\documentclass[12pt]{article}

\usepackage{amssymb, amsmath, amsthm}
\usepackage{mathrsfs}
\usepackage[left=2cm,top=4cm,bottom=2.5cm,right=2cm]{geometry}
\usepackage[utf8]{inputenc} 
\usepackage[T1]{fontenc}    
\usepackage{hyperref}       
\usepackage{url}            
\usepackage{booktabs}       
\usepackage{amsfonts}       
\usepackage{nicefrac}       
\usepackage{microtype}      
\usepackage{xcolor}         
\usepackage{physics}        
\usepackage{amssymb, amsmath, amsthm}
\usepackage{xpatch}
\usepackage{graphicx}
\usepackage{authblk}
\usepackage[symbol]{footmisc}
\usepackage{multirow}       
\usepackage{diagbox}       
\usepackage{makecell}
\usepackage{caption}
\usepackage{enumitem}
\usepackage{comment}
\usepackage{tikz}
\usepackage{algorithm2e}

\makeatletter
\AtBeginDocument{\xpatchcmd{\@thm}{\thm@headpunct{.}}{\thm@headpunct{}}{}{}}
\makeatother

\newcommand{\mathbbm}[1]{\text{\usefont{U}{bbm}{m}{n}#1}}
\newcommand{\floor}[1]{\left\lfloor #1 \right\rfloor}
\DeclareMathOperator\arctanh{arctanh}

\DeclareMathOperator{\diag}{diag}

\DeclareMathOperator{\sinc}{sinc}
\DeclareMathOperator{\sgn}{sgn}

\newtheorem{theorem}{Theorem}
\newtheorem*{theorem*}{Theorem}
\newtheorem{proposition}[theorem]{Proposition}
\newtheorem{corollary}[theorem]{Corollary}
\newtheorem{lemma}{Lemma}
\newtheorem*{lemma*}{Lemma}
\newtheorem{definition}{Definition}
\newtheorem*{definition*}{Definition}
\newtheorem{remark}{Remark}
\newtheorem*{remark*}{Remark}

\newtheorem*{fact*}{Fact}

\newtheoremstyle{theoremnoperiod}
    {\medskipamount}%
    {\medskipamount}%
    {\itshape}%
    {\parindent}%
    {\scshape}%
    {}%
    {1em}%
    {}%
\theoremstyle{theoremnoperiod}
\newtheorem*{Restatementlemma*}{Restatement of Lemma\hspace*{-9pt}}  
\newtheorem*{Restatementtheorem*}{Restatement of Theorem\hspace*{-9pt}}
\newtheorem*{Restatementproposition*}{Restatement of Proposition\hspace*{-9pt}}

\begin{document}

\title{Optimal gentle measurements of finite-dimensional quantum states}
\date{}

\author[1]{\small Cristina Butucea \footnote{\texttt{cristina.butucea@ensae.fr}}}
\author[2]{\small Jan Johannes \footnote{\texttt{johannes@math.uni-heidelberg.de}}}
\author[2]{\small Henning Stein \footnote{\texttt{henning.stein@math.uni-heidelberg.de}}}

\affil[1]{\footnotesize CREST, ENSAE, Institut Polytechnique de Paris, 91120 Palaiseau, France}
\affil[2]{\footnotesize Heidelberg University, 69120 Heidelberg, Germany}

\maketitle

\begin{abstract}
We study the task of estimating a $d-$dimensional quantum state $\rho$ under the constraint that the measurement is $\alpha-$gentle. Such measurements $M$ do not collapse the state; they issue both a random variable $R^M = \omega$ containing statistical information and a post-measurement state $\rho_{M \to \omega}$ such that $\|\rho_{M\to \omega} - \rho\|_{Tr} \leq \alpha$. We describe gentle measurements and their connection to quantum differential privacy. 

Our results show that the optimal minimax estimation rate in Frobenius norm is of order $d^3/(n \alpha^2)$, instead of $d^2/n$ for general measurements. Moreover, for rank $r$ states with $r\leq d$ we prove that the optimal minimax rate is $rd^2/(n \alpha^2)$, instead of $rd/n$. Very surprisingly, the loss for gentleness $d/\alpha^2$ scales with the ambient dimension of the Hilbert space, rather than the number of parameters $rd$, typically seen in classical differential privacy.

We propose optimal gentle measurements and indicate how they can be physically implemented using an ancillary state and a CNOT gate to entangle it with the initial state. We notice that the resulting random variable has a likelihood that satisfies local differential privacy. Lower bounds are proven through a new quantum information-theoretic inequality applied to well chosen families of states in the manifold of (small-rank) quantum states. 
\end{abstract}

\section{Introduction}
One of the most remarkable insights of quantum mechanics is that measurements on a microscopic level behave fundamentally different to those on a macroscopic scale. In contrast to what one would expect from classical physics, any observation of a quantum system is both non-deterministic as well as \emph{destructive}. These properties of quantum systems inherently confront researchers who seek to gain information on the system with a \emph{statistical inverse problem}. One such task is \emph{quantum state estimation}, also called \emph{quantum state tomography}, in which one aims at learning a complete description of the state $\rho$ of a quantum system by repeatedly probing a large quantity of identical individual systems. Our goal is to study the statistical properties of quantum state tomography while quantifying the impact of the destruction of the system by the measurement and keeping it below a prescribed level. 

\smallskip

The earliest procedures for quantum tomography \cite{smithey_measurement_1993}, being mostly physically motivated, lacked rigorous statistical foundations. As experiments showed that the creation and manipulation of quantum systems became ever more feasible \cite{mooij_josephson_1999, hannemann_self-learning_2002, haffner_scalable_2005}, so arose the need for the construction of efficient methods for reconstruction that are founded on statistical guarantees. When this task was brought to the attention of the statistical community \cite{artiles_invitation_2005} researchers took on the task of developing statistically optimal procedures for a variety of different tasks using a variety of different methods. 

In the typical formulation of quantum statistics we assume that we are given access to a quantum system in some unknown state $\rho$ that is part of a quantum statistical model $\mathcal{S} $. By performing a measurement $M$ we obtain a random outcome, \emph{i.e.} a random variable $R^M$, with values in some space $\Omega$ on which we can then perform statistical analysis. The measurement can therefore be seen as transporting the quantum state $\rho$ into some likelihood over $\Omega$. Crucially, in contrast to classical statistical procedures, researchers in quantum state tomography are confronted with the task of minimizing over both the quantum measurement and the subsequent classical post-processing. The difficulty lies therefore not only in the optimal treatment of information given by a measurement but also in the optimal obtainment of statistical information by choosing the most informationally rich measurement of the quantum state. 

Another important purely quantum phenomenon is entanglement, which, greatly simplified, allows for some part of a quantum system to influence another part without the two being connected in the classical sense. For a measurement to harness the power of this property however, any experimentalist needs to be able to hold all states in memory and manipulate them simultaneously which renders entangled (also referred to as coherent) measurements unfeasible with current technology \cite{cotler_quantum_2020}. As such, a lot of research focuses on the study of the subclass of unentangled (also independent/product) measurements which have weaker statistical properties but allow for physical implementation. 

One way of measuring the goodness of the chosen quantum statistical method is by considering its minimax risk. For a class $\mathcal{S}$ of quantum states and a class of $\mathcal{M}$ of quantum measurements, the minimax estimation risk with respect to a loss function $l$ is given by
\begin{equation}
\label{eqn::def_minimax_risk_tomography}
    \mathcal{R}(\mathcal{S}, \mathcal{M}, l) = \inf_{\hat{\rho}} \inf_{M \in \mathcal{M}} \sup_{\rho \in \mathcal{S}} \mathbb{E}_{\rho}\left[ l(\hat{\rho}, \rho) \right]
\end{equation}
where $\hat{\rho}$ is a statistical estimator defined on the random outcomes of the measurement $M$. The choice of a suitable measurement is an inherently quantum phenomenon for which there lacks a classical statistical analogue. 

The most basic class of quantum states is that of \emph{qubits}, a portmanteau combining \emph{quantum} and \emph{bit}, which only consists of two levels. In \cite{bagan_optimal_2006} it was shown that the optimal measurement scheme for qubit estimation is given by an entangled measurement and attains a minimax rate that scales like $1/n$. They show the optimality by comparing their result to the quantum Cramér-Rao-bound (QCRB) based on the Fisher-Helstrom-information that extends the classical Fisher-information into the quantum setting \cite{helstrom_quantum_1969, braunstein_statistical_1994, holevo_statistical_2001}. Interestingly, as shown in \cite{romero-isart_optimal_2005}, the minimax rate for this particular task does not change when the measurements are restricted to be unentangled.

While the task of estimation for qubits had been solved, the question of the optimal estimation strategy for general $d$-dimensional quantum system, a \emph{qudit}, remained unanswered for a longer time. While \cite{kahn_local_2009} showed that the risk scales with $1/n$, exact dependence on the dimensional $d$ remained unknown. In the following years, quantum state tomography became the topic of study for many statisticians covering the topic from various angles \cite{butucea_minimax_2007, koltchinskii_von_2011, koltchinskii_optimal_2015, cai_optimal_2016}. A particular noteworthy result was the discovery that quantum state tomography is intricately linked to the problem of recovering a high dimensional matrix from noisy observations \cite{wang_asymptotic_2013}. This allowed for various known methods in the study of low-rank matrix recovery \cite{candes_power_2010, rohde_estimation_2011, koltchinskii_nuclear-norm_2011, koltchinskii_von_2011} to be used for quantum state estimation. These insights eventually led to the discovery of sample optimal measurement schemes \cite{flammia_quantum_2012, kueng_low_2017, guta2018faststatetomographyoptimal} using techniques from compressed sensing. The corresponding lower bound was proved in \cite{haah_sample-optimal_2017} showing a minimax rate of $dr^2/n$ for state tomography in trace norm using unentangled measurements.

For general entangled measurements, it has been shown \cite{odonnell_efficient_2016, haah_sample-optimal_2017} that the minimax rate for state tomography is $dr/n$, which significantly improves on the result for unentangled measurements, especially for full rank states. However, as stated above, it is currently physically not possible to implements the measurements needed for these results.

Another typical topic in quantum statistics is that of quantum state certification, \emph{i.e.} goodness-of-fit testing. As in the classical case, testing a hypothesis state is easier than estimating an unknown state. \cite{odonnell_quantum_2021} have shown a squared minimax separation rate for quantum state certification scaling as $d/n$ for unentangled measurements. As with estimation, it has been shown \cite{bubeck_entanglement_2020, liu_role_2024, yu_sample_2021} that entanglement is necessary in order to achieve this rate and that the additional constraint of unentangled measurements introduces a penalty. Notably, this penalty is further dependent on another property of the measurement which is randomness. It was shown by \cite{liu_role_2024} that random measurements outperform fixed measurements for state certification, lowering the rate from $d^{2}/n$ to $d^{3/2}/n$. This phenomenon is, again, inherently quantum and limited to the task of state certification rather than tomography.

\smallskip

Most of the research, including the works and techniques mentioned until now, have been focused on only one of the two differences between quantum mechanical and classical observations, the randomness, while ignoring the other, the destruction. While it is well known that no information can be extracted from a quantum mechanical system without disturbing it \cite{myrvold_quantum_2009}, almost no results exist quantifying the amount of disturbance necessary in order to gain information. Such measurements that limit the amount of destruction are known as \emph{gentle measurements}. A quantum measurement $M$ is said to be gentle if it controls the trace-distance between the state before, $\rho$, and after, $\rho_{M \to \omega}$, by some small value $\alpha$, \emph{i.e.}
\begin{equation*}
    \norm{\rho - \rho_{M \to \omega}}_{Tr} \leq \alpha,
\end{equation*}
for all possible states $\rho$ and outcomes $\omega$. Given the complicated non-linear relation between a state before and after a measurement, for a long time, not much more than the rudimentary "gentle measurement Lemma" \cite{winter_coding_1999} was known about this class of measurements. 

Their importance was then recently highlighted when it was shown that gentle measurements are instrumental in the development of some quantum machine learning algorithms, especially quantum backpropagation for neural networks \cite{abbas_quantum_2023}. Quantum algorithms have already been shown to provide substantial speed-ups compared to classical computers for several tasks such as factorizing numbers \cite{shor_algorithms_1994}, the simulation of quantum systems \cite{lloyd_universal_1996} and solving linear equations \cite{harrow_quantum_2009}. With the growing capabilities of physical quantum computers \cite{arute_quantum_2019, zhong_quantum_2020, google_quantum_ai_and_collaboratorstors_quantum_2025} it is ever more important to understand the capabilities and limitations of the measurements and manipulations of quantum states.

The development of gentle measurements requires a lot more care than the treatment of general measurements. The laws of quantum mechanics dictate that any projection measurement operator completely collapses any state perpendicular to its smaller rank image space which prohibits these types of measurements from being gentle. However, most optimal quantum measurement schemes rely on such projector valued measurements \cite{bagan_optimal_2006, bubeck_entanglement_2020, haah_sample-optimal_2017, liu_role_2024,  odonnell_efficient_2016, odonnell_quantum_2021} leading us to consider an almost completely unknown type of measurement operator on which very little is known in the literature.

A significant leap forward in the study of gentle measurements came with the relation of gentle measurements to quantum differential privacy \cite{aaronson2019gentle}. Quantum differential privacy (qDP) is an extension of classical differential privacy \cite{dwork_calibrating_2006} to quantum measurements. While qDP as we define it does currently not have any statistically practical interpretation itself, it allows for the usage of the well-understood field of differential privacy \cite{duchi2014localprivacydataprocessing, butucea_interactive_2023, steinberger_efficiency_2024} to develop results for gentle measurements. While \cite{aaronson2019gentle} focuses on \emph{global gentleness}, such an approach is a again physically unfeasible as it requires access to and manipulation of a large quantity of states at once. In this work we focus on \emph{local gentleness} in which we are given $n$ identical quantum states $\rho$, which we measure separately, limiting the amount of disturbance on each state separately. 

The connection between gentleness and differential privacy was used by \cite{aaronson2019gentle} to give a global measurement procedure based on the Laplace mechanism to estimate the Hamming-weight for a multi-qubit system. Recently, \cite{butucea_sample-optimal_2025} further formalized the concept and gave a procedure for general qubits based on \emph{quantum label switching}, which generalized the optimal differentially private procedure for Bernoulli random variables \cite{steinberger_efficiency_2024} to the quantum setting. Furthermore, they developed a lower bound framework based on Le Cam's method using a newly developed quantum data-processing-inequality for gentle measurements, showing an increase in the minimax rate from $1/n$ to $1/n\alpha^2$. Their results prove and quantify the unavoidable trade-off between information gain and disturbance mirroring the trade-off in differential privacy between information gain and user privacy. Recently, \cite{butucea_locally_2026} then considered the impact of local gentleness on state certification for high dimensional quantum states and showed an optimal gentleness penalty of $d/\alpha^2$ for the goodness-of-fit test for full-rank quantum states.

\smallskip

In this paper we study the effect of local gentleness on high dimensional quantum state estimation and give minimax-optimal rates for states of arbitrary rank. \textbf{Remarkably, we demonstrate that for the corresponding quantum setting, $\alpha$-gentleness incurs a penalty of order $d/\alpha^2$ that scales only with the square root of the number of parameters for full rank states}. This is in stark contrast to the results for classical differential privacy where the additional factor incurred scales linearly with the number of parameters of the model \cite{duchi_minimax_2018}, which is of order $d^2$ for a qudit. This phenomenon suggests that the rich geometry of the quantum state space permits measurement manipulations that are significantly more efficient than their classical counterparts. This is a striking result that offers more insights into the properties of quantum systems and differential privacy. Indeed, we introduce here new quantum algorithms and methodologies that upgrade classical privacy mechanisms to the quantum world, opening the road to new quantum gentle algorithms that may result in new privacy mechanisms. 

This paper is structured as follows. In Section~\ref{sec::quantum_mechanics_background} we give a short recapitulation of the fundamental notions of quantum statistics necessary for the understanding of the background of our work. We then develop the necessary central results of gentle measurements in~\ref{sec::gentle_measurements}. In particular, we improve on the relation between gentleness and quantum differential privacy from \cite{aaronson2019gentle} and show that in certain cases, we can view the two concepts to be equivalent and our results are sharp. Section~\ref{sec::qLS_pure_states} deals with a particular gentle measurement for the probability vector of a pure state. It serves as an introductory example into the development of gentle measurements as it covers a measurement from different angles. In~\ref{sec::mathematical_formulation_of_gentle_measurement_for_probability_vector}, we start by giving its mathematical formulation with which we will be working with when proving statistical guarantees of the subsequent estimator in~\ref{sec::statistical_properties_of_probability_vector_estimator}. In~\ref{sec::relation_to_privacy_of_probability_vector_estimator}, we then demonstrate how the measurement we described earlier relates to the standard privacy kernel for multinomial distributions. Finally, in~\ref{sec::Implementation_of_measurement_of_probability_estimator}, we demonstrate how this measurement can be viewed as a basis measurement in an enlarged space as a consequence of Naimark's dilation theorem, allowing for a more feasible implementation. 
Section~\ref{sec::qLS_pure_states} serves as a prelude for Section~\ref{sec::upper_bound_qudit_estimation} in which we build upon our previous gentle measurement to construct a more sophisticated version that serves as the backbone for the gentle state estimator. By combining gentle measurements with standing state tomography methods, we build a gentle projected least squares estimator that attains a minimax rate of $\frac{d^2r}{n\alpha^2}$, where $r$ is the rank of the state to be measured, up to log factors. 

We then prove the optimality of the state estimator in Section~\ref{sec::lower_bounds_qudits}. For this, we first develop a lower bound reduction scheme for gentle measurements based on Assouad's method in~\ref{sec::reduction_scheme}. We demonstrate the power of our reduction scheme thought the construction of suitable set of quantum states in Section~\ref{sec::lower_bound_constructions}, showing tight minimax lower bounds that match the rate of our estimator up to a factor of $\log(d)$. This is achieved by two separate construction for high- and low-rank states in~\ref{sec::lower_bound_construction_full_rank} and~\ref{sec::lower_bound_construction_low_rank} respectively. For completeness, we then show a construction for pure states in~\ref{sec::lower_bound_construction_pure_states}. Finally, in~\ref{sec::lower_bound_construction_probability_vector}, we finish of the section by demonstrating that our method not only applies to state estimation itself, but can also be applied to the estimation of derived parameters of the state such as the vector of probabilities for a pure state, proving minimax optimality of the estimator constructed in Section~\ref{sec::qLS_pure_states}. We believe that the these constructions may be used for several other tasks in quantum estimation.

\section{Fundamentals of quantum statistics}
\label{sec::quantum_mechanics_background}

\subsection{Notation}
Throughout this paper we will work with the standard physical notation of quantum mechanics. A vector of the complex Hilbert space $\mathcal{H}$ (we will be almost exclusively working with $\mathcal{H} = \mathbb{C}^d$) are denoted by a "ket" $\ket{\psi}$. The corresponding "bra" is the linear operator $\bra{\psi}$ that maps a ket $\ket{\varphi}$ onto the inner product $\bra{\psi}\ket{\varphi}$ between the two. As such, $P = \ket{\psi}\bra{\psi}$ is the rank-one operator given by $P \ket{\varphi} = \ket{\psi}\bra{\psi}\ket{\varphi}$. For a matrix $A \in \mathbb{C}^{d \times d}$ we denote by $A^*$ the adjoint matrix. $A \in \mathbb{C}^{d \times d}$ is said to be positive ($A \geq 0$ in short) if $A$ is self-adjoint and positive semi-definite. If A is positive, it has a square root, i.e. there exists a unique positive matrix $B$ such that $B^2 = A$. Furthermore, by $|A| = \sqrt{A^* A}$ we denote the matrix absolute value of $A$. Using the bra-ket notation, we can write any self-adjoint matrix $A$ as $A = \sum_{j = 1}^d \lambda_j \ket{\psi_j}\bra{\psi_j}$ for an orthonormal basis $(\ket{\psi_j})_{j = 1}^d$ of $\mathbb{C}^d$. We denote operator norm of $A$ as $\norm{A}_{op}$.

\subsection{Quantum mechanics background}
In this section we recapitulate the basic notions of quantum mechanics necessary for understanding the contents of this work. The formalization we use is based on complex separable Hilbert spaces. Since we will are working with finite dimensional objects almost exclusively, the underlying Hilbert space in our case is $\mathbb{C}^d$. 

\begin{definition}[Quantum states]
\label{defn::quantum_states}
    Let $\mathbb{C}^{d}$ be the Hilbert space describing a quantum system. A state of the system is a a positive matrix $\rho \in \mathbb{C}^{d \times d}$ such that $\Tr[\rho] = 1$.
\end{definition}

If $d=2$ we say that $\rho$ is a "qubit". Qubits form the most fundamental quantum system with the lowest possible non-trivial dimension. Contrary, $d$-dimensional quantum states are often referred to as "qudits". The space of qudits (that is the space of  quantum states on $\mathbb{C}^{d}$) is denoted by $\mathcal{S}(\mathbb{C}^d)$. The subspace of quantum states of rank $r$ is denoted by $\mathcal{S}_r(\mathbb{C}^d)$. We say that a state is pure if and only if it has rank one. In that case in can be written as the projector $\rho = \ket{\psi}\bra{\psi}$ for a normalized vector $\ket{\psi} \in \mathbb{C}^d$. As such, we often identify a pure state with its representational vector $\ket{\psi}$. Note that for a $\lambda = e^{i \theta} \in \mathbb{C}$ with $|\lambda| = 1$, $\ket{\psi}$ and $\ket{\psi'} = \lambda \ket{\psi}$, we have $\ket{\psi}\bra{\psi} = \ket{\psi'}\bra{\psi'}$. As such, $\ket{\psi}$ and $\ket{\psi'}$ which only differ by a phase parameter $\theta$ represent the same pure state. We can therefore identify the space of pure states $\mathcal{S}_{pure}(\mathbb{C}^d)$ with the rays of the Hilbert space $\mathbb{C}^d$. For a fixed orthonormal basis $(\ket{e_j})_{j = 1}^d$, which we will sometimes write as $(\ket{j})_{j = 1}^d$, we can write any pure state $\ket{\psi}$ as $\ket{\psi} = \sum_{j = 1}^d \gamma_j \ket{e_j}$ with $\sum_{j = 1}^d |\gamma_j|^2 = 1$. We say that $\ket{\psi}$ is in a superposition of the basis states $\ket{e_j}$ if for all $j = 1,...,d$ it holds $|\gamma_j| \neq 1$. Furthermore, the vector $p = [p_1,...,p_d]^t := \left[ |\gamma_1|^2,...,|\gamma_d|^2 \right]^t$ is the vector of probability amplitudes of $\ket{\psi}$. \medskip

States of a quantum system cannot be observed directly. Instead the observation of the state is limited to quantum measurements. Even if the state of the system is completely known, in general, the outcome of the measurement will be random and cannot be predicted beforehand. Furthermore, a fundamental result of quantum mechanics is that a measurement irreversible alters the state of the system it measures. This alteration is know as the "collapse of the wave function".
\begin{definition}[Quantum measurement]
\label{defn::quantum_measurement}
    A measurement $M$ on a quantum system $\mathbb{C}^d$ is given by a countable set of measurement operators (matrices) $M = (M_{\omega})_{\omega \in \Omega}$ such that
    \begin{equation}
    \label{eqn::completeness_relation}
        \sum_{\omega \in \Omega} E_{\omega} := \sum_{\omega \in \Omega} M_{\omega}^* M_{\omega} = \mathbbm{1}.
    \end{equation}
    for $E_{\omega} := M_{\omega}^*M_{\omega}$. Equation~\eqref{eqn::completeness_relation} is called the \emph{completeness relation}. Each measurement $M$ induces a random variable $R^M$ when measuring a quantum system. If the state of the system is $\rho$, the probability of the measurement outcome being $\omega \in \Omega$ is
    \begin{equation}
    \label{eqn::measurement_prob}
        \mathbb{P}_{\rho}^{R^M}(\{\omega\}) = \mathbb{P}_{\rho}(R^M = \omega) = \Tr\left[ \rho E_{\omega} \right].
    \end{equation}
    After the measurement the new state of the system is given by
    \begin{equation}
    \label{eqn::post_measurement_state}
        \rho_{M \to \omega} = \frac{1}{\mathbb{P}_{\rho}(R^M = \omega)} M_{\omega} \rho M_{\omega}^*.
    \end{equation}
    If $\rho = \ket{\psi}\bra{\psi}$ the post measurement state $\rho_{M \to \omega} = \ket{\psi_{M \to \omega}}\bra{\psi_{M \to \omega}}$ is also pure and equations~\eqref{eqn::measurement_prob} and~\eqref{eqn::post_measurement_state} can be written as
    \begin{equation}
    \label{eqn::post_measurement_state_pure}
    \mathbb{P}_{\rho}( R^M = \omega) = \bra{\psi}M_{\omega}^*M_{\omega}\ket{\psi} = \norm{M_{\omega} \ket{\psi}}^2 \hspace{10pt} \text{and} \hspace{10pt} \ket{\psi_{M \to \omega}} = \frac{1}{\norm{M_{\omega}}} M_{\omega} \ket{\psi}.
    \end{equation}
    respectively.
\end{definition}
There are several notions of measurements in quantum mechanics. We are working with the definition of \cite{NielsenChuang} which is the most general definition that still allows for a description of post-measurement states which is essential for working with gentle measurements. Another often used description of measurements is that of positive operator valued measurements (POVMs). $M: \Sigma \to \mathbb{C}^{d \times d}$ for a measurable space $(\Omega, \Sigma)$ such that 
\begin{enumerate}
    \item[(i)] $M(E) \geq 0$ for all $E \in \Sigma$;

    \item[(ii)] $M(\Omega) = \mathbbm{1}$;

    \item[(iii)] $M(\bigcup_{n \in \mathbb{N}} E_n) = \sum_{n \in \mathbb{N}} M(E_n)$ for a countable sequence of pairwise disjoint $E_n$.
\end{enumerate}
Note that for a measurement as we define it, the mapping
\begin{align*}
    M:& \; \mathcal{P}(\Omega) \; \to \; \mathbb{C}^{d \times d}
    \\
    & \quad E \quad \mapsto \; \sum_{\omega \in E} E_{\omega}
\end{align*}
defines a POVM on $(\Omega, \mathcal{P}(\Omega))$. While POVMs in general capture a wider class of statistical properties of measurements, their general formulation contains no description of the post-measurement state. A measurement is said to be a projector valued measurement  (PVM), if each measurement operator $M_{\omega}$ is an orthogonal projection. A special case of PVMs are basis measurements. We say that we measure a state in the basis $(\ket{e_j})_{j = 1}^d$ if we measure using $M = (\ket{e_j}\bra{e_j})_{j = 1,...,d}$. All the aforementioned descriptions have in common that they transport a quantum model to a statistical model. For a quantum model $\mathcal{S} $ on $\mathbb{C}^{d}$ a measurement $M =(M_{\omega})_{\omega \in \Omega}$ induces a statistical model
\begin{equation*}
    \left\{ \mathbb{P}_{\rho}^{R^M} \, \middle| \, \rho \in \mathcal{S} \right\} \hspace{10pt} \text{on} \hspace{10pt} (\Omega, \mathcal{P}(\Omega)).
\end{equation*}
This "quantum-to-classical randomization" allows for the definition of quantum estimators and tests. A quantum estimator of the state $\rho$ is a pair $(M, \hat{\rho})$ consisting of a measurement $M = (M_{\omega})_{\omega \in \Omega}$ and a classical estimator $\hat{\rho}: \Omega \to \mathcal{S}(\mathbb{C}^d)$. The statistical properties of quantum estimator for a state $\rho$ are then given by the properties of the classical estimator $\hat{\rho}$ on the probability space $(\Omega, \mathcal{P}(\Omega), \mathbb{P}_{\rho}^{R^M})$. Similarly, a quantum test is given by the pair $(M, \Delta)$ for a measurement $M$ and a classical test $\Delta: \Omega \to \{0,1\}$. In~\ref{eqn::types_of_measurements} we further explore different types of particular measurement schemes with properties needed for optimal gentle quantum state tomography.

\begin{definition}[Joint quantum systems]
    Let $\mathbb{C}^{d_1}$ and $\mathbb{C}^{d_2}$ be the Hilbert spaces describing two quantum systems in the states $\rho_1$ and $\rho_2$ respectively. Then the joint system is given by the tensor product $\mathbb{C}^{d_1} \otimes \mathbb{C}^{d_2}$ and the state of the joint system is $\rho_1 \otimes \rho_2$. 
\end{definition}

The state $\rho = \rho_1 \otimes \rho_2$ is called the product of the states $\rho_1$ and $\rho_2$. Conversely, any state $\rho \in \mathcal{S}(\mathbb{C}^{d_1} \otimes \mathbb{C}^{d_2})$ that can be written in the form $\rho = \rho_1 \otimes \rho_2$ for a $\rho_1 \in \mathcal{H}_1$ and $\rho_2 \in \mathcal{H}_2$ is said to be a product state. If $\rho_1$ and $\rho_2$ are pure, so is the product state. Similarly, a measurement $M = (M_{\omega})_{\omega \in \Omega}$ on $\mathbb{C}^{d_1} \otimes \mathbb{C}^{d_2}$ is said to be product if $\Omega = \Omega_1 \times \Omega_2$ and $M = M_1 \otimes M_2 = (M_{\omega_1} \otimes M_{\omega_2})_{\omega_1 \times \omega_2 \in \Omega_1 \times \Omega_2}$ for all $\omega \in \Omega$. Product systems are important to study as they correspond to independent systems. In particular, for a product state $\rho = \rho_1 \otimes \rho_2$ and a product measurement $M = M_1 \otimes M_2$, the induced random variable $R^M$ is given by the vector $R^M = (R^{M_1}, R^{M_2})$ with independent entries, where $R^{M_1}$ and $R^{M_2}$ are the outcome random variables of $M_1$ and $M_2$ on $\rho_1$ and $\rho_2$ respectively. As such, the outcome of applying the product of $n$ identical measurements $M^{\otimes n} := M \otimes ... \otimes M$ on the product state $\rho^{\otimes n} := \rho \otimes ... \otimes \rho$ are $n$ i.i.d. random variables $R^M$. \medskip

The partial trace can be viewed as the opposite operation to pairing a quantum system. Let $\mathbb{C}^{d_1} \otimes \mathbb{C}^{d_2}$ be the Hilbert spaces describing a joint quantum system in the state $\rho$. The partial trace $\Tr_2[\rho] \in \mathbb{C}^{d_1 \times d_1}$ of $\rho$ with respect to the second subsystem is the unique operator defined by
\begin{equation*}
    \Tr\left[ \Tr_2[\rho] B \right] = \Tr\left[ \rho (B \otimes \mathbbm{1}) \right] \hspace{20pt} \text{for all } B \in \mathbb{C}^{d_1 \times d_1}.
\end{equation*}
The partial trace $\Tr_2[\rho]$ describes the properties of the state $\rho$ when we are only given access to the subsystem $\mathbb{C}^{d_1}$. For a product state $\rho = \rho_1 \otimes \rho_2$ we have $\Tr_2[\rho] = \rho_1$.

\subsection{Distances on the space of states}
In order to asses the properties of an estimator as well as as the gentleness of our measurements we introduce metrics on the space on quantum states. 

\begin{definition}
    Let $\rho_0, \rho_1 \in \mathcal{S}(\mathbb{C}^d)$ be two quantum states. The trace-norm between the two states is given by 
    \begin{equation*}
        \norm{\rho_0 - \rho_1}_{Tr} = \frac{1}{2} \Tr[|\rho_0 - \rho_1|] = \frac{1}{2} \sum_{i = 1}^d |\lambda_i|,
    \end{equation*}
    where the $(\lambda_i)_{i = 1}^d$ denote the eigenvalues of $\rho_0-\rho_1$. The Frobenius-norm between the two states is given by
    \begin{equation*}
        \norm{\rho_0 - \rho_1}_F = \Tr\left[ (\rho_0 - \rho_1)^2 \right]^{\frac{1}{2}} = \left(\sum_{i = 1}^d \lambda_i^2 \right)^{\frac{1}{2}}.
    \end{equation*}
\end{definition}
The trace- and Frobenius-norm of $A$ are equivalent to the Schatten-$p$-norms of $A$ for $p = 1,2$ where the trace norm is normalized in such a way that for two quantum states $\rho_0, \rho_1 \in \mathcal{S}(\mathbb{C}^d)$ we have $\norm{\rho_0 - \rho_1} \in [0,1]$ making it the quantum generalization of the total-variation distance. As such, it may alternatively be defined via the minimal testing error between the states $\rho_0$ and $\rho_1$ as
\begin{equation*}
    \inf_{(M, \Delta)} \mathbb{P}_{\rho_0}(\Delta = 1) + \mathbb{P}_{\rho_1}(\Delta = 0) = 1 - \norm{\rho_0 - \rho_0}_{Tr}.
\end{equation*}
where the infimum is taken over all measurements $M$ and subsequent test functions $\Delta$. Furthermore, we will make use of the fact that the Frobenius-norm is induced by the inner product on $\mathbb{C}^{d \times d}$ defined $\langle A, B \rangle = \Tr\left[A^* B \right]$ for $A, B \in \mathbb{C}^{d \times d}$.
The calculation of the trace- and Frobenius-norm is difficult in general as we need to calculate the spectral decomposition of $\rho_0 - \rho_1$ but simplifies a lot if $\rho_0$ and $\rho_1$ are pure states. 

\begin{lemma}[\cite{kargin2003chernoffboundefficiencyquantum}] \label{lemmaTraceNorm}
    Let $\rho_0 = \ket{\psi_0}\bra{\psi_0}, \rho_1 = \ket{\psi_1}\bra{\psi_1}$ be two pure states. Then the trace-norm distance between the two is given by 
    \begin{equation*}
        \norm{\rho_0 - \rho_1}_{Tr} = \sqrt{1 - |\bra{\psi_0}\ket{\psi_1}|^2} \hspace{10pt} \text{and} \hspace{10pt} \norm{\rho_0 - \rho_1}_{F} = \sqrt{2} \sqrt{1 - |\bra{\psi_0}\ket{\psi_1}|^2}.
    \end{equation*}
\end{lemma}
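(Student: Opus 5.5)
The idea is to compute the spectrum of $\Delta := \rho_0 - \rho_1 = \ket{\psi_0}\bra{\psi_0} - \ket{\psi_1}\bra{\psi_1}$ explicitly and then apply the definitions of the two norms. If $\ket{\psi_1} = e^{i\theta}\ket{\psi_0}$, then $\Delta = 0$ and both sides vanish. So I assume $\ket{\psi_0}$ and $\ket{\psi_1}$ are linearly independent.

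Then $\Delta$ is self-adjoint with range contained in the two-dimensional space $V = \operatorname{span}\{\ket{\psi_0},\ket{\psi_1}\}$, and it annihilates $V^\perp$. Hence at most two eigenvalues $\lambda_1,\lambda_2$ are nonzero, and I can determine them from two scalar invariants rather than diagonalizing.

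First, $\Tr[\Delta] = 1 - 1 = 0$, so $\lambda_2 = -\lambda_1$. Second, I compute $\Tr[\Delta^2]$. Expanding the square and using $\Tr[\ket{a}\bra{b}] = \bra{b}\ket{a}$ gives
\begin{equation*}
\Tr[\Delta^2] = \Tr[\rho_0^2] + \Tr[\rho_1^2] - 2\Tr[\rho_0\rho_1] = 2 - 2|\bra{\psi_0}\ket{\psi_1}|^2 ,
\end{equation*}
since $\Tr[\rho_0\rho_1] = \bra{\psi_0}\ket{\psi_1}\bra{\psi_1}\ket{\psi_0} = |\bra{\psi_0}\ket{\psi_1}|^2$.

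This immediately gives the Frobenius statement:
\begin{equation*}
\norm{\rho_0-\rho_1}_F = \sqrt{2}\sqrt{1-|\bra{\psi_0}\ket{\psi_1}|^2}.
\end{equation*}
For the trace norm, combining $\lambda_2 = -\lambda_1$ with $\lambda_1^2+\lambda_2^2 = 2(1-|\bra{\psi_0}\ket{\psi_1}|^2)$ yields
\begin{equation*}
|\lambda_1| = |\lambda_2| = \sqrt{1-|\bra{\psi_0}\ket{\psi_1}|^2}.
\end{equation*}
Therefore
\begin{equation*}
\norm{\rho_0-\rho_1}_{Tr} = \tfrac12\bigl(|\lambda_1|+|\lambda_2|\bigr) = \sqrt{1-|\bra{\psi_0}\ket{\psi_1}|^2},
\end{equation*}
with the factor $\tfrac12$ coming from the paper's normalization of the trace norm.

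The only step needing care is the rank argument, namely that $\Delta$ has at most two nonzero eigenvalues. This follows because $\Delta$ is self-adjoint and vanishes on $V^\perp$, so $V$ is an invariant subspace and all nonzero eigenvectors lie in $V$. Everything else is routine trace algebra.
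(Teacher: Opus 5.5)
Your proof is correct and complete. The paper gives no proof of this lemma; it simply quotes it from Kargin. So what you supply is a self-contained derivation rather than a variant of an in-paper argument.

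The standard explicit route goes as follows. Set $c=\bra{\psi_0}\ket{\psi_1}$ and write $\ket{\psi_1}=c\ket{\psi_0}+\sqrt{1-|c|^2}\,\ket{\psi_0^{\perp}}$ for a suitable unit vector $\ket{\psi_0^{\perp}}\perp\ket{\psi_0}$. Then express $\rho_0-\rho_1$ as an explicit $2\times 2$ matrix in the basis $\{\ket{\psi_0},\ket{\psi_0^{\perp}}\}$ and diagonalize it.

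Your use of the two invariants $\Tr[\Delta]=0$ and $\Tr[\Delta^2]=2-2|c|^2$ avoids choosing that basis. It also makes transparent that the Frobenius identity needs no rank argument at all. Only the trace-norm identity uses the fact that $\Delta$ has at most one positive and at most one negative eigenvalue.

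That same observation settles the equality case of the inequality $\tfrac{1}{\sqrt 2}\norm{\rho_0-\rho_1}_F\le\norm{\rho_0-\rho_1}_{Tr}$, which the paper states right after the lemma. Equality holds exactly when $\rho_0-\rho_1$ has at most one positive and at most one negative eigenvalue, counted with multiplicity. Purity guarantees this but is not required: take $\rho_0=\diag(\tfrac12,\tfrac12,0)$ and $\rho_1=\diag(\tfrac12,0,\tfrac12)$. So the paper's ``iff'' there should be an ``if''.
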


Note that in general, while the trace- and Frobenius-norms are equivalent, this equivalence is generally dependent on the dimension of the underlying quantum system as it holds
\begin{equation*}
    \frac{1}{\sqrt{2}} \norm{\rho_0 - \rho_1}_F \leq \norm{\rho_0 - \rho_1}_{Tr} \leq \frac{\sqrt{r}}{2} \norm{\rho_0 - \rho_1}_F
\end{equation*}
for $r = \rank(\rho_0 - \rho_1)$, where equality on the left holds iff $\rho_0$ and $\rho_1$ are both pure states.

\section{Gentle measurements}
\label{sec::gentle_measurements}
As the name "collapse of the wave function" suggests, quantum states before and after a measurement are widely different. A simple example of this is the impact of the basis measurement in the $\ket{0}, \ket{1}$ basis on the pure state $\rho = \ket{\psi}\bra{\psi}$ for $\ket{\psi} = \sqrt{1 - |\eta|^2} \ket{0} + \eta \ket{1}$ for some $\eta \in \mathbb{C}$ with $|\eta| < 1$. The laws of quantum measurements dictate that the post-measurement state under after this measurement is given by $\rho_{M \to \omega} = \ket{\psi_{M \to \omega}}\bra{\psi_{M \to \omega}}$ for $\omega = 0,1$, where $\ket{\psi_{M \to 0}} = \ket{0}$ and $\ket{\psi_{M \to 1}} = \ket{1}$. The distance between the pre- and post-measurement states in trace norm is then given by
\begin{equation*}
    \norm{\rho - \rho_{M \to 0}}_{Tr} = |\eta| \hspace{20pt} \text{and} \hspace{20pt} \norm{\rho - \rho_{M \to 1}}_{Tr} = \sqrt{1 - |\eta|^2}.
\end{equation*}
For $|\eta|$ close to 0 or 1, we see that the measurement collapses the state into an almost orthogonal post-measurement state that is almost maximally far away from the initial pre-measurement state. Furthermore, the laws of quantum mechanics state that any subsequent measurement of the state will result in a certain outcome $0$ or $1$, depending on the collapse, from which we cannot infer any further on the initial state $\rho$. Let us now assume in contrast that we had performed a measurement $M$ that had the property that
\begin{equation}
\label{eqn::gentleness_property_rewritten}
    \rho_{M \to \omega} = \rho + \Delta_{\omega, \alpha} \hspace{20pt} \text{for some } \Delta_{\omega, \alpha} \text{ with } \norm{\Delta_{\omega, \alpha}}_{Tr} \leq \alpha.
\end{equation}
In that case, for any subsequent measurement $\tilde{M}$, it would hold
\begin{align*}
    \mathbb{P}_{\rho_{M \to \omega}}\left( R^{\tilde{M}} = \tilde{\omega} \right) = \mathbb{P}_{\rho}\left( R^{\tilde{M}} = \tilde{\omega} \right) + \Tr\left[ \Delta_{\omega, \alpha} M_{\tilde{\omega}}^* M_{\tilde{\omega}} \right].
\end{align*}
Since we assumed $\norm{\Delta_{\omega, \alpha}}_{Tr} \leq \alpha$, we have $\left|  \Tr\left[ \Delta_{\omega, \alpha} M_{\tilde{\omega}}^* M_{\tilde{\omega}} \right] \right| \leq \alpha$. In particular, for small $\alpha$, the outcomes of the measurement $\tilde{M}$ on the post-measurement state $\rho_{M \to \omega}$ are almost identical to the outcomes of $\tilde{M}$ on the initial state $\rho$. Property~\eqref{eqn::gentleness_property_rewritten} therefore allows for further inference on the initial state $\rho$ by measurements of the post-measurement state $\rho_{M \to y}$. This motivates the following definition of gentle measurements.

\begin{definition}
\label{defn::gentleness}
    A measurement $M = (M_{\omega})_{\omega \in \Omega}$ is called $\alpha$-gentle on a  quantum statistical model $\mathcal{S} \subseteq \mathcal{S}(\mathbb{C}^d)$ if
    \begin{equation}
        \sup_{\rho \in \mathcal{S}} \sup_{\omega \in \Omega} \norm{\rho - {\rho}_{M \to \omega}} \leq \alpha.
    \end{equation}
    If $\rho = \rho_{ 1} \otimes ... \otimes \rho_{ n} $ is a product state belonging to $ \mathcal{S}_1 \otimes ... \otimes \mathcal{S}_n =: \mathcal{S}^n$, we say that a measurement $M$ is locally-$\alpha$-gentle if it is a product measurement $M = M_1 \otimes ... \otimes M_n$ and $M_i$ is $\alpha$-gentle on $\mathcal{S}_i$ for all $i$.
\end{definition}

Gentleness has been shown by \cite{aaronson2019gentle} to be intimately related to quantum differential privacy which is a generalization of classical differential privacy to the quantum setting. Heuristically, a measurement is $\delta$-quantum differentially-private if for all possible outcomes $\omega \in \Omega$ the probabilities of obtaining the same outcome $\omega$ under a state $\rho_1$ and under a different state $\rho_2$ are close.

\begin{definition}
\label{defn::quantm_differential_pricacy}
    A measurement $M = (M_{\omega})_{\omega \in \Omega}$ is called $\delta$-quantum-differentially-private on a quantum statistical model $\mathcal{S}  \subseteq \mathcal{S}(\mathbb{C}^d)$ if
    \begin{equation}
        \mathbb{P}_{\rho_1}(R^M = \omega ) \leq e^{\delta} \mathbb{P}_{\rho_2}(R^M = \omega) \hspace{20pt} \text{ for all } \rho_1, \rho_2 \in \mathcal{S}, \; \omega \in \Omega.
    \end{equation}
    Similarly as with gentleness, we say that a product measure $M = M_1 \otimes ... \otimes M_n$ is locally-$\delta$-quantum differentially-private on $\mathcal{S} = \mathcal{S}_1 \otimes ... \otimes \mathcal{S}_n$ if each $M_i$ is $\delta$-quantum differentially-private on $\mathcal{S}_i$ for all $i$.
\end{definition}

As with classical differential privacy, we verify local gentleness and local quantum differential privacy on each register separately. As such, we often consider only the action of a single $M_i$ rather than the whole product measurement $M_1 \otimes ... \otimes M_n$. 

\medskip

The first result we proof is allows us to verify the quantum differential privacy of a measurement more easily. It shows that it suffices to check quantum differential privacy on the subset of pure states of a quantum system in order to guarantee it for all states. As pure states are often much easier to work with, this heavily simplifies the study of such measurements. Even more, we can show that verifying quantum differential privacy can be done independently of the states at hand by proving an eigenvalue property of the measurement operators.

\begin{proposition}
\label{prop::equalities_qDP}
    Let $M = (M_{\omega})_{\omega \in \Omega}$ be a quantum measurement. Then the following are equivalent:
    \begin{enumerate}
        \item[i)] $M$ is a $\delta$-quantum-differentially-private measurement on $\mathcal{S}(\mathbb{C}^d)$;

        \item[ii)] $M$ is a $\delta$-quantum-differentially-private measurement on $\mathcal{S}_{pure}(\mathbb{C}^d)$;

        \item[iii)] $\lambda_{max}(E_{\omega}) \leq e^{\delta} \lambda_{min}(E_{\omega})$ for all $\omega \in \Omega$, where $E_{\omega} = M_{\omega}^*M_{\omega}$. 
    \end{enumerate}
\end{proposition}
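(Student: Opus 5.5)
We will prove the cycle of implications i) $\Rightarrow$ ii) $\Rightarrow$ iii) $\Rightarrow$ i). The first implication is immediate, since $\mathcal{S}_{pure}(\mathbb{C}^d) \subseteq \mathcal{S}(\mathbb{C}^d)$ and the defining inequality of Definition~\ref{defn::quantm_differential_pricacy} is quantified over pairs of states in the model.

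For ii) $\Rightarrow$ iii), we fix $\omega \in \Omega$. Since $E_\omega = M_\omega^* M_\omega$ is positive, it has a spectral decomposition $E_\omega = \sum_j \lambda_j \ket{\psi_j}\bra{\psi_j}$. Let $\ket{\psi_{max}}$ and $\ket{\psi_{min}}$ be unit eigenvectors belonging to $\lambda_{max}(E_\omega)$ and $\lambda_{min}(E_\omega)$. We apply the pure-state privacy condition to $\rho_1 = \ket{\psi_{max}}\bra{\psi_{max}}$ and $\rho_2 = \ket{\psi_{min}}\bra{\psi_{min}}$. By \eqref{eqn::post_measurement_state_pure},
\begin{equation*}
\mathbb{P}_{\rho_1}(R^M=\omega) = \bra{\psi_{max}} E_\omega \ket{\psi_{max}} = \lambda_{max}(E_\omega), \qquad \mathbb{P}_{\rho_2}(R^M=\omega) = \lambda_{min}(E_\omega).
\end{equation*}
The privacy inequality then yields iii) directly.

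For iii) $\Rightarrow$ i), we use the Rayleigh-quotient bound $\lambda_{min}(E_\omega) \le \Tr[\rho E_\omega] \le \lambda_{max}(E_\omega)$, valid for every state $\rho$. To justify it, write $\rho = \sum_k \mu_k \ket{\phi_k}\bra{\phi_k}$ with $\mu_k \ge 0$ and $\sum_k \mu_k = 1$. Then $\Tr[\rho E_\omega] = \sum_k \mu_k \bra{\phi_k}E_\omega\ket{\phi_k}$ is a convex combination of Rayleigh quotients, each of which lies in $[\lambda_{min}, \lambda_{max}]$. Hence, for any $\rho_1, \rho_2 \in \mathcal{S}(\mathbb{C}^d)$,
\begin{equation*}
\mathbb{P}_{\rho_1}(R^M=\omega) \le \lambda_{max}(E_\omega) \le e^{\delta}\lambda_{min}(E_\omega) \le e^{\delta}\,\mathbb{P}_{\rho_2}(R^M=\omega),
\end{equation*}
which is i).

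There is no real obstacle here. The only point needing care is the extremal step in ii) $\Rightarrow$ iii), which requires that the extreme eigenvalues are attained by pure states; this holds because every eigenvector defines a pure state. A degenerate case such as $E_\omega = 0$ is consistent with both sides, since the inequality $0 \le e^\delta \cdot 0$ holds.
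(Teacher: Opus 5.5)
Your proof is correct and follows essentially the same route as the paper: the trivial inclusion for i) $\Rightarrow$ ii), testing the pure-state privacy inequality on the eigenvectors of $\lambda_{max}(E_\omega)$ and $\lambda_{min}(E_\omega)$ for ii) $\Rightarrow$ iii), and writing $\Tr[\rho E_\omega]$ as a convex combination of Rayleigh quotients via the spectral decomposition of $\rho$ for iii) $\Rightarrow$ i). Your remark on the degenerate case $E_\omega = 0$ is a harmless addition that the paper does not make.
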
 

\begin{proof}
\noindent

\begin{enumerate}[left=0pt, leftmargin=20pt, align=left, itemsep=0pt]

\item[i) $\implies$ ii)] Since $\mathcal{S}_{pure}(\mathbb{C}^d) \subseteq \mathcal{S}(\mathbb{C}^d)$.

\item[ii) $\implies$ iii)] Let $M$ be $\delta$-quantum-differentially-private on pure states. For any $\omega \in \Omega$ we define $E_{\omega} = M_{\omega}^*M_{\omega}$. Then for arbitrary pure states $\rho = \ket{\psi}\bra{\psi}, \rho' = \ket{\psi'}\bra{\psi'}$ it holds by definition that
    \begin{equation*}
        \bra{\psi}E_{\omega}\ket{\psi} = \mathbb{P}_{\ket{\psi}}(R^M = \omega) \leq e^{\delta}\mathbb{P}_{\ket{\psi'}}(R^M = \omega) = e^{\delta} \bra{\psi'}E_{\omega}\ket{\psi'}.
    \end{equation*}
    Now, let us choose in particular $\ket{\psi}$ and $\ket{\psi'}$ to be the eigenvectors of $E_{\omega}$ with eigenvalue $\lambda_{max}(E_{\omega})$ and $\lambda_{min}(E_{\omega})$ respectively. Then we have
    \begin{align*}
         \lambda_{max}(E_{\omega}) = \bra{\psi}E_{\omega}\ket{\psi} \quad\text{and} \quad 
          \lambda_{min}(E_{\omega}) = \bra{\psi'}E_{\omega}\ket{\psi'}.
    \end{align*}
    We conclude that $\lambda_{max}(E_{\omega}) \leq  e^{\delta} \lambda_{min}$.

\item[iii) $\implies$ i)] Note that we can write any quantum state as $\rho = \sum_{k = 1}^{d} \lambda_k \ket{\psi_k}\bra{\psi_k}$. The outcome probability for $\omega \in \Omega$ is then given by
    \begin{equation*}
        \mathbb{P}_{\rho}\left( R^M = \omega \right) = \Tr\left[ \rho E_{\omega} \right] = \sum_{k = 1}^{d} \lambda_k \Tr\left[ \ket{\psi_k}\bra{\psi_k} E_{\omega} \right] = \sum_{k = 1}^{d} \lambda_k \bra{\psi_k}E_{\omega}\ket{\psi_k}.
    \end{equation*}
    By definition of $\lambda_{max}(E_{\omega})$ and $\lambda_{min}(E_{\omega})$, since $\sum_{k = 1}^{d} \lambda_k = 1$, we have
    \begin{equation*}
        \lambda_{min}(E_{\omega}) \leq \mathbb{P}_{\rho}\left( R^M = \omega \right) \leq \lambda_{max}(E_{\omega})
    \end{equation*}
    for any $\rho \in \mathcal{S}(\mathbb{C}^d)$ . Therefore we have
    \begin{equation*}
        \mathbb{P}_{\rho}\left( R^M = \omega \right) \leq \lambda_{max}(E_{\omega}) \leq e^{\delta} \lambda_{min}(E_{\omega}) \leq e^{\delta} \mathbb{P}_{\rho'}\left( R^M = \omega \right)
    \end{equation*}
    for all $\rho, \rho' \in \mathcal{S}(\mathbb{C}^d), \omega \in \Omega$.
\end{enumerate}
\end{proof}

As quantum differential privacy is a condition purely determined by the outcome probabilities of the measurement, in order to assess the gentleness of the measurement we must fix an implementation. More precisely, for a quantum measurement $M = (M_{\omega})_{\omega \in \Omega}$ and a family of unitary matrices $(U_{\omega})_{\omega \in \Omega}$, the measurement $\tilde{M} = (U_{\omega}M_{\omega})_{\omega \in \Omega}$ also defines a quantum measurement with the same outcome probabilities as $M$. The gentleness of the measurement $\tilde{M}$ is dependent on the particular choice of the $U_{\omega}$. The most natural choice of unitary is such that $U_{\omega} M_{\omega} = |M_{\omega}|$, where $|M_{\omega}|$ is the matrix absolute value of $M_{\omega}$. The following proposition assures that such a choice leads in fact to a gentle measurement. Its proof can be found in Appendix~\ref{sec::additional_proofs_gentleness_results}.

\begin{proposition}
\label{prop::Privacy_implies_gentleness_pure}
    Let $M = (M_{\omega})_{\omega \in \Omega}$ be a quantum measurement and $E_{\omega} = M_{\omega}^*M_{\omega}$. If $M$ is $\delta$-quantum-differentially-private on $\mathcal{S}(\mathbb{C}^{d})$, then there exists an implementation $\Tilde{M}$ of $M$, given by $\tilde{M} = (|M_{\omega}|)_{\omega \in \Omega}$, such that $\Tilde{M}$ is $\alpha$-gentle on $\mathcal{S}_{pure}(\mathbb{C}^{d})$ for
    \begin{equation*}
        \alpha = \frac{e^{\frac{\delta}{2}} -1}{e^{\frac{\delta}{2}} +1} = \tanh\left( \frac{\delta}{4} \right).
    \end{equation*}
\end{proposition}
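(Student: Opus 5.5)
The plan is to reduce the problem to one outcome $\omega$ and one pure state $\rho=\ket{\psi}\bra{\psi}$, and then compute the overlap between the pre- and post-measurement states explicitly. With $\tilde M_\omega = |M_\omega| = E_\omega^{1/2}$, the post-measurement state is pure:
\begin{equation*}
\ket{\psi_{\tilde M\to\omega}} = \frac{E_\omega^{1/2}\ket{\psi}}{\norm{E_\omega^{1/2}\ket{\psi}}}.
\end{equation*}
Since $\tilde M_\omega^*\tilde M_\omega = E_\omega$, the outcome probabilities are unchanged, so $\tilde M$ is a valid implementation of $M$. By Lemma~\ref{lemmaTraceNorm},
\begin{equation*}
\norm{\rho-\rho_{\tilde M\to\omega}}_{Tr}=\sqrt{1-F^2}, \qquad F=\frac{\bra{\psi}E_\omega^{1/2}\ket{\psi}}{\bra{\psi}E_\omega\ket{\psi}^{1/2}}.
\end{equation*}
It therefore suffices to show $F\ge \frac{2e^{\delta/4}}{e^{\delta/2}+1}$, because then
\begin{equation*}
1-F^2\le \frac{(e^{\delta/2}-1)^2}{(e^{\delta/2}+1)^2},
\end{equation*}
which is exactly $\tanh^2(\delta/4)$.

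Next I would diagonalize $E_\omega=\sum_k \mu_k\ket{\phi_k}\bra{\phi_k}$ and set $q_k=|\bra{\phi_k}\ket{\psi}|^2$, a probability vector. Then
\begin{equation*}
F=\frac{\sum_k q_k\sqrt{\mu_k}}{\big(\sum_k q_k\mu_k\big)^{1/2}}.
\end{equation*}
Writing $X=\sqrt{\mu_K}$ with $K\sim q$, this is $F=\mathbb{E}[X]/\sqrt{\mathbb{E}[X^2]}$. By Proposition~\ref{prop::equalities_qDP} iii), $X$ takes values in an interval $[a,b]$ with $b/a\le e^{\delta/2}$. (If $\lambda_{min}(E_\omega)=0$, the privacy condition forces $E_\omega=0$, and then the outcome never occurs.)

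The main step is the reverse Cauchy–Schwarz (Pólya–Szegő / Kantorovich type) inequality: for $X\in[a,b]$ with $a>0$,
\begin{equation*}
\frac{(\mathbb{E}X)^2}{\mathbb{E}X^2}\ge \frac{4ab}{(a+b)^2}.
\end{equation*}
To prove it I would start from $(X-a)(b-X)\ge0$, which gives $\mathbb{E}X^2\le (a+b)\mathbb{E}X-ab$. Hence
\begin{equation*}
\frac{(\mathbb{E}X)^2}{\mathbb{E}X^2}\ge \frac{m^2}{(a+b)m-ab}, \qquad m=\mathbb{E}X.
\end{equation*}
Minimizing the right-hand side over $m$ gives the minimum at $m=2ab/(a+b)$, with value $4ab/(a+b)^2$. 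Substituting $t=b/a\le e^{\delta/2}$, this value is $4t/(1+t)^2$, which is decreasing in $t\ge1$. So $F^2\ge \frac{4e^{\delta/2}}{(1+e^{\delta/2})^2}$, which is the required bound on $F$.

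I expect the main obstacle to be this extremal inequality and getting the exact constant right. A cruder bound on the fidelity would give the right order but not the sharp value $\tanh(\delta/4)$. Finally, the supremum over $\psi$ and $\omega$ of the resulting bound is uniform, so $\tilde M$ is $\alpha$-gentle on $\mathcal{S}_{pure}(\mathbb{C}^d)$.
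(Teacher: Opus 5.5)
Your proposal is correct and follows essentially the same route as the paper: take $\tilde M_\omega=\sqrt{E_\omega}$, apply Lemma~\ref{lemmaTraceNorm} to the pure post-measurement state, bound the overlap ratio by the Kantorovich-type inequality $\frac{4\lambda_1\lambda_d}{(\lambda_1+\lambda_d)^2}$, and use the eigenvalue-ratio condition from Proposition~\ref{prop::equalities_qDP}. The only differences are that you prove the Kantorovich (P\'olya--Szeg\H{o}) step from $(X-a)(b-X)\ge 0$ instead of citing it, and you treat the degenerate case $E_\omega=0$, which the paper passes over by assuming strictly positive eigenvalues.
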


Proposition~\ref{prop::Privacy_implies_gentleness_pure} only assures gentleness on pure states. However, as with quantum differential privacy, we can show that gentleness on pure states implies gentleness on all states as long as we assume the measurement operators to be positive and self-adjoint. This assertion is the content of Proposition~\ref{prop::pure_states_imply_all_states} whose proof can again be found in Appendix~\ref{sec::additional_proofs_gentleness_results}.

\begin{proposition}
\label{prop::pure_states_imply_all_states}
    Let $M = (M_{\omega})_{\omega \in \Omega}$ be an $\alpha$-gentle measurement on $\mathcal{S}_{pure}(\mathbb{C}^d)$ such that $M_{\omega}$ is positive and self-adjoint. Then, $M$ is $\alpha$-gentle on $\mathcal{S}(\mathbb{C}^d)$.
\end{proposition}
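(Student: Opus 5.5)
The plan is to pass to a purification of $\rho$ and reduce the mixed-state claim to the pure-state hypothesis. Positivity of $M_\omega$ is what makes the relevant overlap real and nonnegative, and the diagonal form of $M_\omega$ is what lets a mixed state be replaced by a pure one.

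\textbf{Purification and contraction.} Fix $\omega$ and write $\rho=\sum_k\lambda_k\ket{\psi_k}\bra{\psi_k}$. Purify it as $\ket{\Phi}=\sum_k\sqrt{\lambda_k}\ket{\psi_k}\otimes\ket{k}\in\mathbb{C}^d\otimes\mathbb{C}^d$. The operator $M_\omega\otimes\mathbbm{1}$ acting on $\ket{\Phi}$ gives the normalized post-measurement vector $\ket{\Phi_\omega}$. Since $\Tr_2[(M_\omega\otimes\mathbbm{1})\ket{\Phi}\bra{\Phi}(M_\omega\otimes\mathbbm{1})]=M_\omega\rho M_\omega$, we have
\[\Tr_2\bigl[\ket{\Phi_\omega}\bra{\Phi_\omega}\bigr]=\rho_{M\to\omega}\qquad\text{and}\qquad \Tr_2\bigl[\ket{\Phi}\bra{\Phi}\bigr]=\rho .\]
The trace distance contracts under partial trace, so it suffices to show $\norm{\ket{\Phi}\bra{\Phi}-\ket{\Phi_\omega}\bra{\Phi_\omega}}_{Tr}\le\alpha$.

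\textbf{Reduction to a scalar inequality.} By Lemma~\ref{lemmaTraceNorm}, the required bound is equivalent to
\[\frac{|\bra{\Phi}(M_\omega\otimes\mathbbm{1})\ket{\Phi}|^2}{\bra{\Phi}(M_\omega^2\otimes\mathbbm{1})\ket{\Phi}}\ge 1-\alpha^2,\qquad\text{i.e.}\qquad \Tr[\rho M_\omega]^2\ge(1-\alpha^2)\Tr[\rho M_\omega^2].\]
Applying the same lemma to a pure state $\ket{\psi}\in\mathbb{C}^d$, the hypothesis reads $\bra{\psi}M_\omega\ket{\psi}^2\ge(1-\alpha^2)\bra{\psi}M_\omega^2\ket{\psi}$ for every pure $\ket{\psi}$. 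Here positivity of $M_\omega$ guarantees that the overlap $\bra{\psi}M_\omega\ket{\psi}=\bra{\psi}\ket{\psi_{M\to\omega}}\norm{M_\omega\ket{\psi}}$ is a nonnegative real number.

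\textbf{Main obstacle and how to handle it.} Both $\Tr[\rho M_\omega]$ and $\Tr[\rho M_\omega^2]$ are linear in $\rho$, but the inequality is quadratic. Averaging the pure-state inequalities over the eigendecomposition of $\rho$ therefore fails, because Jensen's inequality points the wrong way. Instead I diagonalize $M_\omega=\sum_i m_i\ket{e_i}\bra{e_i}$ with $m_i\ge0$. Set $q_i=\bra{e_i}\rho\ket{e_i}$, which form a probability vector. Then
\[\Tr[\rho M_\omega]=\sum_i q_im_i\qquad\text{and}\qquad\Tr[\rho M_\omega^2]=\sum_i q_im_i^2,\]
so both quantities depend on $\rho$ only through $q$. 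The pure state $\ket{\psi}=\sum_i\sqrt{q_i}\ket{e_i}$ produces exactly the same two numbers. The hypothesis applied to this $\ket{\psi}$ therefore yields the required inequality for $\rho$, which completes the argument.
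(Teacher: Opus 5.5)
Your proof is correct. Its first half is exactly the paper's: purify $\rho$, identify $\rho_{M\to\omega}$ as the partial trace of the post-measurement state of $M_\omega\otimes\mathbbm{1}$, and use contractivity of the trace distance under $\Tr_2$.

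The routes diverge when showing that $M_\omega\otimes\mathbbm{1}$ is gentle on the purification. The paper argues spectrally:
\begin{enumerate}
\item Lemma~\ref{lem::improved_constant_positivity}, whose proof tests a single extremal pure state, gives $\lambda_{max}(E_\omega)\le e^{4\arctanh(\alpha)}\lambda_{min}(E_\omega)$.
\item This eigenvalue ratio is unchanged for $E_\omega\otimes\mathbbm{1}$.
\item The Kantorovich inequality in Proposition~\ref{prop::Privacy_implies_gentleness_pure} then yields $\alpha$-gentleness of $M_\omega\otimes\mathbbm{1}$ on all pure states of $\mathbb{C}^d\otimes\mathbb{C}^d$.
\end{enumerate}
You instead observe that the overlap depends on $\rho$ only through its diagonal $q_i=\bra{e_i}\rho\ket{e_i}$ in the eigenbasis of $M_\omega$. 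The pure state $\sum_i\sqrt{q_i}\ket{e_i}$ therefore reproduces both $\Tr[\rho M_\omega]$ and $\Tr[\rho M_\omega^2]$, and the hypothesis applies verbatim.

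Your version is shorter and self-contained:
\begin{itemize}
\item It needs neither the Kantorovich inequality nor the gentleness--privacy equivalence.
\item It does not need $M_\omega$ to be invertible. The paper's Lemma~\ref{lem::improved_constant_positivity} is stated for positive-definite operators, so strictly one must first check that pure-state gentleness with $\alpha<1$ excludes a zero eigenvalue.
\item Positivity is not actually used, so your opening remark overstates its role. Any self-adjoint, indeed any normal, $M_\omega$ would do, since only $|\Tr[\rho M_\omega]|^2$ and $\Tr[\rho M_\omega^*M_\omega]$ enter.
\end{itemize}
The paper's route, in exchange, fits its theme of tying gentleness to quantum differential privacy. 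It also makes explicit that, for positive operators, pure-state gentleness is a purely spectral condition automatically inherited by $M_\omega\otimes\mathbbm{1}$.
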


We can combine the two aforementioned results to obtain the following corollary.

\begin{corollary}
\label{prop::Privacy_implies_gentleness}
    Let $M = (M_{\omega})_{\omega \in \Omega}$ be a quantum measurement and $E_{\omega} = M_{\omega}^*M_{\omega}$. If $M$ is $\delta$-quantum-differentially-private on $\mathcal{S}(\mathbb{C}^{d})$, then there exists an implementation $\Tilde{M}$ of $M$, given by $\tilde{M} = (|M_{\omega}|)_{\omega \in \Omega}$, such that $\Tilde{M}$ is $\alpha$-gentle on $\mathcal{S}(\mathbb{C}^{d})$ for
    \begin{equation*}
        \alpha = \frac{e^{\frac{\delta}{2}} -1}{e^{\frac{\delta}{2}} +1} = \tanh\left( \frac{\delta}{4} \right).
    \end{equation*}
\end{corollary}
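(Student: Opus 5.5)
The corollary follows by chaining Proposition~\ref{prop::Privacy_implies_gentleness_pure} with Proposition~\ref{prop::pure_states_imply_all_states}. The only extra fact needed is that the specific implementation $\tilde{M} = (|M_\omega|)_{\omega\in\Omega}$ meets the hypotheses of the second proposition.

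First, we check that $\tilde{M}$ is a valid quantum measurement with the same outcome statistics as $M$. Since $|M_\omega| = \sqrt{M_\omega^* M_\omega}$ is positive and self-adjoint, we have $|M_\omega|^*|M_\omega| = |M_\omega|^2 = M_\omega^*M_\omega = E_\omega$. The completeness relation~\eqref{eqn::completeness_relation} therefore carries over unchanged, and $\mathbb{P}_\rho(R^{\tilde M}=\omega)=\Tr[\rho E_\omega]=\mathbb{P}_\rho(R^M=\omega)$ for every $\rho$. In particular, $\tilde{M}$ is $\delta$-quantum-differentially-private on $\mathcal{S}(\mathbb{C}^d)$ because $M$ is. Equivalently, by Proposition~\ref{prop::equalities_qDP}, condition iii) depends only on the $E_\omega$, which are shared by $M$ and $\tilde M$.

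Second, we apply Proposition~\ref{prop::Privacy_implies_gentleness_pure}. It gives that $\tilde{M}$ is $\alpha$-gentle on $\mathcal{S}_{pure}(\mathbb{C}^d)$ with $\alpha = \tanh(\delta/4)$. Third, the operators $\tilde{M}_\omega = |M_\omega|$ are positive and self-adjoint, so Proposition~\ref{prop::pure_states_imply_all_states} applies directly. It upgrades $\alpha$-gentleness on pure states to $\alpha$-gentleness on all of $\mathcal{S}(\mathbb{C}^d)$ with the same $\alpha$.

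There is no real obstacle here, since all the analytic work sits inside the two cited propositions. The one point to be careful about is that Proposition~\ref{prop::pure_states_imply_all_states} is invoked for the implementation $\tilde M$ and not for the original $M$, whose operators need not be positive. We also need to confirm that the identity $\frac{e^{\delta/2}-1}{e^{\delta/2}+1}=\tanh(\delta/4)$ matches the form of $\alpha$ given in the earlier proposition. This is immediate from multiplying numerator and denominator by $e^{-\delta/4}$.
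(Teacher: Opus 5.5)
Your proposal is correct and matches the paper's proof. Like the paper, you take the implementation $\tilde M = (|M_\omega|)_{\omega\in\Omega}$ from Proposition~\ref{prop::Privacy_implies_gentleness_pure}, observe that its operators are positive and self-adjoint, and invoke Proposition~\ref{prop::pure_states_imply_all_states} to extend $\alpha$-gentleness from pure states to all of $\mathcal{S}(\mathbb{C}^d)$; your extra checks (shared $E_\omega$, hence identical statistics and privacy level, and the $\tanh$ identity) are fine and are simply left implicit in the paper.
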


\begin{proof}
    For the implementation $\Tilde{M} = (|M_{\omega}|)_{\omega \in \Omega}$ we chose in the proof of Proposition~\ref{prop::Privacy_implies_gentleness_pure}, we have that the measurement operators are positive and self-adjoint. Thus, by Proposition~\ref{prop::pure_states_imply_all_states}, we have that the same implementation $\Tilde{M}$ is $\alpha$-gentle on $\mathcal{S}(\mathbb{C}^d)$.
\end{proof}

This shows that quantum-differentially-private measurements have a gentle implementation. The following two results are now concerned with the opposite direction showing that gentle measurements are always quantum-differentially-private. 

\begin{proposition}
\label{lem::gentleness_implies_privacy}
    Let $\alpha $ in $[0, \frac{1}{2})$ and $M$ be $\alpha$-gentle on $\mathcal{S}(\mathbb{C}^d)$ with measurement operators $M_{\omega}$. Then $M$ is $\delta$ quantum differentially-private on $\mathcal{S}(\mathbb{C}^d)$ for $\delta = 2\log(\frac{1+2\alpha}{1-2\alpha})$. 
\end{proposition}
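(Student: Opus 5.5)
By Proposition~\ref{prop::equalities_qDP} (iii), it suffices to show that for every $\omega \in \Omega$
\begin{equation*}
\lambda_{max}(E_\omega) \le \Big(\tfrac{1+2\alpha}{1-2\alpha}\Big)^2 \lambda_{min}(E_\omega).
\end{equation*}
Since gentleness on $\mathcal{S}(\mathbb{C}^d)$ contains gentleness on pure states, I would work only with pure states and Lemma~\ref{lemmaTraceNorm}.

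\textbf{Setup.} Fix $\omega$ and write the polar decomposition $M_\omega = U|M_\omega|$, where $|M_\omega| = E_\omega^{1/2}$ and $U$ is unitary. Let $\ket{a},\ket{b}$ be orthonormal eigenvectors of $E_\omega$ with eigenvalues $\lambda_a = \lambda_{max}$ and $\lambda_b = \lambda_{min}$. If $\lambda_b = 0$, then measuring $\ket{b}$ gives outcome $\omega$ with probability zero. In that case I would treat the statement on states with positive outcome probability, or first argue by a perturbation/limit argument that $\lambda_b > 0$. The main case is $\lambda_b > 0$.

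\textbf{Reduction to a planar problem.} Everything happens in $\mathrm{span}\{\ket{a},\ket{b}\}$. Take the input
\begin{equation*}
\ket{\psi} = (\ket{a}+\ket{b})/\sqrt{2}.
\end{equation*}
By \eqref{eqn::post_measurement_state_pure} its post-measurement vector is $U\ket{\varphi}$, where
\begin{equation*}
\ket{\varphi} = (\sqrt{\lambda_a}\ket{a}+\sqrt{\lambda_b}\ket{b})/\sqrt{\lambda_a+\lambda_b}.
\end{equation*}
Gentleness gives $\norm{U\varphi - \psi}_{Tr} \le \alpha$. Write $\ket{\varphi}$ as a real unit vector at angle $\pi/4+\beta$, with $\tan(\pi/4+\beta) = \sqrt{\lambda_a/\lambda_b} =: r$; then $\norm{\varphi-\psi}_{Tr} = \sin\beta$. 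Since $U$ is unitary,
\begin{equation*}
\sin\beta = \norm{U\varphi - U\psi}_{Tr} \le \norm{U\varphi-\psi}_{Tr} + \norm{\psi - U\psi}_{Tr} \le \alpha + \norm{\psi-U\psi}_{Tr}.
\end{equation*}
So the goal is to show that $\norm{\psi - U\psi}_{Tr} \lesssim \alpha$, which would give $\sin\beta \le 2\alpha$.

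\textbf{Controlling $U$ (main obstacle).} Gentleness at the eigenvectors gives post-states $U\ket{a}$ and $U\ket{b}$, hence $|\langle a|U|a\rangle|^2, |\langle b|U|b\rangle|^2 \ge 1-\alpha^2$. This only controls $U$ up to the relative phase between $U\ket{a}$ and $U\ket{b}$, so it does not by itself bound $\norm{\psi-U\psi}_{Tr}$. To fix the phase I would apply gentleness to the whole family $\ket{\psi_\theta} = (\ket{a}+e^{i\theta}\ket{b})/\sqrt2$ and to the companion inputs
\begin{equation*}
\ket{\chi_\theta} \propto \lambda_a^{-1/2}\ket{a} + e^{i\theta}\lambda_b^{-1/2}\ket{b},
\end{equation*}
which are chosen so that $|M_\omega|\ket{\chi_\theta} \propto \ket{\psi_\theta}$ and hence the post-state of $\chi_\theta$ is $U\ket{\psi_\theta}$. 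Gentleness at $\chi_\theta$ places $U\psi_\theta$ within $\alpha$ of $\chi_\theta$, while gentleness at $\psi_\theta$ places $U\varphi_\theta$ within $\alpha$ of $\psi_\theta$. Because $\chi_\theta$ and $\varphi_\theta$ are mirror images of $\psi_\theta$, at angles $\pi/4 \mp \beta$, combining these constraints should yield a trace distance of $\beta$ against a budget of $2\alpha$. The likely form of the conclusion is $\sin\beta \le 2\alpha$, possibly after optimizing over $\theta$ to remove the phase ambiguity. Making this triangle-inequality bookkeeping rigorous is the step I expect to be hardest.

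\textbf{Conclusion.} Given $\sin\beta \le 2\alpha < 1$, the final computation is routine. Since $\tan\beta \le \sin\beta/\sqrt{1-\sin^2\beta}$, one should be able to bound
\begin{equation*}
r = \frac{1+\tan\beta}{1-\tan\beta} \le \frac{1+2\alpha}{1-2\alpha},
\end{equation*}
possibly with some slack absorbed into the constants. Then $\lambda_{max}/\lambda_{min} = r^2 \le e^{\delta}$ with $\delta = 2\log\frac{1+2\alpha}{1-2\alpha}$. Proposition~\ref{prop::equalities_qDP} then gives $\delta$-quantum differential privacy on $\mathcal{S}(\mathbb{C}^d)$.
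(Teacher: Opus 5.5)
Your reduction to condition (iii) of Proposition~\ref{prop::equalities_qDP} is fine. However, the proof has a genuine gap at the step you flag, and your endgame would not give the stated constant even if that gap were closed.

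\textbf{The constraints you propose carry no information about $\beta$.} Take $\theta=0$ and let $U$ be the real rotation by angle $\beta$ in $\mathrm{span}\{\ket{a},\ket{b}\}$ that carries $\varphi$ to $\psi$. Then $U\ket{\varphi}=\ket{\psi}$ and $U\ket{\psi}=\ket{\chi}$ exactly. So gentleness at $\psi$ and at $\chi$ holds with zero disturbance, for every $\beta$. The mirror symmetry of $\varphi,\chi$ about $\psi$ makes these two constraints jointly consistent with a rotation rather than in tension. Any bound must come from other inputs, such as the eigenvectors (this $U$ moves $\ket{a}$ by $\sin\beta$) or other phases $\theta$. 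You give no argument that $\norm{\psi-U\psi}_{Tr}\le\alpha$.

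\textbf{Even $\sin\beta\le2\alpha$ would not suffice.} With $r=\tan(\pi/4+\beta)$, the target $\lambda_{max}/\lambda_{min}\le\bigl(\frac{1+2\alpha}{1-2\alpha}\bigr)^2$ is equivalent to $\tan\beta\le2\alpha$. From $\sin\beta\le 2\alpha$ you only get $\tan\beta\le 2\alpha/\sqrt{1-4\alpha^2}$. This is strictly weaker, and it gives no finite bound on $r$ at all once $\alpha\ge 1/(2\sqrt{2})$. Since $\delta$ is explicit, there are no constants to absorb slack into. The case $\lambda_{min}=0$ is also left unresolved.

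\textbf{The missing idea is gentleness on mixed states.} The hypothesis is gentleness on all of $\mathcal{S}(\mathbb{C}^d)$, and you discard this at the first step. The paper takes orthogonal pure states $\rho_1,\rho_2$ (for instance, eigenvectors of $E_\omega$ for $\lambda_{max},\lambda_{min}$), sets $p_i=\Tr[\rho_iE_\omega]$, and measures the mixture $\rho_\lambda=\lambda\rho_1+(1-\lambda)\rho_2$.

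By linearity of $\rho\mapsto M_\omega\rho M_\omega^*$, the post-measurement state of $\rho_\lambda$ is the mixture of the two individual post-measurement states, with weights $\lambda p_1/p_\lambda$ and $(1-\lambda)p_2/p_\lambda$. This holds whatever the unitary part of $M_\omega$ is, so $U$ never has to be controlled. Writing $c_\lambda=\lambda p_1/p_\lambda-\lambda$, the difference $\rho_\lambda-(\rho_\lambda)_{M\to\omega}$ equals the same convex combination of the two individual disturbances plus $c_\lambda(\rho_2-\rho_1)$. The triangle inequality together with $\norm{\rho_1-\rho_2}_{Tr}=1$ gives $c_\lambda\le2\alpha$. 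Solving for $p_1/p_2$ and choosing $\lambda=(1-2\alpha)/2$ gives exactly $\bigl(\frac{1+2\alpha}{1-2\alpha}\bigr)^2$. The case $p_2=0$ is excluded automatically, since it would force $c_\lambda=1-\lambda>2\alpha$.

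\textbf{A pure-state repair, if you prefer.} Average the gentleness inequality at $\sqrt{p}\ket{a}+e^{i\theta}\sqrt{q}\ket{b}$ over $\theta$. The cross terms are then bounded by $pq\,\alpha^2(\lambda_a+\lambda_b)$, using the eigenvector constraints $|\bra{a}U\ket{b}|,|\bra{b}U\ket{a}|\le\alpha$. Choosing $p:q=\sqrt{\lambda_b}:\sqrt{\lambda_a}$ gives $(r-1)^2\le 2\alpha^2(r^2+r+1)$, which implies $r\le\frac{1+2\alpha}{1-2\alpha}$ for $\alpha<1/2$. This is an averaging argument, not the triangle-inequality bookkeeping you sketch.
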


Note that in this direction we are not concerned about the particular implementation of the measurement. However, while Proposition~\ref{lem::gentleness_implies_privacy} shows that an arbitrary gentle measurement is quantum differentially private, we can improve upon this result under the additional assumption of positive-definite measurement operator. In particular, we can see that Lemma~\ref{lem::improved_constant_positivity} is the direct counterpart to Corollary~\ref{prop::Privacy_implies_gentleness}. It shows that for positive semi-definite measurement operators, gentleness and quantum differential privacy are equivalent and the constants relating the two are optimal.

\begin{lemma}
\label{lem::improved_constant_positivity}
    Let $\alpha $ in $[0, 1)$ and $M$ be $\alpha$-gentle on $\mathcal{S}(\mathbb{C}^d)$ with positive-definite measurement operators $M_{\omega}$. Then $M$ is $\delta$ quantum differentially-private on $\mathcal{S}(\mathbb{C}^d)$ for 
    \begin{equation*}
        \delta = 2\log(\frac{1+\alpha}{1-\alpha}) = 4 \arctanh(\alpha).
    \end{equation*}
\end{lemma}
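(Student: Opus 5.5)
The plan is to reduce the statement to the eigenvalue criterion of Proposition~\ref{prop::equalities_qDP}~iii) and then test gentleness on one carefully chosen pure state. Since each $M_{\omega}$ is positive-definite, hence self-adjoint, we have $E_{\omega}=M_{\omega}^*M_{\omega}=M_{\omega}^2$. The eigenvalues of $E_{\omega}$ are therefore the squares of those of $M_{\omega}$, with the same eigenvectors. Write $a=\lambda_{max}(M_{\omega})$ and $b=\lambda_{min}(M_{\omega})>0$. By Proposition~\ref{prop::equalities_qDP} it suffices to show, for every $\omega$,
\begin{equation*}
    \frac{a}{b}\leq \frac{1+\alpha}{1-\alpha},
\end{equation*}
since squaring this gives $\lambda_{max}(E_{\omega})\leq e^{\delta}\lambda_{min}(E_{\omega})$ with $\delta=2\log\frac{1+\alpha}{1-\alpha}$.

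Fix $\omega$ and let $\ket{a},\ket{b}$ be orthonormal eigenvectors of $M_{\omega}$ for $a$ and $b$. If $a=b$ there is nothing to prove, so assume $a>b$. Consider the pure state $\ket{\psi}=\cos\theta\ket{a}+\sin\theta\ket{b}$ with $\theta\in(0,\pi/2)$ to be chosen. By \eqref{eqn::post_measurement_state_pure}, the post-measurement state is the normalization of $a\cos\theta\ket{a}+b\sin\theta\ket{b}$. All coefficients are real and positive, which uses $b>0$. Setting $x=\cos^2\theta$, the squared overlap is
\begin{equation*}
    |\bra{\psi}\ket{\psi_{M\to\omega}}|^2=\frac{(ax+b(1-x))^2}{a^2x+b^2(1-x)} .
\end{equation*}
By Lemma~\ref{lemmaTraceNorm}, the trace distance between $\ket{\psi}$ and $\ket{\psi_{M\to\omega}}$ is $\sqrt{1-|\bra{\psi}\ket{\psi_{M\to\omega}}|^2}$. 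Gentleness forces this to be at most $\alpha$ for every $\theta$.

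The only real step is choosing $\theta$ to make the overlap as small as possible. The naive choice $\theta=\pi/4$ gives a weaker constant. I would instead take $\tan^2\theta=a/b$, i.e.\ $x=b/(a+b)$. Then the numerator becomes $\bigl(2ab/(a+b)\bigr)^2$ and the denominator becomes $ab$. Hence the squared overlap equals $4ab/(a+b)^2$, and the trace distance equals $(a-b)/(a+b)$. (One can check by differentiating in $x$ that this choice is optimal, but optimality is not needed for the proof.)

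Gentleness then gives $(a-b)/(a+b)\leq\alpha$, which rearranges to $a/b\leq(1+\alpha)/(1-\alpha)$ because $\alpha<1$. Finally, $2\log\frac{1+\alpha}{1-\alpha}=4\arctanh(\alpha)$ is the standard identity $\arctanh(\alpha)=\frac12\log\frac{1+\alpha}{1-\alpha}$. This also shows the constant matches Corollary~\ref{prop::Privacy_implies_gentleness} exactly, since $\tanh(\delta/4)=\alpha$.
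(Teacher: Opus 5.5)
Your proposal is correct and is essentially the paper's proof: with $\tan^2\theta = a/b$ your test state is exactly the paper's $\frac{1}{\sqrt{\lambda_1+\lambda_d}}\bigl(\sqrt{\lambda_d}\ket{v_1}+\sqrt{\lambda_1}\ket{v_d}\bigr)$. It yields the same squared overlap $4ab/(a+b)^2$, hence $\frac{a-b}{a+b}\le\alpha$, and the same conclusion via Proposition~\ref{prop::equalities_qDP}; your side remarks that $\theta=\pi/4$ gives a weaker bound and that your choice is optimal (the Kantorovich bound) are accurate extras the paper does not spell out.
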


From the results on gentle measurements we have seen so far, it is straight forward to show that the measurement operators $M_{\omega}$ of a gentle measurement $M$ are necessarily full rank. Since the post-measurement state $\rho_{M \to \omega}$ of a quantum state $\rho$ is a multiple of $M_{\omega} \rho M_{\omega}$, $\rho_{M \to \omega}$ has the same rank as $\rho$. As such, we see that gentle measurements leave the rank of the state invariant.

\section{Gentle estimation of a probability vector}
\label{sec::qLS_pure_states}
Before we derive a gentle quantum state estimator, let us shortly demonstrate how to construct gentle measurements using the properties derived in the previous section. We do this by considering the task of estimating the probability vector of a given pure quantum state. Let us recall that for a pure quantum state $\ket{\psi} = \sum_{j = 1}^d \gamma_j \ket{j} \in \mathcal{S}_{pure}(\mathbb{C}^d)$ its probability vector is given by $p = [p_1,...,p_d]^t =  [|\gamma_1|^2,...,|\gamma_d|^2]^t$. Note that this vector of probabilities is a fundamental property of the quantum system and an integral part in many optimal quantum estimators \cite{butucea_local_2018, guta2018faststatetomographyoptimal, odonnell_efficient_2016}. Measuring the state $\ket{\psi}$ in the computational basis $\ket{j}$ results in an outcome random variable $R^M$ that is multinomially distributed on $\{1,...,d\}$ with parameter vector $p$. In order to construct a gentle estimator of $p$ we will therefore leverage the Markov kernel for multinomials, see e.g. \cite{duchi_minimax_2018}, to the quantum level. In the following we will define a quantum measurement based on this optimal privacy kernel, which we will then prove to be gentle. After deriving minimax convergence rates on the subsequent gentle quantum estimator $\hat{p}_n$ of $p$ we will again take a step back and describe in more detail how exactly this relation to the privacy kernel is given. This section shall serve as both an instructive example on gentle measurements as well as a baseline on which to expand in order to construct a more sophisticated estimator of the high-dimensional matrices characterizing quantum states.

\subsection{Quantum gentle measurement}
\label{sec::mathematical_formulation_of_gentle_measurement_for_probability_vector}
For $\Omega = \{0,1\}^d$ and $\omega$ in $\Omega$ we define the measurement operators $M_{\delta, \omega}$ by
\begin{eqnarray}
\label{eqn::measurement_operators_gentle}
    M_{\delta, \omega} &= &\frac{1}{(e^{\frac{\delta}{2}} +1)^{\frac{d}{2}}} \begin{bmatrix}
            (e^{\frac{\delta}{4}})^{d- \norm{\omega - e_1}_1}&  & \multicolumn{2}{c}{\text{\kern0.5em\smash{\raisebox{-1ex}{\Large 0}}}}
            \\
            & \ddots & 
            \\
            \multicolumn{2}{c}{\text{\kern-0.5em\smash{\raisebox{0.75ex}{\Large 0}}}} & & (e^{\frac{\delta}{4}})^{d- \norm{\omega - e_d}_1}
    \end{bmatrix} \nonumber\\
    &=& \hspace*{-3pt}\left(\frac{e^{\frac{\delta}{2}}}{e^{\frac{\delta}{2}} + 1}\right)^{\frac{d}{2}} \sum_{k = 1}^d e^{-\frac{\delta}{4}\norm{\omega - e_k}_1} \ket{k}\bra{k},
\end{eqnarray}
where $e_k \in \{0,1\}$ denotes the vector that is zero everywhere and 1 in the $k$-th entry and $\norm{\cdot}_1$ denotes the $l_1$-norm in $\mathbb{C}^d$. The sum and the factor $e^{-\delta/4 \norm{\omega - e_k}_1}$ assure that the state does not collapse into any singular basis and assures the gentleness of the operators $M_{\delta, \omega}$. Mathematically, we can see that the eigenvalues of the operators are close together, which, as we have seen in the previous section, is characteristic of gentle measurements. Formally, we have the following result.
\begin{lemma}
\label{lem::gentleness_of_qudit_measurement}
    Let $M_{\delta, \omega}$ be the measurement operator  in~\eqref{eqn::measurement_operators_gentle}. Then the family $M_\delta$ of operators $ (M_{\delta,\omega})_{\omega \in \{0,1\}^d}$ defines a quantum measurement that is $\delta$-quantum differentially private and  $\alpha$-gentle on $\mathcal{S}(\mathbb{C}^{d})$ for $\alpha = \tanh\left( \frac{\delta}{4} \right)$. Moreover, the resulting random variable $R^{M_{\delta}}$ has likelihood
        \begin{equation}
        \label{eqn::probability_outcome_pure_state}
        \mathbb{P}_{\rho} \left( R^{M_{\delta}} = \omega  \right) = \left(\frac{e^{\frac{\delta}{2}}}{e^{\frac{\delta}{2}} + 1} \right)^d \sum_{j = 1}^d |\gamma_j|^2 e^{-\frac{\delta}{2} \norm{\omega - e_j}_1}
    \end{equation}
    and the post measurement states are given by
    \begin{equation}
        \label{eqn::post_measuremenent_state_pure_state}
        \rho_{M_{\delta} \to \omega} = \frac{1}{\mathbb{P}_{\rho} \left( R^{M_{\delta}} = \omega  \right)} M_{\delta, \omega} \rho M_{\delta, \omega}^*.
    \end{equation}
\end{lemma}
The proof of this result is a direct application of Lemma~\ref{lem::gentleness_of_gentleized_general_measurement} in Appendix~\ref{sec::additional_proofs_gentleness_results} and the Definition~\ref{defn::quantum_measurement}.
 
\subsection{Statistical properties of the estimator based on the gentle measurement}
\label{sec::statistical_properties_of_probability_vector_estimator}
Let us now demonstrate how we may use the measurement~\eqref{eqn::measurement_operators_gentle} in order to construct an $\alpha$-gentle estimator of $p = [|\gamma_1|^2,...,|\gamma_d|^2]^t$ for $\ket{\psi} = \sum_{j = 1}^d \gamma_j \ket{j} \in \mathcal{S}_{pure}(\mathbb{C}^d)$. 
Let us start by calculating the expectation of the random variable $R^{M_{\delta}}$ in \eqref{eqn::probability_outcome_pure_state}.
Here we have
\begin{align*}
    \mathbb{E}_{\rho}\left[ R^{M_{\delta}} \right]_\ell &= \sum_{\left\{ \omega \in \{0,1\}^d \middle| {\omega}_\ell = 1 \right\}} \mathbb{P}_{\rho}\left( R^{M_{\delta}} = \omega \right)
    \\
    &= \left( \frac{e^{\frac{\delta}{2}}}{e^{\frac{\delta}{2}} + 1} \right)^d \sum_{\left\{ \omega \in \{0,1\}^d \middle| {\omega}_\ell = 1 \right\}}  \sum_{k = 1}^d e^{-\frac{\delta}{2} \norm{\omega - e_k}_1} |\gamma_k|^2
    \\
    &= \left( \frac{e^{\frac{\delta}{2}}}{e^{\frac{\delta}{2}} + 1} \right)^d \Bigg[ \sum_{\left\{ \omega \in \{0,1\}^d \middle| {\omega}_\ell = 1 \right\}} e^{-\frac{\delta}{2} \norm{\omega - e_\ell}_1} |\gamma_\ell|^2 
    \\
    &\qquad \qquad \qquad \quad+ \sum_{k \neq \ell} \sum_{\left\{ \omega \in \{0,1\}^d \middle| {\omega}_\ell = 1 \right\}} e^{-\frac{\delta}{2} \norm{{\omega} - e_k}_1} |\gamma_k|^2 \Bigg].
\end{align*}
We see that, for all $k$ we have
\begin{align*}
    \sum_{\left\{ \omega \in \{0,1\}^d \middle| {\omega}_\ell = 1 \right\}} e^{-\frac{\delta}{2} \norm{\omega - e_k}_1} &= \sum_{j = 0}^{d-1} \binom{d-1}{j} e^{-\frac{\delta}{2}j}  = \left( e^{-\frac{\delta}{2}} + 1 \right)^{d-1}.
\end{align*}
Thus, we get,
\begin{align*}
    \mathbb{E}_{\rho}\left[ R^{M_{\delta}} \right]_\ell   
    &= \left( \frac{e^{\frac{\delta}{2}}}{e^{\frac{\delta}{2}} + 1} \right)^d \left( e^{-\frac{\delta}{2}} + 1 \right)^{d-1} \left( |\gamma_\ell|^2 + e^{-\frac{\delta}{2}} \sum_{k \neq \ell} |\gamma_k|^2 \right)
    \\
    &
    = \frac{e^{\frac{\delta}{2}} - 1}{e^{\frac{\delta}{2}} + 1} |\gamma_\ell|^2 + \frac{1}{e^{\frac{\delta}{2}} + 1} ,
\end{align*}
where we used the fact that $\sum_{k \neq l} |\gamma_k|^2 = 1- |\gamma_l|^2$.
This shows that the expectation of the vector $R^{M_{\delta}}$ is given by
\begin{equation*}
    \mathbb{E}_{\rho}\left[ R^{M_{\delta}} \right] =  \frac{e^{\frac{\delta}{2}}-1}{e^{\frac{\delta}{2}}+1} \begin{bmatrix}
        |\gamma_1|^2
        \\
        \vdots
        \\
        |\gamma_d|^2
    \end{bmatrix} + \frac{1}{e^{\frac{\delta}{2}}+1}.
\end{equation*}
If we perform $n$ identical measurements, we obtain $n$ i.i.d. random variables $R^{M_{\delta}}_i$.
We then consider the estimator
\begin{equation}
\label{eqn::estimator_probability_vector}
    \hat{p}_n = \left( \frac{1}{n} \sum_{i = 1}^n R^{M_{\delta}}_i - \frac{1}{e^{\frac{\delta}{2}}+1} \right) \frac{e^{\frac{\delta}{2}}+1}{e^{\frac{\delta}{2}}-1}.
\end{equation}
It is straightforward to see that $\hat{p}_n$ is unbiased, i.e. $\mathbb{E}_{\rho}\left[ \hat{p}_n \right] = [|\gamma_1|^2, ..., |\gamma_d|^2]^{\top} = p$, and that for the MSE, we have with our choice of $\delta = \delta(\alpha) = 4 \arctanh(\alpha)$:
\begin{align*}
    \mathbb{E}_{\rho}\left[ \norm{\hat{p}_n - p}_2^2 \right] 
    &= \sum_{k = 1}^d \text{Var}_{\rho}\left[ \hat{p}_k \right] 
    = \frac{1}{n} \left( \frac{e^{\frac{\delta}{2}}+1}{e^{\frac{\delta}{2}}-1} \right)^2 \sum_{k = 1}^d \text{Var}_{\rho}\left[ (R^{M_{\delta}})_k^2 \right]
    \leq 4 \frac{d}{n \alpha^2}
\end{align*}
since all coordinates of the vector $R^{M_{\delta}}$ are 0 or 1. We summarize the results of the preceding section in the following theorem.
\begin{theorem}
\label{thm::upper_bound_prob_amplitude}
    Let $\rho = \ket{\psi}\bra{\psi} \in \mathcal{S}_{pure}(\mathbb{C}^d)$ a pure qudit for $\ket{\psi} = \sum_{k = 1}^d \gamma_k \ket{k}$. Then, there exists a locally-$\alpha$-gentle measurement $M_{\delta(\alpha)}$ and subsequent estimator $\hat{p}_n$ of the probability vector $p = [|\gamma_1|^2, ..., |\gamma_d|^2]^{\top}$ of $\rho$ such that
    \begin{equation*}
        \sup_{\rho \in \mathcal{S}_{pure}(\mathbb{C}^d)} \mathbb{E}_{\rho}\left[ \norm{\hat{p}_n - p}_2^2 \right] \leq 4 \frac{d}{n \alpha^2}.
    \end{equation*}
\end{theorem}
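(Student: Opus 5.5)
The plan is to assemble the statement from Lemma~\ref{lem::gentleness_of_qudit_measurement} and the moment computations carried out just above. Take $\delta = \delta(\alpha) = 4\arctanh(\alpha)$, so that $\tanh(\delta/4) = \alpha$. Let $M_{\delta(\alpha)}^{\otimes n}$ act on $\rho^{\otimes n}$.

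First, gentleness. By Lemma~\ref{lem::gentleness_of_qudit_measurement}, each factor $M_{\delta(\alpha)}$ is a valid measurement and is $\tanh(\delta/4) = \alpha$-gentle on $\mathcal{S}(\mathbb{C}^d)$, hence in particular on $\mathcal{S}_{pure}(\mathbb{C}^d)$. The product measurement is therefore locally-$\alpha$-gentle in the sense of Definition~\ref{defn::gentleness}. The outcomes $R_i := R^{M_\delta}_i$ are i.i.d. with the likelihood~\eqref{eqn::probability_outcome_pure_state}.

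Second, unbiasedness of the estimator $\hat p_n$ in~\eqref{eqn::estimator_probability_vector}. This follows from the computed mean $\mathbb{E}_\rho[R]_\ell = \tfrac{e^{\delta/2}-1}{e^{\delta/2}+1}p_\ell + \tfrac{1}{e^{\delta/2}+1}$. The only nontrivial ingredient there is the identity $\sum_{\omega:\omega_\ell=1} e^{-\frac{\delta}{2}\|\omega-e_k\|_1} = (1+e^{-\delta/2})^{d-1}$ for $k=\ell$, together with the analogous sum carrying an extra factor $e^{-\delta/2}$ when $k\neq\ell$. These are binomial sums over the free coordinates.

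Third, the risk. Because $\hat p_n$ is unbiased and the $R_i$ are i.i.d.,
\[
\mathbb{E}_\rho\|\hat p_n - p\|_2^2 = \frac{1}{n}\Big(\frac{e^{\delta/2}+1}{e^{\delta/2}-1}\Big)^2 \sum_{k=1}^d \mathrm{Var}_\rho(R_k).
\]
Each coordinate $R_k$ is Bernoulli, so $\mathrm{Var}_\rho(R_k)\le 1/4$. The prefactor satisfies $\frac{e^{\delta/2}+1}{e^{\delta/2}-1} = 1/\tanh(\delta/4) = 1/\alpha$. Together these give the bound $\frac{d}{4n\alpha^2}\le \frac{4d}{n\alpha^2}$, uniformly over pure states. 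Taking the supremum over $\rho$ then yields the claim.

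The only step needing care is the constant identity $\tanh(\delta/4) = (e^{\delta/2}-1)/(e^{\delta/2}+1)$. It is immediate from the definition of $\tanh$, so there is no real obstacle. Everything else is a direct invocation of the preceding lemma and the displayed expectation computation.
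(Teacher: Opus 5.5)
Your proposal is correct and follows the paper's argument: gentleness comes from Lemma~\ref{lem::gentleness_of_qudit_measurement} with $\delta = 4\arctanh(\alpha)$, unbiasedness from the explicit mean of $R^{M_\delta}$, and the risk bound from the fact that each coordinate is $\{0,1\}$-valued. Using $\mathrm{Var}(R_k)\le 1/4$ and $\frac{e^{\delta/2}+1}{e^{\delta/2}-1}=1/\alpha$ actually gives the sharper constant $\frac{d}{4n\alpha^2}$, and you correctly account for the extra factor $e^{-\delta/2}$ when $k\neq\ell$, which the paper's ``for all $k$'' sum identity glosses over.
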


\subsection{Relation to privacy for multinomial distributions}
\label{sec::relation_to_privacy_of_probability_vector_estimator}
Let us shortly illustrate how the measurement~\eqref{eqn::measurement_operators_gentle} relates to results from differential privacy. 
Recall that the parameter of interest is the vector of probabilities $p = [|\gamma_1|^2,...,|\gamma_d|^2]^t$. 
The task of estimating this property gently is therefore related to the task of estimating the parameter vector of a multinomial distribution privately. 
To see this, consider the basis measurement $M_B = (\ket{k}\bra{k})_{k = 1,...,d}$ with outcome probabilities
\begin{equation*}
    \mathbb{P}_{\rho}(R^{M_B} = k) = |\gamma_k|^2
\end{equation*}
for $\rho = \ket{\psi}\bra{\psi}$. 
A privatized version of the random variable $R^{M_B}$ is given by applying the generalized label switch kernel for multinomial random variables \cite{duchi_minimax_2018}. 
This mechanism is defined on the outcomes of $R^{M_B}$ by drawing $d$ independent random variables $W_j \in \{0,1\}$ according to
\begin{align}
    Q(W_j = \omega_j | R^{M_B} = k) 
    &= \frac{1}{e^{\frac{\delta}{2}}+1} \begin{cases}
        \omega_j e^{\frac{\delta}{2}} + (1-\omega_j) \hspace{10pt}& j = k
        \\
        (1-\omega_j) e^{\frac{\delta}{2}} + \omega_j \hspace{10pt}& j \neq k
    \end{cases} \label{eqn::privacy_kernel_multinomial}
\end{align}
and combining these into one vector $W \in \{0,1\}^d$.
Given an outcome $k$, the probability of obtaining an outcome $\omega \in \{0,1\}^d$ is therefore
\begin{align*}
    Q(W = w | R^{M_B} = k) &= \prod_{j = 1}^d Q(W_j = \omega_j | R^{M_B} = k)
    = \left( \frac{e^{\frac{\delta}{2}}}{e^{\frac{\delta}{2}} +1} \right)^d e^{-\frac{\delta}{2} \norm{\omega - e_k}_1},
\end{align*}
where $e_k$ denotes the vector that is zero everywhere except the $k$-th entry. We then have
\begin{align*}
    \mathbb{P}_{\rho}\left( W = \omega \right) &= \sum_{k = 1}^d Q(W = w |R^{M_B} = k) \mathbb{P}_{\rho}(R^{M_B} = k)
    \\
    &= \left( \frac{e^{\frac{\delta}{2}}}{e^{\frac{\delta}{2}} +1} \right)^d \sum_{k = 1}^d  e^{-\frac{\delta}{2} \norm{\omega - e_k}_1} |\gamma_k|^2 
    = \Tr \left[ \ket{\psi}\bra{\psi} E_{\delta, \omega} \right] = \mathbb{P}_{\rho}\left( R^{M_{\delta}} = \omega \right)
\end{align*}
This shows that measuring the state $\rho = \ket{\psi}\bra{\psi}$ with the measurement $M_\delta$ yields the same results as measuring the same state in the computational basis $\ket{k}$ to get samples from the multinomial random variable $R^{M_B}$ and then applying the classical differentially privacy mechanism for multinomial random variables~\eqref{eqn::privacy_kernel_multinomial} to these samples. However, the second method destroys the quantum information of the original state and it is therefore not gentle.
 The main difficulty in designing the gentle measurement is to manipulate the initial quantum state under gentleness constraints such that the resulting random variables are as informative as the classical differentially private random variables.

\subsection{Physical implementation}
\label{sec::Implementation_of_measurement_of_probability_estimator}
Let us finish this section by showing that the implementation of the measurement we proposed in~\eqref{eqn::measurement_operators_gentle} is feasible in practice. We start by viewing it as a basis measurement on an extended Hilbert space. It can be shown that the measurement we defined can be implemented using an enlarged quantum system. In particular, we prepare an ancillary state, i.e. a second quantum system consisting of $d$ independent qubits, \emph{i.e.} $2$-dimensional quantum states:  $\sigma = \ket{\phi_0}\bra{\phi_0}$ on $(\mathbb{C}^2)^d$, for $\ket{\phi_0} = \ket{0} \otimes ... \otimes \ket{0}$. Next, we apply a unitary transform $\tilde U_{\delta}$, which is composed of qubit rotations $R_{\delta}$ and qudit controlled NOT gates $U_1,\ldots,U_d$, on the coupled state $\rho \otimes \sigma$ entangling the two individual states. The whole system is then in the state $\rho_{\delta,\tilde{U}} = \tilde U_{\delta}(\rho \otimes \sigma)\tilde U_{\delta}^* = |\Psi_{\delta,\tilde{U}}\rangle \langle\Psi_{\delta,\tilde{U}}|$. Finally, measuring the second system only using a basis measurement $\tilde{M}_{\tilde{B}}$ gives the outcome of the measurement $M_{\delta}$ defined by~\eqref{eqn::measurement_operators_gentle}. Furthermore, considering now only the first system and ignoring the second gives exactly the post measurement state $\rho_{M \to \omega}$, showing that this process is gentle on the initial system. Formally, we describe this in the following proposition, which we prove in the Appendix~\ref{sec::additional_proofs_gentleness_introduction_section}. The algorithm scheme is shown in Figure~\ref{alg::Quantum_Label_Switch}.

\begin{figure}[ht]


\tikzset{
    measurebox/.style={
        draw, rectangle, fill=white, minimum width=0.8cm, minimum height=0.55cm, inner sep=0pt
    }
}

\begin{tikzpicture}[x=1.2cm, y=-1.2cm, line width=0.8pt]


\node[anchor=east] at (0, 0) {$\ket{0}$};
\node[anchor=east] at (0, 1) {$\ket{0}$};
\node[anchor=east] at (0, 1.5) {$\ket{0}$};
\node[anchor=east] at (0, 2) {$\ket{\psi}$}; 

\node[anchor=east] at (-0.1, 0.4) {$\vdots$};

\draw (0,0) -- (4.7,0);
\draw[fill=white] (0.65, -0.2) rectangle (1.15, 0.2); \node at (0.9, 0) {$R_{\delta}$};
\node at (5.5,0) {$\cdots$}; 
\draw (6.3,0) -- (8.2,0);

\draw (0,1) -- (4.7,1); 
\draw[fill=white] (0.65, 0.8) rectangle (1.15, 1.2); \node at (0.9, 1) {$R_{\delta}$};
\node at (5.5,1) {$\cdots$}; 
\draw (6.3,1) -- (8.2,1);

\draw (0,1.5) -- (4.7,1.5); 
\draw[fill=white] (0.65, 1.3) rectangle (1.15, 1.7); \node at (0.9, 1.5) {$R_{\delta}$};
\node at (5.5,1.5) {$\cdots$}; 
\draw (6.3,1.5) -- (8.2,1.5);

\draw (0,2) -- (2.65,2);
\draw (3.15,2) -- (3.85,2);
\draw (4.35,2) -- (4.7,2);
\node at (5.5,2) {$\cdots$};
\draw (6.3,2) -- (8.8,2);

\draw[fill=white] (2.65, 1.8) rectangle (3.15, 2.2); \node at (2.9, 2) {$U_1$};
\draw[fill=white] (3.85, 1.8) rectangle (4.35, 2.2); \node at (4.1, 2) {$U_2$};
\draw[fill=white] (6.65, 1.8) rectangle (7.15, 2.2); \node at (6.9, 2) {$U_d$};

\draw[fill=black] (2.9, 1.5) circle (2pt);
\draw (2.9, 1.5) -- (2.9, 1.8);

\draw[fill=black] (4.1, 1) circle (2pt);
\draw (4.1, 1) -- (4.1, 1.8);

\draw[fill=black] (6.9, 0) circle (2pt);
\draw (6.9, 0) -- (6.9, 1.8);

\node[measurebox] (M1) at (8.4, 0) {};
\draw ([xshift=-10pt, yshift=-4pt]M1.center) arc (150:30:11pt and 8pt);
\draw[->] ([yshift=-6pt]M1.center) -- ([xshift=6pt, yshift=6pt]M1.center);

\node[measurebox] (M2) at (8.4, 1) {};
\draw ([xshift=-10pt, yshift=-4pt]M2.center) arc (150:30:11pt and 8pt);
\draw[->] ([yshift=-6pt]M2.center) -- ([xshift=6pt, yshift=6pt]M2.center);

\node[measurebox] (M3) at (8.4, 1.5) {};
\draw ([xshift=-10pt, yshift=-4pt]M3.center) arc (150:30:11pt and 8pt);
\draw[->] ([yshift=-6pt]M3.center) -- ([xshift=6pt, yshift=6pt]M3.center);

\draw[->] (0.3, 2.7) -- (0.3, 2.2);
\node[anchor=north] at (0.3, 2.7) {$\ket{\Psi}$};

\draw[->] (2, 2.7) -- (2, 2.2);
\node[anchor=north] at (2, 2.7) {$\ket{\Psi_{\delta}}$};

\draw[->] (7.7, 2.7) -- (7.7, 2.2);
\node[anchor=north] at (7.7, 2.7) {$\big|\Psi_{\delta, \tilde{U}}\big\rangle$};

\draw[black, thick] ([xshift=-15pt, yshift=-15pt]current bounding box.south west) rectangle ([xshift=15pt, yshift=15pt]current bounding box.north east);

\end{tikzpicture}
    \caption{Schematic representation of the $d$-dimensional Label Switch mechanism.}
    \label{alg::Quantum_Label_Switch}
\end{figure}

\begin{proposition}
\label{prop::physical_implementation}
    Let $\delta > 0$, $\ket{\psi} \in \mathcal{S}_{pure}(\mathbb{C}^d)$ and $\rho = \ket{\psi}\bra{\psi}$. For $\ket{\phi_0} = \ket{0} \otimes ... \otimes \ket{0}$ define $\sigma = \ket{\phi_0}\bra{\phi_0}$ on $(\mathbb{C}^2)^d$. Then there exists a unitary operation $\tilde U_{\delta}$, comprised of qubit rotations $R_{\delta}$ and qudit controlled NOT-gates $U_1,\ldots,U_d$, and a basis measurement $\tilde{M}_{\tilde{B}} =(\mathbbm{1} \otimes \ket{\omega}\bra{\omega})_{\omega \in 
    \Omega}$ on the second system alone such that for $\rho_{\delta,\tilde{U}} = \tilde U_{\delta}(\rho \otimes \sigma) \tilde U_{\delta}^*$ it holds
    \begin{equation*}
        \mathbb{P}_{\rho_{\delta,\tilde{U}}}\left( R^{\tilde{M}_{\tilde{B}}} = \omega \right) = \mathbb{P}_{\rho}\left( R^{M_{\delta}} = \omega \right) \hspace{10pt} \text{and} \hspace{10pt} \Tr_2\left[(\rho_{\delta,\tilde{U}})_{\tilde{M}_{\tilde{B}} \to \omega}\right] = \rho_{M_{\delta} \to \omega}
    \end{equation*}
    for all $\omega \in \Omega$, where $M_{\delta}$ is the measurement given by the measurement operators~\eqref{eqn::measurement_operators_gentle} and $\Tr_2$ denotes the partial trace over the second system.
\end{proposition}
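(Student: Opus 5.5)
We construct the circuit of Figure~\ref{alg::Quantum_Label_Switch} explicitly and verify both identities by tracking the joint pure state. First, let $R_\delta$ be the real qubit rotation
\begin{equation*}
R_\delta\ket{0} = \sqrt{\tfrac{1}{e^{\delta/2}+1}}\,\ket{0} + \sqrt{\tfrac{e^{\delta/2}}{e^{\delta/2}+1}}\,\ket{1}, \qquad R_\delta\ket{1} = -\sqrt{\tfrac{e^{\delta/2}}{e^{\delta/2}+1}}\,\ket{0} + \sqrt{\tfrac{1}{e^{\delta/2}+1}}\,\ket{1}.
\end{equation*}
Applying $R_\delta^{\otimes d}$ to $\ket{\phi_0}$ turns it into the product state $\sum_{\eta\in\{0,1\}^d} c(\eta)\ket{\eta}$ with amplitude $c(\eta)=(e^{\delta/2}+1)^{-d/2} e^{\frac{\delta}{4}\norm{\eta}_1}$. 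Thus
\begin{equation*}
\ket{\Psi_\delta}=\sum_{k}\gamma_k\ket{k}\otimes\sum_\eta c(\eta)\ket{\eta}.
\end{equation*}

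Next, we define the controlled gates. The $k$-th ancilla qubit controls $U_k$, which acts on the qudit. Equivalently, and more conveniently, we take the controlled operation $\tilde U = \sum_{k}\ket{k}\bra{k}\otimes X_k$, where $X_k$ flips every ancilla qubit except the $k$-th. This is a controlled-NOT pattern; up to relabelling which side controls, it is a product of CNOT-type gates. It maps $\ket{k}\otimes\ket{\eta}$ to $\ket{k}\otimes\ket{\eta\oplus(\mathbf 1-e_k)}$. Reindexing with $\omega=\eta\oplus(\mathbf 1 - e_k)$ gives $\norm{\eta}_1 = d-\norm{\omega-e_k}_1$. Hence
\begin{equation*}
\ket{\Psi_{\delta,\tilde U}}=\sum_\omega\Big(\sum_k \gamma_k\,(e^{\delta/2}+1)^{-d/2}(e^{\delta/4})^{d-\norm{\omega-e_k}_1}\ket{k}\Big)\otimes\ket{\omega}=\sum_\omega M_{\delta,\omega}\ket{\psi}\otimes\ket{\omega},
\end{equation*}
which matches \eqref{eqn::measurement_operators_gentle} exactly. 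The main point to check is this bookkeeping, in particular that the flip pattern produces the exponent $d-\norm{\omega-e_k}_1$ and not its complement. If it came out reversed, we would use the other rotation sign or flip only bit $k$.

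Then the conclusion follows directly from Definition~\ref{defn::quantum_measurement}. The operator $\mathbbm{1}\otimes\ket{\omega}\bra{\omega}$ applied to $\ket{\Psi_{\delta,\tilde U}}$ gives $M_{\delta,\omega}\ket{\psi}\otimes\ket{\omega}$. Its squared norm is $\norm{M_{\delta,\omega}\ket\psi}^2=\mathbb P_\rho(R^{M_\delta}=\omega)$, which is the first identity. The normalized post-measurement state is $\rho_{M_\delta\to\omega}\otimes\ket\omega\bra\omega$, so taking $\Tr_2$ yields $\rho_{M_\delta\to\omega}$, which is the second identity. Unitarity of $\tilde U_\delta$ is immediate, since it is a composition of rotations and a controlled permutation.
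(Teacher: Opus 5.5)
Your proof is correct and follows the paper's argument. The paper rotates each ancilla to $(e^{\delta/4}\ket{0}+\ket{1})/\sqrt{e^{\delta/2}+1}$ and flips only qubit $k$ when the qudit is in $\ket{k}$. That yields exactly the joint state $\sum_\omega M_{\delta,\omega}\ket{\psi}\otimes\ket{\omega}$ you obtain by rotating to $(\ket{0}+e^{\delta/4}\ket{1})/\sqrt{e^{\delta/2}+1}$ and flipping all qubits but $k$; the two circuits differ only by an unconditional $X^{\otimes d}$ absorbed into the single-qubit gate. The probability and partial-trace identities are then read off the same way. One small correction: your aside that ``the $k$-th ancilla qubit controls $U_k$'' is backwards, since the qudit is the control, as in the operator $\sum_k \ket{k}\bra{k}\otimes X_k$ you actually use.
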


\section{Gentle estimation of quantum states}
\label{sec::upper_bound_qudit_estimation}

\subsection{Types of quantum measurements}
\label{eqn::types_of_measurements}
The previous section gave an implementable gentle measurement procedure for the estimation of the probability vector corresponding to a pure quantum state. The main ideas that assure gentleness in that task can be applied to the more complex problem of quantum state tomography. However, as is the case with non-gentle tomography, measuring a state in a single basis in insufficient in order to obtain a full description of it. In the non-gentle case, the optimal measurements for state tomography and state certification are given by so-called $2$-designs. A (proper) 2-design is a finite set of $D$ normalized-vectors $(\ket{v_m})_{m = 1}^D$ such that
\begin{equation}
\label{eqn::2_design_definition}
    \frac{1}{D} \sum_{m = 1}^D (\ket{v_m}\bra{v_m})^{\otimes 2} = \int \ket{\psi}\bra{\psi}^{\otimes 2} d\mu(\psi)
\end{equation}
where $\mu$ is the Haar-measure on the unit sphere in $\mathbb{C}^d$. For a proper 2-design $(\ket{v_m})_{m = 1}^D$, the operators $(\sqrt{d/D} \ket{v_m}\bra{v_m})_{m = 1}^D$ define a quantum measurement that preserves the second moment of the Haar-measure \cite{ambainis_quantum_2007}. $2$-designs have been shown to define optimal measurement schemes for both state tomography \cite{guta2018faststatetomographyoptimal} and state certification \cite{liu_role_2024}. However, \cite{butucea_locally_2026} have shown that for optimal gentle state certification, we must consider a sub-class of $2$-design given by mutually unbiased bases (MUBs) as a direct gentle-ization of the $2$-design leads to a sub-optimal rate. A similar thing occurs in state tomography. Gentle-izing the optimal measurement from \cite{guta2018faststatetomographyoptimal} directly leads to the sub-optimal estimation rate of $d^4/n\alpha^2$. 

In order to achieve optimality, we must consider a complete set of MUBs. A set of $n_B$ MUBs is a collection $(|e_k^{(b)}\rangle)_{k = 1,...,d, b = 1,...,n_B}$, where, for fixed $b$, the vectors $(|e_k^{(b)}\rangle)_{k = 1,...,d}$ form an orthonormal basis of $\mathbb{C}^d$ and for $b \neq b'$ it holds $|\langle e_k^{(b)} | e_l^{(b')}\rangle|^2 = \frac{1}{d}$. For any given dimension $d$, there exist a maximum of $n_B = d+1$ MUBs and such a complete set of $d+1$ MUBs forms a $2$-design \cite{klappenecker_mutually_2005}. Instead of gentle-izing this $2$-design as a whole, the optimal gentle measurement is instead given by gentle-izing each basis individually and combining the gentle-ized basis measurements into one whole, which reduces the variance of the subsequent estimator by a factor of $d$ compared to gentle-izing the whole $2$-design.

\subsection{The qLS mechanism for quantum state tomography}
Let us now adapt the estimation procedure given in Section~\ref{sec::qLS_pure_states} to define a locally-gentle estimator for general $d$-dimensional states. Set $D = d(d+1)$ and let us denote the $d+1$ elements of the complete set of MUBs by
\begin{equation}
\label{eqn::definition_v_m}
    (\ket{v_m})_{m = 1,...,D} = \left( \ket{e_{k}^{(b)}} \right)_{_{k = 1,...,d}^{b = 1,...,d+1}}.
\end{equation}
Since maximal sets of mutually unbiased bases form a $2$-design, we have (see \cite{guta2018faststatetomographyoptimal})
\begin{align}
    \label{eqn::2design_property}
    \sum_{m = 1}^D \Tr\left[\ket{v_m}\bra{v_m} X \right] \ket{v_m}\bra{v_m} &= X + \Tr[X] \mathbbm{1}
    \\
    \label{eqn::ONB_property}
    \sum_{m = 1}^D \ket{v_m}\bra{v_m} &= (d+1) \mathbbm{1}
\end{align}
For $\delta > 0$ and for all $d+1$ bases, we define measurement operators $M_{\delta, \omega, b} = \sqrt{E_{\delta, \omega, b}}$ as in equation~\eqref{eqn::measurement_operators_gentle}. That is
\begin{align}
    \label{defn::measurement_operators_qls_mixed_MUB}
    E_{\delta, \omega, b} &= \frac{1}{d+1} \left( \frac{e^{\frac{\delta}{2}}}{e^{\frac{\delta}{2}} + 1} \right)^d \sum_{k  =1}^d e^{-\frac{\delta}{2} \norm{\omega - e_k}_1} \ket{e_k^{(b)}}\bra{e_k^{(b)}} \hspace{10pt} \text{for } \delta = 4 \arctanh(\alpha)
\end{align}
Note that $e_k$ is again the $k$-th standard $d$-dimensional basis vector, \emph{i.e.} the vector that is zero everywhere except in the $k$-th entry, while $|e_k^{(b)} \rangle$ is the $k$-th basis vector in the $b$-th mutually unbiased basis. Since the factor $1/(d+1)$ does not change the eigenvalue ratio of the operators, Lemma~\ref{lem::gentleness_of_gentleized_general_measurement} remains applicable, showing that the measurement $M_{\delta} = (M_{\delta, \omega, b})_{\omega \in \{0,1\}^d, b = 1,...,d+1}$ is $\alpha$-gentle for $\alpha = \tanh(\delta/4)$. The measurement~\eqref{defn::measurement_operators_qls_mixed_MUB} is equivalent to first choosing a basis $b \in \{1,...,d+1\}$ uniformly at random and then performing the measurement defined in~\eqref{eqn::measurement_operators_gentle} in that basis. Formally, the outcome of the measurement is a random tuple $R^{M_{\delta}} = (W, B)$. We transform the outcome into a $D$-dimensional vector $H$. 
Since every $m \in \{1,...,D\}$ may be written as $m = d  b + k$ for unique $b \in \{1,...,d+1\}$ and $k \in \{1,...,d\}$ we have
\begin{equation}\label{eqn::duality_2_desing_MUB}
    H_m = 1 \Longleftrightarrow B = b \text{ and } W_k = 1.
\end{equation}
Thus, the vector $H$ is itself composed of $d+1$ subvectors of length $d$. Each subvector corresponds to one of the possible bases we measure. The resulting $W$ we obtain as an output then replaces the correponding subvector. The remaining $d$ subvectors are $0$. 

By measuring repeatedly, $n$ times, independent copies of $\rho$, we obtain i.i.d. copies $H^{(i)}$ of $H$, for $i$ from 1 to $n$. The vector $\Bar{H} := (\Bar{H}_1,...,\Bar{H}_D) =  \frac{1}{n} \sum_{i = 1}^n H^{(i)}$ is the gentle estimator of the frequencies of possible outcomes $m$ or, equivalently, $(w,b)$. It is not a vector of frequencies as its components do not sum up to one. However, by compensating for this bias introduced by privacy, we are able to define an estimator
\begin{equation}
\label{eqn::qLS_est_before_proj}
    \bar{\rho}_n 
    = \frac{d+1}{\alpha} \sum_{m = 1}^D \Bar{H}_m \ket{v_m}\bra{v_m} - \left(  1 +  \frac{d+1}{\alpha}  \beta \right) \mathbbm{1}
\end{equation}
for $\beta = (e^{\frac{\delta}{2}} + 1)^{-1} \in (0,\frac{1}{2})$. Finally, we define the projection of this matrix on the convex set of quantum states
\begin{equation}
\label{eqn::qLS_est_qudits}
    \check{\rho}_n = \Pi_{\mathcal{S}(\mathbb{C}^d)} (\bar{\rho}_n)
\end{equation}
where $\Pi_{\mathcal{S}(\mathbb{C}^d)}$ denotes the projection onto $\mathcal{S}(\mathbb{C}^d)$. Note that the projection increases the distance to the true state by a factor 2 at most: : $\|\check{\rho}_n - \rho\|_{op} \leq 2 \cdot \|\hat \rho_n - \rho\|_{op}$ (Lemma~\ref{lem::operator_norm_properties_projection}). 

We can further process the estimator in order to assure that it is rank consistent. We do this by spectral thresholding similar to \cite{butucea_spectral_2015, lahiry_minimax_2024}. Let $\check{\rho}_n = \check{U}_n\check{D}_n\check{U}_n^*$ be the spectral decomposition of $\check{\rho}_n$. For $t > 0$, the spectral thresholding algorithm $\mathcal{A}$ (see Algorithm~\ref{alg::spectral_thresholding} in Appendix~\ref{sec::additional_calculations_upper_bound}) with threshold $2t$ then constructs an estimator $\hat{\rho}_n = \mathcal{A}(\check{\rho}_n)$, given by a valid density matrix, such that the minimal eigenvalue of $\hat{\rho}_n$ is bounded from below by $2t$. It assures that, if $\rho$ is low rank, so is the estimator $\hat{\rho}_n$. In fact, we can show that the rank of the estimator $\hat{\rho}_n$ is smaller than the rank of $\rho$ with high probability. We can then show that this estimator is minimax optimal. Furthermore, if we assume the lowest eigenvalue of the true state $\rho$ to be separated far enough from 0 we can show that the estimator $\hat{\rho}_n$ has the same rank as $\rho$ with high probability while incurring only a logarithmic factor in the estimation rate. We summarize this result in the following theorem, which we prove in Appendix~\ref{sec::additional_calculations_upper_bound}. 

\begin{theorem}
\label{thm::upper_bound_qLS_qudits}
    Let $d \geq 2$, $1 \leq r \leq d$ and $\rho \in \mathcal{S}_r(\mathbb{C}^d)$. For $\alpha \in (0, 1)$ set $\delta = 4 \arctanh(\alpha)$ and for $\epsilon \in (0,1]$ set
    \begin{equation*}
        t^2(\epsilon) = 92 \log(d/\epsilon) \frac{d^2}{n\alpha^2} 
    \end{equation*}
    Let $\hat{\rho}_n = \mathcal{A}(\check{\rho}_n)$ be the estimator described previously, where $\mathcal{A}$ is the spectral thresholding algorithm~\ref{alg::spectral_thresholding} with threshold $2t(\epsilon)$. Then $\hat{\rho}_n$ is a locally-$\alpha$-gentle estimator of $\rho$ such that and there exists a constant $C > 0$ independent of $r, d, n, \epsilon$ and $\alpha$ such that
    \begin{equation*}
        \sup_{\rho \in \mathcal{S}_r(\mathbb{C}^d)} \mathbb{E}_{\rho}\left[ \norm{\hat{\rho}_n - \rho}_F^2 \right] \leq C \frac{rd^2}{n \alpha^2} \log\left(\frac{d}{\epsilon}\right).
    \end{equation*}
    If we additionally assume $\lambda_{min}(\rho) \geq 6t(\epsilon)$, then
    \begin{equation*}
        \mathbb{P}_{\rho}\left( \rank(\hat{\rho}_n) = r \right) \geq 1 - \epsilon.
    \end{equation*}
\end{theorem}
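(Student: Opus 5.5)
Gentleness is already settled: each local measurement $M_\delta=(M_{\delta,\omega,b})$ is $\alpha$-gentle by Lemma~\ref{lem::gentleness_of_gentleized_general_measurement} applied basis-wise, and $\hat\rho_n$ is a classical post-processing of the outcomes. So the work is statistical, and I would organise it around an operator-norm concentration bound for $\bar\rho_n$.

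\textbf{Step 1: unbiasedness.} I would show $\mathbb{E}[\bar\rho_n]=\rho$. Conditional on $B=b$, the computation of Section~\ref{sec::statistical_properties_of_probability_vector_estimator} in basis $b$ gives
\[
\mathbb{E}[H_m]=\frac{1}{d+1}\left(\tanh(\delta/4)\,\langle v_m|\rho|v_m\rangle+\beta\right),
\]
and $\tanh(\delta/4)=\alpha$. Hence
\[
\frac{d+1}{\alpha}\sum_m \mathbb{E}[H_m]\,|v_m\rangle\langle v_m| = \sum_m \langle v_m|\rho|v_m\rangle\, |v_m\rangle\langle v_m| + \frac{\beta}{\alpha}\sum_m |v_m\rangle\langle v_m| .
\]
By \eqref{eqn::2design_property} and \eqref{eqn::ONB_property} this equals $\rho+\mathbbm{1}+\frac{(d+1)\beta}{\alpha}\mathbbm{1}$. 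The correction term in \eqref{eqn::qLS_est_before_proj} removes exactly this identity part.

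\textbf{Step 2: matrix Bernstein.} Write $\bar\rho_n-\rho=\frac1n\sum_i X_i$ with i.i.d. centered
\[
X_i=\frac{d+1}{\alpha}\sum_m (H^{(i)}_m-\mathbb{E}H_m)\,|v_m\rangle\langle v_m| .
\]
Only one basis $b$ is active per sample and its subvector has entries in $\{0,1\}$, so $\|X_i\|_{op}\lesssim d/\alpha$. For the variance, conditional on $B=b$, the term $\sum_k W_k |e^{(b)}_k\rangle\langle e^{(b)}_k|$ is diagonal in basis $b$ and its square has operator norm at most $1$. Averaging over $b$ then gives
\[
\|\mathbb{E}X_i^2\|_{op}\lesssim \frac{(d+1)^2}{\alpha^2}\cdot\frac{1}{d+1}\left\|\sum_b\sum_k |e_k^{(b)}\rangle\langle e_k^{(b)}|\right\|_{op}\lesssim \frac{d^2}{\alpha^2}.
\]
This is the factor-$d$ saving of gentle-izing bases separately. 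Matrix Bernstein then yields, with probability at least $1-\epsilon$,
\[
\|\bar\rho_n-\rho\|_{op}\le \sqrt{c\,\log(d/\epsilon)\,d^2/(n\alpha^2)}+c\,\frac{d\log(d/\epsilon)}{n\alpha}\le t(\epsilon)
\]
in the relevant regime $n\alpha^2\gtrsim\log(d/\epsilon)$. Outside that regime the bound is trivial, since $\|\hat\rho_n-\rho\|_F^2\le 2$. The main obstacle is pinning the constants so that $92$ works, in particular keeping the variance proxy at order $d^2/\alpha^2$ rather than $d^3/\alpha^2$.

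\textbf{Step 3: projection, thresholding and rank.} On the good event, Lemma~\ref{lem::operator_norm_properties_projection} gives $\|\check\rho_n-\rho\|_{op}\le 2t$. Thresholding at $2t$ removes all eigenvalues that come from the kernel of $\rho$, so $\operatorname{rank}\hat\rho_n\le r$ and $\|\hat\rho_n-\rho\|_{op}\lesssim t$. Then $\hat\rho_n-\rho$ has rank at most $2r$, so
\[
\|\hat\rho_n-\rho\|_F^2\le 2r\,\|\hat\rho_n-\rho\|_{op}^2\lesssim r\,t^2 .
\]
On the complement of probability $\epsilon$ I use $\|\hat\rho_n-\rho\|_F^2\le 2$. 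Choosing $\epsilon$ polynomially small, or simply absorbing the term, gives the stated bound $C\,rd^2\log(d/\epsilon)/(n\alpha^2)$.

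For the rank claim, assume $\lambda_{\min}(\rho)\ge 6t$. By Weyl's inequality the top $r$ eigenvalues of $\check\rho_n$ are at least $4t$, which exceeds $2t$, so they survive the thresholding. Combined with the upper bound from the previous paragraph, $\operatorname{rank}\hat\rho_n=r$ with probability at least $1-\epsilon$.
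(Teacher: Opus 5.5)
Your outline follows the paper's proof: unbiasedness from \eqref{eqn::2design_property}--\eqref{eqn::ONB_property}, matrix Bernstein with a variance proxy of order $d^2/\alpha^2$ coming from orthogonality within one basis, Lemma~\ref{lem::operator_norm_properties_projection}, thresholding, and Weyl for the rank. However, the passage from the high-probability bound to the expectation bound has a genuine gap. The estimator depends on the given $\epsilon$ through the threshold $2t(\epsilon)$, so you cannot choose $\epsilon$ afterwards. Your argument only gives $\mathbb{E}_\rho\norm{\hat\rho_n-\rho}_F^2\le C r t^2(\epsilon)+2\epsilon$, and the $2\epsilon$ cannot be absorbed: for fixed $\epsilon$ (e.g.\ $\epsilon=1/2$) the claimed bound tends to $0$ as $n\to\infty$, while $2\epsilon$ does not.

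The paper instead integrates $\mathbb{P}_\rho(\norm{\hat\rho_n-\rho}_F^2\ge x)$ over $x\ge x(\epsilon)$, using concentration at every level $\epsilon'\le\epsilon$. For this you need an error bound on $\{\norm{\check\rho_n-\rho}_{op}\le s\}$ for every $s\ge t(\epsilon)$, with the threshold frozen at $2t(\epsilon)$. There your rank argument is unavailable, because the threshold is then too small to force $\operatorname{rank}\hat\rho_n\le r$. A rank-free bound closes the gap. First, $\hat\rho_n$ is a state and $\norm{\hat\rho_n-\check\rho_n}_{op}\le 4t$ by Lemma~\ref{lem::eigenvalue_properties_algorithm}, so $\norm{\hat\rho_n-\rho}_{op}\le 5s$. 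Second, every state $\sigma$ with $\norm{\sigma-\rho}_{op}\le u$ satisfies $\norm{\sigma-\rho}_F^2\le 4ru^2$. To see this, let $P$ be the range projector of $\rho$ and $Q=\mathbbm{1}-P$. The blocks $P(\sigma-\rho)P$ and $P(\sigma-\rho)Q$ have rank at most $r$, while $Q\sigma Q\succeq 0$ has operator norm at most $u$ and trace $-\Tr[P(\sigma-\rho)P]\le ru$. Hence $\mathbb{P}_\rho(\norm{\hat\rho_n-\rho}_F^2>100rs^2)\le d\exp(-cs^2n\alpha^2/d^2)$ for all $s\ge t(\epsilon)$. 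Integrating yields $Crt^2(\epsilon)+C'\epsilon\, rd^2/(n\alpha^2)$, which is the claimed rate since $\log(d/\epsilon)\ge\log 2$.

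Separately, your rank step is stated at the wrong level. With $\norm{\check\rho_n-\rho}_{op}\le 2t$ and threshold $2t$, it is not true that all kernel eigenvalues are removed, because Algorithm~\ref{alg::spectral_thresholding} pushes removed mass onto the surviving eigenvalues. For example, take $r\ge 2$, $d\ge 2r$, $t$ small, $\rho=\frac1r\sum_{k\le r}|k\rangle\langle k|$ and $\check\rho_n=\sum_{k\le r}(\frac1r-2t+\eta)|k\rangle\langle k|+\sum_{k=r+1}^{2r}(2t-\eta)|k\rangle\langle k|$ with $0<\eta<t/r$. Then the algorithm stops with $\operatorname{rank}\hat\rho_n=2r-1$.

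The paper avoids this by applying Bernstein at level $t/2$, so that $\norm{\check\rho_n-\rho}_{op}\le t$ after projection. It then uses the trace identity $\sum_{k>\hat r}\check\lambda_k=\sum_{k\le\hat r}(\lambda_k-\check\lambda_k)$, valid when $\hat r\ge r$, to get $|\hat\lambda_j-\lambda_j|\le 2t$; this contradicts $\hat\lambda_{\hat r}>2t$ whenever $\hat r>r$. The fix only costs constants, but this redistribution argument is needed. Your Weyl argument for $\operatorname{rank}\hat\rho_n\ge r$ is fine.
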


\subsection{Quantum state tomography via mutually unbiased measurements}

A complete set of MUBs is only guaranteed to exist in prime power dimensions $d = p^q$. In non prime-power dimensions $d$ a generalization of MUBs is given by mutually unbiased measurements (MUMs, \cite{kalev_mutually_2014}). These are operators $(P_k^{(b)})_{k = 1,...,d, b = 1,...,n_B}$, together with an efficiency parameter $\kappa \in (1/d, 1]$, such that $\Tr \, [P_k^{(b)}] = 1$ and $\Tr \, [P_k^{(b')}P_{k'}^{(b')}] = \delta_{k, k'} \delta_{b,b'} \kappa + (1-\delta_{k,k'})\delta_{b,b'} \frac{1-\kappa}{d-1} + (1-\delta_{b,b'})\frac{1}{d}$. A complete set of $n_B = d+1$ MUMs can be constructed in any dimension from an ONB of the space of hermitian matrices. The efficiency parameter of the MUMs is dependent on the ONB chosed for construction. In the case $\kappa = 1$, the MUMs are MUBs with $P_k^{(b)} = |e_k^{(b)}\rangle\langle e_k^{(b)}|$. In that way we can view MUMs as an extension of MUBs and we can extend our locally gentle tomography mechanism to MUMs to be valid in any dimension. For a complete set of MUMs given by
\begin{equation}
\label{eqn::definition_v_m_MUM}
    (V_m)_{m = 1,...,D} = (P_k^{(b)})_{\genfrac{}{}{0pt}{}{k = 1,...,d}{b = 1,...,d+1}}
\end{equation}
we define measurement operators $M_{\delta, \omega, b} = \sqrt{E_{\delta, \omega, b}}$ similarly to~\eqref{defn::measurement_operators_qls_mixed_MUB} by
\begin{align}
    \label{defn::measurement_operators_qls_mixed_MUM}
    E_{\delta, \omega, b} &= \frac{1}{d+1} \left( \frac{e^{\frac{\delta}{2}}}{e^{\frac{\delta}{2}} + 1} \right)^d \sum_{k  =1}^d e^{-\frac{\delta}{2} \norm{\omega - e_k}_1} P_k^{(b)} \hspace{10pt} \text{for } \delta = 4 \arctanh(\alpha).
\end{align}
This measurement has the same outcome space as the one defined for MUBs. We can therefore define the vector $H$ as in~\eqref{eqn::duality_2_desing_MUB} and bases on on $n$ i.i.d. copies $H^{(i)}$ we again define $\bar{H} = \frac{1}{n }\sum_{i=1}^n H^{(i)}$ and the corresponding estimator
\begin{equation}
\label{eqn::qLS_est_before_proj_MUM}
    \bar{\rho}_n = \frac{d^2 - 1}{\alpha(\kappa d -1)} \sum_{m = 1}^D \bar{H}_m V_m - \frac{d^2 -1}{\alpha(\kappa d -1)} \left( \frac{\alpha(d-\kappa)}{d^2 - 1} + \beta \right) \mathbbm{1}.
\end{equation}
Projecting $\bar{\rho}$ onto the space of density matrices $\check{\rho}_n = \prod_{\mathcal{S}(\mathbb{C}^d)}(\bar{\rho}_n)$ and applying the spectral thresholding algorithm gives the state estimator $\hat{\rho}_n = \mathcal{A}(\check{\rho}_n)$ for which we have the following result, which we proof in Appendix~\ref{sec::proofs_upper_bound_MUMs}.
\begin{proposition}
\label{prop::upper_bound_qLS_qudits_MUM}
    Let $d \geq 2$, $1 \leq r \leq d$ and $\rho$ a rank $r$ quantum state on $\mathbb{C}^d$. For $\alpha \in (0,1)$ set $\delta = 4 \arctanh(\alpha)$ and for $\epsilon \in (0, 1]$ set
    \begin{equation*}
        t^2(\epsilon) = 92 \log(d/\epsilon) \frac{d^2}{\kappa n \alpha}.
    \end{equation*}
    Let $\bar{\rho}_n$ be as in~\eqref{eqn::qLS_est_before_proj_MUM} for a set of MUMs with efficiency parameter $\kappa$ and $\hat{\rho}_n$ the state estimator based on $\bar{\rho}_n$. Then $\hat{\rho}_n$ is a locally-$\alpha$-gentle estimator of $\rho$ such that there exists a constant $C > 0$ independent of $r, d, n, \epsilon, \kappa$ and $\alpha$ such that
    \begin{equation*}
        \sup_{\rho \in \mathcal{S}_r(\mathbb{C}^d)} \mathbb{E}_{\rho}\left[ \norm{\hat{\rho}_n - \rho}_F^2 \right]  \leq C \frac{rd^2}{n\alpha^2\kappa^2} \log\left( \frac{d}{\epsilon} \right).
    \end{equation*}
    If we additionally assume $\lambda_{min}(\rho) \geq 6t(\epsilon)$, then
    \begin{equation*}
        \mathbb{P}_{\rho}\left( \rank(\hat{\rho}_n = r \right) \geq 1 - \epsilon.
    \end{equation*}
\end{proposition}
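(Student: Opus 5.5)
We follow the same route as for Theorem~\ref{thm::upper_bound_qLS_qudits}. The new ingredients are the replacement of the 2-design identities \eqref{eqn::2design_property}--\eqref{eqn::ONB_property} by their MUM analogues, and tracking the efficiency parameter $\kappa$.

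\textbf{Gentleness and completeness.} Gentleness follows as before. For fixed $b$, the $P_k^{(b)}$ are positive and sum to $\mathbbm{1}$, since a complete set of MUMs forms a POVM in each $b$. Hence $E_{\delta,\omega,b}$ is a convex-type combination whose eigenvalue ratio is at most $e^{\delta}$, and Lemma~\ref{lem::gentleness_of_gentleized_general_measurement} (with Corollary~\ref{prop::Privacy_implies_gentleness}) gives $\alpha$-gentleness with $\alpha=\tanh(\delta/4)$. Summing over $\omega$ exactly as in Section~\ref{sec::qLS_pure_states} yields $\sum_{\omega,b}E_{\delta,\omega,b}=\frac{1}{d+1}\sum_b\sum_k P_k^{(b)}=\mathbbm{1}$.

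\textbf{Unbiasedness.} Repeating the marginal computation of Section~\ref{sec::statistical_properties_of_probability_vector_estimator} gives
\[
\mathbb{E}[H_m]=\frac{1}{d+1}\bigl(\alpha\Tr[\rho V_m]+\beta\bigr),
\]
using that $\frac{e^{\delta/2}-1}{e^{\delta/2}+1}=\tanh(\delta/4)=\alpha$. We then need the MUM frame identity
\[
\sum_m \Tr[V_mX]V_m=\frac{\kappa d-1}{d-1}X+\frac{d-\kappa}{d-1}\Tr[X]\mathbbm{1}.
\]
To get it, decompose $P_k^{(b)}=\mathbbm{1}/d+F_k^{(b)}$, where the $F_k^{(b)}$ for fixed $b$ span a $(d-1)$-dimensional traceless subspace, the subspaces for different $b$ are orthogonal, and the Gram matrix within a block is $\frac{\kappa d-1}{d(d-1)}(\delta_{kk'}d-1)/(d-1)$ up to normalization. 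A direct computation with the defining trace relations then gives the identity. Substituting it and $\sum_m V_m=(d+1)\mathbbm{1}$, the constants in \eqref{eqn::qLS_est_before_proj_MUM} are precisely those making $\mathbb{E}[\bar\rho_n]=\rho$. I would check this bookkeeping carefully, since it is the main source of $\kappa$-dependence.

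\textbf{Concentration.} Write
\[
\bar\rho_n-\rho=\frac1n\sum_i X_i,\qquad X_i=c\sum_m(H^{(i)}_m-\mathbb{E}H_m)V_m,\qquad c=\frac{d^2-1}{\alpha(\kappa d-1)}\asymp \frac{d}{\alpha\kappa}.
\]
Since $H^{(i)}$ is supported on a single block $b$ and $0\preceq\sum_k W_kP_k^{(b)}\preceq\mathbbm{1}$, we get $\|X_i\|_{op}\lesssim c$. For the variance proxy, $\mathbb{E}X_i^2\preceq c^2\,\mathbb{E}\bigl(\sum_kW_kP_k^{(B)}\bigr)^2\preceq c^2\mathbbm{1}$, so $\|\mathbb{E}X_i^2\|_{op}\lesssim d^2/(\alpha^2\kappa^2)$. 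Matrix Bernstein then gives, with probability at least $1-\epsilon$,
\[
\|\bar\rho_n-\rho\|_{op}\lesssim \sqrt{\frac{d^2\log(d/\epsilon)}{n\alpha^2\kappa^2}}+\frac{d\log(d/\epsilon)}{n\alpha\kappa}.
\]
The $\kappa$ in $t(\epsilon)$ should be arranged so that $t$ dominates this deviation; I would set $t^2\asymp \log(d/\epsilon)\,d^2/(n\alpha^2\kappa^2)$ if needed.

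\textbf{Conclusion.} Lemma~\ref{lem::operator_norm_properties_projection} transfers the bound to $\check\rho_n$ with a factor 2. The spectral-thresholding argument of Theorem~\ref{thm::upper_bound_qLS_qudits} then applies verbatim. On the good event, $\hat\rho_n$ has rank at most $r$ (exactly $r$ under $\lambda_{\min}(\rho)\ge 6t$), so $\|\hat\rho_n-\rho\|_F^2\le 2r\|\hat\rho_n-\rho\|_{op}^2\lesssim rt^2$. On the bad event the loss is bounded by 2, which contributes $O(\epsilon)$ after choosing $\epsilon$ polynomially small in the rate.

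The main obstacle is the MUM frame identity together with the constant matching in \eqref{eqn::qLS_est_before_proj_MUM}. The operator-norm variance bound should use only $P_k^{(b)}\preceq\mathbbm{1}$, so that no extra factor of $d$ from non-projective $P_k^{(b)}$ enters.
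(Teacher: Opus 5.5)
Your route is the paper's (MUM analogues of the moment lemmas, Matrix Bernstein, Lemma~\ref{lem::operator_norm_properties_projection}, spectral thresholding). However, the last step does not prove the statement.

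In the proposition, $\epsilon\in(0,1]$ is arbitrary, it fixes the threshold $2t(\epsilon)$ of the estimator, and $C$ must not depend on it. So you cannot choose $\epsilon$ ``polynomially small''. Your good/bad-event split only gives $\mathbb{E}_\rho\|\hat\rho_n-\rho\|_F^2\le C r t^2(\epsilon)+2\epsilon$. For, e.g., $\epsilon=1/2$ and $n\to\infty$, this is not of order $\frac{rd^2}{n\alpha^2\kappa^2}\log(d/\epsilon)$.

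The paper instead integrates a tail bound over all levels $\epsilon'\le\epsilon$. The paper's remark that the high-probability bound ``remains true for all $\epsilon'\le\epsilon$'' itself needs justification, since the rank argument is tied to the fixed threshold $2t(\epsilon)$. A clean way to do this is the inequality $\|\sigma-\rho\|_F^2\le\|\sigma-\rho\|_{op}\Tr[|\sigma-\rho|]\le 4r\|\sigma-\rho\|_{op}^2$, valid for every state $\sigma$ and every rank-$r$ state $\rho$. To see it, let $P$ be the support projection of $\rho$ and $\Delta=\sigma-\rho$. The block $P^{\perp}\Delta P^{\perp}=P^{\perp}\sigma P^{\perp}\succeq 0$ has trace $-\Tr[P\Delta P]\le r\|\Delta\|_{op}$, and the other three blocks have rank at most $r$.

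Since $\hat\rho_n$ is a state and $\|\hat\rho_n-\check\rho_n\|_{op}\le 4t(\epsilon)$ deterministically (Lemma~\ref{lem::eigenvalue_properties_algorithm}), you get $\mathbb{E}_\rho\|\hat\rho_n-\rho\|_F^2\le 8r\bigl(16t^2(\epsilon)+4\,\mathbb{E}_\rho\|\bar\rho_n-\rho\|_{op}^2\bigr)$ for every $\epsilon$. The last expectation follows by integrating your Bernstein tail. The rank argument is then needed only for the rank-consistency claim.

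Separately, your step $c=\frac{d^2-1}{\alpha(\kappa d-1)}\asymp\frac{d}{\alpha\kappa}$ holds only if $\kappa d-1\gtrsim\kappa d$; the paper's claim $\tau_1,\tau_2\le\frac{d}{\alpha\kappa}$ has the same defect. In general $c\le\frac{d}{\alpha(\kappa-1/d)}$, so the argument gives the rate with $\kappa$ replaced by $\kappa-1/d$. That matches the statement up to constants only for, say, $\kappa\ge 2/d$. This cannot be avoided: $\|P_k^{(b)}-\mathbbm{1}/d\|_F^2=\kappa-1/d$, so as $\kappa\downarrow 1/d$ with $n$ fixed and large, the outcome law becomes independent of $\rho$. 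Meanwhile the claimed bound is at most $Crd^4\log(d/\epsilon)/(n\alpha^2)$ for every $\kappa$.

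Apart from these two points, the rest of your proposal is sound:
\begin{itemize}
\item Your derivation of the MUM frame identity is correct. The within-block Gram matrix of the $F_k^{(b)}$ is exactly $\frac{\kappa d-1}{d(d-1)}(d\delta_{kk'}-1)$, so each block's frame operator is $\frac{\kappa d-1}{d-1}$ times the projection onto its span.
\item Your variance bound via $0\preceq\sum_kW_kP_k^{(B)}\preceq\mathbbm{1}$ is cleaner than the paper's cross-term computation.
\item You were right that $t^2(\epsilon)$ must be of order $\log(d/\epsilon)\,d^2/(n\alpha^2\kappa^2)$ rather than the displayed $d^2/(\kappa n\alpha)$.
\end{itemize}
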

We can see that for MUMs with efficiency parameter $\kappa = 1$, both the estimator and its rate are identical to their counterparts with MUBs. Furthermore, as long as we can construct a complete set of MUMs with efficiency parameter sufficiently large, \emph{i.e.} $\kappa \sim 1$, the optimal rate from Theorem~\ref{thm::upper_bound_qLS_qudits} can be generalized to arbitrary dimension.

\section{Lower bounds for gentle estimation of qudits}
\label{sec::lower_bounds_qudits}

In this section we will prove lower bounds that match the rate obtained in our upper bounds in Theorem~\ref{thm::upper_bound_qLS_qudits} up to a factor of $\log(d)$. 
\begin{theorem}
    \label{thm::lower_bound_qudit_estimation}
    Let $\alpha > 0$ be bounded away from $1/2$ by some constant and $r \leq d \in \mathbb{N}$. Then there exists a constant $c > 0$ independent of $n, \alpha, d, r$ such that it holds
    \begin{equation*}
        \inf_{(M, \hat{\rho})} \sup_{\rho \in \mathcal{S}_r(\mathbb{C}^d)} \mathbb{E}_{\rho} \left[ \norm{\hat{\rho} - \rho}_F^2 \right] \geq c \min\left\{ \varphi(d, r),  \frac{d^2r}{n\alpha^2} \right\}
    \end{equation*}
    for some non-increasing function $\varphi$, which we further specify below. Here the infimum is taken over all locally-$\alpha$-gentle meausrements $M$ and subsequent estimators $\hat{\rho}$.
\end{theorem}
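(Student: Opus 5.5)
We follow the route announced in the introduction: an Assouad-type reduction tailored to locally gentle measurements, combined with a packing of rank-$r$ states. The first step transfers gentleness into a privacy-type contraction. By Proposition~\ref{lem::gentleness_implies_privacy}, every $\alpha$-gentle $M_i$ (with $\alpha$ bounded away from $1/2$) is $\delta$-quantum differentially private with $\delta = 2\log\frac{1+2\alpha}{1-2\alpha} \asymp \alpha$. By Proposition~\ref{prop::equalities_qDP}, each $E_\omega$ then satisfies $\lambda_{\max}(E_\omega)\le e^{\delta}\lambda_{\min}(E_\omega)$, so we may write $E_\omega = \lambda_{\min}(E_\omega)(\mathbbm{1} + A_\omega)$ with $0\le A_\omega \le (e^\delta-1)\mathbbm{1}$.

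For two states $\rho,\rho'$ the likelihood difference is then $\Tr[(\rho-\rho')E_\omega] = \lambda_{\min}(E_\omega)\Tr[(\rho-\rho')A_\omega]$. This yields a quantum analogue of the Duchi--Jordan--Wainwright strong data-processing bound: for a mixture $\bar\rho = 2^{-N}\sum_\tau \rho_\tau$ and coordinate flips $\rho_{\tau}, \rho_{\tau^{(j)}}$, summing over $j$ gives
\begin{equation*}
\sum_{j=1}^N \mathrm{KL}\big(\mathbb{P}_{\rho_{\tau}} , \mathbb{P}_{\rho_{\tau^{(j)}}}\big) \lesssim (e^\delta-1)^2 \sup_{\|A\|_{op}\le 1} \sum_{j} \Tr[\Delta_j A]^2 ,
\end{equation*}
where $\Delta_j$ is the perturbation in direction $j$. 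The crucial point is that the supremum is over a single operator $A$ with bounded operator norm, not over arbitrary test functions. If the $\Delta_j = \eta\, B_j$ with $(B_j)$ orthonormal in Frobenius norm, then
\begin{equation*}
\sum_j \Tr[B_jA]^2 \le \|A\|_F^2 \le d\,\|A\|_{op}^2 = d .
\end{equation*}
This factor $d$, instead of the number $N\asymp rd$ of parameters, is exactly what produces the $d/\alpha^2$ penalty. Combined with independence over $n$ copies, the Assouad reduction gives
\begin{equation*}
\text{risk} \gtrsim N\eta^2\Big(1 - \sqrt{n\alpha^2\eta^2 d/N}\Big).
\end{equation*}

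The second step is the construction. Take $\rho_\tau = \rho_0 + \eta\sum_{j=1}^N \tau_j B_j$ with $\tau\in\{-1,1\}^N$. Here $\rho_0 = \frac1r P_r$ is the normalized projector onto an $r$-dimensional subspace, and the $B_j$ are Frobenius-orthonormal, traceless Hermitian matrices. For rank close to $d$ the $B_j$ act inside the support. For low rank they must be tangent directions to the rank-$r$ manifold, i.e.\ off-diagonal blocks $\ket{u}\bra{v}+\ket{v}\bra{u}$ between the support and its complement. Such blocks, however, increase the rank, so we must curve them: write $\rho_\tau = V_\tau \rho_0 V_\tau^*$ for unitaries $V_\tau$ close to the identity, whose first-order expansion reproduces the desired directions and whose second-order error is $O(\eta^2)$. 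The count is $N\asymp rd$, and choosing $\eta^2 \asymp N/(n\alpha^2 d N)\cdot N$, i.e.\ $N\eta^2 \asymp rd\cdot d/(n\alpha^2)$, gives the rate $d^2r/(n\alpha^2)$.

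Positivity of $\rho_\tau$ (respectively the validity of the curved perturbation) requires $\eta\sqrt{N}\lesssim$ the smallest relevant eigenvalue scale, roughly $\eta\lesssim 1/(r\sqrt{d})$ after suitable randomization of signs. This constraint is what truncates the bound to $\varphi(d,r)$ when $n$ is small. The resulting $\varphi$ is non-increasing, and we take the minimum.

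The main obstacle is the low-rank case, specifically controlling the second-order (curvature) terms. These terms break the clean linear structure $\Tr[(\rho_\tau-\rho_{\tau^{(j)}})A]$ on which the $\|A\|_F^2\le d$ argument relies. Our first attempt is to show that the curvature terms are common to $\rho_\tau$ and $\rho_{\tau^{(j)}}$ up to $O(\eta^2)$ and contribute at most $O(n\alpha^2\eta^4 N)$ to the divergence, which is negligible in the chosen regime. If that fails, we would instead use a Gilbert--Varshamov packing of Stiefel-type pure components, treated with Fano's method and the same trace-bound inequality.
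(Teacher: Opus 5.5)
Your reduction is the paper's: gentleness implies qDP, $E_\omega=\lambda_{\min}(E_\omega)(\mathbbm{1}+A_\omega)$, convexity of KL, and one operator per outcome pulled into a single supremum, as in Theorem~\ref{thm::C3}. Your two constructions also match it: linear perturbations inside the support for $r\asymp d$, and unitary conjugation of $\rho^{(r)}_{mm}$ along the off-diagonal block for $r<d/2$.

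The gap is in the low-rank case, and an arithmetic slip hides it. Your own inequality, risk $\gtrsim N\eta^2\bigl(1-\sqrt{n\alpha^2\eta^2 d/N}\bigr)$, forces $\eta^2\lesssim N/(n\alpha^2 d)$. So the bound you actually get is $N\eta^2\asymp N^2/(n\alpha^2 d)$. With $N\asymp rd$ this is $r^2d/(n\alpha^2)$, not $rd\cdot d/(n\alpha^2)$. That is fine for full rank ($N=d^2$ gives $d^3/(n\alpha^2)$) and for $r\asymp d$. For $r\ll d$, however, it is exactly the suboptimal $dr^2/(n\alpha^2)$ that Remark~\ref{rem::High_rank_lower_bound} warns about. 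No curvature control repairs this, because the loss comes from applying $\sum_j\Tr[B_jA]^2\le\|A\|_F^2\le d$ to the linear part.

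The missing idea is that for your tangent directions the effective dimension is $r$, not $d$.
\begin{itemize}
\item With $B_{k,l}=(\ket{k}\bra{l}+\ket{l}\bra{k})/\sqrt2$ and $l\le r<k$, one has $\sum_{k,l}\Tr[AB_{k,l}]^2\le 2\|A_{12}\|_F^2$. Here $A_{12}$ is the block of $A$ between the support and its complement.
\item Since $0\le A\le\mathbbm{1}$ gives $A-A^2\ge 0$, the upper-left block of $A-A^2$ yields $A_{12}A_{12}^*\le A_{11}-A_{11}^2$.
\item Taking traces, $\|A_{12}\|_F^2\le\Tr[A_{11}-A_{11}^2]\le r/4$.
\end{itemize}
Replacing $d$ by $r$ in your Assouad inequality gives $\eta^2\asymp N/(n\alpha^2 r)$ and risk $\asymp N^2/(n\alpha^2 r)\asymp rd^2/(n\alpha^2)$. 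This is the key step of Lemma~\ref{lem::upper_bound_distances_rank_r_states}, which gives the bound $72\epsilon^2/r$.

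Your heuristic should correspondingly read as follows. The gentleness penalty is $N/K$ with $K=\sup_A\sum_j\Tr[B_jA]^2$. This equals $d$ both for full rank ($d^2/d$) and for low rank ($rd/r$), not "$d$ instead of $N$" in general.

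Your curvature plan is in the spirit of the paper. The paper computes $\Delta_\nu(\epsilon)$ in closed form via $\sin^2$ and $\sinc$ of $\sqrt{V_\nu V_\nu^t}$. It also needs $\|\sinc(2\epsilon\sqrt{V_\nu V_\nu^t})-\mathbbm{1}_r\|_{op}$ small for the Hamming separation, which you do not address. It then takes $\epsilon\le 1/(d^2r^2)$ to absorb the second-order terms, and this is what yields $\varphi(d,r)=1/(4d^3r^5)$.
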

To prove Theorem~\ref{thm::lower_bound_qudit_estimation} we give a general scheme based on Assouad's method where we reduce the estimation problem to a testing problem in a high dimensional hypercube. Afterwards we will construct appropriate quantum states that serve as hypothesis states for the aforementioned testing problem. We give constructions for general full rank states with $\varphi(r,d) = 1/(4d^2)$, for low rank quantum states with $\varphi(r,d) = 1/(4d^3r^5)$ and for pure states where we improve $\varphi(r,d) =  1/8$. 

\subsection{General Scheme}
\label{sec::reduction_scheme}
The idea for proving lower bounds is to construct states $\mathcal{F} = \{\rho_{\nu}\}_{\nu \in \mathcal{V}}$ with $\mathcal{V} = \{-1, 1\}^D$ for an appropriately chosen $D$. The parameter $D$ characterizes the hardness of the problem and is closely related to the amount of parameters describing the state space. Since the sets of pure and mixed qudits are manifolds in dimension $2(d-1)$ and $d^2 - 1$ respectively, we will chose $D$ depending on the problem at hand. 

\begin{definition}
\label{defn::mixed_distribution states}
    Let $D \in \mathbb{N}$, $\mathcal{V} = \{-1, 1\}^D$ and $\mathcal{F} = \{\rho_{\nu}\}_{\nu \in \mathcal{V}}$ be a set of $d$-dimensional quantum states. When given given $n$ copies $(\rho_{\nu, i})_{i = 1}^n$ and $n$ independent measurements (one on each register) $(M_i)_{i = 1}^n$ where each $M_i$ is given by measurements operators $M_{{\omega}_i,i}$, with $E_{\omega_i, i} = M_{{\omega}_i,i}^* M_{{\omega}_i,i}$ we define.
    \begin{alignat*}{3}
        \rho_{\pm j,i} &= \frac{1}{2^{D-1}} \sum_{\nu: \nu_j = \pm1} \rho_{\nu,i}, \hspace{50pt} && \mathbb{P}_{\pm j, i}^{R^M}(\{{\omega}_i\}) &&= \Tr[\rho_{\pm j,i} E_{\omega,i}]
        \\
        \rho_{\nu}^n &= \bigotimes_{i = 1}^n \rho_{\nu, i}, && \mathbb{P}_{\nu}^{R^M, n}(\{\omega\}) && = \Tr[\rho_{\nu}^n \bigotimes_{i = 1}^n E_{\omega_i,i}]
        \\
        \rho_{\pm j}^n &= \frac{1}{2^{D-1}} \sum_{\nu: \nu_j = \pm 1} \rho_{\nu}^n, &&\mathbb{P}_{\pm j}^{R^M, n}(\{\omega\}) &&= \Tr[\rho_{\pm j}^n \bigotimes_{i = 1}^n E_{\omega_i,i}]
    \end{alignat*}
    and $\omega = (\omega_1,...,\omega_n)$. Remember that the outcome distribution of $\rho_{\nu, i}$ is given by $\mathbb{P}_{\nu,i}^{R^M}$.
\end{definition}

Note that by definition for every $j \in {1,...,D}$ we have
\begin{align*}
    \frac{1}{2^{D-1}} \sum_{\nu: \nu = \pm j} \mathbb{P}_{\nu}^{R^M,n}(\{\omega\}) &= \frac{1}{2^{D-1}} \sum_{\nu: \nu = \pm j} \Tr[\rho_{\nu}^n \bigotimes_{i = 1}^n E_{{\omega}_i,i}] 
    = \mathbb{P}_{\pm j}^{R^M, n}(\{\omega\}).
\end{align*}
As such, we see that we have the following equalities between the two 
\begin{equation}
\label{eqn::distribution_equalities}
    \frac{1}{2^{D-1}} \sum_{\nu: \nu = \pm j} \mathbb{P}_{\nu}^{R^M,n} =  \mathbb{P}_{\pm j}^{R^M, n} \hspace{20pt} \text{and} \hspace{20pt} \mathbb{P}_{\pm j, i}^{R^M} = \frac{1}{2^{D-1}} \sum_{\nu: \nu = \pm 1} \mathbb{P}_{\nu, i}^{R^M}.
\end{equation}
Our first step in lower bounding the estimation risk is by showing that the set of hypothesis states $\mathcal{F} = \{\rho_{\nu}\}_{\nu \in \mathcal{V}}$ induces a $2\eta$-Hamming separation for our model.

\begin{definition}
\label{defn::Hamming_separation}
    We say that a collection $\mathcal{F} = \{\rho_{v}\}_{v \in \mathcal{V}}$ with $\mathcal{V} \subseteq \{-1,1\}^D$ induces a $2\eta$-Hamming separation for $\norm{\cdot}^2$ on $\mathcal{S} \subseteq \mathcal{S}(\mathbb{C}^d)$ if there exists a function $\mathbf{v}: \mathcal{S} \to \{-1, 1\}^D$ such that
    \begin{equation*}
        \norm{\rho - \rho_{\nu}}^2 \geq 2 \eta \sum_{j = 1}^D \mathbbm{1}_{\{\mathbf{v}(\rho)_j \neq v_j\}}.
    \end{equation*}
\end{definition}
A collection $\mathcal{F}$ that induces a $2 \eta$-Hamming separation on $\mathcal{S}$ separates our model in such a way that the problem of estimating a state reduces to the problem of testing between the states of the collection $\mathcal{F}$.
\begin{lemma}
\label{lem::C1}
    If $\mathcal{F} = \{\rho_v\}_{v \in \mathcal{V}}$ induces a $2\eta$-Hamming separation for $\norm{\cdot}^2$ it holds
    \begin{equation}
    \label{eqn::lemmaC_1}
        \inf_{(M, \hat{\rho})} \max_{\nu \in \mathcal{V}} \mathbb{E}_{\nu}\left[ \norm{\hat{\rho} - \rho_{\nu}}^2\right]
        \geq \eta \sum_{j = 1}^D \left(1 -  \norm{\mathbb{P}_{+j}^{R^M,n} - \mathbb{P}_{-j}^{R^M,n}}_{TV} \right).
    \end{equation}
    where the infimum on the left is taken over all measurements $M = M_1 \otimes ... \otimes M_n$ and subsequent estimators $\hat{\rho}$.
\end{lemma}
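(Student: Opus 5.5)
This is the standard Assouad argument, run for a fixed measurement and then made uniform. Fix an arbitrary product measurement $M = M_1\otimes\dots\otimes M_n$ and an estimator $\hat\rho$, which is a function of the outcome $\omega=(\omega_1,\dots,\omega_n)$. Because a maximum dominates an average, we first bound
\begin{equation*}
\max_{\nu\in\mathcal V}\mathbb E_\nu\big[\norm{\hat\rho-\rho_\nu}^2\big]\;\ge\;\frac{1}{2^D}\sum_{\nu\in\mathcal V}\mathbb E_\nu\big[\norm{\hat\rho-\rho_\nu}^2\big],
\end{equation*}
where $\mathbb E_\nu$ is expectation under $\mathbb P_\nu^{R^M,n}$. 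Next we apply the Hamming separation of Definition~\ref{defn::Hamming_separation} pointwise with $\rho=\hat\rho(\omega)$. Setting $\hat\nu:=\mathbf v(\hat\rho)\in\{-1,1\}^D$, which is a measurable function of $\omega$, this gives $\norm{\hat\rho-\rho_\nu}^2\ge 2\eta\sum_j \mathbbm 1_{\{\hat\nu_j\neq\nu_j\}}$. The average risk is therefore at least
\begin{equation*}
2\eta\sum_{j=1}^D\frac{1}{2^D}\sum_{\nu}\mathbb P_\nu^{R^M,n}(\hat\nu_j\ne\nu_j).
\end{equation*}

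For each fixed $j$ we split the inner sum according to $\nu_j=\pm1$. By the identities \eqref{eqn::distribution_equalities}, the mixtures over $\{\nu:\nu_j=\pm1\}$ are exactly $\mathbb P_{\pm j}^{R^M,n}$, so
\begin{equation*}
\frac{1}{2^D}\sum_\nu \mathbb P_\nu^{R^M,n}(\hat\nu_j\neq\nu_j)=\tfrac12\Big(\mathbb P_{+j}^{R^M,n}(\hat\nu_j=-1)+\mathbb P_{-j}^{R^M,n}(\hat\nu_j=+1)\Big).
\end{equation*}
This is the sum of the two error probabilities of a test between $\mathbb P_{+j}^{R^M,n}$ and $\mathbb P_{-j}^{R^M,n}$. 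By the Neyman–Pearson/Le Cam inequality this sum is at least $1-\norm{\mathbb P_{+j}^{R^M,n}-\mathbb P_{-j}^{R^M,n}}_{TV}$. Combining the factor $2\eta$ with the $\tfrac12$ yields $\eta\sum_j(1-\norm{\cdot}_{TV})$ for this fixed $(M,\hat\rho)$.

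The right-hand side still depends on $M$ through $\mathbb P_{\pm j}^{R^M,n}$. So the claim \eqref{eqn::lemmaC_1} should be read with the $M$ on the right being the one achieving, or approaching, the infimum; equivalently, the bound holds for every $(M,\hat\rho)$ with that $M$. Taking the infimum over $\hat\rho$ and then over $M$ gives the stated bound, where the TV terms are later controlled uniformly over all gentle $M$. None of these steps is a genuine obstacle. The one point needing care is that $\hat\nu$ may be chosen as a deterministic function of the outcome, so that the testing bound applies. If $\hat\rho$ is randomized, we first condition on the external randomness and then average, which only strengthens the bound.
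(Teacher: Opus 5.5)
Your proposal is correct and follows the paper's proof step for step. Both bound the maximum by the average over $\mathcal V$, apply the Hamming separation pointwise to $\hat\rho$, split according to $\nu_j=\pm1$ via \eqref{eqn::distribution_equalities}, and invoke the two-point testing bound $1-\norm{\mathbb P_{+j}^{R^M,n}-\mathbb P_{-j}^{R^M,n}}_{TV}$. Your explicit remark that the bound holds for each fixed $M$, and only afterwards is minimized over $M$, is a cleaner reading of the statement than the paper's: the paper takes the infimum over $(M,\hat\psi)$ while leaving $M$ inside the total-variation term.
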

The proof of Lemma~\ref{lem::C1} is given in Appendix~\ref{sec::proofs_for_lower_bounds}. In order to further bound the expression~\eqref{eqn::lemmaC_1} from below, we relate the sum of total-variation distances of the distributions $\left(\mathbb{P}_{\pm j}^{R^M,n}\right)_{j = 1,...,D}$ to their symmetrized Kullback-Leibler divergences in the following way using a combination of Pinsker's and the Cauchy-Schwarz inequality
\begin{equation*}
        \sum_{j = 1}^D \norm{\mathbb{P}_{+j}^{R^M,n} - \mathbb{P}_{-j}^{R^M,n}}_{TV} \leq \frac{1}{2} \sqrt{D} \left( \sum_{j = 1}^D  D_{KL}^{sym}\left(\mathbb{P}_{+j}^{R^M,n} \middle| \middle| \mathbb{P}_{-j}^{R^M,n} \right) \right)^{\frac{1}{2}}.
    \end{equation*}
The last step of our reduction scheme is given by the main result of this section. A quantum data-processing inequality bounding the symmetrized Kullback-Leibler divergences of the outcome distributions of a gentle measurements. Its proof is given in Appendix~\ref{sec::proofs_for_lower_bounds}.
\begin{theorem}
\label{thm::C3}
Let $\alpha \in [0, \frac{1}{2})$. For the distributions defined in Definition~\ref{defn::mixed_distribution states}, assuming $\rho_{\nu, i} = \rho_{\nu}$ for $i = 1,...,n$, and for any locally-$\alpha$-gentle measurement it holds
\begin{equation*}
    \sum_{j = 1}^D D_{KL}^{sym}\left(\mathbb{P}_{+j}^{R^M,n} \middle| \middle| \mathbb{P}_{-j}^{R^M,n} \right) \leq \frac{64 n \alpha^2}{(1 - 2\alpha)^4} \sup_{\norm{A}_{op} \leq 1}\sum_{j = 1}^D \frac{1}{2^{D-1}} \hspace*{-5pt} \sum_{\nu \in \{-1,1\}^{D-1}} \hspace*{-5pt} \Tr \left[ A (\rho_{\nu_{j+}} - \rho_{\nu_{j-}})\right]^2.
\end{equation*}
Here, for fixed $\nu \in \{-1,1\}^{D-1}$, $\rho_{\nu_{j+}}, \rho_{\nu_{j-}}$ denote the states $\rho_{\nu, i}$ where the parameter vector is padded such that the $j-th$ component is either $+1$ or $-1$. Note that $\rho_{\nu_{j\pm}} \neq \rho_{\pm j}$.
\end{theorem}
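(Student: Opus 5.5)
We would adapt the strong data-processing argument of Duchi, Jordan and Wainwright for locally private channels to the quantum setting, with gentleness supplying the privacy parameter through Proposition~\ref{lem::gentleness_implies_privacy}.

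\textbf{Step 1: tensorize.} Since $M=M_1\otimes\cdots\otimes M_n$ is a product measurement and $\rho_\nu^n=\rho_\nu^{\otimes n}$, each $\mathbb{P}_\nu^{R^M,n}$ is a product measure. Its mixtures $\mathbb{P}_{\pm j}^{R^M,n}$, however, are not products. We would follow the classical route: bound the symmetrized KL of the mixtures by the sum over registers of single-register quantities. This uses the chain rule for KL together with the convexity of the KL divergence applied to the conditional laws. The target inequality is
\begin{equation*}
\sum_{j} D^{sym}_{KL}\big(\mathbb{P}_{+j}^{R^M,n}\|\mathbb{P}_{-j}^{R^M,n}\big)\le \sum_{i=1}^n \sum_j \frac{1}{2^{D-1}}\sum_{\nu\in\{-1,1\}^{D-1}} D^{sym}_{KL}\big(\mathbb{P}^{R^{M_i}}_{\nu_{j+}}\|\mathbb{P}^{R^{M_i}}_{\nu_{j-}}\big)\cdot c(\delta),
\end{equation*}
in the spirit of Duchi et al.'s Theorem~2/Lemma for sequentially interactive channels. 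Local privacy is exactly what allows the mixture over $\nu$ to be controlled by the pairwise terms $\nu_{j+}$ versus $\nu_{j-}$.

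\textbf{Step 2: single-register bound.} Fix a register and write $E_\omega$ for its effects. By Proposition~\ref{lem::gentleness_implies_privacy}, $M_i$ is $\delta$-qDP with $e^{\delta/2}=(1+2\alpha)/(1-2\alpha)$. Hence, by Proposition~\ref{prop::equalities_qDP}, $\lambda_{\max}(E_\omega)\le e^\delta\lambda_{\min}(E_\omega)$ and $\Tr[\rho E_\omega]\ge\lambda_{\min}(E_\omega)$. Using $D^{sym}_{KL}(P\|Q)\le\sum_\omega (P-Q)^2/\min(P,Q)$, we get
\begin{equation*}
D^{sym}_{KL}\le \sum_\omega\frac{\Tr[E_\omega\Delta]^2}{\lambda_{\min}(E_\omega)},\qquad \Delta=\rho_{\nu_{j+}}-\rho_{\nu_{j-}}.
\end{equation*}
Since $\Tr\Delta=0$, we may replace $E_\omega$ by $E_\omega-c_\omega\mathbbm{1}$ with $c_\omega$ the midpoint of its spectrum. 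Then $\|E_\omega-c_\omega\mathbbm 1\|_{op}\le \tfrac{e^\delta-1}{2}\lambda_{\min}(E_\omega)$, so $\Tr[E_\omega\Delta]=\lambda_{\min}(E_\omega)\tfrac{e^\delta-1}{2}\Tr[A_\omega\Delta]$ with $\|A_\omega\|_{op}\le1$. This gives
\begin{equation*}
\sum_\omega \lambda_{\min}(E_\omega)\Big(\frac{e^\delta-1}{2}\Big)^2\Tr[A_\omega\Delta]^2 .
\end{equation*}

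\textbf{Step 3: pull out the supremum and compute the constant.} The weights satisfy $\sum_\omega\lambda_{\min}(E_\omega)\le \sum_\omega \Tr[E_\omega]/d=1$, since $\sum_\omega E_\omega=\mathbbm 1$. Summing over $j,\nu$, the weighted average over $\omega$ of $\sum_j 2^{-(D-1)}\sum_\nu\Tr[A_\omega\Delta]^2$ is at most the supremum over $\|A\|_{op}\le1$, which is the right-hand side. For the constant,
\begin{equation*}
e^\delta-1=\Big(\tfrac{1+2\alpha}{1-2\alpha}\Big)^4-1\le \frac{16\alpha\cdot(\text{bounded})}{(1-2\alpha)^4},
\end{equation*}
which yields $\alpha^2/(1-2\alpha)^8$-type factors. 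To reach exactly $64\alpha^2/(1-2\alpha)^4$, we would instead do the centering argument directly with gentleness rather than with qDP. Gentleness gives $\|M_\omega\rho M_\omega^*/p_\omega-\rho\|_{Tr}\le\alpha$, and we expect this to yield eigenvalue ratio bounds of the form $\sqrt{\lambda_{\max}/\lambda_{\min}}\le (1+2\alpha)/(1-2\alpha)$, so that $(e^{\delta/2}-1)^2\sim 16\alpha^2/(1-2\alpha)^2$. The remaining $(1-2\alpha)^{-2}$ would come from the $\min(P,Q)$ denominator.

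\textbf{Main obstacle.} The hardest step is Step 1, the tensorization for the mixtures $\mathbb{P}_{\pm j}^{n}$ with a constant independent of $n$ and $D$. We would try the Duchi et al. argument, which handles mixtures via conditional densities and the likelihood-ratio bound supplied by privacy, with the single-register factor $(e^\delta-1)^2$ absorbing the mixture. Matching the precise constant $64/(1-2\alpha)^4$ is secondary.
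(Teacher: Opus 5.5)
Your Step 2 is the paper's key single-register estimate. You bound the symmetrized KL by $\sum_\omega (p_+-p_-)^2/\min(p_+,p_-)$ and lower-bound the denominator by $\lambda_{\min}(E_\omega)$. You then use $\Tr[\rho_{\nu_{j+}}-\rho_{\nu_{j-}}]=0$ to shift $E_\omega$ by a multiple of $\mathbbm{1}$, and pull out the supremum using $\sum_\omega \lambda_{\min}(E_\omega)\le 1$. The paper shifts by $\lambda_{\min}(E_\omega)\mathbbm{1}$, which gives the positive semi-definite $A_\omega=(E_\omega/\lambda_{\min}(E_\omega)-\mathbbm{1})/(e^\delta-1)$. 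Your midpoint shift is a harmless variant: its $A_\omega$ is not positive semi-definite, which is fine for the statement as written, and shifting back to $(A_\omega+\mathbbm{1})/2$ recovers exactly the paper's form. As it stands, though, the proposal is not a proof, for two reasons.

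\textbf{Step 1 is left open and misdiagnosed.} No chain rule, conditional laws or privacy is needed, and $c(\delta)=1$. Since $\mathbb{P}^{R^M,n}_{\pm j}=2^{-(D-1)}\sum_{\nu\in\{-1,1\}^{D-1}}\mathbb{P}^{R^M,n}_{\nu_{j\pm}}$, joint convexity of KL, pairing $\nu_{j+}$ with $\nu_{j-}$, gives $D_{KL}(\mathbb{P}^{R^M,n}_{+j}\|\mathbb{P}^{R^M,n}_{-j})\le 2^{-(D-1)}\sum_{\nu} D_{KL}(\mathbb{P}^{R^M,n}_{\nu_{j+}}\|\mathbb{P}^{R^M,n}_{\nu_{j-}})$. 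The measurement $M_1\otimes\cdots\otimes M_n$ is non-adaptive and the state is $\rho_\nu^{\otimes n}$, so each $\mathbb{P}^{R^M,n}_{\nu}=\bigotimes_i \mathbb{P}^{R^{M_i}}_{\nu}$ is a product measure. Hence KL splits exactly into $\sum_i D_{KL}(\mathbb{P}^{R^{M_i}}_{\nu_{j+}}\|\mathbb{P}^{R^{M_i}}_{\nu_{j-}})$. Doing the same for the reversed divergence yields your target inequality, and privacy enters only in the single-register step. The Duchi--Jordan--Wainwright argument you point to is aimed at sequentially interactive channels, where the $n$-fold laws are not products. Here they are products, which is why the elementary route suffices. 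In any case you do not carry that argument out, so Step 1 is unproved in your write-up.

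\textbf{The constant.} From $e^{\delta/2}=(1+2\alpha)/(1-2\alpha)$ you get $e^{\delta}=\bigl((1+2\alpha)/(1-2\alpha)\bigr)^2$, not the fourth power. Hence $e^\delta-1=8\alpha/(1-2\alpha)^2$ and $(e^\delta-1)^2=64\alpha^2/(1-2\alpha)^4$, exactly the stated constant; your centering even gives $16\alpha^2/(1-2\alpha)^4$. The repair sketched at the end of Step 3 is therefore unnecessary, and its reasoning does not hold up. The bound $\sqrt{\lambda_{\max}/\lambda_{\min}}\le(1+2\alpha)/(1-2\alpha)$ you ``expect'' from gentleness is exactly the qDP bound you already used. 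The $\min(P,Q)$ denominator has also already been spent, so it cannot supply an extra factor $(1-2\alpha)^{-2}$.
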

Although Theorem~\ref{thm::C3} initially does not seem to simplify the problem, 
it is the crucial last step in the lower bound scheme as it decouples the distributions appearing in the KL-divergence. Note that by the variational formulation of the trace-distance \cite{audenaert_asymptotic_2008}, we have
\begin{equation*}
     \sup_{\norm{A}_{op} \leq 1} \Tr \left[ A (\rho_{\nu_{j+}} - \rho_{\nu_{j-}})\right] = \norm{\rho_{\nu_{j+}} - \rho_{\nu_{j-}}}_{Tr}.
\end{equation*}
The term in Theorem~\ref{thm::C3} can therefore always be bounded by the sum of individual testing errors of the hypotheses states. In that sense, it serves as a generalization of the quantum data-processing inequality for gentle measurements \cite{butucea_sample-optimal_2025}. It's strength however, lies in the simultaneous minimization of testing errors along the vertices of the $D$-dimensional hypercube. This leads to a much sharper bound on the KL-divergence. Altogether, for a collection $\mathcal{F}$ that induces a $2\eta$-Hamming separation for $\norm{\cdot}$, we have
\begin{align*}
        &\inf_{(M, \hat{\rho})} \max_{\rho \in \mathcal{S}} \mathbb{E}_{\nu}\left[ \norm{\hat{\rho} - \rho}^2\right]
        \\
        \geq& \eta \left( D - \frac{1}{2} \sqrt{D} \left( \frac{64 n \alpha^2}{(1 - 2\alpha)^4} \sup_{\norm{A}_{op} \leq 1} \sum_{j = 1}^D \frac{1}{2^{D-1}} \sum_{\nu \in \{-1,1\}^{D-1}} \Tr \left[ A (\rho_{\nu_{j+}} - \rho_{\nu_{j-},})\right]^2 \right)^{\frac{1}{2}} \right).
\end{align*}
In order to conclude, we choose the largest value of $\eta >0$ (as function of $n,\,\alpha,\,r$ and $d$) such that the quantity under brackets above remains bounded away from 0.
\subsection{Construction of binary hypothesis states}
\label{sec::lower_bound_constructions}
Let us demonstrate the power of our reduction scheme by applying it to several different setups. In particular, we show the optimality of the estimator $\hat{\rho}_n$ defined in~\eqref{eqn::qLS_est_qudits} in the case where the state $\rho$ to be estimated is pure, low-rank or full-rank. Combining these results show that the minimax rate for locally-$\alpha$-gentle quantum state tomography is $\frac{d^2r}{n \alpha^2}$ up to log factors. We end this section by demonstrating that the probability vector estimator $\hat{p}_n$ defined in~\eqref{eqn::estimator_probability_vector} is minimax optimal as well.

\subsubsection{Full rank states}
\label{sec::lower_bound_construction_full_rank}
We start our demonstrating with the case where the state we want to estimate is full rank, that is $r = d$. Let $(V_j)_{j = 1}^{d^2}$ be a basis of the real vector space of hermitian $d \times d$ matrices such that $V_{d^2} = \frac{1}{\sqrt{d}} \mathbbm{1}$. Note that, as $V_{d^2}$ is a multiple of the identity matrix, the matrices $V_j$ are traceless for $j = 1,...,d^2-1$. An example of such a basis are the generalized Gell-Mann-matrices which generalize the Pauli-matrices to the $d$-dimensional case. We can parametrize any quantum state $\rho$ as
\begin{equation}
\label{eqn::GGM_basis_rep}
    \rho = \rho_{mm} + \sum_{j = 1}^{d^2 -1} b_j V_j, \hspace{10pt} \text{for some } b_j \in \mathbb{R},
\end{equation}
where $\rho_{mm} = \frac{1}{d} \mathbbm{1}$ is the maximally mixed state. Now, let $D \in \{1,...,d^2 -1 \}$ and set $\mathcal{V} = \{-1,1\}^D$. We define the set of local binary hypotheses $\mathcal{F} = \{\rho_{\nu}\}_{\nu \in \mathcal{V}}$ by
\begin{equation}
\label{eqn::rho_nu_definition}
    \rho_{\nu} = \rho_{mm} + \Delta_{\nu} = \rho_{mm} + \frac{\epsilon}{\sqrt{dD}} \sum_{i = 1}^D \nu_i V_i.
\end{equation}
By construction, for any $\nu$, $\rho_{\nu}$ is hermitian with trace 1. Furthermore, $\rho_{\nu}$ is positive definite as long as the maximal eigenvalue of $\Delta_{\nu}$ is less than $\frac{1}{d}$, which is true as long as $\epsilon \leq \frac{1}{\sqrt{d}}$. In order to now apply our reduction scheme, we need to assure that the local hypotheses states~\eqref{eqn::rho_nu_definition} are far apart, i.e. induce a suitable Hamming distance. This is the content of the following lemma.
\begin{lemma}
\label{lem::Hamming_distance_full_rank}
    Let $D \in \{1,...,d^2 -1 \}$ and $\mathcal{V} = \{-1,1\}^D$. The set of local binary hypotheses $\mathcal{F} = \{\rho_{\nu}\}_{\nu \in \mathcal{V}}$ defined by~\eqref{eqn::rho_nu_definition} induces a $2\eta$-Hamming separation for $\norm{\cdot}_F^2$ on $\mathcal{S} = \mathcal{S}(\mathbb{C}^d)$ where $\eta = \frac{\epsilon^2}{2dD}$.
\end{lemma}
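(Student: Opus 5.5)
The plan is to read off the Hamming separation directly from the coordinates of states in the Hermitian basis $(V_j)_{j=1}^{d^2}$. I will take this basis to be orthonormal for the Frobenius inner product $\langle A,B\rangle = \Tr[A^*B]$, i.e. $\Tr[V_iV_j]=\delta_{ij}$. This is the normalization of the generalized Gell-Mann matrices, and it is consistent with $V_{d^2}=\frac{1}{\sqrt d}\mathbbm{1}$, which has unit Frobenius norm. If the paper intends a non-orthonormal basis, the argument below does not apply as written; it would need Gram-matrix bounds, and $\eta$ would change by the corresponding constants.

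\emph{Step 1: the map $\mathbf{v}$.} By~\eqref{eqn::GGM_basis_rep}, every $\rho\in\mathcal{S}(\mathbb{C}^d)$ has a unique real coefficient vector $(b_j(\rho))_{j=1}^{d^2-1}$. Orthonormality gives $b_j(\rho)=\Tr[V_j\rho]$, and tracelessness of $V_j$ for $j<d^2$ means the $V_{d^2}$-component is always $\frac{1}{\sqrt d}$. I define $\mathbf{v}:\mathcal{S}\to\{-1,1\}^D$ by $\mathbf{v}(\rho)_j=\sgn(b_j(\rho))$ for $j=1,\dots,D$, with the convention $\sgn(0)=+1$.

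\emph{Step 2: reduce the norm to coefficients.} Both $\rho$ and $\rho_\nu$ have the same component $\frac{1}{\sqrt d}$ along $V_{d^2}$. By Parseval in the orthonormal basis,
\begin{equation*}
\norm{\rho-\rho_\nu}_F^2=\sum_{j=1}^{D}\Big(b_j(\rho)-\tfrac{\epsilon}{\sqrt{dD}}\nu_j\Big)^2+\sum_{j=D+1}^{d^2-1}b_j(\rho)^2\;\ge\;\sum_{j=1}^{D}\Big(b_j(\rho)-\tfrac{\epsilon}{\sqrt{dD}}\nu_j\Big)^2 .
\end{equation*}

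\emph{Step 3: bound each mismatched coordinate.} Fix $j\le D$ with $\mathbf{v}(\rho)_j\neq\nu_j$. Then $b_j(\rho)$ is either $0$ or has sign opposite to $\nu_j$; with the tie convention, a zero coefficient can only mismatch when $\nu_j=-1$. In every case $b_j(\rho)$ and $-\frac{\epsilon}{\sqrt{dD}}\nu_j$ are real numbers of the same (weak) sign, so
\begin{equation*}
\Big|b_j(\rho)-\tfrac{\epsilon}{\sqrt{dD}}\nu_j\Big| = |b_j(\rho)| + \tfrac{\epsilon}{\sqrt{dD}} \ge \tfrac{\epsilon}{\sqrt{dD}} .
\end{equation*}
Squaring and summing only over mismatched indices gives
\begin{equation*}
\norm{\rho-\rho_\nu}_F^2\ge\frac{\epsilon^2}{dD}\sum_{j=1}^D\mathbbm{1}_{\{\mathbf{v}(\rho)_j\ne\nu_j\}} = 2\eta\sum_{j=1}^D\mathbbm{1}_{\{\mathbf{v}(\rho)_j\ne\nu_j\}},
\end{equation*}
with $\eta=\frac{\epsilon^2}{2dD}$, which is exactly the $2\eta$-Hamming separation of Definition~\ref{defn::Hamming_separation}.

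The argument is essentially routine once orthonormality is fixed. The only points needing care are confirming the normalization of the basis and handling the sign convention at $b_j=0$.
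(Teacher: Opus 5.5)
Your proposal is correct and is the same argument as the paper's proof: expand $\rho-\rho_\nu$ in the Frobenius-orthonormal Hermitian basis, drop the coordinates with index above $D$, and take $\mathbf{v}(\rho)_j=\sgn(b_j)$, so that each mismatched coordinate contributes at least $\frac{\epsilon^2}{dD}$. The paper leaves the case $b_j=0$ implicit; your convention $\sgn(0)=+1$ settles it, and your check of the orthonormal normalization is consistent with $V_{d^2}=\frac{1}{\sqrt d}\mathbbm{1}$.
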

\begin{proof}
    Using the definitions (\ref{eqn::GGM_basis_rep}) and (\ref{eqn::rho_nu_definition}), together with the fact, that the $V_i$ are orthonormal with respect to the Hilbert-Schmidt inner product, we get
    \begin{align*}
        \norm{\rho - \rho_{\nu}}_{F}^2 &= \norm{\sum_{j = 1}^{d^2-1} b_j V_j - \frac{\epsilon}{\sqrt{dD}} \sum_{j = 1}^D \nu_j V_j }_{F}^2 \hspace*{-3pt} \geq \sum_{j = 1}^D \left( b_j - \frac{\epsilon}{\sqrt{dD}} \nu_j \right)^2 \hspace*{-3pt} \geq \frac{\epsilon^2}{dD} \sum_{j = 1}^D \mathbbm{1}_{\{\mathbf{v}(\rho)_j \neq \nu_j \}},
    \end{align*}
    where $\mathbf{v}(\rho)_j = \sgn(b_j)$ for the parametrization~\eqref{eqn::GGM_basis_rep} of $\rho$.
\end{proof}

The next step in the reduction scheme involves showing that while the states are suitably separated, their outcome distributions after a locally-gentle measurement remain close, which is the content of the following lemma.

\begin{lemma}
\label{lem::upper_bound_distances_mixed_states}
    Let $\epsilon \leq \frac{1}{\sqrt{d}}$, $D \leq d^2 - 1$ and $(\rho_{\nu})_{\nu \in \mathcal{V}}$ as in~\eqref{eqn::rho_nu_definition}. Then, for the states defined in Definition~\ref{defn::mixed_distribution states} it holds
    \begin{equation*}
        \sup_{\norm{A}_{op} \leq 1} \sum_{j = 1}^D \frac{1}{2^{D-1}} \sum_{\nu \in \{-1,1\}^{D-1}} \Tr \left[ A (\rho_{\nu_{j+},i} - \rho_{\nu_{j-}, i})\right]^2 \leq \frac{4\epsilon^2}{D}
    \end{equation*}
\end{lemma}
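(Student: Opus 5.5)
Since $\rho_{\nu_{j+}}$ and $\rho_{\nu_{j-}}$ differ only in the $j$-th coordinate of the hypercube, the first step is to note that, by \eqref{eqn::rho_nu_definition},
\begin{equation*}
    \rho_{\nu_{j+}} - \rho_{\nu_{j-}} = \frac{2\epsilon}{\sqrt{dD}} V_j,
\end{equation*}
independently of the remaining coordinates $\nu \in \{-1,1\}^{D-1}$ and of the register $i$. The average over $\nu$ (with $2^{D-1}$ terms and normalisation $1/2^{D-1}$) therefore disappears. The left-hand side reduces to
\begin{equation*}
    \frac{4\epsilon^2}{dD} \sup_{\norm{A}_{op} \leq 1} \sum_{j=1}^D \Tr[A V_j]^2 .
\end{equation*}

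The second step is to bound $\sum_{j=1}^D \Tr[AV_j]^2$ uniformly over $\norm{A}_{op}\le 1$. Here I use that $(V_j)_{j=1}^{d^2}$ is an orthonormal basis of the real space of Hermitian matrices with respect to the Hilbert--Schmidt inner product. This orthonormality is assumed in the construction and already used in Lemma~\ref{lem::Hamming_distance_full_rank}.

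Consider first Hermitian $A$. By Bessel's inequality (Parseval for the full basis),
\begin{equation*}
    \sum_{j=1}^D \Tr[AV_j]^2 \le \sum_{j=1}^{d^2}\Tr[AV_j]^2 = \norm{A}_F^2 \le d\,\norm{A}_{op}^2 \le d .
\end{equation*}
For general complex $A$, write $A = A_1 + iA_2$ with Hermitian $A_1, A_2$ satisfying $\norm{A_k}_{op}\le 1$. Then $\Tr[AV_j] = \Tr[A_1V_j] + i\Tr[A_2V_j]$ with both terms real. Interpreting the square as the modulus squared, this gives $\sum_j |\Tr[AV_j]|^2 \le \norm{A_1}_F^2 + \norm{A_2}_F^2$. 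A cleaner route is to apply Bessel directly in the complex Hilbert--Schmidt space, since the $V_j$ are also orthonormal there: this yields $\sum_j |\Tr[AV_j]|^2 \le \norm{A}_F^2 \le d$. Alternatively, one can restrict to Hermitian $A$ from the outset, as is natural in the variational formula for the trace distance.

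Combining the two steps gives the bound
\begin{equation*}
    \frac{4\epsilon^2}{dD}\cdot d = \frac{4\epsilon^2}{D},
\end{equation*}
as claimed. The condition $\epsilon\le 1/\sqrt d$ is needed only to ensure that the $\rho_\nu$ are valid states; it plays no role in the bound itself.

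The only delicate point is the handling of non-Hermitian $A$ in the supremum. I would resolve it via complex orthonormality of the $V_j$ and Bessel's inequality in $\mathbb{C}^{d\times d}$. The key quantitative fact is $\norm{A}_F^2 \le d\norm{A}_{op}^2$, which is exactly what produces the favourable factor $d$ cancelling the $1/d$ in the perturbation size.
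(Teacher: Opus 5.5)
Your proof is correct and takes the same approach as the paper. Both arguments note that $\rho_{\nu_{j+}}-\rho_{\nu_{j-}}=\frac{2\epsilon}{\sqrt{dD}}V_j$ does not depend on $\nu$, and then apply Bessel's inequality together with $\norm{A}_F^2\le d\norm{A}_{op}^2$ to cancel the $1/d$. The paper simply restricts to positive semi-definite $A$, which is all Theorem~\ref{thm::C3} needs, so your treatment of non-Hermitian $A$ via complex Hilbert--Schmidt orthonormality is a valid but unnecessary addition.
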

\begin{proof}
    By definition, for a fixed $\nu \in \mathcal{V}$, we have 
    \begin{equation*}
        \rho_{\nu_{j+}} - \rho_{\nu_{j-}} = \frac{2\epsilon}{\sqrt{d D}} V_j.
    \end{equation*}
    This shows, that the term $\rho_{\nu_{j+}} - \rho_{\nu_{j-}}$ is independent of the actual $\nu$, showing 
    \begin{equation*}
        \sum_{j = 1}^D \frac{1}{2^{D-1}} \sum_{\nu \in \{-1,1\}^{D-1}} \Tr \left[ A (\rho_{\nu_{j+}} - \rho_{\nu_{j-}})\right]^2 = \frac{4 \epsilon^2}{dD} \sum_{j = 1}^D \Tr \left[ A V_j \right]^2
    \end{equation*}
    Since the $V_i$ form an orthonormal basis of the hermitian matrices with respect to the trace inner product, using Bessel's inequality, we have 
    \begin{equation*}
        \sum_{j = 1}^D \Tr^2 \left( A V_j \right) \leq \norm{A}_{F}^2 \leq d
    \end{equation*} 
    for any positive semi-definite $A$ such that $\norm{A}_{op} \leq 1$ which gives the result.
\end{proof}

We finalize the derivation of this lower bound by combining the above results such that we have
\begin{align*}
    \inf_{(M, \hat{\rho})} \max_{\rho \in \mathcal{S}(\mathbb{C}^d)} \mathbb{E}_{\rho}\left[ \norm{\hat{\rho} - \rho}^2\right] &\geq \frac{\epsilon^2}{2dD} \left( D - \frac{1}{2} \sqrt{D} \left( \frac{256\alpha^2 n d \epsilon^2}{(1-2\alpha)^4 d D} \right)^{\frac{1}{2}} \right).
\end{align*}
Assume first that $\frac{(1-2\alpha)^2}{16} \frac{d^2}{\alpha \sqrt{n}} \leq \frac{1}{\sqrt{d}}$. Then we can chose $D = d^2-1$ and $\epsilon = \frac{(1-2\alpha)^2}{16} \frac{D}{\alpha \sqrt{n}} \leq \frac{1}{\sqrt{d}}$. In that case we get
\begin{align*}
    \inf_{(M, \hat{\rho})} \max_{\rho \in \mathcal{S}(\mathbb{C}^d)} \mathbb{E}_{\rho}\left[ \norm{\hat{\rho} - \rho}^2\right] &\geq \frac{(1-2\alpha)^4}{1024} \frac{D^2}{dn \alpha^2} \geq \frac{(1-2\alpha)^4}{1024} \frac{d^3}{n \alpha^2}.
    \intertext{
    If $\frac{(1-2\alpha)^2}{16} \frac{d^2}{\alpha \sqrt{n}} > \frac{1}{\sqrt{d}}$ we chose a $D = \floor{\frac{\alpha \sqrt{n}}{\sqrt{d}} \frac{16}{(1-2\alpha)^2}}$. Then $\epsilon =  \frac{(1-2\alpha)^2}{16} \frac{D}{\alpha \sqrt{n}} \leq \frac{1}{\sqrt{d}}$ which gives}
    \inf_{(M, \hat{\rho})} \max_{\rho \in \mathcal{S}(\mathbb{C}^d)} \mathbb{E}_{\rho}\left[ \norm{\hat{\rho} - \rho}^2\right] &\geq \frac{1}{4 d^2}
\end{align*}
finally giving the lower bound
\begin{equation*}
    \inf_{(M, \hat{\rho})} \max_{\rho \in \mathcal{S}(\mathbb{C}^d)} \mathbb{E}_{\rho}\left[ \norm{\hat{\rho} - \rho}^2\right] \geq \min\left\{ \frac{1}{4d^2}, \frac{(1-2\alpha)^4}{256} \frac{d^3}{n \alpha^2} \right\}.
\end{equation*}

\begin{remark}
\label{rem::High_rank_lower_bound}
    We can adapt this construction to states of rank $r$ in the following way. Assume that the first $r^2-1$ matrices $V_1,...,V_{r^2-1}$ are non-zero only in the upper left sub-matrix. Such a construction is always possible. For example by taking the generalized Gell-Mann-matrices for dimension $r$ and adding $d^2-r^2$ matrices to build a basis of the $d$-dimensional hermitian matrices. Following the same construction as above would then give a rate of $\frac{dr^2}{n \alpha^2}$, which is suboptimal in general. However, for the case of $r \asymp d$ this rate is equivalent to $\frac{d^2r}{n \alpha^2}$. This extension of the lower bound construction~\eqref{eqn::GGM_basis_rep} therefore leads to the optimal lower bound of $\frac{d^2r}{n \alpha^2}$ as long as, say, $r \geq d/2$. To complement this result, for $r < d/2$, we use a different construction which we describe in the following section. 
\end{remark}

\subsubsection{Low-rank states}
\label{sec::lower_bound_construction_low_rank} 
Let us now construct a suitable set of local binary hypotheses for low rank quantum state tomography. The main problem for low-rank states arises from the fact that the space of rank $r$ quantum states has no natural vector space structure which excludes a construction similar to those for full rank states. As we are dealing with low-rank state, \emph{i.e.} $r < d/2$, an adaptation as in Remark~\ref{rem::High_rank_lower_bound} would lead to a sub-optimal lower bound of $\frac{dr^2}{n \alpha^2}$. Therefore, rather than perturbing a central state linearly, we construct our local binary hypothesis states by permuting it unitarily which does not change the rank. A similar construction has already been used by \cite{lahiry_minimax_2024} to show local asymptotic normality of low-rank quantum states to a quantum-classical ensemble of Gaussian distributions and shifted pure and thermal quantum states. As working with such unitary perturbations is more difficult than working with linear perturbations, we start the proof of our lower bounds by showing that the unitary perturbations are almost linear in the close vicinity of the maximally mixed state $\rho_{mm}^{(r)}$ of rank $r$. Formally, we make use of the Lie group structure of $SU(d)$ and its corresponding Lie algebra $\mathfrak{su}(d)$ in order to construct linear perturbations in the Lie algebra and track these perturbations on the states through the exponential map. The Lie algebra $\mathfrak{su}(d)$ is given by the traceless $d \times d$ skew-hermitian matrices. Note that this is the definition typically used in mathematics, in contrast to the definition often used in physics where $\mathfrak{su}(d)$ is the space of traceless hermitian matrices. We can define a basis of $\mathfrak{su}(d)$ using the following matrices.
\begin{alignat*}{2}
    H_l &= i \ket{l}\bra{l} - i \ket{l+1}\bra{l+1} \quad \quad \quad && 1 \leq l \leq d-1
    \\
    T_{k,l} &= \ket{l}\bra{k} - \ket{k}\bra{l} && 1 \leq l < k \leq d
    \\
    T_{l,k} &= i\ket{l}\bra{k} - i\ket{k}\bra{l} && 1 \leq l < k \leq d.
\end{alignat*}
For $r < d$, set $D = (d-r)r$ and $\mathcal{V} = \{-1, 1\}^D$. For any $\nu \in \mathcal{V}$ and $\epsilon > 0$, we define 
\begin{equation}
\label{eqn::S_nu_definition}
    S_{\nu} = \sum_{j = 1}^D \nu_j T_j = \sum_{l = 1}^r \sum_{k = r + 1}^d \nu_{k, l} T_{k,l} = \begin{bmatrix}
        0 & V_{\nu}
        \\
        -V_{\nu}^t & 0
    \end{bmatrix}\hspace{20pt} \text{and} \hspace{20pt} U_{\nu} = \exp(\epsilon S_{\nu}),
\end{equation}
for $V_{\nu} = \sum_{l = 1}^r \sum_{k = r+1}^d \nu_{k,l} \ket{l}\bra{k} = \sum_{j = 1}^D \nu_j T_j$, where we slightly abuse notation by identifying an index $j \in D$ with a pair $(k, l) \in \{r+1,...,d\} \times \{1,...,r\}$. Since $S_{\nu} \in \mathfrak{su}(d)$ we have $U_{\nu} \in SU(d)$. Therefore the matrix
\begin{equation}
\label{eqn::rho_nu_low_rank}
    \rho_{\nu} = U_{\nu} \rho_{mm}^{(r)} U_{\nu}^* = U_{\nu} \frac{1}{r} \mathbbm{1}_r U_{\nu}^*, \hspace{20pt} \text{for } \rho_{mm}^{(r)} := \frac{1}{r} \mathbbm{1}_r = \frac{1}{r} \diag(1,...,1,0,...,0)
\end{equation}
does in fact define a quantum state of rank $r$. In order to express $\rho_{\nu}$ differently we make use of the Baker-Campbell-Hausdorff-formula (\cite{hall_lie_2015}, Proposition 3.35) to calculate
\begin{equation}
\label{eqn::rho_nu_low_rank_expansion}
    \rho_{\nu} = \rho_{mm}^{(r)} +  \frac{1}{r}\sum_{m = 1}^{\infty} \frac{\epsilon^m}{m!} ad_{S_{\nu}}^m(\mathbbm{1}_r) = \rho_{mm}^{(r)} + \Delta_{\nu}(\epsilon)
\end{equation}
where $ad_{A}(B) = [A, B] = AB - BA$ denotes the commutator of the matrices $A$ and $B$ and $ad_{A}^m(B) = \left[ A, \left[ ... ,[A, B] \right] \right]$. Equation~\eqref{eqn::rho_nu_low_rank_expansion} is a non-commutative Taylor expansion of the exponential map mapping the Lie-algebra $\mathfrak{su}(d)$ onto the Lie-group $SU(d)$. By analyzing the structure of our local alternative states, we can show that the perturbations are almost linear and are only along the off-diagonal elements. Formally, in Lemma~\ref{lem::calculation_Delta_nu_epsilon}, we show that for small $\epsilon$, $\Delta_{\nu}(\epsilon)$ can be written as
\begin{equation}
\label{eqn::Unitary_approximation}
    \Delta_{\nu}(\epsilon) = O(\epsilon) \begin{bmatrix}
        O(\epsilon) & V_{\nu}
        \\
        V_{\nu}^t & O(\epsilon)
    \end{bmatrix} = O(\epsilon) \begin{bmatrix}
        0 & V_{\nu}
        \\
        V_{\nu}^t & 0
    \end{bmatrix} + O(\epsilon^2).
\end{equation}
We then use this result to show the following separation property of the states $\rho_{\nu}$.
\begin{lemma}
\label{lem::Hamming_distance_low_rank}
    Let $D \in \{1,...,d^2 -1 \}$ and $\mathcal{V} = \{-1,1\}^D$ and assume that $\epsilon \leq \sqrt{\frac{3}{8}} \frac{1}{r \sqrt{d-r}}$. The set of local binary hypotheses $\mathcal{F} = \{\rho_{\nu}\}_{\nu \in \mathcal{V}}$ defined by~\eqref{eqn::rho_nu_low_rank} induces a $2\eta$-Hamming separation for $\norm{\cdot}_F^2$ on $\mathcal{S} = \mathcal{S}_r(\mathbb{C}^d)$ where $\eta = \frac{\epsilon^2}{r^2}$
\end{lemma}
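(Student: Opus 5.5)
We mirror the proof of Lemma~\ref{lem::Hamming_distance_full_rank}: find a map $\mathbf{v}:\mathcal{S}_r(\mathbb{C}^d)\to\{-1,1\}^D$ and a coordinate-wise lower bound on $\norm{\rho-\rho_\nu}_F^2$. The natural coordinates are the real parts of the off-diagonal block entries. For $\rho\in\mathcal{S}_r(\mathbb{C}^d)$ write $b_{k,l}(\rho)=\mathrm{Re}\,\langle l|\rho|k\rangle$ for $l\le r<k$, and set $\mathbf{v}(\rho)_{k,l}=\sgn(b_{k,l}(\rho))$. The entries $(l,k)$ and $(k,l)$ of a Hermitian matrix both contribute to the Frobenius norm, so
\begin{equation*}
\norm{\rho-\rho_\nu}_F^2\;\ge\;2\sum_{l=1}^r\sum_{k=r+1}^d\big(b_{k,l}(\rho)-\mathrm{Re}\langle l|\rho_\nu|k\rangle\big)^2 .
\end{equation*}
Whenever the signs disagree, each summand is at least $(\mathrm{Re}\langle l|\rho_\nu|k\rangle)^2$. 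It therefore suffices to show that every off-diagonal block entry of $\rho_\nu$ has sign $\nu_{k,l}$ (up to a global orientation convention) and magnitude at least $\epsilon/r$. That gives $\norm{\rho-\rho_\nu}_F^2\ge 2\frac{\epsilon^2}{r^2}\sum_j\mathbbm{1}_{\{\mathbf{v}(\rho)_j\neq\nu_j\}}$, which is exactly $\eta=\epsilon^2/r^2$.

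The core step is an explicit computation of $\rho_\nu$, which sharpens~\eqref{eqn::Unitary_approximation} (this is Lemma~\ref{lem::calculation_Delta_nu_epsilon}). Write $S_\nu=\begin{bmatrix}0&V\\-V^t&0\end{bmatrix}$ with $V=V_\nu$ real of size $r\times(d-r)$. Then $S_\nu^2=-\diag(VV^t,V^tV)$, so the exponential has the closed form
\begin{equation*}
U_\nu=\begin{bmatrix}\cos(\epsilon\sqrt{VV^t}) & V\,\sinc\text{-type}(\epsilon\sqrt{V^tV})\\ \cdot & \cos(\epsilon\sqrt{V^tV})\end{bmatrix}.
\end{equation*}
This makes the off-diagonal block of $\rho_\nu=\frac1r U_\nu\mathbbm{1}_rU_\nu^*$ equal to $\frac1r\cos(\epsilon\sqrt{VV^t})\,\frac{\sin(\epsilon\sqrt{VV^t})}{\sqrt{VV^t}}\,V$. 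Equivalently, the ad-series~\eqref{eqn::rho_nu_low_rank_expansion} gives the off-diagonal block as $\frac{1}{r}\big(\epsilon V-\tfrac{2}{3}\epsilon^3 VV^tV+\dots\big)$, the odd-order terms, with the even-order terms landing in the diagonal blocks.

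The main obstacle is controlling the cubic and higher remainders entrywise rather than only in norm. We need
\begin{equation*}
\big|\big(\tfrac{\sin(2\epsilon X)}{2X}V-\epsilon V\big)_{l,k}\big|\le c\,\epsilon,\qquad X=\sqrt{VV^t},
\end{equation*}
with $c<1-1/\sqrt2$ or similar, so that the true entry stays of size $\ge\epsilon/r$ after possibly adjusting constants. Since $V$ has $\pm1$ entries, $\norm{VV^t}_{op}\le r(d-r)$, so $\epsilon^2\norm{VV^t}_{op}\le \epsilon^2 r(d-r)$. Under the hypothesis $\epsilon\le\sqrt{3/8}/(r\sqrt{d-r})$, this is at most $3/(8r)\le 3/8$. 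The series remainder is then bounded by $\frac{2}{3}\epsilon^3\norm{VV^tV}_{\max}$ times a geometric factor. Using $|(VV^tV)_{l,k}|\le r(d-r)$ yields a relative error of about $\frac23\epsilon^2 r(d-r)\le 1/4$. The factor $r^2$ in the hypothesis gives extra room, which is presumably where $3/8$ comes from.

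With the relative error below one half, one can either absorb constants, or match the paper's exact $\eta$ by comparing $\rho$ against the leading term and using the factor 2 from Hermitian symmetry to compensate for the $(1-\text{error})^2$ loss. We expect to tune this final bookkeeping so that $2\cdot(1-1/4)^2\ge 1$, giving $\eta=\epsilon^2/r^2$ exactly.
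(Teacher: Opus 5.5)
Your route is the paper's: let $\mathbf{v}$ read off (minus) the sign of $\Re\langle l|\rho|k\rangle$ on the off-diagonal block, use the Hermitian factor $2$, and control the $\sinc$ correction entrywise through $\norm{G_\nu}_{op}\le r(d-r)$. Your closed form is also correct. The off-diagonal block of $\rho_\nu$ is $-\frac{\epsilon}{r}\sinc(2\epsilon\sqrt{G_\nu})V_\nu$, with leading term $-\frac{\epsilon}{r}V_\nu$.

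The gap is the final bookkeeping. The factor $2$ from Hermitian symmetry \emph{is} the $2$ in $2\eta$; you already spent it in your first paragraph. It cannot also absorb the $(1-c)^2$ loss. With entrywise relative error $c\le 1/4$ you get $\norm{\rho-\rho_\nu}_F^2\ge 2(1-c)^2\frac{\epsilon^2}{r^2}\sum_j\mathbbm{1}_{\{\mathbf{v}(\rho)_j\neq\nu_j\}}$, i.e.\ $\eta=\frac{9}{16}\frac{\epsilon^2}{r^2}$. The inequality $2(3/4)^2\ge 1$ only certifies $\eta=\frac{\epsilon^2}{2r^2}$. Your intermediate target ``modulus at least $\epsilon/r$'' is also false. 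If all $\nu_{k,l}$ are equal, then $G_\nu V_\nu=r(d-r)V_\nu$, and every entry has modulus $\frac{\epsilon}{r}\sinc\bigl(2\epsilon\sqrt{r(d-r)}\bigr)<\frac{\epsilon}{r}$.

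In fact no choice of $\mathbf{v}$ achieves $\eta=\epsilon^2/r^2$ for the correctly computed $\rho_\nu$, so the statement fails with its stated constant. Take $\rho=\rho_{mm}^{(r)}\in\mathcal{S}_r(\mathbb{C}^d)$ and $\nu=-\mathbf{v}(\rho)$, so that all $D$ coordinates disagree. If $\sigma_i$ are the singular values of $V_\nu$, then $U_\nu$ rotates the $i$-th pair of singular directions by the angle $\epsilon\sigma_i$. Hence $\norm{\rho_{mm}^{(r)}-\rho_\nu}_F^2=\frac{2}{r^2}\sum_i\sin^2(\epsilon\sigma_i)<\frac{2\epsilon^2}{r^2}\norm{V_\nu}_F^2=2\frac{\epsilon^2}{r^2}D$. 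For $r=1$, $d=2$ this reads $2\sin^2\epsilon<2\epsilon^2$.

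The paper reaches $\epsilon^2/r^2$ only because Lemma~\ref{lem::calculation_Delta_nu_epsilon} puts $2\epsilon\sinc(2\epsilon\sqrt{G_\nu})V_\nu$ in the off-diagonal block, twice the true value. Its series step replaces $2^{2m}\epsilon^{2m+1}$ by $(2\epsilon)^{2m+1}$. For $r=1$, $d=2$ the resulting $\rho_\nu$ has determinant $-\frac34\sin^2(2\epsilon)<0$, so it is not even a state. With leading coefficient $2\epsilon/r$, a relative error of $1/2$ still leaves $\epsilon/r$, which is how the paper's proof closes.

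So abandon the exact constant. Using $1-\sinc(x)\le x^2/6$, the hypothesis gives $\norm{\sinc(2\epsilon\sqrt{G_\nu})-\mathbbm{1}_r}_{op}\le\frac{1}{4r}$. For $A_\nu=\sinc(2\epsilon\sqrt{G_\nu})-\mathbbm{1}_r$ this yields $|(A_\nu V_\nu)_{l,k}|\le\sqrt{r}\,\norm{A_\nu}_{op}\le\frac{1}{4\sqrt r}$. Your argument then proves the lemma with $\eta=\frac{9\epsilon^2}{16r^2}$, which changes only absolute constants in the low-rank lower bound.
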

The following Lemma gives an upper bound on the distance between the distributions of the measurement outcomes.
\begin{lemma}
\label{lem::upper_bound_distances_rank_r_states}
    Let $\epsilon \leq \frac{1}{d^2r^2}$, $D = r(d-r)$ and $(\rho_{\nu})_{\nu \in \mathcal{V}}$ as in~\eqref{eqn::rho_nu_low_rank}. Then, for the states defined in Definition~\ref{defn::mixed_distribution states} it holds
    \begin{equation*}
        \sup_{\norm{A}_{op} \leq 1} \sum_{j = 1}^D \frac{1}{2^{D-1}} \sum_{\nu \in \{-1,1\}^{D-1}} \Tr \left[ A (\rho_{\nu_{j+}} - \rho_{\nu_{j-}})\right]^2 \leq 72 \frac{\epsilon^2}{r}.
    \end{equation*}
\end{lemma}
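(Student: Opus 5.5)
We will follow the pattern of Lemma~\ref{lem::upper_bound_distances_mixed_states}: split $\rho_{\nu_{j+}} - \rho_{\nu_{j-}}$ into a part linear in $\epsilon$ and a remainder. The linear part is independent of $\nu$ and lives on mutually orthogonal directions, so a Bessel-type argument applies to it. The remainder will be controlled crudely using the smallness assumption $\epsilon \le 1/(d^2r^2)$.

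The first step is the linear term of the expansion~\eqref{eqn::rho_nu_low_rank_expansion}. Since $S_\nu = \begin{bmatrix} 0 & V_\nu \\ -V_\nu^t & 0\end{bmatrix}$ and $\mathbbm{1}_r$ is the projector onto the first block, we get
\begin{equation*}
\frac{\epsilon}{r}[S_\nu, \mathbbm{1}_r] = -\frac{\epsilon}{r}\begin{bmatrix} 0 & V_\nu \\ V_\nu^t & 0\end{bmatrix}.
\end{equation*}
This matches~\eqref{eqn::Unitary_approximation}. Flipping coordinate $j=(k,l)$ changes this term by
\begin{equation*}
-\frac{2\epsilon}{r} W_j, \qquad W_j := \ket{l}\bra{k} + \ket{k}\bra{l}.
\end{equation*}
This difference does not depend on the other coordinates of $\nu$. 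The $W_j$, $j=(k,l)$ with $l\le r<k$, are pairwise Hilbert--Schmidt orthogonal with $\norm{W_j}_F^2=2$. Moreover, $\Tr[AW_j] = A_{kl}+A_{lk}$ involves only the off-diagonal block $A_{12}$ of $A$ (rows $1,\dots,r$, columns $r+1,\dots,d$) and its transpose counterpart. Hence
\begin{equation*}
\sum_j \Tr[AW_j]^2 \le 2\sum_{l\le r<k} \left(|A_{lk}|^2+|A_{kl}|^2\right) \le 4\norm{A_{12}}_F^2 \le 4r\norm{A}_{op}^2,
\end{equation*}
where the last step uses that $A_{12}$ has only $r$ rows, each of Euclidean norm at most $\norm{A}_{op}$. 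This is the key gain: the bound is $r$ rather than $d$. The linear contribution to the sum is therefore at most $\frac{4\epsilon^2}{r^2}\cdot 4r = 16\epsilon^2/r$.

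The second step handles the remainder $R_\nu(\epsilon) = \frac1r\sum_{m\ge2}\frac{\epsilon^m}{m!}\,ad_{S_\nu}^m(\mathbbm{1}_r)$. We will use $\norm{ad_{S}^m(B)}_{op}\le (2\norm{S}_{op})^m\norm{B}_{op}$ together with $\norm{S_\nu}_{op}\le \norm{V_\nu}_F \le \sqrt{r(d-r)}$. This gives
\begin{equation*}
\norm{R_\nu}_{Tr} \le d\,\norm{R_\nu}_{op} \lesssim \frac{d}{r}\,(\epsilon\sqrt{rd})^2,
\end{equation*}
provided $\epsilon\sqrt{rd}\le 1$, which the assumption on $\epsilon$ guarantees. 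Then, using $(a+b)^2\le 2a^2+2b^2$, the contribution of the remainder differences to the sum is at most
\begin{equation*}
D\cdot 4\sup_\nu \norm{R_\nu}_{Tr}^2 \lesssim rd\cdot d^2\epsilon^4 d^2.
\end{equation*}
Since $\epsilon\le 1/(d^2r^2)$, we have $rd^5\epsilon^4 \le \epsilon^2/r$, up to constants that we expect to fit within the total constant $72$ together with the linear part.

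The main obstacle is the constant bookkeeping in the remainder. An alternative that avoids the crude trace-norm bound is to use the structure from Lemma~\ref{lem::calculation_Delta_nu_epsilon}. The even-order terms are block diagonal and of order $\epsilon^2 V_\nu V_\nu^t$ and $\epsilon^2 V_\nu^tV_\nu$, while the odd-order terms are off-diagonal. For the $j$-flip difference, one could compute the second-order term explicitly; it is linear in $\nu_j$ times the other coordinates, so averaging over $\nu$ gives better bounds. I would first try the crude bound and fall back on this explicit computation only if the constants fail.
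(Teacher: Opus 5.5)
Your treatment of the linear term is correct and uses the same key idea as the paper: only the off-diagonal block $A_{12}$ enters, which gives a factor $r$ instead of $d$. Your bound $\norm{A_{12}}_F^2\le r\norm{A}_{op}^2$ even avoids the positivity of $A$ that the paper uses (via $A-A^2\ge 0$) to get $r/4$.

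\textbf{The gap is in the remainder.} Converting operator norm to trace norm with a factor $d$ gives a remainder contribution of order $rd^5\epsilon^4$. Your claim $rd^5\epsilon^4\le\epsilon^2/r$ is equivalent to $\epsilon^2\le 1/(r^2d^5)$. The hypothesis $\epsilon\le 1/(d^2r^2)$ only gives $\epsilon^2\le 1/(r^4d^4)$, hence only $rd^5\epsilon^4\le \epsilon^2 d/r^3$. That is $O(\epsilon^2/r)$ only when $d\lesssim r^2$; for $r=1$ your remainder bound is of order $d\,\epsilon^2$ against the target $72\epsilon^2$. This is a loss of a factor $d/r^2$, not a matter of constants, so the crude route as written does not prove the lemma.

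\textbf{The fix.} The missing observation is that the remainder differences have rank $O(r)$, so passing from operator to trace norm should cost $r$, not $d$. Both $\rho_{\nu_{j+}}$ and $\rho_{\nu_{j-}}$ have rank $r$, and the linear difference $-\frac{2\epsilon}{r}W_j$ has rank $2$. Hence $R_{\nu_{j+}}-R_{\nu_{j-}}$ has rank at most $2r+2$. With $x=2\epsilon\sqrt{r(d-r)}\le 1$ (guaranteed by the hypothesis),
\begin{equation*}
\norm{R_{\nu_{j+}}-R_{\nu_{j-}}}_1\le (2r+2)\,\frac{2}{r}\,\bigl(e^{x}-1-x\bigr)\le 32\,\epsilon^2 rd .
\end{equation*}
Summing the squares over $D\le rd$ indices gives $O(\epsilon^4r^3d^3)$. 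Using $\epsilon^2\le 1/(r^4d^4)$, this is $O(\epsilon^2/(rd))$, of lower order than $\epsilon^2/r$. Pinning down the exact constant $72$ still needs more careful numerical estimates, especially for small $d$.

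Your fallback would also work, but it has to be carried out. The $j$-flip difference of the second-order term $\frac{\epsilon^2}{r}\diag(-V_\nu V_\nu^t,\,V_\nu^tV_\nu)$ is independent of $\nu_j$ and linear in the signs $\nu_{k,l'}$ ($l'\ne l$) and $\nu_{k',l}$ ($k'\ne k$). So its averaged square, summed over $j$, is $O(\epsilon^4 d/r)$, and orders $\ge 3$ can then be bounded crudely.

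Do not expect the paper to supply a shortcut here. It splits $\Delta_\nu$ via Lemma~\ref{lem::calculation_Delta_nu_epsilon} into the $\sin^2$ blocks, the $(\sinc-\mathbbm{1}_r)V_\nu$ blocks and the linear part, then applies H\"older. However, its estimate $\Tr[AM^{(1)}]^2\le 4\epsilon^4(d-r)^2\norm{A}_{op}\norm{\cdot}_1$ leaves $\norm{A}_{op}\norm{\cdot}_1$ unsquared. With the square restored, that step suffers the same $d/r^2$ loss, so a rank argument of the above type is needed there as well.
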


This result now allows us to utilize our reduction scheme to obtain
\begin{align*}
    \inf_{(M, \hat{\rho})} \max_{\rho \in \mathcal{S}_r(\mathbb{C}^d)} \mathbb{E}_{\rho}\left[ \norm{\hat{\rho} - \rho}_F^2\right] \geq& \frac{\epsilon^2}{r^2} \left( r(d-r) - \frac{1}{2} \sqrt{r(d-r)} \left( \frac{64 n\alpha^2}{(1 - 2\alpha)^4} \frac{72 \epsilon^2}{r} \right)^{\frac{1}{2}} \right).
\end{align*}
We chose
\begin{align*}
    \epsilon = \min\left\{ \frac{1}{d^2 r^2} , \frac{(1-2\alpha)^2}{\sqrt{4608}} \frac{r\sqrt{d-r}}{\alpha \sqrt{n}}\right\}
\end{align*}
to get
\begin{align*}
    \inf_{(M, \hat{\rho})} \max_{\rho \in \mathcal{S}_r(\mathbb{C}^d)} \mathbb{E}_{\rho}\left[ \norm{\hat{\rho} - \rho}_F^2\right] 
    \geq \min\left\{ \frac{1}{4 d^3 r^5} ,\frac{(1-2\alpha)^4}{36864} \frac{r d^2}{n \alpha^2} \right\}
\end{align*}
as long as $r \leq d/2$.

\subsubsection{Pure states}
\label{sec::lower_bound_construction_pure_states}
Although the lower bound for pure states is included in the low rank case for $r = 1$, we will give a short description of how to construct a suitable set of local pure hypothesis states that allows for the application within our reduction scheme. We not only give an explicit construction for pure states, but also improve on the second order term of the rate from $1/d^3r^5$ to $1/4$. Let us assume without loss of generality that $d$ is even. In the case of odd $d$, we omit one dimension from our considerations. For an orthonormal basis $(\ket{k})_{k \in \{1,...,d\}}$, consider the pure states $\left(\rho_{\nu}\right)_{\nu \in \mathcal{V}} = \left(\ket{\psi_{\nu}}\bra{\psi_{\nu}}\right)_{\nu \in \mathcal{V}}$ for $\mathcal{V} = \{-1, 1\}^{D}$ and $D = \frac{d}{2}$, where
\begin{equation}
\label{eqn::rho_nu_pure}
    \ket{\psi_{\nu}} = \sum_{k = 1}^D \frac{1 + \epsilon \nu_k}{\sqrt{d(1+\epsilon^2)}} \ket{k} + \frac{1 - \epsilon \nu_k}{\sqrt{d(1+\epsilon^2)}} \ket{k + \frac{d}{2}},
\end{equation}
for $\epsilon \leq 1$.
\begin{lemma}
\label{lem::Hamming_distance_pure}
    Let $D = d/2$ and $\mathcal{V} = \{-1,1\}^D$. The set of local binary hypotheses $\mathcal{F} = \{\rho_{\nu}\}_{\nu \in \mathcal{V}}$ defined by~\eqref{eqn::rho_nu_pure} induce a $2\eta$-Hamming separation for $\norm{\cdot}_F^2$ on $\mathcal{S}_{pure}(\mathbb{C}^d)$ where $\eta = \frac{\epsilon^2}{2d}$.
\end{lemma}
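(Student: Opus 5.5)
The plan is to take $\mathbf{v}(\rho)$ to be a nearest hypothesis. Then the triangle inequality reduces the claim to a lower bound on the pairwise distances $\norm{\rho_\nu-\rho_{\nu'}}_F^2$ in terms of Hamming distance. Those distances can be computed exactly with Lemma~\ref{lemmaTraceNorm}, because the hypotheses are pure states with explicit real amplitudes.

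\emph{Step 1 (choice of $\mathbf{v}$).} For $\rho\in\mathcal{S}_{pure}(\mathbb{C}^d)$ I set $\mathbf{v}(\rho)\in\arg\min_{\nu\in\mathcal{V}}\norm{\rho-\rho_\nu}_F$, breaking ties arbitrarily. For any $\nu$ the triangle inequality gives
\begin{equation*}
\norm{\rho_\nu-\rho_{\mathbf{v}(\rho)}}_F\le \norm{\rho-\rho_\nu}_F+\norm{\rho-\rho_{\mathbf{v}(\rho)}}_F\le 2\norm{\rho-\rho_\nu}_F .
\end{equation*}
Hence $\norm{\rho-\rho_\nu}_F^2\ge \tfrac14\norm{\rho_\nu-\rho_{\mathbf{v}(\rho)}}_F^2$. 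It therefore suffices to show $\norm{\rho_\nu-\rho_{\nu'}}_F^2\ge \frac{4\epsilon^2}{d}\,h(\nu,\nu')$ for all $\nu,\nu'\in\mathcal{V}$, where $h$ denotes Hamming distance. Combining the two bounds then yields $\norm{\rho-\rho_\nu}_F^2\ge \frac{\epsilon^2}{d}\sum_j\mathbbm{1}_{\{\mathbf{v}(\rho)_j\neq\nu_j\}}$, which is exactly $2\eta$ times the Hamming count with $\eta=\epsilon^2/(2d)$.

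\emph{Step 2 (overlaps).} All amplitudes in \eqref{eqn::rho_nu_pure} are real. Expanding the inner product gives
\begin{equation*}
\langle\psi_\nu|\psi_{\nu'}\rangle=\frac{1}{d(1+\epsilon^2)}\sum_{k=1}^{D}\bigl[(1+\epsilon\nu_k)(1+\epsilon\nu'_k)+(1-\epsilon\nu_k)(1-\epsilon\nu'_k)\bigr]=\frac{2}{d(1+\epsilon^2)}\sum_{k=1}^D\bigl(1+\epsilon^2\nu_k\nu'_k\bigr).
\end{equation*}
Using $\nu_k\nu'_k=1-2\mathbbm{1}_{\{\nu_k\neq\nu'_k\}}$ and $D=d/2$, this simplifies to $o:=1-\frac{4\epsilon^2 h}{d(1+\epsilon^2)}$. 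Since $h\le d/2$ and $\epsilon\le1$, we have $o\in[0,1]$. As a by-product, taking $\nu=\nu'$ gives $o=1$, which confirms that the $\ket{\psi_\nu}$ are normalized.

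\emph{Step 3 (conclusion).} By Lemma~\ref{lemmaTraceNorm}, $\norm{\rho_\nu-\rho_{\nu'}}_F^2=2(1-o^2)=2(1-o)(1+o)\ge 2(1-o)=\frac{8\epsilon^2 h}{d(1+\epsilon^2)}$. With $\epsilon\le1$ this is at least $\frac{4\epsilon^2h}{d}$, as required in Step 1.

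There is no real obstacle. The only points needing care are that the overlap is nonnegative, so that the factor $1+o\ge1$ can be dropped, and that the factor $\tfrac14$ from the nearest-neighbour argument is exactly absorbed by the constant $8/(1+\epsilon^2)\ge4$. If the constant in the statement had been tighter, one would have to argue coordinatewise instead of through a global nearest neighbour.
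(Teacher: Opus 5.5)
Your proof is correct, and it follows a different route from the paper's.

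\textbf{The paper's route.} The paper builds an explicit coordinatewise decoder, $\mathbf{v}(\rho)_k=\sgn\bigl(|\gamma_k|-1/\sqrt{d(1+\epsilon^2)}\bigr)$. It uses the purity of $\rho$, together with the nonnegativity of the hypothesis amplitudes, to pass to the modulus vector $\ket{\psi'}=\sum_k|\gamma_k|\ket{k}$. It then bounds $2\bigl(1-|\langle\psi|\psi_\nu\rangle|^2\bigr)\ge\norm{\ket{\psi'}-\ket{\psi_\nu}}^2$ and charges each coordinate with $\mathbf{v}(\rho)_k\neq\nu_k$ separately.

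\textbf{Your route.} You use the generic nearest-hypothesis reduction. It needs nothing about $\rho$, so it works verbatim on all of $\mathcal{S}(\mathbb{C}^d)$ and not only on pure states. It needs only the pairwise geometry of the hypotheses, which you compute exactly via the overlap $1-\frac{4\epsilon^2h}{d(1+\epsilon^2)}$. The price is a factor $4$ from the triangle inequality and a less explicit $\mathbf{v}$. That factor is harmless here because your pairwise bound $\frac{8\epsilon^2h}{d(1+\epsilon^2)}$ has room to spare when $\epsilon\le1$.

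\textbf{The constant.} Your route is the one that actually delivers the stated constant. The paper's per-coordinate bound is only $\frac{\epsilon^2}{d(1+\epsilon^2)}$. Its final step, $\frac{\epsilon^2}{d(1+\epsilon^2)}\ge\frac{\epsilon^2}{d}$, goes the wrong way, so as written it yields only $\eta=\epsilon^2/(4d)$. This does not matter for the rate, but your argument gives $\eta=\epsilon^2/(2d)$ as claimed.
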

\noindent
For the outcomes of any gentle measurements on these local hypothesis states we have the following result.
\begin{lemma}
\label{lem::upper_bound_distances_pure_states}
    Let $\epsilon \leq 1$, $D = \frac{d}{2}$ and $(\rho_{\nu})_{\nu \in \mathcal{V}}$ as in~\eqref{eqn::rho_nu_pure}. Then, for the states defined in Definition~\ref{defn::mixed_distribution states} it holds
    \begin{equation*}
        \sup_{\norm{A}_{op} \leq 1} \sum_{j = 1}^D \frac{1}{2^{D-1}} \sum_{\nu \in \{-1,1\}^{D-1}} \Tr \left[ A (\rho_{\nu_{j+},i} - \rho_{\nu_{j-}, i})\right]^2 \leq \frac{392 \epsilon^2}{d}.
    \end{equation*}
\end{lemma}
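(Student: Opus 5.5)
The plan is to exploit that each $\ket{\psi_\nu}$ in~\eqref{eqn::rho_nu_pure} is affine in $\nu$, so that flipping one coordinate $\nu_j$ changes $\rho_\nu$ by a rank-two perturbation that involves a single fixed vector $f_j$. Set $c = (d(1+\epsilon^2))^{-1/2}$, $D=d/2$, and define
\begin{equation*}
\ket{a} = c\sum_{k=1}^D \left(\ket{k}+\ket{k+D}\right), \qquad \ket{f_k} = \ket{k}-\ket{k+D}, \qquad \ket{b_\nu} = c\sum_{k=1}^D \nu_k \ket{f_k}.
\end{equation*}
Then $\ket{\psi_\nu} = \ket{a}+\epsilon\ket{b_\nu}$.

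Fix $j$ and $\nu\in\{-1,1\}^{D-1}$, and let $\ket{b'}$ be the part of $\ket{b_\nu}$ without the $j$-th term, so that $\ket{b_{\nu_{j\pm}}} = \ket{b'}\pm c\ket{f_j}$. Expanding $\rho_{\nu_{j\pm}} = \ket{\psi_{\nu_{j\pm}}}\bra{\psi_{\nu_{j\pm}}}$, all terms even in the $j$-th sign cancel in the difference. The cross terms combine as
\begin{equation*}
\rho_{\nu_{j+}}-\rho_{\nu_{j-}} = 2c\epsilon\left(\ket{g}\bra{f_j} + \ket{f_j}\bra{g}\right), \qquad \ket{g} := \ket{a}+\epsilon\ket{b'},
\end{equation*}
and $\ket{g}$ does not depend on $\nu_j$. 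This identity is the key step, and it is a short exact computation with no Taylor approximation needed. It gives
\begin{equation*}
\Tr[A(\rho_{\nu_{j+}}-\rho_{\nu_{j-}})] = 2c\epsilon\left(\bra{f_j}A\ket{g} + \bra{g}A\ket{f_j}\right),
\end{equation*}
and hence, by $(x+y)^2\le 2|x|^2+2|y|^2$,
\begin{equation*}
\Tr[A(\rho_{\nu_{j+}}-\rho_{\nu_{j-}})]^2 \le 8c^2\epsilon^2\left(|\bra{f_j}A\ket{g}|^2+|\bra{f_j}A^*\ket{g}|^2\right).
\end{equation*}

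Next I sum over $j$. The vectors $f_j$, $j=1,\dots,D$, are mutually orthogonal with $\norm{f_j}^2=2$, so Bessel's inequality gives $\sum_j |\bra{f_j}A\ket{g}|^2 \le 2\norm{A\ket{g}}^2 \le 2\norm{A}_{op}^2\norm{g}^2 \le 2\norm{g}^2$, and the same holds for $A^*$. One subtlety is that $\ket{g}$ depends on $j$ through $b'$. However, the bound $\norm{g}^2 = c^2\big(d+\epsilon^2(d-2)\big)\le c^2 d(1+\epsilon^2)=1$ is uniform in $j$ and in $\nu$, because $a\perp f_k$ for every $k$. To apply Bessel cleanly despite this dependence, I plan either to bound each term by $|\bra{f_j}A\ket{a}|^2$ plus a $b'$-contribution, or, more simply, to note that the $j$-dependent part of $b'$ is orthogonal to $f_j$ and to handle it via $\sum_j|\bra{f_j}A\ket{h_j}|^2$ with $h_j$ uniformly bounded. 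Carrying constants loosely, this yields at most a constant times $c^2\epsilon^2\le \epsilon^2/d$.

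Finally, the bound is uniform in $\nu$, so averaging over $\nu\in\{-1,1\}^{D-1}$ changes nothing. Taking the supremum over $\norm{A}_{op}\le1$ then gives a bound of order $32\epsilon^2/d$ from the clean case, with room to absorb the crude handling of the $j$-dependence of $g$, well within $392\epsilon^2/d$. The main obstacle is exactly that $j$-dependence of $\ket{g}$ inside the Bessel step. My first attempt is to split $\ket{g} = \ket{a}+\epsilon\ket{b_\nu}-\epsilon\nu_j c\ket{f_j}$ into the $\nu$-only part, which is handled by Bessel, and the $f_j$ part, which contributes at most $\epsilon^2c^2\norm{A}^2\norm{f_j}^4$ per $j$, i.e.\ $O(\epsilon^2 c^2 D)$ in total, i.e.\ $O(\epsilon^2)$ after the $c^2$ prefactor. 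That term is too large, so I will instead rely on the observation that $\ket{g}$ can be replaced by $\tfrac12(\ket{\psi_{\nu_{j+}}}+\ket{\psi_{\nu_{j-}}})$ and bound the $f_j$-component separately using $\bra{f_j}A\ket{f_j}$-type terms weighted by $c\epsilon$. Those terms sum to $O(c^2\epsilon^2 D)\cdot c^2 = O(\epsilon^2/d^2)$ after the $c^2$ prefactor, which is harmless.
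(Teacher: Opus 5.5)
Your route is correct and differs from the paper's, but your last paragraph reaches its conclusion through faulty bookkeeping. The split you rejected as ``too large'' is exactly the one that works. The replacement you then adopt is the same split: $\tfrac12(\ket{\psi_{\nu_{j+}}}+\ket{\psi_{\nu_{j-}}})$ is just $\ket g$ again, so it removes no $j$-dependence by itself.

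To make it clean, note that the $j$-th summand does not depend on $\nu_j$. So rewrite the left-hand side as $2^{-D}\sum_{\nu\in\{-1,1\}^D}\sum_{j}\Tr[A(\rho_{\nu_{j+}}-\rho_{\nu_{j-}})]^2$. For a fixed full sign vector $\nu$ this gives $\ket{g_j}=\ket{\psi_\nu}-\epsilon c\,\nu_j\ket{f_j}$, with $\ket{\psi_\nu}$ one unit vector common to all $j$. For Hermitian $A$ with $\norm{A}_{op}\le 1$, your identity gives $\Tr[A(\rho_{\nu_{j+}}-\rho_{\nu_{j-}})]=4c\epsilon\,\Re\bra{f_j}A\ket{g_j}$; your $A$/$A^*$ splitting covers general $A$ with the same constant. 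Hence
\begin{equation*}
\sum_{j=1}^D \Tr[A(\rho_{\nu_{j+}}-\rho_{\nu_{j-}})]^2 \le 32c^2\epsilon^2\Big(\sum_{j}|\bra{f_j}A\ket{\psi_\nu}|^2+\epsilon^2c^2\sum_{j}|\bra{f_j}A\ket{f_j}|^2\Big)\le 32c^2\epsilon^2\big(2+4\epsilon^2c^2D\big)\le \frac{96\epsilon^2}{d}.
\end{equation*}
Here you use Bessel for the orthonormal system $\ket{f_j}/\sqrt2$, the bound $|\bra{f_j}A\ket{f_j}|\le 2$, and $4\epsilon^2c^2D=2\epsilon^2/(1+\epsilon^2)\le 1$. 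So the $f_j$-correction costs $O(c^4\epsilon^4D)=O(\epsilon^4/d)$. It is not $O(\epsilon^2)$: you dropped the $c^2\epsilon^2$ prefactor there. Your later count $O(\epsilon^2/d^2)$ is also off, though harmlessly.

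Compared with the paper: the paper splits each $\ket{\psi_{\nu_{j\pm}}}$ into its $j$-th block $\ket{\psi_{\nu_{j\pm},2}}$ and the common remainder $\ket{\psi_{\nu_{j\pm},1}}$. This produces a block-diagonal term and a cross term $2\Re\bra{\psi_{\nu_{j+},1}}A(\ket{\psi_{\nu_{j+},2}}-\ket{\psi_{\nu_{j-},2}})$. It then decomposes the remainder into three pieces:
\begin{enumerate}
\item the uniform vector $\ket{\Psi_1}$, handled by Bessel over $\ket{\phi_j}$;
\item a $j$-block correction, bounded crudely;
\item the Rademacher part $\epsilon\ket{\Psi_{2,\nu}}$, controlled only on average over $\nu$ via $2^{-(D-1)}\sum_\nu\ket{\Psi_{2,\nu}}\bra{\Psi_{2,\nu}}=A_2\preceq 2\mathbbm{1}$.
\end{enumerate}
Your exact rank-two identity $\rho_{\nu_{j+}}-\rho_{\nu_{j-}}=2c\epsilon(\ket g\bra{f_j}+\ket{f_j}\bra g)$ merges the diagonal and cross terms into one. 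Completing $\ket{g_j}$ to $\ket{\psi_\nu}$ then lets a single Bessel step absorb the uniform and random parts together, so no second-moment computation over the signs is needed. The resulting bound holds for every sign vector rather than on average, with constant $96$ instead of $392$. The paper's averaging device is what you would need if no vector common to all $j$ were available, but for this family it is unnecessary.
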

\noindent
Using our reduction scheme then gives the lower bound
\begin{align*}
    \inf_{(M, \hat{\rho})} \sup_{\rho \in \mathcal{S}_{pure}(\mathbb{C}^d)} \mathbb{E}_{\rho}\left[ \norm{\hat{\rho} - \rho}_{F}^2\right] &\geq \frac{\epsilon^2}{2d} \left( \frac{d}{2} - \frac{1}{2} \sqrt{\frac{d}{2}} \left( \frac{25088 n \alpha^2 \epsilon^2}{(1 - 2\alpha)^4 d} \right)^{\frac{1}{2}} \right).
    \intertext{We chose 
\begin{equation*}
    \epsilon = \min\left\{1,  \frac{(1-2\alpha)^2 d}{224 \alpha \sqrt{n}} \right\},
\end{equation*}
which gives the lower bound}
    \inf_{(M, \hat{\rho})} \sup_{\rho \in \mathcal{S}_{pure}(\mathbb{C}^d)} \mathbb{E}_{\rho}\left[ \norm{\hat{\rho} - \rho}_{F}^2\right] &\geq \min \left\{ \frac{1}{8}, \frac{(1-2\alpha)^4}{401408} \frac{d^2}{n \alpha^2} \right\}.
\end{align*}
This result concludes the proof of Theorem~\ref{thm::lower_bound_qudit_estimation}.

\subsection{Lower bounds for the estimation of a probability vector}
\label{sec::lower_bound_construction_probability_vector}
We finalize this section by showing that we can use the same proof technique not only for state tomography but also of some parameter of interest of the state by proving a lower bound on the minimax estimation error for the estimation of the probability vector of a pure state. The following theorem gives matching lower bounds to the upper bound of Theorem~\ref{thm::upper_bound_prob_amplitude}.
\begin{theorem}
    \label{thm::lower_bound_probability_vector}
    Let $\alpha > 0$ be bounded away from $1/2$ by some constant. For a fixed orthonormal basis $(\ket{\psi_k})_{k = 1}^d$ of $\mathbb{C}^d$ and a pure state $\rho = \ket{\psi}\bra{\psi}$ with $\ket{\psi} = \sum_{k = 1}^d \gamma_k \ket{\psi_k}$ we set $p(\rho) = [|\gamma_1|^2,...,|\gamma_d|^2]^{\top}$. Then it holds
    \begin{equation*}
        \inf_{(M, \hat{p})} \sup_{\rho \in \mathcal{S}_{pure}(\mathbb{C}^d)} \mathbb{E}\left[ \norm{\hat{p} - p(\rho)}_2^2 \right] \geq \min\left\{ \frac{1}{8d}, \frac{(1-2\alpha)^4d}{1024 n \alpha^2} \right\}
    \end{equation*}
    Here the infimum is taken over all locally-$\alpha$-gentle meausrements $M$ and subsequent estimators $\hat{p}$.
\end{theorem}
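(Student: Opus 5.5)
We apply the Assouad-type reduction of Section~\ref{sec::reduction_scheme}, with the pure-state hypotheses \eqref{eqn::rho_nu_pure} reused. Take $D=d/2$ (dropping one dimension if $d$ is odd) and $\ket{\psi_\nu}$ as in \eqref{eqn::rho_nu_pure} with $\epsilon\le 1$, written in the fixed basis $(\ket{\psi_k})_{k=1}^d$. The probability vector is then explicit:
\begin{equation*}
p(\rho_\nu)_k=\frac{(1+\epsilon\nu_k)^2}{d(1+\epsilon^2)},\qquad p(\rho_\nu)_{k+d/2}=\frac{(1-\epsilon\nu_k)^2}{d(1+\epsilon^2)},\qquad k\le D.
\end{equation*}
So flipping $\nu_k$ changes the two coordinates $k$ and $k+d/2$, each by $\pm 4\epsilon/(d(1+\epsilon^2))$.

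The first step is a Hamming separation for the parameter $p$ rather than the state. Lemma~\ref{lem::C1} is stated for states, but its proof only uses a function $\mathbf{v}$ and the separation inequality, so it applies verbatim to the loss $\norm{\hat p-p(\rho_\nu)}_2^2$. For any vector $q$, define $\mathbf{v}(q)_k=\sgn(q_k-q_{k+d/2})$. If $\mathbf{v}(q)_k\neq\nu_k$, then $(q_k-q_{k+d/2})$ and $(p_k-p_{k+d/2})=4\epsilon\nu_k/(d(1+\epsilon^2))$ have opposite signs. Hence
\begin{equation*}
(q_k-p_k)^2+(q_{k+d/2}-p_{k+d/2})^2\ \ge\ \tfrac12\bigl((q_k-q_{k+d/2})-(p_k-p_{k+d/2})\bigr)^2\ \ge\ \tfrac12\Bigl(\tfrac{4\epsilon}{d(1+\epsilon^2)}\Bigr)^2\ \ge\ \tfrac{2\epsilon^2}{d^2}
\end{equation*}
for $\epsilon\le1$. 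This gives a $2\eta$-Hamming separation with $\eta\asymp\epsilon^2/d^2$, and Lemma~\ref{lem::C1} yields the risk bound $\eta\sum_j(1-\norm{\mathbb{P}_{+j}-\mathbb{P}_{-j}}_{TV})$.

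The second step bounds the total-variation sum by Pinsker and Cauchy--Schwarz, exactly as in the text, then applies Theorem~\ref{thm::C3} and Lemma~\ref{lem::upper_bound_distances_pure_states}, which are already proven for these same states. This gives $\sum_j D_{KL}^{sym}\le \frac{64n\alpha^2}{(1-2\alpha)^4}\cdot\frac{392\epsilon^2}{d}$. The risk is therefore at least
\begin{equation*}
c\frac{\epsilon^2}{d^2}\Bigl(\frac d2-\frac12\sqrt{\frac d2}\Bigl(\frac{C n\alpha^2\epsilon^2}{(1-2\alpha)^4 d}\Bigr)^{1/2}\Bigr).
\end{equation*}

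The last step is the choice $\epsilon=\min\{1,\,c'(1-2\alpha)^2 d/(\alpha\sqrt n)\}$, which keeps the bracket at least $d/4$. If $\epsilon=1$, the risk is of order $1/d$. Otherwise it is of order $\frac{\epsilon^2}{d}\asymp \frac{(1-2\alpha)^4 d}{n\alpha^2}$, matching Theorem~\ref{thm::upper_bound_prob_amplitude}.

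The main obstacle is bookkeeping of constants. The stated constants ($1/(8d)$, $1/1024$) are sharper than Lemma~\ref{lem::upper_bound_distances_pure_states}'s $392$ would naturally give. I would therefore recompute the $\sup_A$ term directly: $\rho_{\nu_{j+}}-\rho_{\nu_{j-}}$ is a rank-two perturbation supported in a few coordinates. If that fails, I would accept a slightly worse absolute constant. The structure of the argument does not change.
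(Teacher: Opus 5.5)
Your route is sound, but it differs from the paper's in the choice of hypotheses.

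\textbf{How the two routes differ.} The paper switches to the square-root family \eqref{eqn::definition_pure_states_lower_bound_prop_amplitude}, for which $p(\rho_\nu)_k=(1\pm\epsilon\nu_k)/d$ is affine in $\nu$. It separates with $\mathbf{v}_k=\sgn(\hat p_k-1/d)$, giving $\eta=\epsilon^2/(2d^2)$ (Lemma~\ref{lem::Hamming_distance_probability_vector}). It then needs a fresh bound on the $\sup_A$ term for that family, Lemma~\ref{lem::upper_bound_distances_pure_states_for_amplitudes}.

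You keep the Frobenius family \eqref{eqn::rho_nu_pure}. You observe that only the difference $p_k-p_{k+d/2}=4\epsilon\nu_k/(d(1+\epsilon^2))$ has to carry the sign of $\nu_k$, and you separate with $\sgn(q_k-q_{k+d/2})$. That separation argument is correct, and it gives the slightly better $\eta=\epsilon^2/d^2$. It also lets you import Lemma~\ref{lem::upper_bound_distances_pure_states} unchanged. The order $\epsilon^2/d$ in that lemma is genuinely established by the Bessel-inequality step over the orthonormal $\ket{\phi_j}$. Its constant $392$ is less reliable: the step bounding $(a+2b)^2$ by $a^2+4b^2$ should read $2a^2+8b^2$. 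So your argument proves the rate $\min\{1/d,\,(1-2\alpha)^4d/(n\alpha^2)\}$ up to absolute constants, which is the substance of the theorem, and it needs fewer new ingredients than the paper's.

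\textbf{The constant.} What your argument does not prove is the displayed inequality. Taking $392$ at face value, your choice $\epsilon=\min\{1,(1-2\alpha)^2d/(224\alpha\sqrt n)\}$ gives $\min\{1/(4d),\,(1-2\alpha)^4d/(200704\,n\alpha^2)\}$. The first branch is fine, but the second is off by a factor of about $196$, and this is not bookkeeping.

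Your picture of $\rho_{\nu_{j+}}-\rho_{\nu_{j-}}$ as supported in a few coordinates is wrong. It is rank two, but its range is spanned by $\ket{\psi_{\nu_{j\pm}}}$, which are spread over all $d$ coordinates. Concretely, the rank-one test operator $A=\ket{u}\bra{u}$ with $u=D^{-1/2}\sum_{k\le D}\ket{k}$ already makes the $\sup_A$ term at least $8\epsilon^2/(d(1+\epsilon^2)^2)$ for your family. Reaching $1/1024$ would therefore need an essentially tight analysis of that supremum together with an optimized choice of $\epsilon$.

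\textbf{The paper does not supply this either.} Its constant rests on Lemma~\ref{lem::upper_bound_distances_pure_states_for_amplitudes}. That proof bounds a single summand $\Tr[A(\rho_{\nu_{j+}}-\rho_{\nu_{j-}})]^2$ by a squared trace distance of order $\epsilon^2/d$, and never sums over the $D=d/2$ indices $j$. Summing that bound would give order $\epsilon^2$, and hence only the rate $1/(n\alpha^2)$.

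The lemma is in fact false as stated. With the same $u$, for the square-root family each summand is at least $4\epsilon^2(1-\epsilon)/d^2$. The total is then at least $2\epsilon^2(1-\epsilon)/d>\epsilon^2/d$ whenever $\epsilon<1/2$, which is exactly the small-$\epsilon$ regime of the second branch.

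Neither argument certifies the constant $1/1024$. You should state your result with the constant you actually obtain rather than chase the paper's.
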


\begin{proof}
    In order to prove this, we let $D= \frac{d}{2}$ and consider the pure states $(\rho_{\nu})_{\nu \in \mathcal{V}} = (\ket{\psi_{\nu}}\bra{\psi_{\nu}})_{\nu \in \mathcal{V}}$ for $\mathcal{V} = \{-1, 1\}^D$ given by
    \begin{equation}
    \label{eqn::definition_pure_states_lower_bound_prop_amplitude}
        \ket{\psi_{\nu}} = \sum_{k = 1}^D \sqrt{\frac{1 + \epsilon \nu_k}{d}} \ket{k} + \sqrt{\frac{1 - \epsilon \nu_k}{d}} \ket{k +\frac{d}{2}} = \sum_{k = 1}^d \gamma_{\nu, k} \ket{k}
    \end{equation}
    In Lemma~\ref{lem::Hamming_distance_probability_vector}, we show that these states induce a a $2\eta$-Hamming separation for $\norm{p(\cdot)}_2^2$ on $\mathcal{S}_{pure}(\mathbb{C}^d)$ where $\eta = \frac{\epsilon^2}{2d^2}$. Furthermore, Lemma~\ref{lem::upper_bound_distances_pure_states_for_amplitudes} shows that
    \begin{equation*}
        \sup_{\norm{A}_{op} \leq 1} \sum_{j = 1}^D \frac{1}{2^{D-1}} \sum_{\nu \in \{-1,1\}^{D-1}} \Tr \left[ A (\rho_{\nu_{j+}} - \rho_{\nu_{j-}})\right]^2 \leq \frac{\epsilon^2}{d}.
    \end{equation*}
    We then apply our reduction scheme to get the following lower bound for the minimax estimation error of the probability vector of a pure state:
    \begin{align*}
        \inf_{(M, \hat{p})} \max_{\rho \in \mathcal{S}_{pure}(\mathbb{C}^d)} \mathbb{E}_{\rho}\left[ \norm{\hat{p} - p(\rho)}_2^2 \right] &\geq \frac{\epsilon^2}{2d^2} \left( \frac{d}{2} - \frac{1}{2} \sqrt{\frac{d}{2}} \left( \frac{64 n\alpha^2}{(1 - 2\alpha)^4} \frac{\epsilon^2}{d} \right)^{\frac{1}{2}} \right).
        \intertext{Chosing \begin{equation*}
            \epsilon = \min\left\{ 1, \frac{(1-2\alpha)^2 d}{\sqrt{128} \sqrt{n} \alpha} \right\}
        \end{equation*}
        then gives the rate}
        \inf_{(M, \hat{p})} \max_{\rho \in \mathcal{S}_{pure}(\mathbb{C}^d)} \mathbb{E}_{\rho}\left[ \norm{\hat{p} - p(\rho)}_2^2 \right] &\geq \min\left\{ \frac{1}{8d}, \frac{(1-2\alpha)^4d}{1024 n \alpha^2} \right\}.
    \end{align*}
\end{proof}

\bibliographystyle{plain}
\bibliography{estimatingquditsbib}       

\newpage
\begin{appendix}
\section{Proof for the results from section~\ref{sec::gentle_measurements}}
\label{sec::additional_proofs_gentleness_results}
\begin{proof}[Proof of Proposition~\ref{prop::Privacy_implies_gentleness_pure}]
    Note that, by Proposition~\ref{prop::equalities_qDP}, we have 
    $$
    \lambda_{max}(E_{\omega}) \leq e^{\delta} \lambda_{min}(E_{\omega})
    $$ for all $\omega \in \Omega$, where $E_{\omega} = M_{\omega}^*M_{\omega}$. Let $|M_{\omega}| = \sqrt{E_{\omega}}$ be the unique positive square root of $E_{\omega}$. Then $\Tilde{M} = (|M_{\omega}|)_{\omega \in \Omega}$ does define a quantum measurement that has the same outcome probabilities as $M_{\omega}$. Given $|M_{\omega}|$, there exists an orthonormal basis $\ket{v_{1}},...,\ket{v_{d}}$ of $\mathbb{C}^d$ and $\lambda_{1},...,\lambda_{d} > 0$ such that
    \begin{equation*}
        |M_{\omega}| = \sum_{i = 1}^d \lambda_{i} \ket{v_{i}}\bra{v_{i}},
    \end{equation*}
    where $\lambda_{1} = \sqrt{\lambda_{min}(E_{\omega})}$ and $\lambda_{d} = \sqrt{\lambda_{max}(E_{\omega})}$. Note that the eigenvectors are eigenvalues are dependent on $\omega$ even though we omit this in the notation.
    Then, for any pure state $\rho = \ket{\psi}\bra{\psi}$, using Lemma~\ref{lemmaTraceNorm} and the definition of the post-measurement state~\eqref{eqn::post_measurement_state_pure}, we have
    \begin{align*}
        \norm{\rho - \rho_{|M| \to \omega}}_{Tr}^2 = 1 - \frac{\left|\bra{\psi}|M_{\omega}|\ket{\psi}\right|^2}{\bra{\psi}|M_{\omega}|^2\ket{\psi}} &\leq 1 - \frac{4 \lambda_1 \lambda_d}{(\lambda_d+ \lambda_1)^2} = \left(\frac{\lambda_d - \lambda_1}{\lambda_d + \lambda_1}\right)^2 \leq \left(\frac{e^{\frac{\delta}{2}} -1}{e^{\frac{\delta}{2}} +1}\right)^2,
    \end{align*}
    where the first inequality is due to the Kantorovich inequality \cite{Moslehian2012} and the second inequality is due to the fact that the eigenvalues of $E_{\omega}$ are given by the square of the eigenvalues of $|M_{\omega}|$. This shows that $\Tilde{M}$ is $\alpha$-gentle on pure states.
\end{proof}

\begin{proof}[Proof of Proposition~\ref{prop::pure_states_imply_all_states}]
    Let $\rho = \sum_{k = 1}^{d}\lambda_k \ket{\psi_k}\bra{\psi_k} \in \mathcal{S}(\mathbb{C}^d)$ be any quantum state and $M$ an $\alpha$-gentle measurement on $\mathcal{S}_{pure}(\mathbb{C}^d)$. Define \begin{equation*}
        \ket{\Psi} = \sum_{k = 1}^{d} \sqrt{\lambda_k} \ket{\psi_k} \otimes \ket{\psi_k} \in \mathcal{S}_{pure}(\mathbb{C}^d \otimes \mathbb{C}^d).
    \end{equation*} 
    Then it holds $\rho = \Tr_2[\ket{\Psi}\bra{\Psi}]$, where $\Tr_2$ is the partial trace over the seconds Hilbert space $\mathbb{C}^d$. Furthermore, for the measurement $M \otimes I = (M_{\omega} \otimes I)_{\omega \in \Omega}$ it holds
    \begin{align*}
        \mathbb{P}_{\rho}\left( R^M = \omega \right) = \Tr\left[ \rho M_{\omega}^* M_{\omega} \right] &= \Tr\left[ \Tr_2[\ket{\Psi}\bra{\Psi}] M_{\omega}^* M_{\omega} \right] 
        \\
        &= \Tr\left[ \ket{\Psi}\bra{\Psi} (M_{\omega}^*M_{\omega} \otimes I) \right]
        \\
        &= \Tr\left[ \ket{\Psi}\bra{\Psi} (M_{\omega}^* \otimes I) (M_{\omega} \otimes I) \right]
        \\
        &= \Tr\left[ \ket{\Psi}\bra{\Psi} (M_{\omega} \otimes I)^* (M_{\omega} \otimes I) \right] = \mathbb{P}_{\ket{\Psi}}\left( R^{M\otimes I} = \omega \right).
    \end{align*}
    Furthermore, we have 
    \begin{align*}
        M_{\omega} \rho M_{\omega}^* = M_{\omega} \Tr_2[\ket{\Psi}\bra{\Psi}] M_{\omega}^* = \Tr_2 \left[ (M_{\omega} \otimes I) \ket{\Psi}\bra{\Psi} (M_{\omega}^* \otimes I) \right].
    \end{align*}
    This shows that
    \begin{align*}
        \rho_{M \to \omega} &= \frac{1}{\mathbb{P}_{\rho}\left( R^M = \omega \right)} M_{\omega}\rho M_{\omega}^* 
        \\
        &= \frac{1}{\mathbb{P}_{\ket{\Psi}}\left( R^{M\otimes I} = \omega \right)} \Tr_2 \left[ (M_{\omega} \otimes I) \ket{\Psi}\bra{\Psi} (M_{\omega}^* \otimes I) \right]y
        \\
        &= \Tr_2 \left[ \frac{1}{\sqrt{\mathbb{P}_{\ket{\Psi}}\left( R^{M\otimes I} = \omega \right)}} (M_{\omega} \otimes I) \ket{\Psi}\bra{\Psi} (M_{\omega}^* \otimes I) \frac{1}{\sqrt{\mathbb{P}_{\ket{\Psi}}\left( R^{M\otimes I} = \omega \right)}} \right]
        \\
        &= \Tr_2\left[ \ket{\Psi_{M \otimes I \to \omega}}\bra{\Psi_{M \otimes I \to \omega}} \right].
    \end{align*}
    Since the trace norm is contractive under quantum channels such as the partial trace, we have
    \begin{align*}
        \norm{\rho - \rho_{M \to \omega}}_{Tr} &= \norm{\Tr_2[\ket{\Psi}\bra{\Psi}] - \Tr_2\left[ \ket{\Psi_{M \otimes I \to \omega}}\bra{\Psi_{M \otimes I \to \omega}} \right]}_{Tr} 
        \\
        &\leq \norm{\ket{\Psi}\bra{\Psi} -  \ket{\Psi_{M \otimes I \to \omega}}\bra{\Psi_{M \otimes I \to \omega}}}_{Tr}.
    \end{align*}
    Therefore, the gentleness of $M$ on $\mathcal{S}(\mathbb{C}^d)$ is bounded by the gentleness of $M \otimes I$ on $\mathcal{S}_{pure}(\mathbb{C}^d \otimes \mathbb{C}^d)$. By Lemma~\ref{lem::improved_constant_positivity}, $M$ is $\delta$-quantum-differentially-private on $\mathcal{S}(\mathbb{C}^d)$ for $\delta = 4 \arctanh(\alpha)$. Note that the proof of Lemma~\ref{lem::improved_constant_positivity} is independent of this proof As such, even though we have will proof it later, we may use its result here already. As such, we have
    \begin{equation*}
        \lambda_{max}(M_{\omega}^*M_{\omega} \otimes I) = \lambda_{max}(M_{\omega}^*M_{\omega}) \leq e^{\delta} \lambda_{min}(M_{\omega}^*M_{\omega}) = e^{\delta} \lambda_{min}(M_{\omega}^*M_{\omega} \otimes I),
    \end{equation*}
    which shows that $M \otimes I$ is also $\delta$-quantum-differentially-private on $\mathcal{S}(\mathbb{C}^d \otimes \mathbb{C}^d)$. Proposition~\ref{prop::Privacy_implies_gentleness_pure} then shows, that $M \otimes I$ is $\alpha$-gentle on $\mathcal{S}_{pure}(\mathbb{C}^d \otimes \mathbb{C}^d)$ from which we get
    \begin{align*}
        \norm{\rho - \rho_{M \to \omega}}_{Tr} \leq \norm{\ket{\Psi}\bra{\Psi} -  \ket{\Psi_{M \otimes I \to \omega}}\bra{\Psi_{M \otimes I \to \omega}}}_{Tr} \leq \alpha.
    \end{align*}
\end{proof}

\begin{proof}[Proof of Proposition~\ref{lem::gentleness_implies_privacy}]
    Assume that $M = (M_{\omega})_{\omega \in \Omega}$ is an $\alpha$-gentle measurement on $\mathcal{S}(\mathbb{C}^d)$. Define $E_{\omega} := M_{\omega}^*M_{\omega}$. Let $\rho_1, \rho_2 \in \mathcal{S}(\mathbb{C}^d)$ s.t. $\norm{\rho_1 - \rho_2}_{Tr} = 1$. Let us further denote by $p_1$ (respectively $p_2$) the probability of obtaining outcome $y$ under $\rho_1$ (respectively $\rho_2$). That is
    \begin{equation*}
        p_1 = \mathbb{P}_{\rho_1}\left(R^M = \omega \right) \hspace{20pt} \text{and} \hspace{20pt} p_2 = \mathbb{P}_{\rho_2}\left(R^M = \omega \right) 
    \end{equation*} Without loss of generality we assume that $p_1 > p_2 \geq 0$. Now, let
    \begin{equation*}
        \rho_{\lambda} = \lambda \rho_1 + (1-\lambda) \rho_2 \hspace{10pt} \text{for all } \lambda \in (0,1).
    \end{equation*}
    The probability of obtaining the outcome $y$ when measuring $\rho_{\lambda}$ is
    \begin{equation*}
        p_{\lambda} = \mathcal{P}_{\rho_{\lambda}}(R^M = \omega) = \Tr\left[ \rho_{\lambda} E_{\omega} \right] = \lambda \Tr\left[ \rho_1 E_{\omega} \right] + (1 - \lambda) \Tr\left[ \rho_2 E_{\omega} \right] = \lambda p_1 + (1 - \lambda) p_2.
    \end{equation*}
    The post-measurement state of $\rho_{\lambda}$ is then given by
    \begin{equation*}
        (\rho_{\lambda})_{M \to \omega} = \frac{1}{p_{\lambda}} M_{\omega} \rho_{\lambda} M_{\omega}^* = \frac{\lambda p_1 (\rho_1)_{M \to \omega} + (1-\lambda) p_2 (\rho_2)_{M \to \omega}}{\lambda p_1 + (1-\lambda) p_2}.
    \end{equation*}
    Now if we define $\delta = \frac{\lambda p_1}{\lambda p_1 + (1-\lambda) p_2} - \lambda > 0$, we get
    \begin{align*}
        \rho_{\lambda} - (\rho_{\lambda})_{M \to \omega} = \frac{\lambda p_1}{p_{\lambda}} \left((\rho_1 - (\rho_1)_{M \to \omega}\right) + \frac{(1-\lambda) p_2}{p_{\lambda}} \left(\rho_2 - (\rho_2)_{M \to \omega}\right) + \delta \left(\rho_2 - \rho_1\right)).
    \end{align*}
    By the triangle inequality and gentleness we now have
    \begin{align*}
        \delta \norm{\rho_2 - \rho_1}_{Tr} \leq \frac{\lambda p_1}{p_{\lambda}} \alpha + \frac{(1-\lambda) p_2}{p_{\lambda}} \alpha + \alpha = 2 \alpha
    \end{align*}
    Since we further assumed $\norm{\rho_2 - \rho_1}_{Tr} = 1$, we get $\delta \leq 2 \alpha$. This allows us to write
    \begin{align*}
        p_1 = \frac{\lambda - \lambda^2 + \delta (1-\lambda)}{\lambda - \lambda^2 - \delta \lambda} p_2 \leq \frac{\lambda - \lambda^2 + 2\alpha (1-\lambda)}{\lambda - \lambda^2 - 2 \alpha \lambda} p_2 \hspace{10pt} \text{for all } \lambda \in (0,1 - 2 \alpha).
    \end{align*}
    The last inequality only holds as long as the denominator is positive which is the case for $\lambda < 1 - 2 \alpha$. As such, for $\lambda_0 = \frac{1 - 2 \alpha}{2} < 1 - 2 \alpha$, we obtain
    \begin{equation*}
        p_1 \leq \frac{\lambda_0 - \lambda_0^2 + 2\alpha (1-\lambda_0)}{\lambda_0 - \lambda_0^2 - 2 \alpha \lambda_0} p_2 = \left( \frac{1 + 2\alpha}{1 - 2\alpha} \right)^2 p_2.
    \end{equation*}
    Now, since we started with $\rho_1, \rho_2$ such that $\norm{\rho_1 - \rho_2}_{Tr} = 1$, the last relation holds for every pure state. As such $M$ is $2 \log\left( \frac{1 + 2\alpha}{1 - 2\alpha} \right)$-quantum-differentially-private on pure states. Proposition~\ref{prop::equalities_qDP} then proves that $M$ is $2 \log\left( \frac{1 + 2\alpha}{1 - 2\alpha} \right)$-quantum-differentially-private on the whole space.
\end{proof}

\begin{proof}[Proof of Lemma~\ref{lem::improved_constant_positivity}]
    Let $M_{\omega} = \sum_{i = 1}^d \lambda_i \ket{v_i}\bra{v_i}$ be positive-definite with maximal and minimal eigenvalue $\lambda_d$ and $\lambda_1$ respectively. Consider the gentleness of $M_{\omega}$ on the pure state $\ket{\psi} = \frac{1}{\sqrt{\lambda_1 + \lambda_d}} (\sqrt{\lambda_d} \ket{v_1} + \sqrt{\lambda_1} \ket{v_d})$. Lemma~\ref{lemmaTraceNorm} then shows that, due to the gentleness of $M$ on $\rho = \ket{\psi}\bra{\psi}$, we have
    \begin{align*}
        \alpha^2 \geq \norm{\rho - \rho_{M \to \omega}}_{Tr}^2 = 1- \frac{\left| \bra{\psi} M_{\omega} \ket{\psi} \right|^2}{\bra{\psi} M_{\omega}^2 \ket{\psi}}.
    \end{align*}
    For the numerator and denominator we have
    \begin{align*}
        \left|\bra{\psi} M_{\omega} \ket{\psi}\right|^2 &= \frac{1}{(\lambda_1 + \lambda_d)^2} \left( \lambda_d \lambda_1 + \lambda_d \lambda_1 \right)^2 = \frac{4 \lambda_1^2 \lambda_d^2}{(\lambda_1+ \lambda_d)^2}
        \\
        \bra{\psi} M_{\omega}^2 \ket{\psi} &= \frac{1}{\lambda_1 + \lambda_d} \left( \lambda_d \lambda_1^2 + \lambda_d^2 \lambda_1 \right) = \lambda_1 \lambda_d.
    \end{align*}
    from which we obtain
    \begin{equation*}
        \frac{\left| \bra{\psi} M_{\omega} \ket{\psi} \right|^2}{\bra{\psi} M_{\omega}^2 \ket{\psi}} = \frac{4 \lambda_1 \lambda_d}{(\lambda_1 + \lambda_d)^2}
    \end{equation*}
    This gives
    \begin{equation*}
        \alpha^2 \geq \frac{(\lambda_d - \lambda_1)^2}{(\lambda_d + \lambda_1)^2} \hspace{20pt} \text{or equivalently} \hspace{20pt} \alpha \geq \frac{\lambda_d - \lambda_1}{\lambda_d + \lambda_1} = \frac{\frac{\lambda_d}{\lambda_1} - 1}{\frac{\lambda_d}{\lambda_1} +1 }
    \end{equation*}
    Using the fact that for $E_{\omega} = M_{\omega}^2$ we have $\lambda_{max}(E_{\omega}) = \lambda_d^2$ and $\lambda_{min}(E_{\omega}) = \lambda_1^2$, we get
    \begin{align*}
        \alpha \geq \frac{e^{2 \frac{1}{4}\log\left( \frac{\lambda_{max}(E_{\omega})}{\lambda_{min}(E_{\omega})} \right)} - 1}{e^{2 \frac{1}{4}\log\left( \frac{\lambda_{max}(E_{\omega})}{\lambda_{min}(E_{\omega})} \right)} + 1} = \tanh\left( \frac{1}{4} \log\left( \frac{\lambda_{max}(E_{\omega})}{\lambda_{min}(E_{\omega})} \right) \right).
    \end{align*}
    Finally, using the monotonicity of $\tanh$, we get
    \begin{equation*}
        \frac{\lambda_{max}(E_{\omega})}{\lambda_{min}(E_{\omega})} \leq e^{4 \arctanh(\alpha)}
    \end{equation*}
    which together with Proposition~\ref{prop::equalities_qDP} shows that $M$ is $\delta$-quantum-differentially-private on $\mathcal{S}(\mathbb{C}^d)$ for
    \begin{equation*}
        \delta = 4 \arctanh(\alpha).
    \end{equation*}
\end{proof}

\begin{lemma}
\label{lem::gentleness_of_gentleized_general_measurement}
    Let $\alpha \in [0,1]$ and $\Omega = \{1,...,|\Omega|\}$ be a finite set such that $M = (M_\omega)_{\omega \in \Omega}$ is a quantum measurement. For $E_{\omega} = M_{\omega}^*M_{\omega}$ and $e_m \in \{0,1\}^D$ being the vector that is zero everywhere except in the m-th entry, define
    \begin{equation*}
    \label{eqn::gentle-ize_finite_measurement}
        E_{\delta, z} = \left( \frac{e^{\frac{\delta}{2}}}{e^{\frac{\delta}{2}} + 1} \right)^{|\Omega|} \sum_{m = 1}^{|\Omega|} e^{-\frac{\delta}{2} \norm{z - e_m}_1} E_m, \hspace{20pt} \text{with} \hspace{20pt} M_{\delta, z} = \sqrt{E_{\delta, z}}
    \end{equation*}
    for every $z \in \{0,1\}^{|\Omega|}$ and $\delta = 4 \arctanh(\alpha)$ and set $M_{\delta} = (M_{\delta, z})_{z \in \{0,1\}^D}$. Then $M_{\delta}$ is an $\alpha$-gentle measurement on $\mathcal{S}(\mathbb{C}^d)$.
\end{lemma}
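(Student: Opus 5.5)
We prove the lemma in three steps. First we check that $M_\delta$ is a valid measurement, with positive square roots. Then we bound the eigenvalue ratio of each $E_{\delta,z}$ to get quantum differential privacy. Finally we transfer privacy to gentleness via Corollary~\ref{prop::Privacy_implies_gentleness}.

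\textbf{Completeness.} Each $E_{\delta,z}$ is a nonnegative combination of the positive operators $E_m$, so it is positive and $M_{\delta,z}=\sqrt{E_{\delta,z}}$ is well defined. For completeness we exchange the sums:
\begin{equation*}
\sum_{z\in\{0,1\}^{|\Omega|}}E_{\delta,z}=\left(\frac{e^{\delta/2}}{e^{\delta/2}+1}\right)^{|\Omega|}\sum_{m}E_m\sum_{z}e^{-\frac{\delta}{2}\norm{z-e_m}_1}.
\end{equation*}
The inner sum factorizes coordinatewise and equals $(1+e^{-\delta/2})^{|\Omega|}$, independently of $m$. The prefactor cancels it exactly, so the total is $\sum_m E_m=\mathbbm{1}$.

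\textbf{Eigenvalue ratio.} For fixed $z$, set $c_m(z)=e^{-\frac{\delta}{2}\norm{z-e_m}_1}$. Since $e_m$ and $e_{m'}$ differ in exactly two coordinates, $\norm{z-e_m}_1$ and $\norm{z-e_{m'}}_1$ differ by at most $2$. Hence $\max_m c_m(z)\le e^{\delta}\min_m c_m(z)$. Using $\sum_m E_m=\mathbbm{1}$ we sandwich
\begin{equation*}
\min_m c_m(z)\,\mathbbm{1}\le\sum_m c_m(z)E_m\le\max_m c_m(z)\,\mathbbm{1}.
\end{equation*}
It follows that $\lambda_{max}(E_{\delta,z})\le e^{\delta}\lambda_{min}(E_{\delta,z})$.

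\textbf{Privacy to gentleness.} By Proposition~\ref{prop::equalities_qDP} iii), the eigenvalue bound says $M_\delta$ is $\delta$-quantum-differentially-private on $\mathcal{S}(\mathbb{C}^d)$. Because $M_{\delta,z}=\sqrt{E_{\delta,z}}=|M_{\delta,z}|$ is already the positive implementation, Corollary~\ref{prop::Privacy_implies_gentleness} applies directly and gives $\alpha'$-gentleness with $\alpha'=\tanh(\delta/4)$. With $\delta=4\arctanh(\alpha)$ this is exactly $\alpha$.

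The main obstacle is the eigenvalue ratio: one must notice that the weights differ by at most a factor $e^{\delta}$ rather than $e^{\delta/2}$ per coordinate flip. Counting two flipped coordinates gives exactly $e^{\delta}$, which matches the $\tanh(\delta/4)$ in Corollary~\ref{prop::Privacy_implies_gentleness}. The rest is bookkeeping. We would also note that the lemma's exponent $D$ should read $|\Omega|$.
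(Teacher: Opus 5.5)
Your proof is correct and follows essentially the paper's route: completeness by exchanging the sums, an $e^{\delta}$ bound on the ratio of the weights $e^{-\frac{\delta}{2}\norm{z-e_m}_1}$ giving $\delta$-quantum differential privacy, and then Propositions~\ref{prop::Privacy_implies_gentleness_pure} and~\ref{prop::pure_states_imply_all_states}, which you invoke in their packaged form as Corollary~\ref{prop::Privacy_implies_gentleness} after correctly noting $|M_{\delta,z}|=M_{\delta,z}$. The differences are cosmetic. You verify condition iii) of Proposition~\ref{prop::equalities_qDP} through the operator sandwich $\min_m c_m(z)\mathbbm{1}\le\sum_m c_m(z)E_m\le\max_m c_m(z)\mathbbm{1}$, whereas the paper bounds the outcome-probability ratio on pure states. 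You also factorize the completeness sum coordinatewise instead of splitting on the value of $z_m$.
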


\begin{proof}
    We will make use of Proposition~\ref{prop::Privacy_implies_gentleness_pure} and~\ref{prop::pure_states_imply_all_states} to show the results. We first assure that the operators $M_{\delta} = (M_{\delta, z})_{z \in \{0,1\}^{|\Omega|}}$ do define a quantum measurement, that is 
    \begin{equation*}
        \sum_{m = 1}^{|\Omega|} E_m = \mathbbm{1}.
    \end{equation*}
    Since each $E_{\delta, z}$ is a sum of positive operators, $E_{\delta, z}$ is itself positive and we can take the operator square root $M_{\delta, z} = \sqrt{E_{\delta, z}}$. In order to assure that $M_{\delta} = (M_{\delta, z})_{z \in \{0,1\}^{|\Omega|}}$ indeed defines a quantum measurement we must assure that it fulfills the completeness relation. We have
    \begin{align*}
        \sum_{z \in \{0,1\}^{|\Omega|}} E_{\delta, z} &=  \left( \frac{e^{\frac{\delta}{2}}}{e^{\frac{\delta}{2}} + 1} \right)^{|\Omega|} \sum_{m = 1}^{|\Omega|} E_m \sum_{z \in \{0,1\}^{|\Omega|}} e^{-\frac{\delta}{2} \norm{z - e_m}_1} 
        \\
        &= \left( \frac{e^{\frac{\delta}{2}}}{e^{\frac{\delta}{2}} + 1} \right)^{|\Omega|} \sum_{m = 1}^{|\Omega|} E_m \left[ \sum_{\{z | z_m = 1\}} e^{-\frac{\delta}{2} \norm{z - e_m}_1} + \sum_{\{z | z_m = 0\}} e^{-\frac{\delta}{2} \norm{z - e_m}_1}\right]
        \\
        &= \left( \frac{e^{\frac{\delta}{2}}}{e^{\frac{\delta}{2}} + 1} \right)^{|\Omega|} \sum_{m = 1}^{|\Omega|} E_m \left[ \sum_{j = 0}^{{|\Omega|}-1} \binom{{|\Omega|}-1}{j} e^{-\frac{\delta}{2} j } + \sum_{j = 0}^{{|\Omega|}-1} \binom{{|\Omega|}-1}{j} e^{-\frac{\delta}{2} (j+1)} \right]
        \\
        &= \left( \frac{e^{\frac{\delta}{2}}}{e^{\frac{\delta}{2}} + 1} \right)^{|\Omega|}  \sum_{m = 1}^{|\Omega|} E_m \left( (e^{-\frac{\delta}{2}} + 1)^{{|\Omega|}-1} + (e^{-\frac{\delta}{2}} +1)^{{|\Omega|}-1} e^{-\frac{\delta}{2}} \right)
        \\
        &= \frac{e^{\frac{\delta}{2}}}{e^{\frac{\delta}{2}} + 1} \left( 1 + e^{-\frac{\delta}{2}} \right) \mathbbm{1}
        \\
        &= \mathbbm{1}.
    \end{align*}
    Therefore, taking $M_{\delta, z} = \sqrt{E_{\delta, z}}$, the operators $M_{\delta} = (M_{\delta, z})_{z \in \{0,1\}^{|\Omega|}}$ do indeed define a quantum measurement. Now, let $z \in \{0,1\}^{|\Omega|}$ with $\norm{z}_1 = j$ be fixed. We have
    \begin{equation*}
        e^{-\frac{\delta}{2} (j+1)} \leq e^{-\frac{\delta}{2} \norm{z - e_m}} \leq e^{-\frac{\delta}{2} (j-1)}
    \end{equation*}
    for all $m$ from which we obtain
    \begin{equation*}
        \frac{\mathbbm{P}_{\rho}(R^{M_{\delta}} = z)}{\mathbbm{P}_{\rho'}(R^{M_{\delta}} = z)} \leq e^{\frac{\delta}{2}((k+1)-(k-1))} \leq e^{\delta}
    \end{equation*}
    for any $\rho = \ket{\psi}\bra{\rho}$ $\rho' = \ket{\psi'}\bra{\psi'}$ and any $z \in \{0,1\}^{|\Omega|}$. This shows that $M_{\delta}$ is $\delta$-quantum-differentially-private on $\mathcal{S}_{pure}(\mathbb{C}^d)$. Proposition~\ref{prop::Privacy_implies_gentleness_pure} then assures that $M_{\delta}$ is $\alpha$-gentle on $\mathcal{S}_{pure}(\mathbb{C}^d)$. Since the operators $M_{\delta, \omega}$ are positive and self-adjoint, Proposition~\ref{prop::pure_states_imply_all_states} assures the $\alpha$-gentleness of $M_{\delta}$ on $\mathcal{S}(\mathbb{C}^d)$.
\end{proof}

\newpage
\section{Proofs for the results from Section~\ref{sec::qLS_pure_states}}
\label{sec::additional_proofs_gentleness_introduction_section}
\begin{proof}[Proof of Proposition~\ref{prop::physical_implementation}]
    Let $\ket{\psi} = \sum_{l = 1}^d \gamma_l \ket{l}$ and recall $\rho = \ket{\psi} \bra{\psi}$. The state of the combined system $\rho \otimes \sigma$ is pure and given by $\rho \otimes \sigma = \ket{\Psi}\bra{\Psi}$ for
    \begin{equation*}
        \ket{\Psi} = \ket{\psi} \otimes \ket{0} \otimes ... \otimes \ket{0}.
    \end{equation*}
    
    The first part of the algorithm is given by the Rotation gates $R_{\delta}$ which rotate the initial $\ket{0}$ states into a superposition of $\ket{0}$ and $\ket{1}$ parametrized by the privacy parameter $\delta$. Define the states
    \begin{align*}
        \ket{\phi_{1,\delta}} &= \left( e^{\frac{\delta}{2}} + 1 \right)^{-\frac{1}{2}} \left( \ket{0} + e^{\frac{\delta}{4}} \ket{1} \right) \in \mathcal{S}_{pure}(\mathbb{C}^2)
        \intertext{and}
        \ket{\phi_{0,\delta}} &= \left( e^{\frac{\delta}{2}} + 1 \right)^{-\frac{1}{2}} \left( e^{\frac{\delta}{4}} \ket{0} + \ket{1} \right) \in \mathcal{S}_{pure}(\mathbb{C}^2).
    \end{align*}
    Now, let $R_{\delta}: \mathbb{C}^2 \to \mathbb{C}^2$ be the unitary matrix that maps $\ket{0}$ to $\ket{\phi_{1,\delta}}$, that is
    \begin{alignat*}{3}
        R_{\delta}: & \; \mathbb{C}^2 && \; \to && \; \mathbb{C}^2
        \\
        & \ket{0} && \mapsto && \ket{\phi_{0,\delta}}
        \\
        & \ket{1} && \mapsto && \ket{\phi_{0,\delta}}^{\perp},
    \end{alignat*}
    where 
    \begin{equation*}
        \ket{\phi_{0,\delta}}^{\perp} = \left( e^{\frac{\delta}{2}} + 1 \right)^{-\frac{1}{2}} \left( \ket{0} - e^{\frac{\delta}{4}} \ket{1} \right).
    \end{equation*}
    As such, applying the $R_{\delta}$ to each of the $d$ qubit registers transforms the system at the end of the first step of the algorithm into
    \begin{equation*}
        \ket{\Psi_{\delta}} = \left( \mathbbm{1} \otimes R_{\delta} \otimes ... \otimes R_{\delta} \right) \left( \ket{\Psi} \right) = \ket{\psi} \otimes \ket{\phi_{0,\delta}} \otimes ... \otimes \ket{\phi_{0,\delta}}.
    \end{equation*}

    The second part of the quantum algorithm is given by applying a number of qubit controlled NOT-gates to the system. Each can be seen as entangling the $k$-th mode of the qudit with one of the $d$ qubit systems in such a way that it flips the "true" value of the mode with low probability. More precisely, if the qudit is in the $k$-th basis state, the gate flips the two modes of the qubit system, \emph{i.e.} the first (qudit) system controls the second (qubit) system. It is given by
    \begin{alignat*}{7}
        U_k: \; & \mathbb{C}^d \; && \otimes \; && \mathbb{C}^2 && \to \; && \mathbb{C}^d \; && \otimes \; && \mathbb{C}^2
        \\
        & \ket{l} && \otimes && \ket{0} && \mapsto && \ket{l} && \otimes  &&\ket{0}
        \\
        & \ket{l} && \otimes && \ket{1} && \mapsto && \ket{l} && \otimes  &&\ket{1}
        \\
        & \ket{k} && \otimes && \ket{0} && \mapsto && \ket{k} && \otimes  &&\ket{1}
        \\
        & \ket{k} && \otimes && \ket{1} && \mapsto && \ket{k} && \otimes  &&\ket{0}
    \end{alignat*}
    for $l \neq k$. Let us now consider the action of $U_k$ on the state $\ket{\psi} \otimes \ket{\phi_{0,\delta}}$ which is given by
    \begin{align*}
        U_k \left( \ket{\psi} \otimes \ket{\phi_{0,\delta}} \right) = \gamma_k \ket{k} \otimes \ket{\phi_{1,\delta}} + \sum_{l \neq k} \gamma_l \ket{l} \otimes \ket{\phi_{0,\delta}}.
    \end{align*}
    In order to perform the algorithm as depicted in Figure~\ref{alg::Quantum_Label_Switch}, we perform consecutively the operations $U_1,\ldots,U_d$. Due to the product structure of the state and of the measurement, the order in which we perform the $U_k$ operations does not matter. Formally, we can describe the global action by an extended version $\tilde{U}_k$ which is defined on $\mathbb{C}^d \otimes (\mathbb{C}^2)^{\otimes d}$ such that it acts like $U_k$ on the $k$-th register, that is, formally
    \begin{alignat*}{15}
        \tilde{U}_k: \quad & \mathbb{C}^d \quad && \otimes && (\mathbb{C}^{2})^{\otimes (k-1)} && \otimes \quad && \mathbb{C}^2 \quad && \otimes \; && (\mathbb{C}^{2})^{\otimes (d-k)} \; && \to \quad  && \mathbb{C}^d \quad && \otimes && (\mathbb{C}^{2})^{\otimes (k-1)} && \otimes \quad && \mathbb{C}^2 \quad&& \otimes \; && (\mathbb{C}^{2})^{\otimes (d-k)}
        \\
        & \ket{l} && \otimes && \quad \ket{v} && \otimes && \ket{0} && \otimes && \quad \ket{v'} && \mapsto && \ket{l} && \otimes && \quad \ket{v} && \otimes && \ket{0} && \otimes && \quad \ket{v'}
        \\
        & \ket{l} && \otimes && \quad \ket{v} && \otimes && \ket{1} && \otimes && \quad \ket{v'} && \mapsto && \ket{l} && \otimes && \quad \ket{v} && \otimes && \ket{1} && \otimes && \quad \ket{v'}
        \\
        & \ket{k} && \otimes && \quad \ket{v} && \otimes && \ket{0} && \otimes && \quad \ket{v'} && \mapsto && \ket{k} && \otimes && \quad \ket{v} && \otimes && \ket{1} && \otimes && \quad \ket{v'}
        \\
        & \ket{k} && \otimes && \quad \ket{v} && \otimes && \ket{1} && \otimes && \quad \ket{v'} && \mapsto && \ket{k} && \otimes && \quad \ket{v} && \otimes && \ket{0} && \otimes && \quad \ket{v'},
    \end{alignat*}
    where $l \neq k$ and $\ket{v} \in (\mathbb{C}^2)^{\otimes (k-1)}, \ket{v'} \in (\mathbb{C}^2)^{\otimes (d-k)}$ are arbitrary vectors. The action of $\tilde{U}_k$ on $\ket{\psi} \otimes \ket{v} \otimes \ket{\phi_{1,\delta}} \otimes \ket{v'}$ is therefore given by
    \begin{align*}
        \tilde{U}_k \left( \ket{\psi} \otimes \ket{v} \otimes \ket{\phi_{0,\delta}} \otimes \ket{v'} \right) = \gamma_k \ket{k} \otimes \ket{v} \otimes \ket{\phi_{1,\delta}} \otimes \ket{v'}+ \sum_{l \neq k} \gamma_l \ket{l} \otimes \ket{v} \otimes \ket{\phi_{0,\delta}} \otimes \ket{v'}.
    \end{align*}
    Therefore, after applying consecutively the controlled gates $\tilde{U}_1,...,\tilde{U}_d$ to the state 
    \begin{equation*}
        \ket{\Psi_{\delta}} = \ket{\psi} \otimes \ket{\phi_{0,\delta}} \otimes ... \otimes \ket{\phi_{0,\delta}}
    \end{equation*}
    yields
    \begin{align*}
        \ket{\Psi_{\delta,\tilde{U}}} = \tilde{U}_d\bigg(... \tilde{U}_1\Big( \ket{\psi} \otimes \ket{\phi_{0,\delta}} \otimes ... \otimes \ket{\phi_{0,\delta}} \Big) ... \bigg) = \sum_{k = 1}^d \gamma_k \ket{k} \otimes \ket{\phi_{1,\delta}^{(k)}},
    \end{align*}
    where
    \begin{align*}
        \ket{\phi_{1,\delta}^{(k)}} = \ket{\phi_{0,\delta}}^{\otimes (k-1)} \otimes \ket{\phi_{1,\delta}} \otimes \ket{\phi_{0,\delta}}^{\otimes (d-k)}.
    \end{align*}

    The algorithm now works by measuring the $d$ qubit systems in their computational basis $\ket{0}, \ket{1}$. In order to calculate the outcome probabilities of this measurement we first give an alternative representation of $\ket{\phi_{1,\delta}^{(k)}}$. By using the distributive property of the tensor product we can write
    \begin{align*}
        \ket{\phi_{1,\delta}^{(k)}} &= \ket{\phi_{0,\delta}}^{\otimes (k-1)} \otimes \ket{\phi_{1,\delta}} \otimes \ket{\phi_{0,\delta}}^{\otimes (d-k)}
        \\
        =& \frac{1}{(e^{\frac{\delta}{2}} +1)^{\frac{d}{2}}} \sum_{\omega_{-k} \in \{0,1\}^{d-1}} e^{\frac{\delta}{4} (d-1 - \norm{\omega_{-k}}_1)} \cdot \\
        & \quad \cdot \ket{\omega_1} \otimes ... \otimes \ket{\omega_{k-1}}  \otimes \left( \ket{0} + e^{\frac{\delta}{4}} \ket{1} \right) \otimes  \ket{\omega_{k+1}} \otimes ... \otimes \ket{\omega_{d}} ,
        \intertext{where $\omega_{-k} = (\omega_1,\ldots, w_{k-1},\omega_{k+1},\ldots,\omega_d)\in \{0,1\}^{d-1}$ is the vector where we omitted the $k$-th entry. We can further expand this as}
        =& \frac{1}{(e^{\frac{\delta}{2}} +1)^{\frac{d}{2}}} \sum_{\omega_{-k} \in \{0,1\}^{d-1}} e^{\frac{\delta}{4} (d-1 - \norm{\omega_{-k}}_1)} \Bigg[ \left( \ket{\omega_1} \otimes ... \otimes \ket{\omega_{k-1}} \right) \otimes \ket{0} \otimes \left( \ket{\omega_{k+1}} \otimes ... \otimes \ket{\omega_{d}} \right)
        \\
        &\qquad \qquad \qquad + \left( e^{\frac{\delta}{4}} \ket{\omega_1} \otimes ... \otimes \ket{\omega_{k-1}} \right) \otimes \ket{1} \otimes \left( \ket{\omega_{k+1}} \otimes ... \otimes \ket{\omega_{d}} \right) \Bigg]
        \\
        &= \frac{1}{(e^{\frac{\delta}{2}} +1)^{\frac{d}{2}}} \sum_{\omega \in \{0,1\}^d} e^{\frac{\delta}{4} (d - \norm{\omega - e_k}_1)} \ket{\omega_1} \otimes ... \otimes \ket{\omega_d}
        \\
        &= \left(\frac{e^{\frac{\delta}{2}}}{e^{\frac{\delta}{2}} +1}\right)^{\frac{d}{2}} \sum_{\omega \in \{0,1\}^d} e^{-\frac{\delta}{4} \norm{\omega - e_k}_1} \ket{\omega_1} \otimes ... \otimes \ket{\omega_d}
    \end{align*}
    As such, we get
   \begin{align*}
        \ket{\Psi_{\delta,\tilde{U}}} 
        = \left( \frac{e^{\frac{\delta}{2}}}{e^{\frac{\delta}{2}} + 1} \right)^{\frac{d}{2}} \sum_{k = 1}^d \gamma_k \sum_{\omega \in \{0,1\}^d} e^{-\frac{\delta}{4} \norm{\omega - e_k}_1} 
         \ket{k} \otimes \ket{\omega_1} \otimes ... \otimes \ket{\omega_d}.
    \end{align*}

    Let us now see what happens when we measure the qubit registers of the system, which is in the state $\ket{\Psi_{\delta,\tilde{U}}}$, in the computational basis, that is, using the measurement $\tilde{M}_{\tilde{B}}$ defined by $\tilde{M}_{\tilde{B}, \omega}$
    \begin{equation*}
        \tilde{M}_{\tilde{B},\omega} = (\mathbbm{1} \otimes \ket{\omega_1}\bra{\omega_1} \otimes ... \otimes \ket{\omega_d}\bra{\omega_d})_{\omega \in \{0,1\}^d}.
    \end{equation*}
    The operators $\tilde{M}_{\tilde{B},\omega}$ are given by projections in the enlarged Hilbert space $\mathbb{C}^d \otimes (\mathbb{C}^{2})^{\otimes d}$. The probability of obtaining the outcome $\omega^{\circ} \in \{0,1\}^d$ is given for $\ket{\omega^{\circ}} = \ket{\omega^{\circ}_1} \otimes ... \otimes  \ket{\omega^{\circ}_d}$
    \begin{align*}
        \mathbb{P}_{\rho_{\delta}}(R^{\tilde{M}_{\tilde{B}}} = \omega^{\circ}) = \norm{(\mathbbm{1} \otimes \ket{\omega^{\circ}}\bra{\omega^{\circ}})\ket{\Psi_{\delta,\tilde{U}}}}^2.
    \end{align*}
    We have
    \begin{align*}
        (\mathbbm{1} \otimes \ket{\omega^{\circ}}\bra{\omega^{\circ}})\ket{\Psi_{\delta,\tilde{U}}} &= \sum_{k = 1}^d \gamma_k \ket{k} \otimes \ket{\omega^{\circ}}\bra{\omega^{\circ}}\ket{\phi_{0,\delta}^{(k)}}
        \\
        &=  \sum_{k = 1}^d \gamma_k \ket{k} \otimes \ket{\omega^{\circ}} \left( \frac{e^{\frac{\delta}{2}}}{e^{\frac{\delta}{2}} + 1} \right)^{\frac{d}{2}} \sum_{\omega \in \{0,1\}^d} e^{-\frac{\delta}{4}\norm{\omega - e_k}_1} \bra{\omega^{\circ}}\ket{\omega}
        \\
        &=  \left( \frac{e^{\frac{\delta}{2}}}{e^{\frac{\delta}{2}} + 1} \right)^{\frac{d}{2}}  \sum_{k = 1}^d e^{-\frac{\delta}{4}\norm{\omega^{\circ} - e_k}_1} \gamma_k \ket{k} \otimes \ket{\omega^{\circ}} 
    \end{align*}
    and therefore
    \begin{align*}
        \mathbb{P}_{\rho_{\delta}}(R^{\tilde{M}_{\tilde{B}}} = \omega^{\circ}) &= \norm{(\mathbbm{1} \otimes \ket{\omega^{\circ}}\bra{\omega^{\circ}})\ket{\Psi_{\delta,\tilde{U}}}}^2 
        \\
        &= \left( \frac{e^{\frac{\delta}{2}}}{e^{\frac{\delta}{2}} +1} \right)^d \sum_{k = 1}^d  e^{-\frac{\delta}{2} \norm{\omega^{\circ} - e_k}_1} |\gamma_k|^2 = \mathbb{P}_{\rho}(R^{M_{\delta}} = \omega^{\circ}).
    \end{align*}
    Furthermore, we can calculate the post-measurement state of $\rho_{\delta,\tilde{U}}$ as
    \begin{equation*}
        (\rho_{\delta,\tilde{U}})_{\tilde{M}_{\tilde{B}} \to \omega^{\circ}} = \frac{1}{\mathbb{P}_{\rho_{\delta}}(R^{\tilde{M}_{\tilde{B}}} = \omega^{\circ})} (\mathbbm{1} \otimes \ket{\omega^{\circ}}\bra{\omega^{\circ}})\ket{\Psi_{\delta,\tilde{U}}} \bra{\Psi_{\delta,\tilde{U}}} (\mathbbm{1} \otimes \ket{\omega^{\circ}}\bra{\omega^{\circ}}).
    \end{equation*}
    For a composite system $\mathcal{H}_1 \otimes \mathcal{H}_2$ the partial trace with respect to the second subsystem can be calculated, see \cite{maziero_computing_2017}:
    \begin{equation*}
        \Tr_2[\rho] = \sum_{k = 1}^{d_2} \left( \mathbbm{1} \otimes \bra{b_k} \right) \rho \left( \mathbbm{1} \otimes \ket{b_k} \right),
    \end{equation*}
    where $(\ket{b_k})_{k = 1}^{d_2}$ is an orthonormal basis of $\mathcal{H}_2$.
    Applying this formula to our case gives
    \begin{equation*}
        \Tr_2\left[ (\mathbbm{1} \otimes \ket{\omega^{\circ}}\bra{\omega^{\circ}})\ket{\Psi_{\delta,\tilde{U}}} \bra{\Psi_{\delta,\tilde{U}}} (\mathbbm{1} \otimes \ket{\omega^{\circ}}\bra{\omega^{\circ}}) \right] = M_{\delta,\omega^{\circ}} \ket{\psi}\bra{\psi}M_{\delta,\omega^{\circ}}^*,
    \end{equation*}
    where $M_\delta$ has been introduced in~\eqref{eqn::measurement_operators_gentle}. Since we have already shown $\mathbb{P}_{\rho_{\delta,\tilde{U}}}(R^{\tilde{M}_{\tilde{B}}} = \omega^{\circ}) = \mathbb{P}_{\rho}(R^{M_{\delta}} = \omega^{\circ})$ and using the form of the post-measurement states for $M_{\delta}$ given in~\eqref{eqn::post_measuremenent_state_pure_state}, we obtain
    \begin{equation*}
        \Tr_2\left[ (\rho_{\delta,\tilde{U}})_{\tilde{M}_{\tilde{B}} \to \omega^{\circ}} \right] = \rho_{M_{\delta} \to \omega^{\circ}}.
    \end{equation*}
    This shows that the implementation we provided indeed gives the same outcome probabilities and states as the measurement~\eqref{eqn::measurement_operators_gentle}, showing the equality of the two.
\end{proof}

\newpage
\section{Proof of Theorem~\ref{thm::upper_bound_qLS_qudits}}
\label{sec::additional_calculations_upper_bound}
In this section we give a proof of Theorem~\ref{thm::upper_bound_qLS_qudits} which we restate for the readers convenience.
\begin{Restatementtheorem*}{\scshape~ \ref{thm::upper_bound_qLS_qudits}}.\hspace{7pt}
    Let $d \geq 2$, $1 \leq r \leq d$ and $\rho \in \mathcal{S}_r(\mathbb{C}^d)$. For $\alpha \in (0, 1)$ set $\delta = 4 \arctanh(\alpha)$ and for $\epsilon \in (0,1]$ set
    \begin{equation*}
        t^2(\epsilon) = 92 \log(d/\epsilon) \frac{d^2}{n\alpha^2} 
    \end{equation*}
    Let $\hat{\rho}_n = \mathcal{A}(\check{\rho}_n)$ be the estimator described previously, where $\mathcal{A}$ is the spectral thresholding Algorithm~\ref{alg::spectral_thresholding} with threshold $2t(\epsilon)$. Then $\hat{\rho}_n$ is a locally-$\alpha$-gentle estimator of $\rho$ such that and there exists a constant $C > 0$ independent of $r, d, n, \epsilon$ and $\alpha$ such that
    \begin{equation*}
        \sup_{\rho \in \mathcal{S}_r(\mathbb{C}^d)} \mathbb{E}_{\rho}\left[ \norm{\hat{\rho}_n - \rho}_F^2 \right] \leq C \frac{rd^2}{n \alpha^2} \log\left(\frac{d}{\epsilon}\right).
    \end{equation*}
    If we additionally assume $\lambda_{min}(\rho) \geq 6t(\epsilon)$, then
    \begin{equation*}
        \mathbb{P}_{\rho}\left( \rank(\hat{\rho}_n) = r \right) \geq 1 - \epsilon.
    \end{equation*}
\end{Restatementtheorem*}

\begin{proof}
    As we have already seen, the measurement $M_{\delta}$ defined in~\eqref{defn::measurement_operators_qls_mixed_MUB} on which the estimator is based, remains gentle, which shows that the estimator $\bar{\rho}_n$ is locally-$\alpha$-gentle. 
    Now, let $H \in \{0,1\}^D$ be as in~\eqref{eqn::duality_2_desing_MUB}. Similarly as for pure states, we have
    \begin{align}
        \mathbb{E}_{\rho}[H_m] 
        \label{eqn::expectation_hm}
        &= \frac{1}{d+1} \left( \alpha  \Tr\left[ \ket{v_m}\bra{v_m} \rho \right] + \beta \right)
    \end{align}
    for $\beta = (e^{\frac{\delta}{2}} + 1)^{-1}$, which is shown in Lemma~\ref{lem::Expectation_of_measurement_MUB} in Appendix~\ref{sec::technical_proofs_for_upper_bound}. 
    Using equation (\ref{eqn::2design_property}) we can then calculate
    \begin{align*}
        \mathbb{E}_{\rho}\left[ \sum_{m = 1}^D H_m \ket{v_m}\bra{v_m} \right] 
        &= \frac{\alpha}{d+1} \rho + \left(\frac{\alpha}{d+1} + \beta\right) \mathbbm{1}
    \end{align*}
    as shown in Lemma~\ref{lem::expectation_H_m_v_m_MUB}.
    By linearity of the expectation, we see that $\bar{\rho}_n$ is unbiased. We now want to apply the Matrix-Bernstein-inequality from \cite{tropp_introduction_2015} which we will restate for convenience.
    \begin{theorem}[Matrix-Bernstein-inequality, \cite{tropp_introduction_2015}]
    \label{thm::Matrix-Bernstein-inequality}
        Let $A_i$ be a sequence of independent $d$-dimensional hermitian matrices with
        \begin{equation*}
            \mathbb{E}\left[ A_i \right] = 0, \hspace{10pt} \norm{A_i}_{op} \leq R \text{ \; a.s. for all $i$ and }\sigma^2 = 
            \norm{\sum_{i = 1}^n \mathbb{E}\left[ A_i^2 \right]}_{op}.
        \end{equation*}
        Then, for all $t > 0$, it holds
        \begin{equation*}
            \mathbb{P}\left( \norm{\sum_{i = 1}^n A_i}_{op} \geq t \right) 
            \leq d \exp\left( -\frac{3t^2}{2 \left( 3 \sigma^2 + R t \right)} \right).
        \end{equation*}
    \end{theorem}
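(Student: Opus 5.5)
The plan is to follow Tropp's matrix Laplace-transform method. Set $Y=\sum_{i=1}^n A_i$, which is Hermitian with $\norm{Y}_{op}=\max\{\lambda_{max}(Y),\lambda_{max}(-Y)\}$. For any $\theta>0$, Markov's inequality and the spectral mapping theorem give
\begin{equation*}
\mathbb{P}\left(\norm{Y}_{op}\geq t\right)\leq e^{-\theta t}\,\mathbb{E}\left[e^{\theta\norm{Y}_{op}}\right]\leq 2e^{-\theta t}\,\mathbb{E}\,\Tr\left[\cosh(\theta Y)\right],
\end{equation*}
where the last inequality uses $e^{\theta\norm{Y}_{op}}\leq 2\cosh(\theta\norm{Y}_{op})\leq 2\Tr[\cosh(\theta Y)]$.

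To bound $\mathbb{E}\,\Tr[e^{\pm\theta Y}]$ we use Lieb's concavity theorem: $H\mapsto\Tr\exp(K+\log H)$ is concave on positive matrices. Applying Jensen's inequality conditionally, one summand at a time, gives the subadditivity of matrix cumulants,
\begin{equation*}
\mathbb{E}\,\Tr\,e^{\theta Y}\leq\Tr\exp\left(\sum_{i=1}^n\log\mathbb{E}\,e^{\theta A_i}\right).
\end{equation*}
Each cumulant is controlled by the Bernstein bound on the matrix mgf. Since $\mathbb{E}A_i=0$ and $\norm{A_i}_{op}\leq R$, the scalar inequality $e^{x}\leq 1+x+\frac{x^2/2}{1-|x|/3}$ for $|x|<3$, transferred through the spectral calculus, yields for $0<\theta<3/R$
\begin{equation*}
\mathbb{E}\,e^{\theta A_i}\preceq\mathbbm{1}+g(\theta)\,\mathbb{E}A_i^2\preceq\exp\left(g(\theta)\,\mathbb{E}A_i^2\right),\qquad g(\theta)=\frac{\theta^2/2}{1-R\theta/3}.
\end{equation*}
Operator monotonicity of $\log$ and monotonicity of $\Tr\exp$ then give
\begin{equation*}
\mathbb{E}\,\Tr\,e^{\pm\theta Y}\leq\Tr\exp\left(g(\theta)\sum_{i=1}^n\mathbb{E}A_i^2\right)\leq d\,e^{g(\theta)\sigma^2}.
\end{equation*}
Choosing $\theta=t/(\sigma^2+Rt/3)$ turns $-\theta t+g(\theta)\sigma^2$ into $-\frac{t^2/2}{\sigma^2+Rt/3}=-\frac{3t^2}{2(3\sigma^2+Rt)}$, which is exactly the exponent in the statement.

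The main obstacle is the prefactor. The cosh step above produces $2d$, whereas the statement claims $d$. Tropp's theorem gives $d$ only for the one-sided event $\{\lambda_{max}(Y)\geq t\}$, and $2d$ for the operator norm. My first attempt at removing the factor $2$ would be to sharpen the trace inequality by using all eigenvalues rather than only the extreme one. Writing $\mu_1\geq\dots\geq\mu_d$ for the eigenvalues of $Y$, we have
\begin{equation*}
e^{\theta\norm{Y}_{op}}\leq\Tr\left[\cosh(\theta Y)\right]+\frac12\max\left\{e^{\theta\mu_1}-e^{-\theta\mu_1},\;e^{-\theta\mu_d}-e^{\theta\mu_d}\right\}.
\end{equation*}
The extra term is not controlled in expectation by $d\,e^{g\sigma^2}$ in general, so I do not expect this route to close.

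If it fails, I would state the result for the one-sided events $\{\lambda_{max}(\pm Y)\geq t\}$, each with prefactor $d$, which is what the statement delivers verbatim. For the two-sided operator norm I would read the prefactor as $2d$. Downstream in the proof of Theorem~\ref{thm::upper_bound_qLS_qudits}, this constant only enters through $\log(d/\epsilon)$, where it can be absorbed into the numerical constant $92$ in $t^2(\epsilon)$ after replacing $\epsilon$ by $\epsilon/2$.
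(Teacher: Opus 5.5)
Your argument is correct, and it is exactly Tropp's matrix Laplace-transform proof: Lieb's theorem, subadditivity of matrix cumulants, the Bernstein mgf bound, and the choice $\theta = t/(\sigma^2+Rt/3)$. The paper gives no proof of its own, only the citation, so there is nothing further to compare. What you establish is $\mathbb{P}(\lambda_{max}(\pm Y)\geq t)\leq d\exp\bigl(-\tfrac{3t^2}{2(3\sigma^2+Rt)}\bigr)$, and hence the same bound with prefactor $2d$ for $\norm{Y}_{op}$. This is what the cited source actually states: prefactor $d$ for $\lambda_{max}$, and prefactor $d_1+d_2=2d$ for the norm.

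The factor you could not remove is not a gap in your proposal. The statement as printed is false, so you were right to abandon the attempt to sharpen the trace inequality. A counterexample already exists for $d=2$, the smallest dimension the paper uses:
\begin{itemize}
\item Take $n=2$, independent Rademacher signs $\varepsilon_1,\varepsilon_2$, and $A_1=R\varepsilon_1\mathbbm{1}$, $A_2=R\varepsilon_2\diag(1,-1)$.
\item Then $\mathbb{E}A_i=0$ and $\norm{A_i}_{op}=R$.
\item Also $\sum_i\mathbb{E}A_i^2=2R^2\mathbbm{1}$, so $\sigma^2=2R^2$.
\item The sum $Y=R\diag(\varepsilon_1+\varepsilon_2,\varepsilon_1-\varepsilon_2)$ has, for every outcome, one eigenvalue equal to $\pm 2R$. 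Hence $\mathbb{P}(\norm{Y}_{op}\geq 2R)=1$.
\item The claimed bound at $t=2R$ is $2\exp\bigl(-\tfrac{12R^2}{2(6R^2+2R^2)}\bigr)=2e^{-3/4}\approx 0.945<1$.
\end{itemize}
The example is consistent with the one-sided bound, since $\mathbb{P}(\lambda_{max}(Y)\geq 2R)=1/2$. For $d=1$ it is even simpler: a single summand $A_1=R\varepsilon_1$ at $t=R$ gives $1>e^{-3/8}$.

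So the correct two-sided statement carries the prefactor $2d$, exactly as you propose. Your accounting of the downstream effect is also right. In the proof of Theorem~\ref{thm::upper_bound_qLS_qudits}, $\log(d/\epsilon)$ becomes $\log(2d/\epsilon)$. Since $\log(2d/\epsilon)\leq 2\log(d/\epsilon)$ for $d\geq 2$ and $\epsilon\leq 1$, only the numerical constant changes, and the rate is unaffected.
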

    Given $n$ independent copies of the qudit $\rho$ and measuring all qudits with the measurement $M_{\delta}$, we obtain $n$ i.i.d. random matrices $\tilde{\rho}_i$ given by
    \begin{equation}
    \label{eqn::qLS_est_before_proj_single}
        \tilde{\rho}_i = \frac{(d+1)}{\alpha}  \sum_{m = 1}^D {H}_m^{(i)} \ket{v_m}\bra{v_m} - \left( 1 + \frac{d+1}{\alpha}\beta \right) \mathbbm{1} \hspace{20pt} \text{such that } \bar{\rho}_n = \frac{1}{n} \sum_{i = 1}^n \tilde{\rho}_i
    \end{equation}
    In order to apply the Matrix-Bernstein-inequality we need to calculate $\mathbb{E}[\bar{\rho}_1^2]$. Lemma~\ref{lem::second_moment_rho_hat_MUB} assures that 
    \begin{align*}
        \mathbb{E}_{\rho}[\tilde{\rho}_1^2] =& \left( (1 - 2\beta) \frac{d+1}{\alpha} - 2 \right) \rho + \left( \beta^2 \left( \frac{d+1}{\alpha} \right)^2 + (2\beta + 1) \frac{d+1}{\alpha} + 1  + \beta \right) \mathbbm{1}
    \end{align*}
    Using the fact that $\norm{\rho}_{op}, \norm{\mathbbm{1}}_{op} \leq 1$ and the triangle inequality, we can bound the operator norm of the second moment of $\hat{\rho}^2$ by
    \begin{align}
    \label{eqn::second_moment_operator_norm}
\norm{\mathbb{E}_{\rho}\left[\tilde{\rho}_1^2\right]}_{op} &\leq 
        \frac{10d^2}{\alpha^2}
    \end{align}
    for $d \geq 2$.  This now allows us to apply the Matrix-Bernstein-inequality for $A_i = \frac{1}{n}(\tilde{\rho}_i - \rho)$, where we have
    \begin{align*}
        \sigma^2 = \norm{\sum_{i = 1}^n  \mathbb{E}_{\rho}\left[ \left( \frac{1}{n}(\tilde{\rho}_i - \rho) \right)^2 \right]}_{op} = \frac{1}{n} \norm{\mathbb{E}_{\rho}\left[  \tilde{\rho}_1^2 \right]}_{op} \leq 10 \frac{1}{n} \frac{d^2}{\alpha^2}
    \end{align*}
    for $d \geq 2$. Using the fact that $\hat{\rho}$ is unbiased for $\rho$ we get
    \begin{align*}
        R = \norm{A_i}_{op} = \norm{\frac{1}{n}\left( \tilde{\rho}_i - \rho \right)}_{op} \leq 3 \frac{1}{n} \frac{d}{\alpha}
    \end{align*}
    for $d \geq 2$, where we used $\sum_{m = 1}^D H_m \ket{v_m}\bra{v_m} \preceq \mathbbm{1}$. This allows us to bound the error of the estimator $\bar{\rho}_n = \frac{1}{n} \sum_{i = 1}^n \tilde{\rho}_i$ as
    \begin{align}
        \label{eqn::concentration_inequality_qubit_estimator_Operator}
        \mathbb{P}_{\rho}\left( \norm{\bar{\rho}_n - \rho}_{op} \geq t \right) 
        \leq d \exp\left( - \frac{3t^2}{2 \left( 3\sigma^2 + Rt \right)} \right)  \leq d \exp\left( -\frac{1}{4} \frac{t^2 n \alpha^2}{d^2} \right).
    \end{align}
    for $t \in [0, 2], \alpha \in [0,1], d \geq 2$. Lemma~\ref{lem::operator_norm_properties_projection} then assures us that
    \begin{equation*}
        \mathbb{P}_{\rho}\left( \norm{\check{\rho}_n - \rho}_{op} \geq t \right) \leq d \exp\left( -\frac{1}{16} \frac{t^2 n \alpha^2}{d^2} \right) = \epsilon.
    \end{equation*}
    Now, by Proposition~\ref{prop::spectral_thresholding_properties}, it holds with probability at least $1-\epsilon$ that
    \begin{equation*}
        \norm{\hat{\rho}_n - \rho}_F^2 \leq 64rt^2(\epsilon), \hspace{20pt} \text{where } t^2(\epsilon) = 16\log\left( \frac{d}{\epsilon}\right) \frac{d^2}{n\alpha^2}.
    \end{equation*}
    This inequality remains true for all $0 < \epsilon' \leq \epsilon$. Now, let $x(\epsilon') = 64rt^2(\epsilon')$, from which we obtain
    \begin{equation*}
        \epsilon' = d \exp\left( -\frac{1}{1024} \frac{n\alpha^2}{dr^2} x(\epsilon') \right).
    \end{equation*}
    From this we obtain the expected error in Frobenius-norm as
    \begin{align*}
        \mathbb{E}_{\rho}\left[ \norm{\hat{\rho}_n - \rho}_F^2 \right] 
        =& \int \limits_0^{x(\epsilon)} \mathbb{P}_{\rho}\left( \norm{\hat{\rho}_n - \rho}_F^2 \geq x \right) dx+ \int \limits_{x(\epsilon)}^{\infty} \mathbb{P}_{\rho}\left( \norm{\hat{\rho}_n - \rho}_F^2 \geq x \right) dx
        \\
        \leq&
        3072 \frac{rd^2}{n \alpha^2} \log\left(\frac{d}{\epsilon}\right).
    \end{align*}
    Furthermore, Proposition~\ref{prop::spectral_thresholding_properties} also assures us that with probability at least $1-\epsilon$ we have
    \begin{equation*}
        \rank{\hat{\rho}_n}  =r.
    \end{equation*}
    as long as $\lambda_{min}(\rho) \geq 6t(\epsilon)$.
\end{proof}

\subsection{Technical proofs for the proof of Theorem~\ref{thm::upper_bound_qLS_qudits}}
\label{sec::technical_proofs_for_upper_bound}

\begin{lemma}
\label{lem::Expectation_of_measurement_MUB}
    Let $M_{\delta}$ be the measurement defined in~\eqref{defn::measurement_operators_qls_mixed_MUB} and $H$ as in~\eqref{eqn::duality_2_desing_MUB}. Then it holds
    \begin{align}
    \mathbb{E}_{\rho}[H_m] = \frac{1}{d+1} \left( \alpha\Tr\left[\ket{v_m}\bra{v_m} \rho \right] + \beta \right)
\end{align}
for $\beta = (1+e^{\frac{\delta}{2}})^{-1} \in (0,\frac{1}{2})$ and $\alpha = \tanh(\delta/4)$.
\end{lemma}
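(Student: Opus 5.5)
The computation mirrors the expectation calculation in Section~\ref{sec::statistical_properties_of_probability_vector_estimator}, with the basis index $b$ added. Write $m = db + k$ as in~\eqref{eqn::duality_2_desing_MUB}. Then
\begin{equation*}
\mathbb{E}_\rho[H_m] = \mathbb{P}_\rho(B=b,\, W_k=1) = \sum_{\{\omega \in \{0,1\}^d : \omega_k = 1\}} \Tr\left[\rho\, E_{\delta,\omega,b}\right].
\end{equation*}
Substituting~\eqref{defn::measurement_operators_qls_mixed_MUB} and exchanging the finite sums gives
\begin{equation*}
\mathbb{E}_\rho[H_m] = \frac{1}{d+1}\left(\frac{e^{\delta/2}}{e^{\delta/2}+1}\right)^d \sum_{\ell=1}^d \langle e^{(b)}_\ell|\rho|e^{(b)}_\ell\rangle \sum_{\omega_k=1} e^{-\frac{\delta}{2}\norm{\omega-e_\ell}_1}.
\end{equation*}

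Next we evaluate the inner sum, as in Section~\ref{sec::statistical_properties_of_probability_vector_estimator}. Fix $\omega_k=1$ and sum over the remaining $d-1$ coordinates.
\begin{itemize}
\item If $\ell=k$, the $k$-th coordinate contributes $0$ to the distance, and the binomial expansion gives $(1+e^{-\delta/2})^{d-1}$.
\item If $\ell\neq k$, coordinate $k$ contributes $1$. Coordinate $\ell$ contributes $1$ when $\omega_\ell=0$ and $0$ otherwise, which again gives a factor $1+e^{-\delta/2}$. The other coordinates behave the same way. The total is $e^{-\delta/2}(1+e^{-\delta/2})^{d-1}$.
\end{itemize}
The prefactor times $(1+e^{-\delta/2})^{d-1}$ simplifies to $e^{\delta/2}/(e^{\delta/2}+1)=1-\beta$. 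Writing $p_\ell=\langle e_\ell^{(b)}|\rho|e_\ell^{(b)}\rangle$ and using $\sum_\ell p_\ell=\Tr\rho=1$, we get
\begin{equation*}
\mathbb{E}_\rho[H_m] = \frac{1}{d+1}\Bigl((1-\beta)p_k + \beta(1-p_k)\Bigr) = \frac{1}{d+1}\Bigl((1-2\beta)p_k+\beta\Bigr).
\end{equation*}

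It remains to identify $1-2\beta$ with $\alpha$:
\begin{equation*}
1-2\beta = \frac{e^{\delta/2}-1}{e^{\delta/2}+1}=\tanh(\delta/4)=\alpha,
\end{equation*}
and $p_k=\Tr[\ket{v_m}\bra{v_m}\rho]$ by~\eqref{eqn::definition_v_m}. The bound $\beta\in(0,\tfrac12)$ follows from $\delta>0$.

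The only real obstacle is bookkeeping: keeping track of which coordinate carries the mismatch in the $\ell\neq k$ case, and checking that the $1/(d+1)$ factor from choosing the basis is carried through correctly. Positivity and completeness of the operators are not needed here; they were already handled by Lemma~\ref{lem::gentleness_of_gentleized_general_measurement}.
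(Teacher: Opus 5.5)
Your proposal is correct and follows essentially the same route as the paper. Like the paper, you write $\mathbb{E}_\rho[H_m]$ as a sum of $\Tr[\rho E_{\delta,\omega,b}]$ over $\omega$ with $\omega_k=1$, split the inner sum into the cases $\ell=k$ and $\ell\neq k$ and evaluate each by the binomial theorem, then use $\sum_\ell p_\ell=1$ in the fixed basis and identify $1-2\beta=\tanh(\delta/4)=\alpha$.
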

\begin{proof}
    Let $m \in \{1,...,D\}$ and $k_0$ and $b_0$ be the unique natural numbers such that $m = db_0 + k_0$. Using the equivalence~\eqref{eqn::duality_2_desing_MUB}, for the expected value of $H_m$ we have
\begin{align*}
    \mathbb{E}_{\rho}[H_m] &= \sum_{h} h_m \mathbb{P}\big( H_m = h_m \big)
    \\
    &= \sum_{\{h | h_m = 1\}} h_m \mathbb{P}\big( F(R^M) = h_m \big)
    \\
    &= \sum_{\{\omega | \omega_{k_0} = 1\}} \omega_{k_0} \mathbb{P}\big( R^M = (\omega, b_0) \big)
    \\
    &= \sum_{\{\omega | \omega_{k_0}^{(b_0)} = 1\}} \Tr \bigg[  \frac{1}{d+1} \left(\frac{e^{\frac{\delta}{2}}}{e^{\frac{\delta}{2}}+1} \right)^d \sum_{k = 1}^d e^{-\frac{\delta}{2}\norm{\omega - e_k}_1} \ket{e_k^{(b_0)}}\bra{e_k^{(b_0)}} \rho \bigg]
    \\
    \begin{split}
        &= \frac{1}{d+1} \left(\frac{e^{\frac{\delta}{2}}}{e^{\frac{\delta}{2}}+1} \right)^d \Bigg[ \sum_{k \neq k_0} \sum_{\{\omega | \omega_{k_0}^{(b_0)} = 1\}} e^{-\frac{\delta}{2}\norm{\omega - e_k}_1} \Tr[\ket{e_k^{(b_0)}}\bra{e_k^{(b_0)}} \rho]
        \\
        &\qquad \qquad \qquad \qquad \quad + \sum_{\{\omega | \omega_{k_0}^{(b_0)} = 1\}} e^{-\frac{\delta}{2}\norm{\omega - e_{k_0}}_1}\Tr[\ket{e_{k_0}^{(b_0)}}\bra{e_{k_0}^{(b_0)}} \rho] \Bigg].
    \end{split}
\end{align*}
Let us now consider the sums in each term separately. For the first part it, let $\omega \in \{0,1\}^d$ such that $ \omega_{k_0}^{(b_0)} = 1$. Since $k \neq k_0$, the vector $\omega - e_k$ differs in at least one entry, namely the $k_0$-th. The difference $\norm{\omega - e_k}_1$ is then only dependent on the $d-1$ other entries of the vector $\omega$. Since there are an equal amount of vectors $\omega$ where these entries are $0$ and $1$, we can rewrite the sum as 
\begin{align*}
    \sum_{\{\omega | \omega_{k_0}^{(b_0)} = 1\}} e^{-\frac{\delta}{2}\norm{\omega - e_k}_1} = e^{-\frac{\delta}{2}} \sum_{j = 0}^{d-1} \binom{d-1}{j} (e^{-\frac{\delta}{2}})^j = e^{-\frac{\delta}{2}} \left( e^{-\frac{\delta}{2}} +1 \right)^{d-1}.
\end{align*}
Similarly, if $\omega_{k_0}^{(b_0)} = 1$, then the $k_0$-th entry of $z^{(b_0)} - e_k$ is $0$ and so we can rewrite the sum as 
\begin{align*}
    \sum_{\{\omega | \omega_{k_0}^{(b_0)} = 1\}} e^{-\frac{\delta}{2}\norm{\omega - e_{k_0}}_1} = \sum_{j = 0}^{d-1} \binom{d-1}{j} (e^{-\frac{\delta}{2}})^j = \left( e^{-\frac{\delta}{2}} +1 \right)^{d-1}.
\end{align*}
This allows us to rewrite the above sum as
\begin{align*}
    \mathbb{E}_{\rho}[H_m] =& \frac{1}{d+1} \bigg(\frac{e^{\frac{\delta}{2}}}{e^{\frac{\delta}{2}}+1}  \bigg)^d \Bigg[\sum_{k \neq k_0} e^{-\frac{\delta}{2}} \left( e^{-\frac{\delta}{2}} +1 \right)^{d-1} \Tr[\ket{e_k^{(b_0)}}\bra{e_k^{(b_0)}} \rho]
    \\
    & \qquad \qquad \qquad \qquad+ \left( e^{-\frac{\delta}{2}} +1 \right)^{d-1} \Tr[\ket{e_{k_0}^{(b_0)}}\bra{e_{k_0}^{(b_0)}} \rho] \Bigg].
    \intertext{Using the fact that the $\ket{e_k^{(b_0)}}$ form an orthonormal basis of $\mathbb{C}^d$ for fixed $b_0$, we may write $\sum_{k \neq k_0} \ket{e_k^{(b_0)}}\bra{e_k^{(b_0)}} = \mathbbm{1} - \ket{e_{k_0}^{(b_0)}}\bra{e_{k_0}^{(b_0)}}$ and further simplify the sum as}
    = &\frac{1}{d+1} \frac{1}{1 + e^{-\frac{\delta}{2}}}  \bigg[ e^{-\frac{\delta}{2}} \left( 1 - \Tr\left[ \ket{e_{k_0}^{(b_0)}}\bra{e_{k_0}^{(b_0)}} \rho \right] \right) + \Tr\left[ \ket{e_{k_0}^{(b_0)}}\bra{e_{k_0}^{(b_0)}} \rho \right] \bigg]
    \\
    = &\frac{1}{d+1} \bigg[ \frac{1 - e^{-\frac{\delta}{2}}}{1+ e^{-\frac{\delta}{2}}}  \Tr\left[ \ket{e_{k_0}^{(b_0)}}\bra{e_{k_0}^{(b_0)}} \rho \right] + \frac{e^{-\frac{\delta}{2}}}{1+e^{-\frac{\delta}{2}}}\bigg]
    \\
    = &\frac{1}{d+1} \bigg[ \frac{1 - e^{-\frac{\delta}{2}}}{1+ e^{-\frac{\delta}{2}}}  \Tr\left[ \ket{v_m}\bra{v_m} \rho \right] + \frac{e^{-\frac{\delta}{2}}}{1+e^{-\frac{\delta}{2}}}\bigg]
    \\
    = &\frac{1}{d+1} \bigg[ \frac{ e^{\frac{\delta}{2}}-1}{e^{\frac{\delta}{2}} + 1}  \Tr\left[\ket{v_m}\bra{v_m} \rho \right] + \frac{1}{e^{\frac{\delta}{2}} + 1} \bigg]
    \\
    = &\frac{1}{d+1} \left[ \alpha \Tr\left[\ket{v_m}\bra{v_m} \rho \right] + \beta \right]
\end{align*}
using again that $m$ may is uniquely determined by $b_0$ and $k_0$. 
\end{proof}

\begin{lemma}
\label{lem::expectation_H_m_v_m_MUB}
    Let $\ket{v_m}$ be as in~\eqref{eqn::definition_v_m}, $M_{\delta}$ be the measurement defined in~\eqref{defn::measurement_operators_qls_mixed_MUB} and $H$ as in~\eqref{eqn::duality_2_desing_MUB}. Then it holds
    \begin{equation*}
        \mathbb{E}_{\rho}\left[ \sum_{m = 1}^D H_m \ket{v_m}\bra{v_m} \right] = \frac{\alpha}{d+1} \rho + \left(\frac{\alpha}{d+1} + \beta \right) \mathbbm{1}
    \end{equation*}
\end{lemma}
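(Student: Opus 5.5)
The plan is to reduce the statement to Lemma~\ref{lem::Expectation_of_measurement_MUB} by linearity of expectation and then apply the two MUB identities~\eqref{eqn::2design_property} and~\eqref{eqn::ONB_property}. The projectors $\ket{v_m}\bra{v_m}$ are deterministic, and only the scalar coefficients $H_m$ are random. We can therefore pull the expectation inside the finite sum:
\begin{equation*}
    \mathbb{E}_{\rho}\left[ \sum_{m = 1}^D H_m \ket{v_m}\bra{v_m} \right] = \sum_{m = 1}^D \mathbb{E}_{\rho}[H_m] \ket{v_m}\bra{v_m}.
\end{equation*}

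Next we insert $\mathbb{E}_{\rho}[H_m] = \frac{1}{d+1}\left(\alpha \Tr[\ket{v_m}\bra{v_m}\rho] + \beta\right)$ from Lemma~\ref{lem::Expectation_of_measurement_MUB}. This splits the sum into two pieces:
\begin{equation*}
    \frac{\alpha}{d+1} \sum_{m = 1}^D \Tr\left[\ket{v_m}\bra{v_m}\rho\right] \ket{v_m}\bra{v_m} \; + \; \frac{\beta}{d+1} \sum_{m = 1}^D \ket{v_m}\bra{v_m}.
\end{equation*}

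For the first piece we apply the $2$-design identity~\eqref{eqn::2design_property} with $X = \rho$, which gives $\rho + \Tr[\rho]\mathbbm{1} = \rho + \mathbbm{1}$ because $\Tr[\rho] = 1$. For the second piece, identity~\eqref{eqn::ONB_property} gives $(d+1)\mathbbm{1}$, since each of the $d+1$ bases resolves the identity. Adding the two pieces yields
\begin{equation*}
    \frac{\alpha}{d+1}\rho + \frac{\alpha}{d+1}\mathbbm{1} + \beta\mathbbm{1},
\end{equation*}
which is exactly the claimed expression.

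There is no real obstacle here. The only point needing care is that~\eqref{eqn::2design_property} holds in the stated normalization, without a $1/D$ prefactor and with coefficient $1$ on $X$. This is the standard identity for a complete set of $d+1$ MUBs, quoted from \cite{guta2018faststatetomographyoptimal}, and we will take it as given.
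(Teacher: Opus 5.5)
Your proposal is correct and follows essentially the same route as the paper: pull the expectation inside the sum, substitute $\mathbb{E}_{\rho}[H_m]$ from Lemma~\ref{lem::Expectation_of_measurement_MUB}, and use the identities~\eqref{eqn::2design_property} and~\eqref{eqn::ONB_property} together with $\Tr[\rho]=1$. Your remark on the normalization of~\eqref{eqn::2design_property} (no $1/D$ prefactor, coefficient $1$ on $X$) is also consistent with how the paper uses it.
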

\begin{proof}
    We calculate
\begin{align}
    \mathbb{E}_{\rho}\left[ \sum_{m = 1}^D H_m \ket{v_m}\bra{v_m} \right] =& \sum_{m = 1}^D \mathbb{E}_{\rho}[H_m] \ket{v_m}\bra{v_m} \notag
    \\
    =& \sum_{m = 1}^D \frac{1}{d+1} \left( \alpha \Tr\left[ \rho \ket{v_m}\bra{v_m} \right] + \beta \right) \ket{v_m}\bra{v_m} \notag
    \\
    =& \frac{\alpha}{d+1} \sum_{m = 1}^D \Tr\left[ \rho \ket{v_m}\bra{v_m} \right]\ket{v_m}\bra{v_m} + \frac{\beta}{d+1} \sum_{m = 1}^D \ket{v_m}\bra{v_m} \notag
    \intertext{Using the $2$-design properties~\eqref{eqn::2design_property} and~\eqref{eqn::ONB_property} of $\ket{v_m}\bra{v_m}$ and $\Tr[\rho] = 1$, we can simplify this sum to be}
    =& \frac{\alpha}{d+1} (\rho + \mathbbm{1}) + \frac{\beta}{d+1} (d+1) \mathbbm{1} \notag
    \\
    \label{eqn::expectation_hm_vm}
    =& \frac{\alpha}{d+1} \rho + \left( \frac{\alpha}{d+1} + \beta \right) \mathbbm{1}.
\end{align}
\end{proof}

\begin{lemma}
\label{lem::second_moment_rho_hat_MUB}
    Let 
    \begin{equation*}
        \tilde{\rho} = \frac{d+1}{\alpha} \sum_{m = 1}^D H_m \ket{v_m}\bra{v_m} - \left( 1+ \frac{d+1}{\alpha}\beta \right) \mathbbm{1},
    \end{equation*}
    be defined as in~\eqref{eqn::qLS_est_before_proj_single}. Then it holds
    \begin{align*}
        \mathbb{E}_{\rho}[\tilde{\rho}_1^2] =& \left( (1 - 2\beta) \frac{d+1}{\alpha} - 2 \right) \rho + \left( \beta^2 \left( \frac{d+1}{\alpha} \right)^2 + (2\beta + 1) \frac{d+1}{\alpha} + 1  + \beta \right) \mathbbm{1}
    \end{align*}
\end{lemma}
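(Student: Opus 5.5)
The plan is to expand the square directly and reduce everything to first moments, which are already known from Lemma~\ref{lem::expectation_H_m_v_m_MUB}. Abbreviate $c = \frac{d+1}{\alpha}$, $a = 1 + c\beta$, $P_m = \ket{v_m}\bra{v_m}$ and $X = \sum_{m=1}^D H_m P_m$, so that $\tilde{\rho} = cX - a\mathbbm{1}$. Since $X$ commutes with $\mathbbm{1}$,
\begin{equation*}
    \tilde{\rho}^2 = c^2 X^2 - 2ac X + a^2 \mathbbm{1}.
\end{equation*}

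The key structural step is to show $X^2 = X$ almost surely. By~\eqref{eqn::duality_2_desing_MUB}, only the coordinates $H_m$ belonging to the single randomly chosen basis $B = b$ can be nonzero. Within one basis the projectors $P_m = |e_k^{(b)}\rangle\langle e_k^{(b)}|$ are mutually orthogonal, so all cross terms $P_m P_{m'}$ with $m \neq m'$ vanish. Moreover $H_m^2 = H_m$ because $H_m \in \{0,1\}$, and $P_m^2 = P_m$. Hence
\begin{equation*}
    X^2 = \sum_m H_m^2 P_m = X,
\end{equation*}
and therefore $\tilde{\rho}^2 = (c^2 - 2ac) X + a^2 \mathbbm{1}$. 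This identity also gives $0 \preceq X \preceq \mathbbm{1}$, which is used for $R$ in the proof of Theorem~\ref{thm::upper_bound_qLS_qudits}.

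Next I take expectations. Lemma~\ref{lem::expectation_H_m_v_m_MUB} reads $\mathbb{E}_\rho[X] = \frac{1}{c}\rho + \left(\frac{1}{c} + \beta\right)\mathbbm{1} = \frac{1}{c}\rho + \frac{a}{c}\mathbbm{1}$. Substituting,
\begin{equation*}
    \mathbb{E}_\rho[\tilde{\rho}^2] = (c - 2a)\rho + \big(a(c-2a) + a^2\big)\mathbbm{1} = (c-2a)\rho + a(c-a)\mathbbm{1}.
\end{equation*}
The $\rho$-coefficient is $c - 2a = (1-2\beta)\frac{d+1}{\alpha} - 2$, which agrees with the statement.

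The main obstacle is the identity coefficient, and this is where I have to flag a discrepancy. Expanding $a(c-a) = (1+c\beta)(c - 1 - c\beta)$ gives
\begin{equation*}
    -\beta^2 c^2 + \beta c^2 + (1-2\beta)c - 1.
\end{equation*}
This is not the displayed expression $\beta^2 c^2 + (2\beta+1)c + 1 + \beta$. In my derivation every input is forced: $X^2 = X$ follows from orthogonality within a basis, and the first moment is quoted verbatim from Lemma~\ref{lem::expectation_H_m_v_m_MUB}. So I do not see how the displayed identity coefficient can be obtained as stated, and I would re-check the bookkeeping and the stated form before accepting it.

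What the downstream argument needs does survive. Both coefficients are $O(c^2) = O(d^2/\alpha^2)$, since $0 < \beta < \tfrac12$ and $c \geq d+1 \geq 3$. With $\|\rho\|_{op}, \|\mathbbm{1}\|_{op} \leq 1$, this yields the bound $\|\mathbb{E}_\rho[\tilde{\rho}_1^2]\|_{op} \leq \frac{10 d^2}{\alpha^2}$ in~\eqref{eqn::second_moment_operator_norm}. This holds for either form of the constant, because the displayed coefficient dominates the one derived here in absolute value.
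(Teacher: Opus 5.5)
Your proof is correct and follows the same route as the paper's. Your discrepancy flag is also correct: the displayed identity coefficient in the statement is wrong, and the mistake is in the paper, not in your algebra.

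The paper's proof also expands the square. It kills the cross terms $H_mH_l\ket{v_m}\bra{v_m}\ket{v_l}\bra{v_l}$ by orthogonality within the single measured basis, so that $\mathbb{E}_\rho[X^2]=\mathbb{E}_\rho[X]$. It then reaches exactly your intermediate identity $\mathbb{E}_\rho[\tilde\rho^2]=(c^2-2ac)\,\mathbb{E}_\rho[X]+a^2\mathbbm{1}$ before substituting Lemma~\ref{lem::expectation_H_m_v_m_MUB}.

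The error occurs in the paper's final simplification. The identity coefficient is $a(c-a)=\beta(1-\beta)c^2+(1-2\beta)c-1$, as you found. Since $\alpha=\tanh(\delta/4)=1-2\beta$, one has $(1-2\beta)c=d+1$ and $\beta(1-\beta)=\tfrac{1-\alpha^2}{4}$. The lemma should therefore read
\[
\mathbb{E}_\rho[\tilde\rho^2]=(d-1)\rho+\Bigl(d+\frac{(1-\alpha^2)(d+1)^2}{4\alpha^2}\Bigr)\mathbbm{1}.
\]
A limiting case confirms this. As $\beta\to 0$ the measurement becomes a noiseless random MUB measurement with $X=\ket{v_m}\bra{v_m}$. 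Then $\tilde\rho^2=(d^2-1)\ket{v_m}\bra{v_m}+\mathbbm{1}$, hence $\mathbb{E}_\rho[\tilde\rho^2]=(d-1)\rho+d\,\mathbbm{1}$. This matches your formula, whereas the displayed one gives identity coefficient $d+2$.

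One side claim in your last paragraph is false. The displayed coefficient does not dominate the correct one in general. Take $\beta=1/4$ (i.e.\ $\alpha=1/2$, $c=2(d+1)$). The correct coefficient is $\tfrac{3}{16}c^2+\tfrac12 c-1$, while the displayed one is $\tfrac1{16}c^2+\tfrac32 c+\tfrac54$, and the former is larger once $d\ge 4$.

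The downstream bound should instead be read off directly from the corrected expression. For $d\ge 2$,
\[
\|\mathbb{E}_\rho[\tilde\rho_1^2]\|_{op}\le (d-1)+d+\frac{(d+1)^2}{4\alpha^2}\le \frac{10d^2}{\alpha^2},
\]
so~\eqref{eqn::second_moment_operator_norm} and the rest of the proof of Theorem~\ref{thm::upper_bound_qLS_qudits} are unaffected.
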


\begin{proof}
    We calculate
\begin{alignat}{1}
    \mathbb{E}_{\rho}[\tilde{\rho}^2] =& \left(\frac{d+1}{\alpha}\right)^2 \mathbb{E}_{\rho}\Bigg[ \bigg(\sum_{m = 1}^D H_m \ket{v_m}\bra{v_m} \bigg)^2 \Bigg] \notag
    \\
    &- 2 \frac{d+1}{\alpha} \left( 1 + \frac{d+1}{\alpha}\beta \right) \mathbb{E}_{\rho}\Bigg[ \sum_{m = 1}^D H_m \ket{v_m}\bra{v_m} \Bigg] \notag
    \\
    \label{eqn::second_moment_calculations}
    &+ \left( 1 + \frac{d+1}{\alpha}\beta \right)^2 \mathbbm{1}.
\end{alignat}
We can decompose the first expectation into two separate sums using the fact that $H_m^2 = H_m$ and $\bra{v_m}\ket{v_m} = 1$ in the following way
\begin{align}
\label{eqn::expectation_sqaured_hm}
\begin{split}
    \mathbb{E}_{\rho}\Bigg[ \bigg(\sum_{m = 1}^D H_m \ket{v_m}\bra{v_m} \bigg)^2 \Bigg] =& \mathbb{E}_{\rho}\Bigg[ \sum_{m = 1}^D H_m \ket{v_m}\bra{v_m} \Bigg] 
    \\
    &+ \mathbb{E}_{\rho}\Bigg[ \sum_{m = 1}^D \sum_{l \neq m} H_m H_l \ket{v_m}\bra{v_m}\ket{v_l}\bra{v_l} \Bigg]
\end{split}
\end{align}

We now study the second summand. Let us again write $m = db_m + k_m$ and $l = db_l + k_l$. By definition of the vector $H$, at least one of the terms $H_m$ and $H_l$ will be $0$ if $b_m \neq b_l$. We may therefore write
\begin{align*}
    &\mathbb{E}_{\rho}\Bigg[ \sum_{m = 1}^D \sum_{l \neq m} H_m H_l \ket{v_m}\bra{v_m}\ket{v_l}\bra{v_l} \Bigg] 
    \\
    = &\sum_{b_m = 1}^{d+1} \sum_{k_m = 1}^d \sum_{k_l \neq k_m}^d \mathbb{E}_{\rho}\left[ W_{k_m} W_{k_l} \right] \ket{e_{k_m}^{(b_m)}}\bra{e_{k_m}^{(b_m)}}\ket{e_{k_l}^{(b_m)}}\bra{e_{k_l}^{(b_m)}}.
\end{align*}
Using the fact that $\bra{e_{k_m}^{(b_m)}}\ket{e_{k_l}^{(b_m)}} = 0$ for $k_m \neq k_l$ due to the orthogonality of the vectors, we see that
\begin{align}
\label{eqn::cross_terms_zero}
    \mathbb{E}_{\rho}\Bigg[ \sum_{m = 1}^D \sum_{l \neq m} H_m H_l \ket{v_m}\bra{v_m}\ket{v_l}\bra{v_l} \Bigg] = 0.
\end{align}
Therefore, we have
\begin{equation}
\label{eqn::expectation_square_is_expectation}
    \mathbb{E}_{\rho}\Bigg[ \bigg(\sum_{m = 1}^D H_m \ket{v_m}\bra{v_m} \bigg)^2 \Bigg] = \mathbb{E}_{\rho}\Bigg[ \sum_{m = 1}^D H_m \ket{v_m}\bra{v_m} \Bigg].
\end{equation}
As such, we obtain
\begin{align*}
    \mathbb{E}_{\rho}\left[ \tilde{\rho}^2 \right] =& \mathbb{E}_{\rho}\Bigg[ \sum_{m = 1}^D H_m \ket{v_m}\bra{v_m} \Bigg] \left( \left( \frac{d+1}{\alpha} \right)^2 - 2 \frac{d+1}{\alpha} \left( 1 + \frac{d+1}{\alpha} \beta \right) \right) 
    \\
    &+ \left( 1 + \frac{d+1}{\alpha} \beta \right)^2 \mathbbm{1}.
    \intertext{Having calculated the expectation already in Lemma~\ref{lem::expectation_H_m_v_m_MUB}, we can rewrite this expression as}
    =& \left( (1 - 2\beta) \frac{d+1}{\alpha} - 2 \right) \rho + \left( \beta^2 \left( \frac{d+1}{\alpha} \right)^2 + (2\beta + 1) \frac{d+1}{\alpha} + 1  + \beta \right) \mathbbm{1}
\end{align*}
\end{proof}

\begin{lemma}
\label{lem::operator_norm_properties_projection}
    Let $\bar{\rho} \in \mathbb{C}^{d \times d}$ be a (not-necessarily positive and not necessarily normed) hermitian matrix and $\check{\rho} \in \mathbb{C}^{d \times d}$ be its projection onto the convex set $\mathcal{S}(\mathbb{C}^d)$. Then, it holds
    \begin{equation*}
        \norm{\check{\rho} - \rho}_{op} \leq 2 \norm{\bar{\rho} - \rho}_{op} \hspace{20pt} \text{for all } \rho \in \mathcal{S}(\mathbb{C}^d).
    \end{equation*}
\end{lemma}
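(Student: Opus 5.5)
The plan is to use the explicit form of the Frobenius-norm projection onto $\mathcal{S}(\mathbb{C}^d)$ and then apply Weyl's inequality. Write $t := \norm{\bar{\rho} - \rho}_{op}$. By the triangle inequality $\norm{\check{\rho} - \rho}_{op} \leq \norm{\check{\rho} - \bar{\rho}}_{op} + t$, so it suffices to show $\norm{\check{\rho} - \bar{\rho}}_{op} \leq t$.

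\emph{Step 1 (form of the projection).} Diagonalise $\bar{\rho} = U \diag(x_1,\dots,x_d) U^*$ with $x_1 \geq \dots \geq x_d$. The projection onto the density matrices is $\check{\rho} = U \diag(p_1,\dots,p_d) U^*$, where $p$ is the Euclidean projection of $x$ onto the probability simplex. Explicitly $p_i = \max(x_i - \tau, 0)$, with $\tau \in \mathbb{R}$ the unique number such that $\sum_i p_i = 1$. I take this as standard; it follows from unitary invariance of $\norm{\cdot}_F$ and the fact that the best approximation commutes with $\bar{\rho}$. Since $\check{\rho}$ and $\bar{\rho}$ are simultaneously diagonal, $\check{\rho} - \bar{\rho} = U \diag(p_i - x_i) U^*$ with $p_i - x_i = -\min(\tau, x_i)$. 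Hence
\begin{equation*}
\norm{\check{\rho} - \bar{\rho}}_{op} = \max_i |\min(\tau, x_i)|,
\end{equation*}
and it remains to show $|\min(\tau, x_i)| \leq t$ for every $i$.

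\emph{Step 2 (Weyl).} With $\lambda_1 \geq \dots \geq \lambda_d \geq 0$ the eigenvalues of $\rho$, Weyl's inequality gives $|x_i - \lambda_i| \leq t$. In particular $x_i \geq -t$ for all $i$. This already covers every index with $x_i \leq \tau$ and $x_i \leq 0$, since then $|\min(\tau,x_i)| = |x_i| \leq t$. It also shows $\min(\tau,x_i) \leq \tau$, so upper bounds only need to control $\tau$ from above.

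\emph{Step 3 (bounding $\tau$; the main point).} Let $k$ be the number of indices with $x_i > \tau$; these are the top $k$ indices, and $k \geq 1$.

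If $\tau \geq 0$, then
\begin{equation*}
1 = \sum_{i \leq k} (x_i - \tau) \leq \sum_{i \leq k} \lambda_i + k(t - \tau) \leq 1 + k(t - \tau),
\end{equation*}
so $\tau \leq t$. Consequently $0 \leq \min(\tau, x_i) \leq t$ when $x_i \geq 0$, and Step 2 handles negative $x_i$.

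If $\tau < 0$, we must show $\tau \geq -t$.
\begin{itemize}
\item If $k < d$, some $x_j \leq \tau$, and Step 2 gives $\tau \geq x_j \geq -t$.
\item If $k = d$, then $\tau = (\Tr\bar{\rho} - 1)/d$. Summing the Weyl bounds gives $|\Tr\bar{\rho} - 1| = |\sum_i (x_i - \lambda_i)| \leq dt$, hence $|\tau| \leq t$.
\end{itemize}
In both subcases $|\min(\tau, x_i)| \leq \max(|\tau|, |x_i|\mathbbm{1}_{x_i \leq \tau}) \leq t$.

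Combining the cases yields $\norm{\check{\rho} - \bar{\rho}}_{op} \leq t$, and the triangle inequality gives the factor $2$. The only delicate point is that $\bar{\rho}$ need not have unit trace, so $\tau$ can be negative. This is why the case $\tau < 0$ must be split according to whether the thresholding is active ($k < d$) or not ($k = d$).
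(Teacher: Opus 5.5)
Your proof is correct and follows essentially the same route as the paper. Both use the soft-thresholding form of the projection, Weyl's inequality $|x_i-\lambda_i|\le t$, a bound $|\tau|\le t$ on the threshold, and the triangle inequality; the paper gets $|\tau|\le t$ by contradiction from $\sum_i\check\lambda_i=\sum_i\lambda_i=1$, while you derive it directly, and your version is slightly more careful: the paper asserts $\max_k|\check\lambda_k-\bar\lambda_k|\le|\theta|$, which fails for indices with $\bar\lambda_k<\theta$ unless one also uses $\bar\lambda_k\ge -t$ from Weyl, exactly as you do in Step 2.
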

\begin{proof}
    Let us start by giving a formula for the projection of $\bar{\rho}$ onto $\mathcal{S}(\mathbb{C}^d)$. We start by noting that the optimal projection $\check{\rho}$ has to be in the same eigenbasis as $\bar{\rho}$ (as shown in \cite{smolin_efficient_2012}) and in that case the problem reduces to the optimal projection of the eigenvalues onto the probability simplex which has  already been solved (see for example \cite{duchi_efficient_2008}). As such for $\bar{\rho} = \bar{U} \bar{D} \bar{U}^*$ with $\bar{D} = \diag\left( \bar{\lambda}_1,...,\bar{\lambda}_d \right)$, its projection onto $\mathcal{S}(\mathbb{C}^d)$ is given by 
    \begin{equation*}
        \check{\rho} = \bar{U} \check{D} \bar{U}^* \hspace{20pt} \text{for } \check{D} = \diag\left( \check{\lambda}_1,...,\check{\lambda}_d \right),
    \end{equation*}
    where $\check{\lambda}_i = \max(\bar{\lambda}_i - \theta, 0)$ for some $\theta$ such that $\sum_{i = 1}^d \check{\lambda}_i = 1$. Since the operator norm and the set $\mathcal{S}(\mathbb{C}^d)$ are unitarily invariant, we can assume that $\rho$ is diagonal and it is equivalent to show 
    \begin{equation*}
        \norm{\check{D} - \rho}_{op} \leq 2 \norm{\bar{D} - \rho}_{op} \hspace{20pt} \text{for all } \rho \in \mathcal{S}(\mathbb{C}^d).
    \end{equation*}
    Now, let $\rho = UDU^*\in \mathcal{S}(\mathbb{C}^d)$ with $D = \diag(\lambda_1,...,\lambda_d)$ such that $\lambda_k \geq 0$ for $k = 1,..,d$ and $\sum_{k = 1}^d\lambda_k = 1$.  Set $\epsilon = \norm{\bar{D} - \rho}_{op}$. Then, we have
    \begin{align*}
        \max_{k = 1,...,d} \left| \check{\lambda}_k - \bar{\lambda}_k \right| &\leq |\theta|
        \intertext{and by Weyl's inequality \cite{tropp_introduction_2015}}
        \max_{k = 1,...,d} \left| \bar{\lambda}_k - \lambda_k \right| &\leq \epsilon.
    \end{align*}
    We will now show that $|\theta| \leq \epsilon$. In order to do this, we will first write $\bar{\lambda}_k = \lambda_k + e_k$, where $|e_k| \leq \epsilon$. Assume now that $\theta > \epsilon$. Then we have
    \begin{equation*}
        \bar{\lambda}_k - \theta = \lambda_k + e_k - \theta \leq \lambda_k.
    \end{equation*}
    As such, we also have $\check{\lambda}_k \leq \max(\lambda_k - \theta, 0) \leq \lambda_k$. However, there exists at least $k_0$ such that $\lambda_{k_0} > 0$. Since $\theta > \epsilon \geq 0$ we have $\check{\lambda}_{k_0} < \lambda_{k_0}$ from which we obtain
    \begin{equation*}
        1 = \sum_{k = 1}^d \check{\lambda}_k < \sum_{k = 1}^d \lambda_k = 1,
    \end{equation*}
    which is a contradiction. This shows $\theta < \epsilon$. If we now assume that $\theta < -\epsilon$ we have 
    \begin{equation*}
        \check{\lambda}_k = \max(\bar{\lambda}_k - \theta, 0) = \bar{\lambda}_k - \theta = \lambda_k + e_k - \theta > \lambda_k.
    \end{equation*}
    Here we obtain
    \begin{equation*}
        1 = \sum_{k = 1}^d \check{\lambda}_k > \sum_{k = 1}^d \lambda_k = 1,
    \end{equation*}
    which is again a contradiction. As such we have $|\theta| \leq \epsilon$. By the triangle inequality we have
    \begin{equation*}
        \norm{\check{D} - \rho}_{op} \leq \norm{\check{D} - \bar{D}}_{op} + \norm{\bar{D} - \rho}_{op} \leq |\theta| + \epsilon \leq 2 \epsilon \leq 2 \norm{\bar{D} - \rho}_{op}.
    \end{equation*}
    which shows the claim.
\end{proof}

\subsection{Technical proofs for the spectral thresholding algorithm}
The spectral thresholding algorithm for an estimator $\check{\rho}_n$ is given as follows. It is essentially the same as in \cite{butucea_spectral_2015} and \cite{lahiry_minimax_2024}. We include both it and its properties again for the readers convenience.

\begin{algorithm}
\SetKwInOut{Input}{Input}
\SetKwInOut{Output}{Output}
    \Input{Threshold $2t$ and $\check{\rho}_n = \check{U}_n\check{D}_n\check{U}_n^*$, with $\check{D}_n = \diag(\check{\lambda}_1,...,\check{\lambda}_d)$ such that $\check{\lambda}_1 > ... > \check{\lambda}_{\check{r}}$}
    \Output{$\hat{\rho}_n$ an estimator of $\rho$}
    $l = 0$\;
    $\hat{\lambda}_j^{(0)} = \check{\lambda}_j$ for $j = 1,...,d$\;
    \For{$l = 1,...,d$}{
        \eIf{$\hat{\lambda}_{d-l+1}^{(l-1)} > 2t$}{
            STOP
        }{
        $\hat{\lambda}_{d-l+1}^{(l)} = 0$\;
        $\hat{\lambda}_{j}^{(l)} = \hat{\lambda}_{j}^{(l-1)} + \frac{1}{d-l}\hat{\lambda}_{d-l+1}^{(l-1)}$ for $j = 1,...,d-l$\;
        }
    }
    $\hat{D}_n = \diag\left(\hat{\lambda}_1^{(d)},...,\hat{\lambda}_d^{(d)}\right)$\;
    $\hat{\rho}_n =  \check{U}_n\hat{D}_n\check{U}_n^*$\;
\caption{Spectral thresholding algorithm}
\label{alg::spectral_thresholding}
\end{algorithm}

Algorithm~\ref{alg::spectral_thresholding} assures that if the state $\rho$ is low rank, so is the estimator $\hat{\rho}_n$. It does so by iterating over the ordered eigenvalues of $\check{\rho}_n$ from the lowest to the largest. If the current iteration of the eigenvalue is smaller than the threshold, the algorithm sets this eigenvalue to zero and distributes the mass evenly among the remaining larger values. It does so until it reaches an eigenvalue that is above the threshold. We have the following two results.

\begin{lemma}
\label{lem::eigenvalue_properties_algorithm}
    Let $\hat{\rho}_n = \mathcal{A}(\check{\rho}_n)$ be the output of the spectral thresholding algorithm~\ref{alg::spectral_thresholding} with threshold $2t$ for the input being the estimator~\eqref{eqn::qLS_est_qudits} of a state $\rho$. Denote by $\check{\lambda}_1 > ... > \check{\lambda}_d$ the eigenvalues of $\check{\rho}_n$ and by $\hat{\lambda}_1 > ... > \hat{\lambda}_d$ the eigenvalues of $\hat{\rho}_n$. Then, for all $j = 1,...,d$ it holds
    \begin{equation*}
        \left| \hat{\lambda}_j - \check{\lambda}_j \right| \leq 4t.
    \end{equation*}
\end{lemma}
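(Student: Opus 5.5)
The plan is to track, through the iterations of Algorithm~\ref{alg::spectral_thresholding}, a single scalar: the common amount by which all not-yet-zeroed eigenvalues have been raised. I would then bound this scalar using the ordering of the eigenvalues of $\check{\rho}_n$ and their nonnegativity. Nonnegativity holds because $\check{\rho}_n = \Pi_{\mathcal{S}(\mathbb{C}^d)}(\bar{\rho}_n)$ is a density matrix, by the explicit form of the projection in the proof of Lemma~\ref{lem::operator_norm_properties_projection}. Since the algorithm only changes eigenvalues and keeps the eigenbasis $\check{U}_n$, and it removes the smallest ones first, the ordering of the surviving eigenvalues is preserved. It therefore suffices to compare $\hat{\lambda}^{(L)}_j$ with $\check{\lambda}_j$ index by index, where $L$ is the step at which the algorithm stops.

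Step 1 (invariant). I would show by induction on $l$ that there is a number $c_l \ge 0$ with $c_0 = 0$ such that $\hat{\lambda}^{(l)}_j = \check{\lambda}_j + c_l$ for all $j \le d-l$, and $\hat{\lambda}^{(l)}_j = 0$ for $j > d-l$. The update rule adds the same quantity $\hat{\lambda}^{(l-1)}_{d-l+1}/(d-l)$ to every surviving entry. Hence
\begin{equation*}
c_l = c_{l-1} + \frac{\check{\lambda}_{d-l+1} + c_{l-1}}{d-l}.
\end{equation*}

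Step 2 (zeroed eigenvalues). An index $j = d-l+1$ is zeroed only if $\check{\lambda}_{j} + c_{l-1} \le 2t$. Combined with $\check{\lambda}_j \ge 0$ and $c_{l-1} \ge 0$, this gives $|\hat{\lambda}_j - \check{\lambda}_j| = \check{\lambda}_j \le 2t$ for every zeroed index.

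Step 3 (surviving eigenvalues). This is the key estimate, and the main obstacle. Naively summing the increments $2t/(d-l)$ would give a harmonic sum and hence a $\log d$ loss, so I would not do that. Instead I use the zeroing condition at the last step $L$ where zeroing occurs: $c_{L-1} \le 2t - \check{\lambda}_{d-L+1} \le 2t$. The same condition bounds the numerator of the final increment, $\check{\lambda}_{d-L+1} + c_{L-1} \le 2t$, while $d-L \ge 1$. Therefore
\begin{equation*}
c_L \le c_{L-1} + 2t \le 4t,
\end{equation*}
and every surviving $j$ satisfies $|\hat{\lambda}_j - \check{\lambda}_j| = c_L \le 4t$.

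The remaining point to check is the degenerate case in which the loop would reach $l = d$, where $d-l = 0$. Here I would argue that the algorithm stops before all mass is removed. By Step 1 and the trace normalisation, the largest surviving value is at least the average of the surviving values, which is positive. If the convention requires it, I would simply state that the top eigenvalue is never zeroed, so that $d-L \ge 1$ always holds. Together, Steps 2 and 3 give the claim for all $j$.
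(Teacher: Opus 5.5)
Your proof is correct and is essentially the paper's argument: the paper also uses the non-stopping condition at the last zeroed index to bound the common shift accumulated before the final zeroing step by $2t$ (the paper writes this shift as $\frac{1}{\hat r+1}\sum_{l\ge \hat r+2}\check\lambda_l$, you write it as $c_{L-1}$), and then adds at most $2t$ in the final redistribution. Your explicit use of $\check\lambda_j\ge 0$ and of the preserved ordering is a welcome addition, and the degenerate case is minor, but note that positivity alone does not force a stop: the top value gets zeroed only if $1=\hat\lambda_1^{(d-1)}\le 2t$, in which case $4t\ge 2$ and the bound is trivial.
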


\begin{proof}
    This inequality holds obviously true for all $j > \hat{r}$, as in that case $\hat{\lambda}_j = 0$ and $\check{\lambda}_j < 2t$. Let us now consider the case $j \leq \hat{r}$. In that case we note that in the second last iteration (which is the $d - \hat{r}-1$-st), for all $j = 1,...., \hat{r}+1$, we can prove that
    \begin{equation*}
        \hat{\lambda}_{j}^{(d - \hat{r}-1)} = \check{\lambda}_j+ \frac{1}{\hat{r} + 1}\sum_{l = \hat{r} + 2}^{d} \check{\lambda}_l,
    \end{equation*}
    i.e., the mass of the smallest $d - \hat{r} - 2$ smallest eigenvalues is evenly distributed among the first $\hat{r}+1$ eigenvalues. Since the algorithm does not stop in the $d-\hat{r}$-th iteration, for $j = \hat{r}+1$ we have
    \begin{equation*}
        \hat{\lambda}_{\hat{r}+1}^{(d - \hat{r}-1)} < 2t,
    \end{equation*}
    which shows that
    \begin{equation*}
        \frac{1}{\hat{r} + 1}\sum_{l = \hat{r} + 2}^{d} \check{\lambda}_l < 2t
    \end{equation*}
    and in particular, that we have
    \begin{equation*}
        \hat{\lambda}_{j}^{(d - \hat{r}-1)} < \check{\lambda}_j+ 2t \hspace{10pt} \text{for all } j = 1,...,\hat{r} + 1 \hspace{10pt} \text{ and } \hspace{10pt} \hat{\lambda}_{\hat{r} + 1}^{(d - \hat{r}-1)} <  2t.
    \end{equation*}
    Heuristically, the algorithm cannot put more mass than $2t$ on any eigenvalue. If it did, the algorithm would stop at iteration $d- \hat{r}$ and the rank of the estimator would be $\hat{r} +1$. Now, in the last iteration, the algorithm distributes the mass currently on $\hat{\lambda}_{\hat{r} + 1}^{(d - \hat{r})}$ on the remaining $\hat{r}$ eigenvalues, i.e. for all $j = 1,...,\hat{r}$
    \begin{equation*}
        \hat{\lambda}_{j}^{(d - \hat{r})} = \hat{\lambda}_{j}^{(d - \hat{r}-1)} + \hat{\lambda}_{\hat{r}+1}^{(d-\hat{r}-1)} < \check{\lambda}_j+ 2t + 2t < \check{\lambda}_j + 4t.
    \end{equation*}
    Since the algorithm then stops at, we have 
    \begin{equation*}
        \hat{\lambda}_j < \check{\lambda}_j + 4t \hspace{20pt} \text{for all } j = 1,...,\hat{r}.
    \end{equation*}
\end{proof}

The following result has already been proven for similar estimators in \cite{butucea_spectral_2015} and \cite{lahiry_minimax_2024}. We however include the proof again for the readers convenience.

\begin{proposition}
\label{prop::spectral_thresholding_properties}
    Let $\epsilon > 0$ and 
    \begin{equation*}
        t^2 = t^2(\epsilon) = 16 \log\left( \frac{d}{\epsilon} \right) \frac{d^2}{n\alpha^2}.
    \end{equation*}
    Furthermore, let $\hat{\rho}_n = \mathcal{A}(\check{\rho}_n)$ be the output of the spectral thresholding algorithm~\ref{alg::spectral_thresholding} with threshold $2t$ for the input being the estimator~\eqref{eqn::qLS_est_qudits} of $\rho$. Then we have
    \begin{align*}
        \mathbb{P}_{\rho}\left( \norm{\hat{\rho}_n - \rho}_F^2 \leq 64rt^2 \right) &\geq 1- \epsilon.
        \intertext{If we additionally assume that $\lambda_{min}(\rho) \geq 6t$, then}
        \mathbb{P}_{\rho}\left( \rank(\hat{\rho}_n) = \rank(\rho) \right) &\geq 1 - \epsilon
    \end{align*}
\end{proposition}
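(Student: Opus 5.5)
We work on the event $\mathcal{E} = \{\norm{\check{\rho}_n - \rho}_{op} \leq t\}$. By the concentration bound \eqref{eqn::concentration_inequality_qubit_estimator_Operator} combined with Lemma~\ref{lem::operator_norm_properties_projection}, $\mathcal{E}$ has probability at least $1-\epsilon$ for the stated choice of $t^2(\epsilon)$. Everything below is deterministic on $\mathcal{E}$. Write $\lambda_1 \geq \dots \geq \lambda_d$ for the eigenvalues of $\rho$, with $\lambda_j = 0$ for $j > r$. By Weyl's inequality, $|\check{\lambda}_j - \lambda_j| \leq t$ for all $j$.

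\textbf{Rank control.} For $j > r$ we get $\check{\lambda}_j \leq t < 2t$. The algorithm scans from the smallest eigenvalue upward, and after redistribution the eigenvalues only grow. I will argue that the algorithm never stops before reaching index $r$. The danger is that redistributed mass pushes some $\hat{\lambda}^{(l)}_j$ with $j > r$ above $2t$. This needs care.

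The bound I will use is that the total mass moved onto any surviving coordinate is at most $\frac{1}{\hat r +1}\sum_{l>\hat r+1}\check\lambda_l$. This is the same quantity controlled in the proof of Lemma~\ref{lem::eigenvalue_properties_algorithm}. If that is too weak, I only need the conclusion $\hat r \leq r$, i.e.\ $\operatorname{rank}(\hat{\rho}_n) \leq r$, and not an exact stopping index. For the Frobenius bound, $\hat r \leq r$ is all that is used. If $\hat r$ exceeds $r$ I would instead bound $\hat r \leq 2r$-type quantities. This step is the one I expect to be most delicate.

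Under the extra assumption $\lambda_{min}(\rho) \geq 6t$ (minimum over nonzero eigenvalues), we have $\check{\lambda}_r \geq 5t$. By Lemma~\ref{lem::eigenvalue_properties_algorithm}, the value at index $r$ when the algorithm reaches it is at least $\check{\lambda}_r > 2t$, so the algorithm stops there. Hence $\hat r = r$ on $\mathcal{E}$, which gives the rank statement.

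\textbf{Frobenius bound.} Decompose
\[
\hat{\rho}_n - \rho = (\hat{\rho}_n - \check{\rho}_n) + (\check{\rho}_n - \rho).
\]
Both $\hat{\rho}_n$ and $\check{\rho}_n$ share the eigenbasis $\check U_n$. So $\norm{\hat\rho_n - \check\rho_n}_F^2 = \sum_j (\hat\lambda_j - \check\lambda_j)^2$.

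For $j > \hat r$, $|\hat\lambda_j - \check\lambda_j| = |\check\lambda_j| < 2t$. For $j \leq \hat r$, Lemma~\ref{lem::eigenvalue_properties_algorithm} gives $|\hat\lambda_j - \check\lambda_j| \leq 4t$.

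Summing naively over $j > \hat r$ gives a factor $d$ rather than $r$, so the estimate must be organised differently. I will instead bound $\norm{\hat\rho_n - \rho}_F^2 \leq \operatorname{rank}(\hat\rho_n - \rho)\,\norm{\hat\rho_n - \rho}_{op}^2$. Since $\operatorname{rank}(\hat\rho_n - \rho) \leq \hat r + r \leq 2r$, it suffices to have an operator-norm bound. The triangle inequality gives
\[
\norm{\hat\rho_n - \rho}_{op} \leq \norm{\hat\rho_n - \check\rho_n}_{op} + \norm{\check\rho_n - \rho}_{op} \leq 4t + t = 5t.
\]
This yields $\norm{\hat\rho_n - \rho}_F^2 \leq 2r \cdot 25 t^2 \leq 64 r t^2$, as claimed.
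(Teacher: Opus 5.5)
The gap is the rank-control step $\hat r\le r$. You flag it as delicate but never prove it, and both conclusions depend on it. Your Frobenius bound needs $\operatorname{rank}(\hat\rho_n-\rho)\le \hat r+r\le 2r$. Your exact-rank argument only shows that the algorithm cannot go past index $r$, i.e.\ $\hat r\ge r$, so ``hence $\hat r=r$'' silently uses $\hat r\le r$. The bound you suggest, $\frac{1}{\hat r+1}\sum_{l>\hat r+1}\check\lambda_l<2t$ from the proof of Lemma~\ref{lem::eigenvalue_properties_algorithm}, is circular: it comes from the stopping rule itself. It only bounds the accumulated shift by something of order $2t$, so at an index $j>r$ you get a tested value of order $\check\lambda_j+2t\le 3t$, which does not beat the threshold $2t$. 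Bounding the removed mass termwise by $(d-j)t$ is worse still. The fallback to ``$\hat r\le 2r$-type quantities'' has no argument behind it.

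The missing idea is trace conservation. Every surviving eigenvalue receives the same shift, and both $\check\rho_n$ (a projection onto $\mathcal{S}(\mathbb{C}^d)$) and $\rho$ have unit trace. So when the algorithm tests index $j\ge r$ after zeroing $j+1,\dots,d$, the accumulated shift is
\[
c_j=\frac1j\sum_{k>j}\check\lambda_k=\frac1j\sum_{k\le j}\bigl(\lambda_k-\check\lambda_k\bigr)\le t,
\]
using $\sum_{k\le j}\lambda_k=1$ and Weyl's inequality. Hence for $j>r$ the tested value is $\check\lambda_j+c_j\le\lambda_j+2t=2t$, so that coordinate is zeroed. Downward induction then gives $\hat r\le r$; the paper runs exactly this computation as a contradiction at $j=\hat r$.

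Once this is in place, the rest of your proposal is correct, and your Frobenius step is a different and tidier route than the paper's. The paper splits the error into $\check U_n(\hat D_n-D)\check U_n^*$ and $\check U_nD\check U_n^*-UDU^*$ and bounds each in Frobenius norm, by $5\sqrt r\,t$ and $2\sqrt{2r}\,t$. You instead bound $\norm{\hat\rho_n-\rho}_{op}\le 4t+t$ once and multiply by the rank, which gives $50rt^2\le 64rt^2$.
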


\begin{proof}
    By equation~\eqref{eqn::concentration_inequality_qubit_estimator_Operator} we have
\begin{equation*}
    \mathbb{P}_{\rho}\left( \norm{\check{\rho}_n - \rho}_{op} \leq t \right) \geq 1 - d \exp\left( -\frac{1}{16} \frac{t^2 n \alpha^2}{d^2} \right) = 1- \epsilon.
\end{equation*}
Let us now write
\begin{align*}
    \rho &= UDU^* \qquad \text{with } D = \diag(\lambda_1,...,\lambda_r,0,...,0)
    \\
    \check{\rho}_n &= \check{U}_n \check{D}_n \check{U}_n^* \quad\text{with } \check{D}_n = \diag(\check{\lambda}_1,...,\check{\lambda}_{\check{r}},0,...,0)
    \\
    \hat{\rho}_n &= \check{U}_n \hat{D}_n \check{U}_n^* \quad\text{with } \check{D}_n = \diag(\hat{\lambda}_1,...,\hat{\lambda}_{\check{r}},0,...,0).
\end{align*}
Note that by the properties of the algorithm we have $\hat{\lambda}_j = \check{\lambda}_j + \frac{1}{\hat{r}}\sum_{k > \hat{r}} \check{\lambda}_k$. Under the event that $\norm{\check{\rho}_n - \rho}_{op} \leq t$, which happens with probability at least $1 - \epsilon$, we have by Weyl's perturbation Theorem \cite{tropp_acm_2024}
\begin{equation}
\label{eqn::spectral_thresholding_eigenvalue_property}
    \max_{j = 1,...,\max\{r, \check{r}\}} |\lambda_j - \check{\lambda}_j| \leq \norm{\check{\rho}_n - \rho}_{op} \leq t.
\end{equation}
We will now prove that in this event it holds $\hat{r} \leq r$. Assume first that $\hat{r} > r$. Then it holds $\hat{\lambda}_{\hat{r}} > 2t$. However, for all $j = 1,...,\hat{r}$ we have
\begin{align*}
    \left|\hat{\lambda}_j - \lambda_j \right| &= \left|\check{\lambda}_j - \lambda_j - \frac{1}{\hat{r}} \sum_{k > \hat{r}} \check{\lambda}_k\right| \\
    &= \left| \check{\lambda}_j - \lambda_j - \frac{1}{\hat{r}} \sum_{k \leq \hat{r}} (\lambda_k - \check{\lambda}_k) \right| 
    \\
    &\leq \left| \check{\lambda}_j - \lambda_j \right| + \frac{1}{\hat{r}} \sum_{k \leq \hat{r}} \left| \lambda_k - \check{\lambda}_k \right| \leq 2t,
\end{align*}
which shows that $\hat{r} > r$ cannot hold. Let us now analyze the error of the estimator. By the triangle inequality we have
\begin{equation*}
    \norm{\hat{\rho}_n - \rho}_F \leq \norm{\check{U}_n\hat{D}_n \check{U}_n^* - \check{U}_n D \check{U}_n^*}_F + \norm{\check{U}_n D \check{U}_n^* - UDU^*}_F.
\end{equation*}
We now analyze both summands separately. Since $\hat{r} \leq r$, for the first one we have
\begin{equation*}
    \norm{\check{U}_n\hat{D}_n \check{U}_n^* - \check{U}_n D \check{U}_n^*}_F = \norm{\hat{D}_n - D}_F = \left( \sum_{j = 1}^r \left( \hat{\lambda}_j - \lambda_j \right)^2 \right)^{\frac{1}{2}} \leq 5 \sqrt{r} t
\end{equation*}
using Lemma~\ref{lem::eigenvalue_properties_algorithm}. For the second summand, we first bound its operator norm in the following way.
\begin{align*}
    \norm{\check{U}_n D \check{U}_n^* - UDU^*}_{op} &\leq \norm{\check{U}_n D \check{U}_n^* - \check{U}_n \check{D}_n \check{U}_n^*}_{op} + \norm{\check{U}_n \check{D}_n \check{U}_n^* - UDU^*}_{op}
    \\
    &\leq \norm{D - \check{D}_n}_{op} + \norm{\check{\rho}_n - \rho}_{op} \leq 2t.
\end{align*}
Since $\check{U}_n D \check{U}_n^*$ and $UDU^*$ are both matrices of at most rank $r$, the difference has rank at most $2r$ which allows us to bound
\begin{equation*}
    \norm{\check{U}_n D \check{U}_n^* - UDU^*}_{F} \leq 2 \sqrt{2r} t.
\end{equation*}    
Finally, we obtain
\begin{equation*}
    \norm{\hat{\rho}_n - \rho}_F \leq 8 \sqrt{r} t
\end{equation*}
in the event that $\norm{\check{\rho}_n - \rho}_{op} \leq t$, which shows
\begin{equation*}
    \mathbb{P}_{\rho}\left( \norm{\hat{\rho}_n - \rho}_F \leq 8\sqrt{r}t \right) \geq 1- \epsilon.
\end{equation*}
Let us now additionally assume that $\lambda_{min}(\rho) > 6t$. Assume that in this case $\hat{r} < r$. Then we have $\hat{\lambda}_r = 0$. However, it holds
\begin{align*}
    \hat{\lambda}_r = \lambda_r - \left|\lambda_r - \check{\lambda}_r \right| - \left| \check{\lambda}_r - \hat{\lambda} \right| \leq 6t - 3t - t = 2t,
\end{align*}
which is again a contradiction. Together with the results $\hat{r} \leq r$, we have the following result.
\begin{equation*}
    \mathbb{P}_{\rho}\left( \rank(\hat{\rho}_n) = \rank(\rho) \right) \geq 1 - \epsilon.
\end{equation*}
This completes the rank property of the estimator $\hat{\rho}_n$. 
\end{proof}

\newpage
\section{Proof of Proposition~\ref{prop::upper_bound_qLS_qudits_MUM}}
\label{sec::proofs_upper_bound_MUMs}
The proof of Proposition~\ref{prop::upper_bound_qLS_qudits_MUM} essentially follows the proof of Theorem~\ref{thm::upper_bound_qLS_qudits}. As such, we first state three analogous results of the Lemmas~\ref{lem::Expectation_of_measurement_MUB},~\ref{lem::expectation_H_m_v_m_MUB} and~\ref{lem::second_moment_rho_hat_MUB} which are given by the Lemmas~\ref{lem::Expectation_of_measurement_MUM},~\ref{lem::expectation_H_m_v_m_MUM} and~\ref{lem::second_moment_rho_hat_MUM}. The proof of Lemma~\ref{lem::Expectation_of_measurement_MUM} is almost identical to the proof of Lemma~\ref{lem::Expectation_of_measurement_MUB}. The proof of Lemma~\ref{lem::expectation_H_m_v_m_MUM} is also analogous to the proof of Lemma~\ref{lem::expectation_H_m_v_m_MUM}. However, instead of the $2$-design properties of MUBs~\eqref{eqn::2design_property} and~\eqref{eqn::ONB_property} we make use of the following linear inversion and completeness properties (see \cite{kalev_mutually_2014}): Let $X$ be a hermitian matrix. For a set of mutually unbiased measurements $(V_m)_{m = 1,...,D}$ with efficiency parameter $\kappa$ it holds
\begin{align}
\label{eqn::2_design_property_MUM}
    \sum_{m = 1}^D \Tr\left[ XV_m \right] V_m &= \frac{\kappa d -1}{d-1} X + \frac{d - \kappa}{d-1} \Tr[X] \mathbbm{1}
    \intertext{and}
\label{eqn::ONB_property_MUM}
    \sum_{m = 1}^D V_m &= (d+1) \mathbbm{1}.
\end{align}
    Finally, the proof of Lemma~\ref{lem::second_moment_rho_hat_MUM} is similar to the proof of Lemma~\ref{lem::second_moment_rho_hat_MUB} with the additional difficulty that due to the missing orthogonality of measurement operators within a bases we have no analogous result to equation~\ref{eqn::expectation_square_is_expectation}. As such, we must treat some cross terms more carefully. With these results we can then show that for the estimator $\tilde{\rho}$ based on mutually unbiased measurements given in~\eqref{eqn::qLS_est_before_proj_MUM} we have
    \begin{align*}
        \mathbb{E}_{\rho}\left[ \bar{\rho} \right] = \rho \hspace{20pt} \text{and} \hspace{20pt} \norm{\mathbb{E}_{\rho}\left[ \bar{\rho}^2\right]}_{op} \leq \frac{10d^2}{\alpha^2 \kappa^2}.
    \end{align*}
    To proof Proposition~\ref{prop::upper_bound_qLS_qudits_MUM} we then follow the remainder of the proof of Theorem~\ref{thm::upper_bound_qLS_qudits} from~\eqref{eqn::second_moment_operator_norm} replacing $\alpha$ by $\alpha \kappa$. Let us now state the Lemmas~~\ref{lem::Expectation_of_measurement_MUM},~\ref{lem::expectation_H_m_v_m_MUM} and~\ref{lem::second_moment_rho_hat_MUM}.

\begin{lemma}
\label{lem::Expectation_of_measurement_MUM}
    Let $M_{\delta}$ be the measurement defined in~\eqref{defn::measurement_operators_qls_mixed_MUM} and $H$ as in~\eqref{eqn::duality_2_desing_MUB}. Then it holds
    \begin{align}
    \mathbb{E}_{\rho}[H_m] = \frac{1}{d+1} \bigg[ \alpha \Tr\left[V_m \rho \right] + \beta \bigg],
\end{align}
wherer $\alpha = \tanh(\delta/4)$ and $\beta = (e^{\frac{\delta}{2}} + 1)^{-1} \in (0,\frac{1}{2})$.
\end{lemma}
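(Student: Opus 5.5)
The plan is to mirror the proof of Lemma~\ref{lem::Expectation_of_measurement_MUB} line by line, replacing the rank-one projectors $\ket{e_k^{(b)}}\bra{e_k^{(b)}}$ by the MUM operators $P_k^{(b)}$. The only structural fact about a basis used there is the completeness $\sum_{k=1}^d \ket{e_k^{(b)}}\bra{e_k^{(b)}} = \mathbbm{1}$ within a fixed basis $b$. I will therefore first check that this also holds for MUMs: $\sum_{k=1}^d P_k^{(b)} = \mathbbm{1}$ for each $b$. This is part of the definition of a MUM, namely each $(P_k^{(b)})_k$ is a POVM \cite{kalev_mutually_2014}. It is also implicitly required for~\eqref{defn::measurement_operators_qls_mixed_MUM} to satisfy the completeness relation: summing over $\omega$ and $b$ with the computation from Lemma~\ref{lem::gentleness_of_gentleized_general_measurement} gives $\frac{1}{d+1}\sum_b\sum_k P_k^{(b)}$, which equals $\mathbbm{1}$ by~\eqref{eqn::ONB_property_MUM}.

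Next, fix $m$ and write it as the pair $(b_0,k_0)$. By~\eqref{eqn::duality_2_desing_MUB},
\[
\mathbb{E}_\rho[H_m]=\sum_{\omega:\,\omega_{k_0}=1}\Tr\left[E_{\delta,\omega,b_0}\rho\right].
\]
Inserting~\eqref{defn::measurement_operators_qls_mixed_MUM} and exchanging the sums, I split the inner sum over $k$ into $k=k_0$ and $k\neq k_0$. The two combinatorial identities are purely classical and identical to the MUB case:
\[
\sum_{\omega_{k_0}=1}e^{-\frac{\delta}{2}\|\omega-e_{k_0}\|_1}=(1+e^{-\frac{\delta}{2}})^{d-1},
\qquad
\sum_{\omega_{k_0}=1}e^{-\frac{\delta}{2}\|\omega-e_k\|_1}=e^{-\frac{\delta}{2}}(1+e^{-\frac{\delta}{2}})^{d-1}\ \ (k\neq k_0).
\]
This yields
\[
\mathbb{E}_\rho[H_m]=\frac{1}{d+1}\cdot\frac{1}{1+e^{-\frac{\delta}{2}}}\left[\Tr[P_{k_0}^{(b_0)}\rho]+e^{-\frac{\delta}{2}}\sum_{k\neq k_0}\Tr[P_k^{(b_0)}\rho]\right].
\]

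Using within-basis completeness and $\Tr\rho=1$, we have $\sum_{k\neq k_0}\Tr[P_k^{(b_0)}\rho]=1-\Tr[V_m\rho]$. The remaining algebra then gives
\[
\frac{1}{d+1}\left[\frac{e^{\frac{\delta}{2}}-1}{e^{\frac{\delta}{2}}+1}\Tr[V_m\rho]+\frac{1}{e^{\frac{\delta}{2}}+1}\right],
\]
and the identification $\frac{e^{\delta/2}-1}{e^{\delta/2}+1}=\tanh(\delta/4)=\alpha$ completes the proof.

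The main, and essentially only, obstacle is justifying the within-basis completeness $\sum_k P_k^{(b)}=\mathbbm{1}$. The paper only states the total sum~\eqref{eqn::ONB_property_MUM}. I would invoke the MUM construction of \cite{kalev_mutually_2014}, where each $b$ defines a POVM. If only the trace relations were assumed, I would instead derive it by computing $\|\sum_k P_k^{(b)}-\mathbbm{1}\|_F^2$ from the given inner products. That yields $d\kappa+d(d-1)\frac{1-\kappa}{d-1}-2d+d=0$, which also settles it.
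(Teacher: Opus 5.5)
Your proof is correct and follows the paper's argument essentially line by line. You use the same split of the sum over $k$ into $k=k_0$ and $k\neq k_0$, the same two binomial identities, and the same use of within-basis completeness $\sum_{k} P_k^{(b_0)}=\mathbbm{1}$ to eliminate the $k\neq k_0$ terms. The paper simply asserts that completeness, so your Frobenius-norm check from the MUM trace relations is a welcome self-contained justification; it needs only that the $P_k^{(b)}$ are Hermitian. Your aside is slightly off, though: completeness of the full measurement uses only the total sum \eqref{eqn::ONB_property_MUM}, so it does not by itself force within-basis completeness.
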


\begin{proof}
    Let $m \in \{1,...,D\}$ and $k_0$ and $b_0$ be the unique natural numbers such that $m = db_0 + k_0$. Using the equivalence~\eqref{eqn::duality_2_desing_MUB}, for the expected value of $H_m$ we have
    \begin{align*}
        \mathbb{E}_{\rho}[H_m] &= \sum_{h} h_m \mathbb{P}\big( H_m = h_m \big)
        \\
        &= \sum_{\{h | h_m = 1\}} h_m \mathbb{P}\big( F(R^M) = h_m \big)
        \\
        &= \sum_{\{\omega | \omega_{k_0} = 1\}} \omega_{k_0} \mathbb{P}\big( R^M = (\omega, b_0) \big)
        \\
        &= \sum_{\{\omega | \omega_{k_0}^{(b_0)} = 1\}} \Tr \bigg[  \frac{1}{d+1} \left(\frac{e^{\frac{\delta}{2}}}{e^{\frac{\delta}{2}}+1} \right)^d \sum_{k = 1}^d e^{-\frac{\delta}{2}\norm{\omega - e_k}_1} P_k^{(b_0)} \rho \bigg]
        \\
        \begin{split}
            &= \frac{1}{d+1} \left(\frac{e^{\frac{\delta}{2}}}{e^{\frac{\delta}{2}}+1} \right)^d \Bigg[ \sum_{k \neq k_0} \sum_{\{\omega | \omega_{k_0}^{(b_0)} = 1\}} e^{-\frac{\delta}{2}\norm{\omega - e_k}_1} \Tr[P_k^{(b_0)} \rho]
            \\
            &\qquad \qquad \qquad \qquad \quad + \sum_{\{\omega | \omega_{k_0}^{(b_0)} = 1\}} e^{-\frac{\delta}{2}\norm{\omega - e_{k_0}}_1}\Tr[P_{k_0}^{(b_0)} \rho] \Bigg].
        \end{split}
    \end{align*}
    Let us now consider the sums in each term separately. For the first part it, let $\omega \in \{0,1\}^d$ such that $ \omega_{k_0}^{(b_0)} = 1$. Since $k \neq k_0$, the vector $\omega - e_k$ differs in at least one entry, namely the $k_0$-th. The difference $\norm{\omega - e_k}_1$ is then only dependent on the $d-1$ other entries of the vector $\omega$. Since there are an equal amount of vectors $\omega$ where these entries are $0$ and $1$, we can rewrite the sum as 
    \begin{align*}
        \sum_{\{\omega | \omega_{k_0}^{(b_0)} = 1\}} e^{-\frac{\delta}{2}\norm{\omega - e_k}_1} = e^{-\frac{\delta}{2}} \sum_{j = 0}^{d-1} \binom{d-1}{j} (e^{-\frac{\delta}{2}})^j = e^{-\frac{\delta}{2}} \left( e^{-\frac{\delta}{2}} +1 \right)^{d-1}.
    \end{align*}
    Similarly, if $\omega_{k_0}^{(b_0)} = 1$, then the $k_0$-th entry of $z^{(b_0)} - e_k$ is $0$ and so we can rewrite the sum as 
    \begin{align*}
        \sum_{\{\omega | \omega_{k_0}^{(b_0)} = 1\}} e^{-\frac{\delta}{2}\norm{\omega - e_{k_0}}_1} = \sum_{j = 0}^{d-1} \binom{d-1}{j} (e^{-\frac{\delta}{2}})^j = \left( e^{-\frac{\delta}{2}} +1 \right)^{d-1}.
    \end{align*}
    This allows us to rewrite the above sum as
    \begin{align*}
        \mathbb{E}_{\rho}[H_m] =& \frac{1}{d+1} \bigg(\frac{e^{\frac{\delta}{2}}}{e^{\frac{\delta}{2}}+1}  \bigg)^d \Bigg[\sum_{k \neq k_0} e^{-\frac{\delta}{2}} \left( e^{-\frac{\delta}{2}} +1 \right)^{d-1} \Tr[P_k^{(b_0)} \rho]
        \\
        & \qquad \qquad \qquad \qquad+ \left( e^{-\frac{\delta}{2}} +1 \right)^{d-1} \Tr[P_{k_0}^{(b_0)} \rho] \Bigg].
        \intertext{Using the fact that the $P_k^{(b_0)}$ form a quantum measurement for fixed $b_0$, we may write $\sum_{k \neq k_0} P_k^{(b_0)} = \mathbbm{1} - P_{k_0}^{(b_0)}$ and further simplify the sum as}
        = &\frac{1}{d+1} \frac{1}{1 + e^{-\frac{\delta}{2}}}  \bigg[ e^{-\frac{\delta}{2}} \left( 1 - \Tr\left[ P_{k_0}^{(b_0)} \rho \right] \right) + \Tr\left[ P_{k_0}^{(b_0)} \rho \right] \bigg]
        \\
        = &\frac{1}{d+1} \bigg[ \frac{1 - e^{-\frac{\delta}{2}}}{1+ e^{-\frac{\delta}{2}}}  \Tr\left[ P_{k_0}^{(b_0)} \rho \right] + \frac{e^{-\frac{\delta}{2}}}{1+e^{-\frac{\delta}{2}}}\bigg]
        \\
        = &\frac{1}{d+1} \bigg[ \frac{1 - e^{-\frac{\delta}{2}}}{1+ e^{-\frac{\delta}{2}}}  \Tr\left[ V_m \rho \right] + \frac{e^{-\frac{\delta}{2}}}{1+e^{-\frac{\delta}{2}}}\bigg]
        \\
        = &\frac{1}{d+1} \bigg[ \frac{ e^{\frac{\delta}{2}}-1}{e^{\frac{\delta}{2}} + 1}  \Tr\left[V_m \rho \right] + \frac{1}{e^{\frac{\delta}{2}} + 1} \bigg]
        \\
        =& \frac{1}{d+1} \bigg[ \alpha \Tr\left[V_m \rho \right] + \beta \bigg].
    \end{align*}
    using again that $m$ may is uniquely determined by $b_0$ and $k_0$. 
\end{proof}

\begin{lemma}
\label{lem::expectation_H_m_v_m_MUM}
    Let $(V_m)_{m = 1}^D$ be as in~\eqref{eqn::definition_v_m_MUM}, $M_{\delta}$ be the measurement defined in~\eqref{defn::measurement_operators_qls_mixed_MUM} and $H$ as in~\eqref{eqn::duality_2_desing_MUB}. Then it holds
    \begin{equation*}
        \mathbb{E}_{\rho}\left[ \sum_{m = 1}^D H_m V_m \right] = \frac{\alpha(\kappa d -1)}{d^2 - 1} \rho + \left( \frac{\alpha(d - \kappa)}{d^2 -1} + \beta \right) \mathbbm{1},
    \end{equation*}
    where $\kappa \in (1/d, 1]$ is the efficiency parameter of the MUM, $\alpha = \tanh(\delta/4)$ and $\beta = (e^{\frac{\delta}{2}} + 1)^{-1}$.
\end{lemma}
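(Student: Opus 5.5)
The proof is a direct computation mirroring that of Lemma~\ref{lem::expectation_H_m_v_m_MUB}, with the MUM identities \eqref{eqn::2_design_property_MUM} and \eqref{eqn::ONB_property_MUM} replacing the $2$-design identities \eqref{eqn::2design_property} and \eqref{eqn::ONB_property}.

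First, by linearity of the expectation,
\begin{equation*}
    \mathbb{E}_{\rho}\left[ \sum_{m=1}^D H_m V_m \right] = \sum_{m=1}^D \mathbb{E}_{\rho}[H_m]\, V_m .
\end{equation*}
Lemma~\ref{lem::Expectation_of_measurement_MUM} gives $\mathbb{E}_{\rho}[H_m] = \frac{1}{d+1}\left(\alpha \Tr[V_m\rho] + \beta\right)$. Substituting this splits the sum into two pieces:
\begin{equation*}
    \frac{\alpha}{d+1}\sum_{m=1}^D \Tr[\rho V_m] V_m \quad\text{and}\quad \frac{\beta}{d+1}\sum_{m=1}^D V_m .
\end{equation*}

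For the first piece, apply \eqref{eqn::2_design_property_MUM} with $X=\rho$ and use $\Tr[\rho]=1$. This gives
\begin{equation*}
    \frac{\alpha}{d+1}\left(\frac{\kappa d-1}{d-1}\rho + \frac{d-\kappa}{d-1}\mathbbm{1}\right) = \frac{\alpha(\kappa d-1)}{d^2-1}\rho + \frac{\alpha(d-\kappa)}{d^2-1}\mathbbm{1},
\end{equation*}
since $(d+1)(d-1)=d^2-1$.

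For the second piece, \eqref{eqn::ONB_property_MUM} gives $\sum_m V_m=(d+1)\mathbbm{1}$, so it equals $\beta\mathbbm{1}$. Adding the two pieces yields the claimed formula.

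There is essentially no obstacle. The only point to check is that \eqref{eqn::2_design_property_MUM} is applied with the correct normalization of the $P_k^{(b)}$. The $P_k^{(b)}$ have $\Tr[P_k^{(b)}]=1$ and sum, within each fixed $b$, to $\mathbbm{1}$, exactly as used in Lemma~\ref{lem::Expectation_of_measurement_MUM}. This normalization matches the convention under which the identities of \cite{kalev_mutually_2014} are stated.

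As a sanity check, setting $\kappa=1$ makes the coefficients $\frac{\alpha}{d+1}$ and $\frac{\alpha}{d+1}+\beta$, which recovers Lemma~\ref{lem::expectation_H_m_v_m_MUB}.
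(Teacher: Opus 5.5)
Your proof is correct and is the paper's argument: linearity of expectation, substitution of Lemma~\ref{lem::Expectation_of_measurement_MUM}, then the MUM inversion identity \eqref{eqn::2_design_property_MUM} with $X=\rho$ together with the completeness relation \eqref{eqn::ONB_property_MUM}, using $(d+1)(d-1)=d^2-1$. Your $\kappa=1$ sanity check against Lemma~\ref{lem::expectation_H_m_v_m_MUB} is a useful consistency check but is not needed for the proof.
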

\begin{proof}
    We calculate
\begin{align}
    \mathbb{E}_{\rho}\left[ \sum_{m = 1}^D H_m \ket{v_m}\bra{v_m} \right] =& \sum_{m = 1}^D \mathbb{E}_{\rho}[H_m] \ket{v_m}\bra{v_m} \notag
    \\
    =& \sum_{m = 1}^D \frac{1}{d+1} \bigg[ \alpha \Tr\left[V_m \rho \right] + \beta \bigg] V_m \notag
    \\
    =& \frac{\alpha}{d+1} \sum_{m = 1}^D \Tr\left[ V_m \rho \right] V_m + \frac{\beta}{d+1} \sum_{m = 1}^D V_m. \notag
    \intertext{Using the linear inversion properties~\eqref{eqn::2_design_property_MUM} and~\eqref{eqn::ONB_property_MUM} of $V_m$ and $\Tr[\rho] = 1$, we can simplify this sum to be}
    \label{eqn::expectation_hm_vm_MUM}
    =& \frac{\alpha(\kappa d -1)}{d^2 - 1} \rho + \left( \frac{\alpha(d - \kappa)}{d^2 -1} + \beta \right) \mathbbm{1}
\end{align}
\end{proof}

\begin{lemma}
\label{lem::second_moment_rho_hat_MUM}
    Let 
    \begin{equation*}
        \tilde{\rho} = \frac{d^2-1}{\alpha(\kappa d - 1)} \sum_{m = 1}^D H_m V_m - \frac{d^2 -1}{\alpha(\kappa d -1)} \left( \frac{\alpha(d-\kappa)}{d^2 - 1} + \beta\right) \mathbbm{1}
    \end{equation*}
    be defined as in~\eqref{eqn::qLS_est_before_proj_MUM} for a singular outcome with $\kappa \in (1/d, 1]$ being the efficiency parameter of the MUM, $\alpha = \tanh(\delta/4)$ and $\beta = (e^{\frac{\delta}{2}} + 1)^{-1}$. Then it holds
    \begin{align*}
        \norm{\mathbb{E}_{\rho}[\tilde{\rho}^2]}_{op} \leq& 10 \frac{d^2}{\alpha^2 \kappa^2}  \hspace{10pt} \text{and} \hspace{10pt} \norm{\tilde{\rho}}_{op} \leq 2 \frac{d}{\alpha \kappa}.
    \end{align*}
\end{lemma}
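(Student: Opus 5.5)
The plan is to avoid computing $\mathbb{E}_{\rho}[\tilde{\rho}^2]$ exactly, as was done in Lemma~\ref{lem::second_moment_rho_hat_MUB}. That exact computation relied on orthogonality within a basis, which fails for MUMs. Instead I would bound $\tilde{\rho}$ deterministically in operator norm and then use
\begin{equation*}
\norm{\mathbb{E}_{\rho}[\tilde{\rho}^2]}_{op} \leq \mathbb{E}_{\rho}\left[\norm{\tilde{\rho}}_{op}^2\right] \leq \sup \norm{\tilde{\rho}}_{op}^2 .
\end{equation*}
This makes the troublesome cross terms $H_mH_l V_mV_l$ irrelevant.

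First I write $\tilde{\rho} = c\,(S - \gamma \mathbbm{1})$, where
\begin{equation*}
c = \frac{d^2-1}{\alpha(\kappa d-1)}, \qquad S = \sum_{m=1}^D H_m V_m, \qquad \gamma = \frac{\alpha(d-\kappa)}{d^2-1} + \beta .
\end{equation*}
By \eqref{eqn::duality_2_desing_MUB}, for an outcome $(W,B)$ only the block $b=B$ of $H$ is nonzero. Hence $S = \sum_{k=1}^d W_k P_k^{(B)}$ with $W_k \in \{0,1\}$. Since each $P_k^{(B)} \geq 0$ and $\sum_k P_k^{(B)} = \mathbbm{1}$ (the MUM elements of one basis form a measurement, as used in Lemma~\ref{lem::Expectation_of_measurement_MUM}), we get $0 \preceq S \preceq \mathbbm{1}$ almost surely. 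Next I check that $\gamma \in [0,1]$. We have $\beta < 1/2$, and $\alpha(d-\kappa)/(d^2-1) \leq (d-\kappa)/(d^2-1)$, which is at most $1/2$ for $d \geq 2$ and $\kappa > 1/d$. Hence $-\gamma\mathbbm{1} \preceq S - \gamma\mathbbm{1} \preceq (1-\gamma)\mathbbm{1}$, so $\norm{S - \gamma \mathbbm{1}}_{op} \leq \max\{\gamma, 1-\gamma\} \leq 1$. Even without the check on $\gamma$, the crude bound $1+\gamma \leq 2$ would only change constants.

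It then remains to bound $c$ by a multiple of $d/(\alpha\kappa)$. Writing
\begin{equation*}
\frac{d^2-1}{\kappa d - 1} = \frac{(d-1)(d+1)}{\kappa d -1},
\end{equation*}
I would use $\kappa d - 1 \geq \kappa d/2$, which holds whenever $\kappa d \geq 2$. This gives $c \leq 2(d^2-1)/(\alpha\kappa d) \leq 2d/(\alpha\kappa)$, hence $\norm{\tilde{\rho}}_{op} \leq 2d/(\alpha\kappa)$. Squaring and taking expectations then gives $\norm{\mathbb{E}_{\rho}[\tilde{\rho}^2]}_{op} \leq 4d^2/(\alpha^2\kappa^2) \leq 10\, d^2/(\alpha^2\kappa^2)$.

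The main obstacle is the factor $\kappa d - 1$ in the denominator. The lemma only assumes $\kappa \in (1/d,1]$, and as $\kappa \downarrow 1/d$ the constant $c$ blows up faster than $d/\kappa$. So the stated bound genuinely needs $\kappa$ bounded away from $1/d$ by a constant factor, for instance $\kappa \geq 2/d$. I would make this assumption explicit, noting that it is harmless in the regime $\kappa \sim 1$ of interest in Proposition~\ref{prop::upper_bound_qLS_qudits_MUM}. Alternatively, I would state the bounds with $\alpha(\kappa d-1)/d$ in place of $\alpha\kappa$.
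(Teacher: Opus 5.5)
Your argument is correct. What it proves is a corrected version of the lemma, and the extra hypothesis you add is genuinely necessary.

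\textbf{Comparison with the paper.} For the operator-norm bound you and the paper use the same observation, $S=\sum_k W_kP^{(B)}_k$ with $0\preceq S\preceq\mathbbm{1}$. The paper then uses the cruder $\tau_1+\tau_2$ (with $\tau_1=c$, $\tau_2=c\gamma$ in your notation), where you use $\norm{S-\gamma\mathbbm{1}}_{op}\le\max\{\gamma,1-\gamma\}$. For the second moment the routes differ. The paper expands $\mathbb{E}_\rho[\tilde\rho^2]=\tau_1^2\mathbb{E}_\rho[S^2]-2\tau_1\tau_2\mathbb{E}_\rho[S]+\tau_2^2\mathbbm{1}$ and bounds the diagonal terms via $V_m^2\preceq V_m$. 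It then computes the joint moments $\mathbb{E}_\rho[W_{k_1}W_{k_2}]$ for the within-basis cross terms and controls those in Loewner order using $P(\mathbbm{1}-P)\preceq P$ and the MUM linear-inversion identity; only then does it bound the coefficients.

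Your reduction $\mathbb{E}_\rho[\tilde\rho^2]\preceq\sup\norm{\tilde\rho}_{op}^2\,\mathbbm{1}$ removes all of that and even gives the constant $4$. What the expansion could buy in principle is sharpness when $\alpha$ is not small. For MUBs the exact second moment is of order $d$ as $\alpha\to1$, whereas $\sup\norm{\tilde\rho}_{op}^2$ stays of order $d^2$. The paper's final bound does not exploit this, and for small $\alpha$ the two agree up to constants. So for the lemma and for the subsequent Matrix Bernstein step your route loses nothing.

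\textbf{The caveat on $\kappa$.} Your caveat is right, and it pinpoints an error in the paper's proof, which concludes by asserting $\tau_1,\tau_2\le d/(\alpha\kappa)$. For $\tau_1$ this inequality is equivalent to $\kappa\ge d$, so it fails for every admissible $\kappa$; already at $\kappa=1$ one has $\tau_1=(d+1)/\alpha$.

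Moreover, the statement itself, not just the proof, breaks down as $\kappa\downarrow 1/d$. The outcome $W=0$ has probability $\bigl(\frac{e^{\delta/2}}{e^{\delta/2}+1}\bigr)^d e^{-\delta/2}>0$, independent of $\kappa$ and $\rho$. On that outcome $\tilde\rho=-\tau_2\mathbbm{1}$ with $\tau_2\ge\frac{d-\kappa}{\kappa d-1}\to\infty$. Hence both $\norm{\tilde\rho}_{op}$ and $\norm{\mathbb{E}_\rho[\tilde\rho^2]}_{op}$ are unbounded in that limit. You should include this example, since the blow-up of $c$ alone does not show that the bound fails: a priori $S-\gamma\mathbbm{1}$ could compensate.

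Your hypothesis $\kappa d\ge 2$ suffices. The exact condition for $c\le 2d/(\alpha\kappa)$ is the slightly weaker $\kappa(d^2+1)\ge 2d$.
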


\begin{proof}
    We first set
    \begin{align*}
        \tau_1 := \frac{d^2-1}{\alpha(\kappa d - 1)} \hspace{20pt} \text{and} \hspace{20pt} \tau_2 = \frac{d^2 -1}{\alpha(\kappa d -1)} \left( \frac{\alpha(d-\kappa)}{d^2 - 1} + \beta\right).
    \end{align*}
    Then, it holds
    \begin{alignat}{1}
        \mathbb{E}_{\rho}[\tilde{\rho}^2] =& \tau_1^2 \mathbb{E}_{\rho}\Bigg[ \bigg(\sum_{m = 1}^D H_m V_m \bigg)^2 \Bigg] - 2 \tau_1 \tau_2 \mathbb{E}_{\rho}\Bigg[ \sum_{m = 1}^D H_m V_m \Bigg] + \tau_2^2 \mathbbm{1}. \label{eqn::second_moment_calculations_MUM}
    \end{alignat}
    Let us consider the first expectation. We can write this term as
    \begin{align}
    \label{eqn::Second_moment_summand_1}
        \mathbb{E}_{\rho}\Bigg[ \bigg(\sum_{m = 1}^D H_m V_m \bigg)^2 \Bigg] = \mathbb{E}_{\rho}\left[ \sum_{m = 1}^D H_m^2 \right] V_m^2 + \mathbb{E}_{\rho}\left[ \sum_{m_1 \neq m_2} H_{m_1}H_{m_2} \right] V_{m_1} V_{m_2}
    \end{align}
    Since $H_m^2 = H_m$ and $0 \lesssim V_m \lesssim \mathbbm{1}$ (\emph{i.e.} $V_m$ and $\mathbbm{1} - V_m$ are hermitian and positive semi-definite), we have
    \begin{equation}
    \label{eqn::Second_moment_summand_1_summand_1}
        \mathbb{E}_{\rho}\left[ \sum_{m = 1}^D H_m^2 \right] V_m^2 = \mathbb{E}_{\rho}\left[ \sum_{m = 1}^D H_m \right] V_m^2 \lesssim \mathbb{E}_{\rho}\left[ \sum_{m = 1}^D H_m \right] V_m.
    \end{equation}
    For the second summand in~\eqref{eqn::Second_moment_summand_1} we now consider the identity~\eqref{eqn::duality_2_desing_MUB}. Let $b_1, b_2 \in \{1,...,d+1\}$ and $k_1, k_2 \in \{1,...,d\}$ be the unique integers such that $m_1 = (d-1)b_1 + k_1$ and $m_2 = (d-1)b_2 + k_2$. We can then rewrite
    \begin{equation}
    \label{eqn::Second_moment_summand_1_summand_2}
        \mathbb{E}_{\rho}\left[ \sum_{m_1 \neq m_2} H_{m_1}H_{m_2} \right] V_{m_1} V_{m_2} = \sum_{b = 1}^{d+1} \sum_{k_1 \neq k_2}^d \mathbb{E}_{\rho}\left[ W_{k_1} W_{k_2} \right] P_{k_1}^{(b)} P_{k_2}^{(b)},
    \end{equation}
    where we used that $b_1 = b_2$ as otherwise one of the terms $W_{k_1} W_{k_2}$ would be zero. Let us now calculate the expectation
    \begin{align*}
        \mathbb{E}_{\rho}\left[ W_{k_1} W_{k_2} \right] =& \sum_{\{\omega | \omega_{k_1} = \omega_{k_2} = 1 \}} \omega_{k_1} \omega_{k_2} \mathbb{P}_{\rho} \left( F(R^{M_{\delta}}) = (\omega, b) \right)
        \\
        =& \frac{1}{d+1} \left( \frac{e^{\frac{\delta}{2}}}{e^{\frac{\delta}{2}}+1} \right)^d \sum_{\{\omega | \omega_{k_1} = \omega_{k_2} = 1 \}} \sum_{k = 1}^d e^{-\frac{\delta}{2} \norm{\omega - e_k}_1} \Tr\left[ \rho P_k^{(b)} \right]
        \\
        =& \frac{1}{d+1} \left( \frac{e^{\frac{\delta}{2}}}{e^{\frac{\delta}{2}}+1} \right)^d \Bigg( \sum_{k \not \in \{k_1, k_2\}}^d \sum_{\{\omega | \omega_{k_1} = \omega_{k_2} = 1 \}}  e^{-\frac{\delta}{2} \norm{\omega - e_k}_1} \Tr\left[ \rho P_k^{(b)} \right]
        \\
        & \qquad \qquad \qquad \qquad + \sum_{\{\omega | \omega_{k_1} = \omega_{k_2} = 1 \}}  e^{-\frac{\delta}{2} \norm{\omega - e_{k_1}}_1} \Tr\left[ \rho P_{k_1}^{(b)} \right]
        \\
        & \qquad \qquad \qquad \qquad + \sum_{\{\omega | \omega_{k_1} = \omega_{k_2} = 1 \}}  e^{-\frac{\delta}{2} \norm{\omega - e_{k_2}}_1} \Tr\left[ \rho P_{k_2}^{(b)} \right] \Bigg)
        \\
        =& \frac{1}{d+1} \left( \frac{e^{\frac{\delta}{2}}}{e^{\frac{\delta}{2}}+1} \right)^d \Bigg( \sum_{k \not \in \{k_1, k_2\}}^d \Tr\left[ \rho P_k^{(b)} \right] e^{-\delta} \left( e^{-\frac{\delta}{2}} + 1 \right)^{d-2}
        \\
        &\qquad \qquad \qquad \qquad + \Tr\left[ \rho P_{k_1}^{(b)} \right] e^{-\frac{\delta}{2}} \left( e^{-\frac{\delta}{2}} + 1 \right)^{d-2}
        \\
        &\qquad \qquad \qquad \qquad + \Tr\left[ \rho P_{k_2}^{(b)} \right] e^{-\frac{\delta}{2}} \left( e^{-\frac{\delta}{2}} + 1 \right)^{d-2} \Bigg)
        \\
        =& \frac{1}{d+1} \left( \frac{1}{e^{\frac{\delta}{2}}+1} \right)^2 \Bigg( 1 + \Tr\left[ \rho \left( P_{k_1}^{(b)} + P_{k_2}^{(b)} \right) \right] \left( e^{\frac{\delta}{2}} - 1 \right) \Bigg)
        \\
        =& \frac{1}{d+1} \beta \Bigg( \beta + \alpha \Tr\left[ \rho \left( P_{k_1}^{(b)} + P_{k_2}^{(b)} \right) \right] \Bigg),
    \end{align*}
    where we used the fact that for a fixed $b$ the operators $P_k^{(b)}$ form a quantum measurement and therefore $\sum_{k \not \in \{k_1, k_2\}}^d P_k^{(b)} = \mathbbm{1} - P_{k_1}^{(b)} - P_{k_2}^{(b)}$. Plugging this result into~\eqref{eqn::Second_moment_summand_1_summand_2} gives
    \begin{align}
    \label{eqn::Second_moment_summand_1_summand_2_version_2}
        \mathbb{E}_{\rho}\left[ \sum_{m_1 \neq m_2} H_{m_1}H_{m_2} \right] V_{m_1} V_{m_2} =& \frac{\beta^2}{d+1} \sum_{b = 1}^{d+1} \sum_{k_1 \neq k_2} P_{k_1}^{(b)} P_{k_2}^{(b)} 
        \\
        &+ \frac{\beta \alpha}{d+1} \sum_{b = 1}^{d+1} \sum_{k_1 \neq k_2} \Tr\left[ \rho \left( P_{k_1}^{(b)} + P_{k_2}^{(b)} \right) \right] P_{k_1}^{(b)} P_{k_2}^{(b)}. \notag
    \end{align}
    We again calculate both summands separately. For the first summand, we have
    \begin{align*}
        \frac{\beta^2}{d+1} \sum_{b = 1}^{d+1} \sum_{k_1 \neq k_2} P_{k_1}^{(b)} P_{k_2}^{(b)} &= \frac{\beta^2}{d+1} \sum_{b = 1}^{d+1} \sum_{k_1}^d P_{k_1}^{(b)} \sum_{\genfrac{}{}{0pt}{}{k_2 = 1}{k_2 \neq k_1}}^d P_{k_2}^{(b)} 
        \\
        &= \frac{\beta^2}{d+1} \sum_{b = 1}^{d+1} \sum_{k_1}^d P_{k_1}^{(b)} \left(\mathbbm{1} - P_{k_1}^{(b)} \right)
        \\
        &\lesssim \frac{\beta^2}{d+1} \sum_{b = 1}^{d+1} \sum_{k_1}^d P_{k_1}^{(b)} = \beta^2 \mathbbm{1},
    \end{align*}
    where we again used $0 \lesssim P_{k_1}^{(b)} \lesssim \mathbbm{1}$ and the fact that the $P_k^{(b)}$ form a quantum measurement for fixed $b$. For the second summand in~\eqref{eqn::Second_moment_summand_1_summand_2_version_2}, we first note that the expression is symmetric in $k_1$ and $k_2$. As such, we have
    \begin{align*}
        \frac{\beta \alpha}{d+1} \sum_{b = 1}^{d+1} \sum_{k_1 \neq k_2} \Tr\left[ \rho \left( P_{k_1}^{(b)} + P_{k_2}^{(b)} \right) \right] P_{k_1}^{(b)} P_{k_2}^{(b)} =& 2 \frac{\beta \alpha}{d+1} \sum_{b = 1}^{d+1} \sum_{k_1 = 1}^d \Tr\left[ \rho P_{k_1}^{(b)} \right] P_{k_1}^{(b)} \sum_{\genfrac{}{}{0pt}{}{k_2 = 1}{k_2 \neq k_1}}^d P_{k_2}^{(b)}
        \\
        =& 2 \frac{\beta \alpha}{d+1} \sum_{b = 1}^{d+1} \sum_{k_1 = 1}^d \Tr\left[ \rho P_{k_1}^{(b)} \right] P_{k_1}^{(b)} \hspace*{-2pt} \left( \hspace*{-2pt} \mathbbm{1} - P_{k_1}^{(b)} \right)
        \\
        \lesssim& 2 \frac{\beta \alpha}{d+1} \sum_{b = 1}^{d+1} \sum_{k_1 = 1}^d \Tr\left[ \rho P_{k_1}^{(b)} \right] P_{k_1}^{(b)}
        \\
        =& 2 \frac{\beta \alpha}{d+1} \sum_{m = 1}^D \Tr\left[ \rho V_m \right] V_m
        \\
        =& 2 \frac{\alpha \beta}{d+1} \left( \frac{\kappa d -1}{d-1} \rho + \frac{d-\kappa}{d-1} \mathbbm{1} \right)
    \end{align*}
    using the linear inversion property~\eqref{eqn::2design_property} of the MUMs. This finally shows 
    \begin{align}
    \label{eqn::Second_moment_summand_1_summand_2_final_version}
        \mathbb{E}_{\rho}\left[ \sum_{m_1 \neq m_2} H_{m_1}H_{m_2} \right] &= \beta^2 \mathbbm{1} + 2 \frac{\alpha \beta}{d+1} \left( \frac{\kappa d -1}{d-1} \rho + \frac{d-\kappa}{d-1} \mathbbm{1} \right)
        \\
        &= \left( \beta^2 + 2 \frac{\alpha \beta}{d+1} + 2 \frac{d-\kappa}{d-1} \right) \mathbbm{1} + 2\frac{\alpha \beta}{d+1} \frac{\kappa d -1}{d-1} \rho. \notag
    \end{align}
    Pluggin equations~\eqref{eqn::Second_moment_summand_1_summand_1} and~\eqref{eqn::Second_moment_summand_1_summand_2_final_version} into~\eqref{eqn::second_moment_calculations_MUM} and using Lemma~\ref{lem::expectation_H_m_v_m_MUM} yields
    \begin{align*}
        \mathbb{E}_{\rho}[\tilde{\rho}^2] \lesssim& \left( \left( \tau_1^2 + 2 \tau_1 \tau_2 \right) \frac{1}{\tau_1} + 2 \tau_1^2 \frac{\alpha}{d+1} \beta \frac{\kappa d -1}{d-1} \right) \rho
        \\
        &+ \left( \left( \tau_1^2 + 2 \tau_1 \tau_2 \right) \left( \frac{\alpha(d - \kappa)}{d^2 -1} + \beta \right) + \tau_2 + \tau_1^2 \left( \beta^2 + 2 \frac{\alpha}{d+1} \beta + 2 \frac{d-\kappa}{d-1} \right)\right) \mathbbm{1}.
    \end{align*}
    We now use the fact that $\tau_1, \tau_2 \leq \frac{d^2}{\alpha \kappa d}$ in order to bound the operator norm of this term by
    \begin{equation*}
        \norm{\mathbb{E}_{\rho}[\tilde{\rho}^2]}_{op} \leq 10 \frac{d^2}{\alpha^2 \kappa^2}.
    \end{equation*}
    Furthermore, we have
    \begin{align*}
        \norm{\tilde{\rho}}_{op} \leq \tau_1 \norm{\sum_{m = 1}^D H_m V_m}_{op} + \tau_2.
    \end{align*}
    Writing again $m = (d-1)b + k$ for unique $b \in \{1,...,d+1\}$ and $k \in \{1,...,d\}$ we first note that the entries of $H$ are empty except for those corresponding to the $b$-th subvector. We therefore have
    \begin{align*}
        \sum_{m = 1}^D H_m V_m = \sum_{k = 1}^d W_k P_{k}^{(b)} \lesssim \sum_{k = 1}^d P_k^{(b)} = \mathbbm{1},
    \end{align*}
    where the maximum is attained if all entries of $W_k$ are $1$. This shows
    \begin{equation*}
        \norm{\tilde{\rho}}_{op} \leq 2 \frac{d}{\alpha \kappa}.
    \end{equation*}
\end{proof}

\newpage 
\section{Technical proofs for the lower bound scheme}
\label{sec::proofs_for_lower_bounds}

\begin{Restatementlemma*}{\scshape~\ref{lem::C1}.\hspace{7pt}}
    If $\mathcal{F} = \{\rho_v\}_{v \in \mathcal{V}}$ induces a $2\eta$-Hamming separation for $\norm{\cdot}^2$ it holds
    \begin{equation*}
        \inf_{(M, \hat{\rho})} \max_{\nu \in \mathcal{V}} \mathbb{E}_{\nu}\left[ \norm{\hat{\rho} - \rho_{\nu}}^2\right]
        \geq \eta \sum_{j = 1}^D \left(1 -  \norm{\mathbb{P}_{+j}^{R^M,n} - \mathbb{P}_{-j}^{R^M,n}}_{TV} \right).
    \end{equation*}
    where the infimum on the left is taken over all measurements $M = M_1 \otimes ... \otimes M_n$ and subsequent estimators $\hat{\rho}$.
\end{Restatementlemma*}
\begin{proof}
    Fix a measurement $M = \bigotimes_{i = 1}^n M_i$ and an arbitrary estimator $\hat{\rho}_n: \Omega_1 \otimes ... \otimes \Omega_n \to \mathcal{S}(\mathbb{C}^d)$. We then have
    \begin{align*}
        \max_{\nu \in \mathcal{V}} \mathbb{E}_{\nu}\left[ \norm{\hat{\rho}_n - \rho_{\nu}}^2\right] &\geq \frac{1}{|\mathcal{V}|} \sum_{\nu \in \mathcal{V}} \mathbb{E}_{\nu}\left[ \norm{\hat{\rho} - \rho_{\nu}}^2\right]
        \\
        &\geq \frac{1}{|\mathcal{V}|} \sum_{\nu \in \mathcal{V}} 2 \eta \sum_{j = 1}^D \mathbb{E}_{\nu} \left[ \mathbbm{1}_{\left\{ \mathbf{v}(\hat{\rho}_n)_j \neq \nu_j \right\}} \right]
        \\
        &\geq 2 \eta \sum_{j = 1}^D  \frac{1}{|\mathcal{V}|} \sum_{\nu \in \mathcal{V}} \mathbb{E}_{\nu} \left[ \mathbbm{1}_{\left\{ \mathbf{v}(\hat{\rho}_n)_j \neq \nu_j \right\}} \right]
    \end{align*}
    using the fact that the maximum is larger that the average and the $2\eta$-Hamming separation assumption. We will now consider the latter sum. Here, we have
    \begin{align*}
        \frac{1}{|\mathcal{V}|} \sum_{\nu \in \mathcal{V}} \mathbb{E}_{\nu} \left[ \mathbbm{1}_{\left\{ \mathbf{v}(\hat{\rho}_n)_j \neq \nu_j \right\}} \right] =& \frac{1}{|\mathcal{V}|} \sum_{\nu \in \mathcal{V}} \mathbb{P}_{\nu}^{R^M,n} \left( \mathbf{v}(\hat{\rho}_n)_j \neq \nu_j \right)
        \\
        =& \frac{1}{|\mathcal{V}|} \sum_{\nu: \nu_j = +1} \mathbb{P}_{\nu}^{R^M,n} \left( \mathbf{v}(\hat{\rho}_n)_j \neq \nu_j \right) 
        \\
        &+ \frac{1}{|\mathcal{V}|} \sum_{\nu: \nu_j = -1} \mathbb{P}_{\nu}^{R^M,n} \left( \mathbf{v}(\hat{\rho}_n)_j \neq \nu_j \right)
        \\
        =& \frac{1}{2} \mathbb{P}_{+j}^{R^M,n}\left( \mathbf{v}(\hat{\rho}_n)_j \neq +1 \right) + \frac{1}{2} \mathbb{P}_{-j}^{R^M,n}\left( \mathbf{v}(\hat{\rho}_n)_j \neq -1 \right)
    \end{align*}
    using equation (\ref{eqn::distribution_equalities}) in the last step. This gives the lower bound
    \begin{align*}
        \max_{\nu \in \mathcal{V}} \mathbb{E}_{\nu}\left[ \norm{\hat{\rho}_n - \rho_{\nu}}^2\right] \geq \eta \sum_{j = 1}^D \left( \mathbb{P}_{+j}^{R^M,n}\left( \mathbf{v}(\hat{\rho}_n)_j \neq +1 \right) + \mathbb{P}_{-j}^{R^M,n}\left( \mathbf{v}(\hat{\rho}_n)_j \neq -1 \right) \right).
    \end{align*}
    Taking the infimum over all pairs of estimators and measurements $(\hat{\rho}, M)$ gives
    \begin{align*}
        \inf_{(M, \hat{\rho}_n)} \max_{\nu \in \mathcal{V}} \mathbb{E}_{\nu}\left[ \norm{\hat{\rho}_n - \rho_{\nu}}^2\right]
        &\geq \inf_{(M, \hat{\rho}_n)} \eta \sum_{j = 1}^D \left( \mathbb{P}_{+j}^{R^M,n}\left( \mathbf{v}(\hat{\rho}_n)_j \neq +1 \right) + \mathbb{P}_{-j}^{R^M,n}\left( \mathbf{v}(\hat{\rho}_n)_j \neq -1 \right) \right)
        \\
        &\geq \eta \sum_{j = 1}^D \inf_{(M, \hat{\psi})} \left( \mathbb{P}_{+j}^{R^M, n} (\hat{\psi} \neq +1) + \mathbb{P}_{-j}^{R^M, n} (\hat{\psi} \neq -1) \right)
        \intertext{where we take the infimum over all test functions $\psi$ in the last step. Since the latter is given by the total-variation distance between the two probability distributions, we obtain the final result}
        &\geq \eta \sum_{j = 1}^D  \left(1 -  \norm{\mathbb{P}_{+j}^{R^M,n} - \mathbb{P}_{-j}^{R^M,n}}_{TV} \right).
    \end{align*}
\end{proof}

\begin{Restatementtheorem*}{\scshape~\ref{thm::C3}.\hspace{7pt}}
Let $\alpha \in [0, \frac{1}{2})$. For the distributions defined in Definition~\ref{defn::mixed_distribution states}, assuming $\rho_{\nu, i} = \rho_{\nu}$ for $i = 1,...,n$, and for any locally-$\alpha$-gentle measurement it holds
\begin{equation*}
    \sum_{j = 1}^D D_{KL}^{sym}\left(\mathbb{P}_{+j}^{R^M,n} \middle| \middle| \mathbb{P}_{-j}^{R^M,n} \right) \leq \frac{64 n \alpha^2}{(1 - 2\alpha)^4} \sup_{\norm{A}_{op} \leq 1}\sum_{j = 1}^D \frac{1}{2^{D-1}} \hspace*{-5pt} \sum_{\nu \in \{-1,1\}^{D-1}} \hspace*{-5pt} \Tr \left[ A (\rho_{\nu_{j+}} - \rho_{\nu_{j-}})\right]^2.
\end{equation*}
Here, for fixed $\nu \in \{-1,1\}^{D-1}$, $\rho_{\nu_{j+}}, \rho_{\nu_{j-}}$ denote the states $\rho_{\nu, i}$ where the parameter vector is padded such that the $j-th$ component is either $+1$ or $-1$. Note that $\rho_{\nu_{j\pm},i} \neq \rho_{\pm j, i}$.
\end{Restatementtheorem*}

\begin{proof}
    We start by using the convexity of the Kullback-Leibler divergence for the distributions
    \begin{equation*}
        \mathbb{P}_{\pm j}^{R^M, n} = \frac{1}{2^{D-1}} \sum_{\nu: \nu_j = \pm1} \mathbb{P}_{\nu}^{R^M, n}
    \end{equation*}
    we get 
    \begin{equation*}
        D_{KL}\left( \mathbb{P}_{+j}^{R^M, n} \middle|\middle| \mathbb{P}_{-j}^{R^M, n} \right) \leq \frac{1}{2^{D-1}} \sum_{\nu \in \{-1,1\}^{D-1}} D_{KL}\left( \mathbb{P}_{\nu_{j+}}^{R^M,n} \middle| \middle| \mathbb{P}_{\nu_{j-}}^{R^M,n} \right)
    \end{equation*}
    where we use the notation $\mathbb{P}_{\nu_{j+}}^{R^M,n}$ and $\mathbb{P}_{\nu_{j-}}^{R^M,n}$ to denote whether the $j$-th component of $\nu$ is positive or negative. We now use the fact that the $\mathbb{P}_{\nu}^{R^M,n}$ are product measures to write
    \begin{equation*}
        \sum_{\nu \in \{-1,1\}^{D-1}} D_{KL}\left( \mathbb{P}_{\nu_+}^{R^M,n} \middle| \middle| \mathbb{P}_{\nu_-}^{R^M,n} \right) = \sum_{\nu \in \{-1,1\}^{D-1}} \sum_{i = 1}^n D_{KL}\left( \mathbb{P}_{\nu_+}^{R^{M_i}} \middle| \middle| \mathbb{P}_{\nu_-}^{R^{M_i}} \right).
    \end{equation*}
    For the symmetrized Kullback-Leibler divergence we now have
    \begin{align*}
        D_{KL}^{sym}\left(\mathbb{P}_{+j}^{R^M,n} \middle| \middle| \mathbb{P}_{-j}^{R^M,n} \right) =& D_{KL}\left(\mathbb{P}_{+j}^{R^M,n} \middle| \middle| \mathbb{P}_{-j}^{R^M,n} \right) + D_{KL}\left(\mathbb{P}_{-j}^{R^M,n} \middle| \middle| \mathbb{P}_{+j}^{R^M,n} \right)
        \\
        \leq& \frac{1}{2^{D-1}} \sum_{\nu \in \{-1,1\}^{D-1}} \sum_{i = 1}^n D_{KL}\left( \mathbb{P}_{\nu_+}^{R^{M_i}} \middle| \middle| \mathbb{P}_{\nu_-}^{R^{M_i}} \right) 
        \\
        &+ \frac{1}{2^{D-1}} \sum_{\nu \in \{-1,1\}^{D-1}} \sum_{i = 1}^n D_{KL}\left( \mathbb{P}_{\nu_-}^{R^{M_i}} \middle| \middle| \mathbb{P}_{\nu_+}^{R^{M_i}} \right)
        \\
        =& \frac{1}{2^{D-1}} \sum_{\nu \in \{-1,1\}^{D-1}} \sum_{i = 1}^n D_{KL}^{sym}\left( \mathbb{P}_{\nu_+}^{R^{M_i}} \middle| \middle| \mathbb{P}_{\nu_-}^{R^{M_i}} \right).
    \end{align*}
    Now, using the fact that locally-$\alpha$ gentle measurements are locally-$\delta$ quantum differentially private with $\delta = 2\log(\frac{1+2\alpha}{1-2\alpha})$, we will bound the last symmterized Kullback-Leibler divergence as
    \begin{align*}
        D_{KL}^{sym}\left( \mathbb{P}_{\nu_+}^{R^{M_i}} \middle| \middle| \mathbb{P}_{\nu_-}^{R^{M_i}} \right) &= \sum_{\omega \in \Omega} \left( p_{\nu_{j+}}^{R^{M_i}}(\omega) - p_{\nu_{j-}}^{R^{M_i}}(\omega) \right) \log\left( \frac{p_{\nu_{j-}}^{R^{M_i}}(\omega)}{p_{\nu_{j+}}^{R^{M_i}}(\omega)} \right)
        \\
        &\leq \sum_{\omega \in \Omega} \left( p_{\nu_{j+}}^{R^{M_i}}(\omega) - p_{\nu_{j-}}^{R^{M_i}}(\omega) \right)^2 \frac{1}{\min\left\{ p_{\nu_{j+}}^{R^{M_i}}(\omega), p_{\nu_{j-}}^{R^{M_i}}(\omega) \right\}}
        \\
        &\leq \sum_{\omega \in \Omega} \Tr \left[ E_{\omega, i} (\rho_{\nu_{j+},i} - \rho_{\nu_{j-}, i}) \right]^2 \frac{1}{\lambda_{min}(E_{\omega})}
        \\
        &= \sum_{\omega \in \Omega} \Tr \left[ \left( \frac{E_{\omega, i}}{\lambda_{min}(E_{\omega})} - \mathbbm{1} \right) (\rho_{\nu_{j+},i} - \rho_{\nu_{j-}, i}) \right]^2 \lambda_{min}(E_{\omega})
        \\
        &= \sum_{\omega \in \Omega} (e^{\delta} -1)^2 \Tr \left[ A_{\omega, i} (\rho_{\nu_{j+},i} - \rho_{\nu_{j-}, i}) \right]^2 \lambda_{min}(E_{\omega})
    \end{align*}
    where
    \begin{equation*}
        A_{\omega, i} := \left( \frac{E_{\omega, i}}{\lambda_{min}(E_{\omega})} - \mathbbm{1} \right)\frac{1}{(e^{\delta} - 1)}
    \end{equation*}
    is positive semi-definite with largest eigenvalue being $1$ due to the properties of gentleness. As such, since $\rho_{\nu} = \rho_{\nu, i}$  for all $i = 1,...,n$, we may bound the above term in the following way by taking the supremum over positive semi-definite operators with operator norm bounded by $1$ and the fact that $\sum_{\omega \in \Omega} \lambda_{min}(E_{\omega}) \leq 1$
    \begin{align*}
        \sum_{j = 1}^D D_{KL}^{sym}\left(\mathbb{P}_{+j}^{R^M,n} \middle| \middle| \mathbb{P}_{-j}^{R^M,n} \right) &\leq \sum_{j = 1}^D \frac{1}{2^{D-1}} \sum_{\nu \in \{-1,1\}^{D-1}} \sum_{i = 1}^n D_{KL}\left( \mathbb{P}_{\nu_+}^{R^{M_i}} \middle| \middle| \mathbb{P}_{\nu_-}^{R^{M_i}} \right).
        \\
        &\leq n (e^{\delta} -1)^2 \sup_{\norm{A}_{op} \leq 1} \sum_{j = 1}^D \frac{1}{2^{D-1}} \hspace*{-6pt} \sum_{\nu \in \{-1,1\}^{D-1}} \hspace*{-6pt} \Tr \left[ A (\rho_{\nu_{j+}} - \rho_{\nu_{j-}}) \right]^2.
    \end{align*}
    Plugging in 
    \begin{equation*}
        \left(e^{\delta} -1\right)^2 = \frac{64 \alpha^2}{(1 - 2\alpha)^4}
    \end{equation*} 
    gives the final result
    \begin{equation*}
        \sum_{j = 1}^D D_{KL}^{sym}\left(\mathbb{P}_{+j}^{R^M,n} \middle| \middle| \mathbb{P}_{-j}^{R^M,n} \right) \leq \frac{64 n \alpha^2}{(1 - 2\alpha)^4} \sup_{\norm{A}_{op} \leq 1}\sum_{j = 1}^D \frac{1}{2^{D-1}} \hspace*{-5pt} \sum_{\nu \in \{-1,1\}^{D-1}} \hspace*{-5pt} \Tr \left[ A (\rho_{\nu_{j+}} - \rho_{\nu_{j-}})\right]^2.
    \end{equation*}
\end{proof}

\newpage
\section{Technical Lemmas for the construction of local alternatives}

\subsection{Low-rank states}
In this section we prove the necessary results in order to apply our lower bound scheme to low-rank states. We start with a technical result on the representation of the local alternative states $\rho_{\nu}$. More precisely, for the unitary matrices defined in~\eqref{eqn::S_nu_definition} by
\begin{equation*}
    S_{\nu} = \sum_{l = 1}^r \sum_{k = r + 1}^d \nu_{k, l} T_{k,l} = \begin{bmatrix}
        0 & V_{\nu}
        \\
        -V_{\nu}^t & 0
    \end{bmatrix}\hspace{20pt} \text{and} \hspace{20pt} U_{\nu} = \exp(\epsilon S_{\nu})
\end{equation*}
we have the representation 
\begin{equation*}
    \rho_{\nu} = U_{\nu} \rho_{mm}^{(r)} U_{\nu}^* = \rho_{mm}^{(r)} +  \frac{1}{r}\sum_{m = 1}^{\infty} \frac{\epsilon^m}{m!} ad_{S_{\nu}}^m(\mathbbm{1}_r) = \rho_{mm}^{(r)} + \Delta_{\nu}(\epsilon).
\end{equation*}
The following results then shows that $\Delta_{\nu}(\epsilon)$ is almost linear for small perturbations $\epsilon$. We do this by deriving the non-commutative Taylor expansion of $\exp(\epsilon S_{\nu})$ with respect to $\epsilon$. Intuitively, as $\sin^2(\epsilon) = \epsilon^2 + O(\epsilon^3)$ and $\sinc(\epsilon) = 1 + O(\epsilon)$ we can see that the perturbation in the first-order approximation has only non-diagonal entries. Formally, we have the following result.

\label{sec::Appendix_low_rank_states}
\begin{lemma}
\label{lem::calculation_Delta_nu_epsilon}
    Let $S_{\nu}$ be as in~\eqref{eqn::S_nu_definition} and $\Delta_{\nu}(\epsilon)$ as in~\eqref{eqn::rho_nu_low_rank}. Then  it holds
    \begin{equation*}
        \Delta_{\nu}(\epsilon) = -\frac{1}{r} \begin{bmatrix} \sin^2\left( \epsilon \sqrt{G_{\nu}} \right) &
            2 \epsilon \sinc(2\epsilon \sqrt{G_{\nu}}) V_{\nu}
            \\
            2 \epsilon V_{\nu}^t \sinc(2\epsilon \sqrt{G_{\nu}}) & -\sin^2\left( \epsilon \sqrt{H_{\nu}} \right)
        \end{bmatrix},
    \end{equation*}
    where $G_{\nu} = V_{\nu}V_{\nu}^t$ and $H_{\nu} = V_{\nu}^t V_{\nu}$ are Gram-matrices and $\sinc(x) = \frac{\sin(x)}{x}$ is the sinc-function.
\end{lemma}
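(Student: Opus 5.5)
The plan is to compute $U_\nu=\exp(\epsilon S_\nu)$ in closed form from its power series, rather than summing the nested commutators $ad_{S_\nu}^m(\mathbbm{1}_r)$ directly. The key observation is that $S_\nu$ squares to a block-diagonal matrix:
\begin{equation*}
S_\nu^2=\begin{bmatrix} -V_\nu V_\nu^t & 0\\ 0 & -V_\nu^tV_\nu\end{bmatrix}=-\begin{bmatrix} G_\nu & 0\\ 0 & H_\nu\end{bmatrix}.
\end{equation*}
Hence $S_\nu^{2k}=(-1)^k\diag(G_\nu^k,H_\nu^k)$ and $S_\nu^{2k+1}=(-1)^k\diag(G_\nu^k,H_\nu^k)S_\nu$.

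Splitting the exponential series into even and odd powers gives
\begin{equation*}
U_\nu=\begin{bmatrix}\cos(\epsilon\sqrt{G_\nu}) & \epsilon\,\sinc(\epsilon\sqrt{G_\nu})V_\nu\\ -\epsilon\,\sinc(\epsilon\sqrt{H_\nu})V_\nu^t & \cos(\epsilon\sqrt{H_\nu})\end{bmatrix}.
\end{equation*}
Here $f(\sqrt{G_\nu})$ is defined by functional calculus on the positive semidefinite Gram matrices. Since $\cos$ and $\sinc$ are even entire functions of their argument, these are power series in $G_\nu$ and $H_\nu$, so no square-root ambiguity arises. Throughout I use the intertwining identities
\begin{equation*}
V_\nu f(H_\nu)=f(G_\nu)V_\nu, \qquad V_\nu^t f(G_\nu)=f(H_\nu)V_\nu^t,
\end{equation*}
which hold for any power series $f$ because $V_\nu (V_\nu^tV_\nu)^k=(V_\nu V_\nu^t)^kV_\nu$.

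Next, since $S_\nu$ is real and skew, $U_\nu^*=U_\nu^t$. Moreover $\rho_{mm}^{(r)}=\frac1r P$ with $P=\diag(\mathbbm{1}_r,0)$, so $U_\nu PU_\nu^*=XX^t$, where $X$ is the first block column of $U_\nu$. This gives the blocks directly:
\begin{itemize}
\item The top-left block is $\cos^2(\epsilon\sqrt{G_\nu})$. Subtracting $\mathbbm{1}_r$ yields $-\sin^2(\epsilon\sqrt{G_\nu})$.
\item The off-diagonal blocks are $-\epsilon\cos(\epsilon\sqrt{G_\nu})\sinc(\epsilon\sqrt{G_\nu})V_\nu$ and its transpose. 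Using $\cos x\,\frac{\sin x}{x}=\frac{\sin 2x}{2x}$, this collapses to a multiple of $\epsilon\,\sinc(2\epsilon\sqrt{G_\nu})V_\nu$.
\item The bottom-right block is $\epsilon^2 V_\nu^t\sinc^2(\epsilon\sqrt{G_\nu})V_\nu$. By intertwining this equals $\epsilon^2 H_\nu\,\sinc^2(\epsilon\sqrt{H_\nu})=\sin^2(\epsilon\sqrt{H_\nu})$, as an identity of power series in $H_\nu$, so no invertibility of $H_\nu$ is needed.
\end{itemize}
Dividing by $r$ then gives $\Delta_\nu(\epsilon)=\frac1r(U_\nu PU_\nu^*-P)$ in the stated block form. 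As a consistency check, expanding to first order in $\epsilon$ recovers $\frac{\epsilon}{r}\,ad_{S_\nu}(\mathbbm{1}_r)$ from the Baker–Campbell–Hausdorff series.

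The main obstacle is bookkeeping rather than any conceptual step. I expect three points to need care. First, the sign and transpose conventions for the off-diagonal blocks (whether $V_\nu$ or $-V_\nu^t$ sits in the upper right of $S_\nu$). Second, the precise numerical prefactor of the off-diagonal $\sinc$ term: my computation produces coefficient $\epsilon$ in front of $\sinc(2\epsilon\sqrt{G_\nu})$, whereas the statement shows $2\epsilon$, so I would recheck this against the first-order term $\frac{\epsilon}{r}[S_\nu,\mathbbm{1}_r]$. Third, justifying the intertwining identities on the kernels of $G_\nu$ and $H_\nu$, which is handled by working with power series throughout. None of these affects the structural conclusion~\eqref{eqn::Unitary_approximation}: the diagonal blocks are $O(\epsilon^2)$ and the off-diagonal blocks are $O(\epsilon)V_\nu$.
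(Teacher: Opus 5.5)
Your computation is correct, including the coefficient you were unsure about, and it takes a different route from the paper. The paper never writes down $U_\nu$. It expands $\rho_\nu=\frac1r e^{\epsilon\, ad_{S_\nu}}(\mathbbm{1}_r)$ and proves by induction that $ad_{S_\nu}^{2m}(\mathbbm{1}_r)=(-1)^m2^{2m-1}\diag(G_\nu^m,-H_\nu^m)$ and that $ad_{S_\nu}^{2m+1}(\mathbbm{1}_r)$ is off-diagonal with upper-right block $(-1)^{m+1}2^{2m}G_\nu^mV_\nu$. It then resums these against the Taylor series of $\sin^2$ and $\sinc$. You instead use $S_\nu^2=-\diag(G_\nu,H_\nu)$ to get $U_\nu$ in closed form. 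You then read off the blocks of $XX^t$ from $\cos^2=1-\sin^2$, $\cos x\,\sinc x=\sinc 2x$, $x^2\sinc^2x=\sin^2x$ and intertwining. Your route is shorter and gives $U_\nu$ itself. It also makes explicit the relation $V_\nu H_\nu^k=G_\nu^kV_\nu$, which the paper's induction step uses without comment. The paper's route avoids the exponential, but it has to track powers of $2$ through the resummation, and that is where it goes wrong.

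Do not leave the coefficient as something to "recheck": the printed $2\epsilon$ is an error in the statement. Your first-order check settles it. Since $[S_\nu,\mathbbm{1}_r]=-\begin{bmatrix}0&V_\nu\\V_\nu^t&0\end{bmatrix}$, the off-diagonal part of $\Delta_\nu(\epsilon)$ must begin with $-\frac{\epsilon}{r}V_\nu$. The displayed formula instead gives $-\frac{2\epsilon}{r}V_\nu$ at first order. The upper-right blocks of the paper's own odd-order terms also resum to your answer:
\begin{equation*}
\sum_{m\ge 0}\frac{\epsilon^{2m+1}}{(2m+1)!}(-1)^{m+1}2^{2m}G_\nu^mV_\nu=-\frac12\sum_{m\ge0}\frac{(-1)^m(2\epsilon)^{2m+1}}{(2m+1)!}G_\nu^mV_\nu=-\epsilon\,\sinc\bigl(2\epsilon\sqrt{G_\nu}\bigr)V_\nu .
\end{equation*}
The paper's proof writes $(2\epsilon)^{2m+1}$ where $2^{2m}\epsilon^{2m+1}$ belongs. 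So the lemma as printed is false. What you prove is the corrected identity, with $\epsilon\,\sinc(2\epsilon\sqrt{G_\nu})$ in both off-diagonal blocks and the diagonal blocks as stated, and you should state it that way. Downstream this changes only numerical constants, not the structure~\eqref{eqn::Unitary_approximation} or the rate. For example, the separation in Lemma~\ref{lem::Hamming_distance_low_rank} becomes $\eta=\epsilon^2/(4r^2)$.
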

\begin{proof}
    Note that $G_{\nu} = V_{\nu}V_{\nu}^t$ is positive semi-definite, and therefore, there exist $U_{G_{\nu}} \in U(d)$ and a diagonal matrix $D_{G_{\nu}} = \diag(\lambda_1(G_{\nu}),...,\lambda_d(G_{\nu}))$ such that $G_{\nu} = U_{G_{\nu}} D_{G_{\nu}} U_{G_{\nu}}^*$. As such we can define
    \begin{equation}
    \label{eqn::sinc_matrix_expansion}
        \sinc(2 \epsilon \sqrt{G_{\nu}}) = U_{G_{\nu}} \sinc(2 \epsilon D_{G_{\nu}}^{1/2}) U_{G_{\nu}}^* = U_{G_{\nu}} \begin{bmatrix}
            \sinc(2 \epsilon  \lambda_1(G_{\nu}))&  & \multicolumn{2}{c}{\text{\kern0.5em\smash{\raisebox{-1ex}{\Large 0}}}}
            \\
            & \ddots & 
            \\
            \multicolumn{2}{c}{\text{\kern-0.5em\smash{\raisebox{0.75ex}{\Large 0}}}} & & \sinc(2 \epsilon  \lambda_d(G_{\nu}))
        \end{bmatrix} U_{G_{\nu}}^*.
    \end{equation}
    The same applies to $H_{\nu} = V_{\nu}^tV_{\nu}$. Let us now prove the statement by induction. We claim that $ad_{S_{\nu}}^m(\mathbbm{1}_r)$ is always of the following block form:
    \begin{equation}
    \label{eqn::ad_block_matrix_form}
        ad_{S_{\nu}}^m(\mathbbm{1}_r) = \begin{bmatrix}
            A_m & B_m
            \\
            B_m^t & C_m
        \end{bmatrix}, \qquad \text{for } A_m \in \mathbb{Z}^{r \times r}, B_m \in \mathbb{Z}^{r \times (d-r)}, C_m \in \mathbb{Z}^{(d-r) \times (d-r)}
    \end{equation}
    such that $A_m, C_m = 0$ for odd $m$ and $B_m = 0$ for even $m$. Furthermore, for even $2m$ it holds
    \begin{equation*}
        ad_{S_{\nu}}^{2m}(\mathbbm{1}_r) =  (-1)^m 2^{2m-1} 
        \begin{bmatrix}
            G_{\nu}^m & 0
            \\
            0 & -H_{\nu}^m
        \end{bmatrix}
    \end{equation*}and for odd $2m+1$ we have
    \begin{equation*}
        ad_{S_{\nu}}^{2m+1}(\mathbbm{1}_r) = (-1)^{m+1} 2^{2m} \begin{bmatrix}
            0 & G_{\nu}^m V_{\nu}
            \\
            V_{\nu}^t G_{\nu}^m  & 0
        \end{bmatrix}.
    \end{equation*}
    Let us prove of the claim by starting with $ad_{S_{\nu}}(\mathbbm{1}_r)$ and $ad_{S_{\nu}}^{2}(\mathbbm{1}_r)$ which lets us start the proof by induction. We see that
    \begin{align*}
        ad_{S_{\nu}}(\mathbbm{1}_r) = \left[  S_{\nu}, \mathbbm{1}_r \right] &= - \begin{bmatrix}
        0 & V_{\nu}
        \\
        V_{\nu}^t & 0
        \end{bmatrix} = (-1)^{0+1} 2^0 \begin{bmatrix}
            0 & G_{\nu}^0 V_{\nu}
            \\
            V_{\nu}^t G_{\nu}^0  & 0
        \end{bmatrix}
    \end{align*}
    and
    \begin{align*}
        ad_{S_{\nu}}^2(\mathbbm{1}_r) &= 2 \begin{bmatrix}
        - V_{\nu}V_{\nu}^t & 0
        \\
        0 & V_{\nu}^t V_{\nu}
        \end{bmatrix} = (-1)^1 2^{2\cdot 1-1} 
            \begin{bmatrix}
                G_{\nu}^1 & 0
                \\
                0 & -V_{\nu}^t G^{0}V_{\nu}
            \end{bmatrix}.
    \end{align*}
    Now, if we assume $ad_{S_{\nu}}^{2m+1}(\mathbbm{1}_r)$ and $ad_{S_{\nu}}^{2m}(\mathbbm{1}_r)$ of the above form, we can directly calculate
    \begin{equation*}
        ad_{S_{\nu}}^{2(m+1)+1}(\mathbbm{1}_r) = (-1)^{(m+1)+1} 2^{2(m+1)} \begin{bmatrix}
            0 & G_{\nu}^{m+1} V_{\nu}
            \\
            V_{\nu}^t G_{\nu}^{m+1}  & 0
        \end{bmatrix}
    \end{equation*}
    and
    \begin{equation*}
        ad_{S_{\nu}}^{2(m+1)}(\mathbbm{1}_r) =  (-1)^{(m+1)} 2^{2(m+1)-1} 
        \begin{bmatrix}
            G_{\nu}^{(m+1)} & 0
            \\
            0 & -H_{\nu}^{(m+1)}
        \end{bmatrix}
    \end{equation*}
    which shows that $ad_{S_{\nu}}^{m}(\mathbbm{1}_r)$ is always of the claimed form~\eqref{eqn::ad_block_matrix_form}. Now, as the Taylor-expansion of $\sinc(x)$ and $\sin^2(x)$ are given by 
    \begin{equation*}
        \sinc(x) = \sum_{m = 0}^{\infty} \frac{(-1)^n x^{2n}}{(2n+1)!} \hspace{20pt} \text{and} \hspace{20pt} \sin^2(x) = \sum_{m = 1}^{\infty} (-1)^m \frac{2^{(2m -1)} x^{2m}}{(2m)!},
    \end{equation*}
    for $\Delta_{\nu}(\epsilon)$ we have
    \begin{align*}
        \Delta_{\nu}(\epsilon) &= \frac{1}{r}\sum_{m = 1}^{\infty} \frac{\epsilon^m}{m!} ad_{S_{\nu}}^m(\mathbbm{1}_r) 
        \\
        &= \frac{1}{r} \begin{bmatrix}
            \sum_{m = 1}^{\infty} (-1)^m \frac{2^{(2m-1)} \epsilon^{2m}}{(2m)!} G_{\nu}^m & \sum_{m = 1}^{\infty} \frac{(-1)^{m+1} (2\epsilon)^{2m+1}}{(2m+1)!} G_{\nu}^m V_{\nu}
            \\
            \\
            V_{\nu}^t \sum_{m = 1}^{\infty} \frac{(-1)^{m+1} (2\epsilon)^{2m+1}}{(2m+1)!} G_{\nu}^m & -\sum_{m = 1}^{\infty} (-1)^m \frac{2^{(2m-1)} \epsilon^{2m}}{(2m)!} H_{\nu}^m
        \end{bmatrix}
        \\
        \\
        &= \frac{1}{r} \begin{bmatrix}
            \sum_{m = 1}^{\infty} (-1)^m \frac{2^{(2m-1)} \epsilon^{2m}}{(2m)!} \sqrt{G_{\nu}}^{2m} & \sum_{m = 1}^{\infty} \frac{(-1)^{m+1} (2\epsilon)^{2m+1}}{(2m+1)!} \sqrt{G_{\nu}}^{2m} V_{\nu}
            \\
            \\
            V_{\nu}^t \sum_{m = 1}^{\infty} \frac{(-1)^{m+1} (2\epsilon)^{2m+1}}{(2m+1)!} \sqrt{G_{\nu}}^{2m} & -\sum_{m = 1}^{\infty} (-1)^m \frac{2^{(2m-1)} \epsilon^{2m}}{(2m)!} \sqrt{H_{\nu}}^{2m}
        \end{bmatrix}
        \\
        \\
        &= -\frac{1}{r} \begin{bmatrix} \sin^2\left( \epsilon \sqrt{G_{\nu}} \right) &
            2 \epsilon \sinc(2\epsilon \sqrt{G_{\nu}}) V_{\nu}
            \\
            2 \epsilon V_{\nu}^t \sinc(2\epsilon \sqrt{G_{\nu}}) & -\sin^2\left( \epsilon \sqrt{H_{\nu}} \right)
        \end{bmatrix}.
    \end{align*}
    This completes the proof of the Lemma.
\end{proof}

With the results of Lemma~\ref{lem::calculation_Delta_nu_epsilon} we are now able to proof the separation property of the state $\rho_{\nu}$ defined in~\eqref{eqn::rho_nu_low_rank}. 

\begin{Restatementlemma*}{\scshape~\ref{lem::Hamming_distance_low_rank}.\hspace{7pt}}
    Let $D \in \{1,...,d^2 -1 \}$ and $\mathcal{V} = \{-1,1\}^D$ and assume that $\epsilon \leq \sqrt{\frac{3}{8}} \frac{1}{r \sqrt{d-r}}$. The set of local binary hypotheses $\mathcal{F} = \{\rho_{\nu}\}_{\nu \in \mathcal{V}}$ defined by~\eqref{eqn::rho_nu_low_rank} induces a $2\eta$-Hamming separation for $\norm{\cdot}_F^2$ on $\mathcal{S} = \mathcal{S}_r(\mathbb{C}^d)$ where $\eta = \frac{\epsilon^2}{r^2}$
\end{Restatementlemma*}
\begin{proof}
    Let $\rho_{\nu} = \rho_{mm}^{(r)} + \Delta_{\nu}(\epsilon)$ for 
    \begin{equation*}
        \Delta_{\nu}(\epsilon) = -\frac{1}{r} \begin{bmatrix} \sin^2\left( \epsilon \sqrt{G_{\nu}} \right) &
            2 \epsilon \sinc(2\epsilon \sqrt{G_{\nu}}) V_{\nu}
            \\
            2 \epsilon V_{\nu}^t \sinc(2\epsilon \sqrt{G_{\nu}}) & -\sin^2\left( \epsilon \sqrt{H_{\nu}} \right)
        \end{bmatrix},
    \end{equation*}
    as shown in Lemma~\ref{lem::calculation_Delta_nu_epsilon} and assume that $\epsilon \leq \sqrt{\frac{3}{8}} \frac{1}{r \sqrt{d-r}}$. Then, noting that the maximal possible eigenvalue of $G_{\nu}$ is given by $\lambda_{max}(D_{G_{\nu}}) \leq r(d-r)$ and $\sinc(x)$ being monotone and decreasing on $(0, \pi)$, we have
    \begin{align}
        \label{eqn::bound_on_sinc_operator_norm}
        \norm{\sinc(2\epsilon \sqrt{G_{\nu}}) - \mathbbm{1}_r}_{op} &= \norm{\sinc(2\epsilon \sqrt{D_{G_{\nu}}}) - \mathbbm{1}_r}_{op}
        \\
        &\leq \left| \sinc\left(2\epsilon \lambda_{max}({D_{G_{\nu}}^{1/2}})\right)  - 1\right| \notag
        \\
        &\leq 1 - \sinc\left(2\epsilon \sqrt{r(d-r)} \right) \notag
        \\
        &\leq \frac{\left(2\epsilon \sqrt{r(d-r)}\right)^2}{3} \notag
        \\
        &\leq \frac{1}{2\sqrt{r}}. \notag
    \end{align}
    For any state $\rho \in \mathcal{S}_r(\mathbb{C}^d)$ we now have
    \begin{align*}
        \norm{\rho - \rho_{\nu}}_F^2 &\geq 2 \norm{\rho_{1,2} + \frac{2\epsilon}{r} \sinc(2\epsilon \sqrt{G_{\nu}}) V_{\nu} }_F^2
        \intertext{where $\rho_{1,2}$ denotes the $r \times (d-r)$ sub-matrix of $\rho$ in the upper-right. We can then write this as}
        &= 2 \norm{\rho_{1,2} + \frac{2\epsilon}{r}\left(\sinc(2\epsilon \sqrt{G_{\nu}}) - \mathbbm{1}_r + \mathbbm{1}_r \right) V_{\nu} }_F^2
        \\
        &= 2 \norm{\rho_{1,2} + \frac{2\epsilon}{r} V_{\nu} +  \frac{2\epsilon}{r}A_{\nu} V_{\nu} }_F^2, 
        \intertext{where $A_{\nu} = \left(\sinc(2\epsilon \sqrt{G_{\nu}}) - \mathbbm{1}_r\right)$ has operator-norm bounded by $\frac{1}{2\sqrt{r}}$ as we have seen in~\eqref{eqn::bound_on_sinc_operator_norm}. As such, for the entries of $A_{\nu}V_{\nu}$ it holds $\left|(A_{\nu}V_{\nu})\right|_{k,l} \leq \frac{1}{2}$ as
        \begin{equation}
            \label{eqn::bound_on_entries_of_AV}
            \left|(A_{\nu}V_{\nu})\right|_{k,l} = \left| \sum_{i = 1}^r (A_{\nu})_{k,i} (V_{\nu})_{i,l} \right| \leq \sqrt{r}  \left| \sum_{i = 1}^r (A_{\nu})_{k,i} \frac{(V_{\nu})_{i,l}}{\sqrt{r}} \right| \leq \sqrt{r} \norm{A_{\nu}}_{op}
        \end{equation}
        using the fact that the entries of $V_{\nu}$ are in $\{-1,1\}$ which is a $\sqrt{r}$-multiple of a unit vector. This lets us bound the above term from below by}
        &= 2 \sum_{l = 1}^r \sum_{k = r+1}^{d} \left| \hat{\rho}_{k,l} + \frac{2 \epsilon}{r} \nu_{k,l} + \frac{2\epsilon}{r} (A_{\nu}V_{\nu})_{k,l} \right|^2.
        \intertext{We now define the function $\mathbf{v}:\mathcal{S}_r(\mathbb{C}^d) \to \{-1, 1\}^D$ via $\mathbf{v}(\hat{\rho})_{k,l} = -\sgn(\Re(\hat{\rho}_{k,l}))$ which allows us to further bound the term from below by}
        &\geq 2 \frac{\epsilon^2}{r^2} \sum_{l = 1}^r \sum_{k = r + 1}^d \mathbbm{1}_{\{ \mathbf{v}(\hat{\rho})_{k,l} \neq \nu_{k,l} \}},
    \end{align*}
    proving the Lemma.
\end{proof}

\begin{Restatementlemma*}{\scshape~\ref{lem::upper_bound_distances_rank_r_states}.\hspace{7pt}}
    Let $\epsilon \leq \frac{1}{d^2r^2}$, $D = r(d-r)$ and $(\rho_{\nu})_{\nu \in \mathcal{V}}$ as in~\eqref{eqn::rho_nu_low_rank}. Then, for the states defined in Definition~\ref{defn::mixed_distribution states} it holds
    \begin{equation*}
        \sup_{\norm{A}_{op} \leq 1} \sum_{j = 1}^D \frac{1}{2^{D-1}} \sum_{\nu \in \{-1,1\}^{D-1}} \Tr \left[ A (\rho_{\nu_{j+}} - \rho_{\nu_{j-}})\right]^2 \leq 72 \frac{\epsilon^2}{r}.
    \end{equation*}
\end{Restatementlemma*}

\begin{proof}
    Note that, as we identify an index $j \in \{1,...,D\}$ with a pair $(k,l) \in \{r+1,...,d\} \times \{1,...,r\}$, we write the matrix $\Delta_{\nu_j} = (\rho_{\nu_{j+}} - \rho_{\nu_{j-}})$ in the following form, using the form~\eqref{eqn::rho_nu_low_rank_expansion} and Lemma~\ref{lem::calculation_Delta_nu_epsilon}, as 
    \begin{align}
        \Delta_{\nu_j} =& \frac{1}{r} \begin{bmatrix}
        \sin^2\left( \epsilon \sqrt{G_{\nu_{j-}}} \right) - \sin^2\left( \epsilon \sqrt{G_{\nu_{j+}}} \right) & \hspace*{-18pt} 2 \epsilon \left( \sinc\left( 2 \epsilon \sqrt{G_{\nu_{j-}}} \right) V_{\nu_{j-}} - \sinc\left( 2 \epsilon \sqrt{G_{\nu_{j+}}} \right) V_{\nu_{j+}} \right)
        \\ \notag
        \\
        2 \epsilon \left( \sinc\left( 2 \epsilon \sqrt{G_{\nu_{j-}}} \right) V_{\nu_{j-}} - \sinc\left( 2 \epsilon \sqrt{G_{\nu_{j+}}} \right) V_{\nu_{j+}} \right) & \hspace*{-18pt} \sin^2\left( \epsilon \sqrt{H_{\nu_{j+}}} \right) - \sin^2\left( \epsilon \sqrt{H_{\nu_{j-}}} \right)
        \end{bmatrix}
        \\ \notag
        \\
        =& \frac{1}{r} \begin{bmatrix}
            \sin^2\left( \epsilon \sqrt{G_{\nu_{j-}}} \right) - \sin^2\left( \epsilon \sqrt{G_{\nu_{j+}}} \right) & 0
            \\
            0 & \sin^2\left( \epsilon \sqrt{H_{\nu_{j+}}} \right) - \sin^2\left( \epsilon \sqrt{H_{\nu_{j-}}} \right)
        \end{bmatrix} \label{eqn::matrix_sum_1}
        \\ \notag
        \\ 
        &+ \frac{1}{r} \begin{bmatrix}
            0 & 2 \epsilon \left( \sinc\left( 2 \epsilon \sqrt{G_{\nu_{j-}}} \right) V_{\nu_{j-}} - V_{\nu_{j-}}\right)
            \\
            2 \epsilon \left( V_{\nu_{j-}}^t \sinc\left( 2 \epsilon \sqrt{G_{\nu_{j-}}} \right)  - V_{\nu_{j-}}^t \right) & 0
        \end{bmatrix} \label{eqn::matrix_sum_2}
        \\ \notag
        \\ 
        &+ \frac{1}{r} \begin{bmatrix}
            0 & 2 \epsilon \left( \sinc\left( 2 \epsilon \sqrt{G_{\nu_{j+}}} \right) V_{\nu_{j+}} - V_{\nu_{j+}}\right)
            \\
            2 \epsilon \left( V_{\nu_{j+}}^t \sinc\left( 2 \epsilon \sqrt{G_{\nu_{j+}}} \right)  - V_{\nu_{j+}}^t \right) & 0
        \end{bmatrix} \label{eqn::matrix_sum_3}
        \\ \notag
        \\
        &+ \frac{1}{r} \begin{bmatrix}
            0 & 2\epsilon \left(V_{\nu_{j-}} - V_{\nu_{j+}} \right)
            \\
            2\epsilon \left(V_{\nu_{j-}}^t - V_{\nu_{j+}}^t \right) & 0
        \end{bmatrix} \label{eqn::matrix_sum_4}
    \end{align}
    Let us now first consider the matrix in~\eqref{eqn::matrix_sum_1}. Here, we note again that the maximal eigenvalue of $G_{\nu}$ is bounded from above by $\lambda_{max}(G_{\nu}) \leq r(d-r)$, which allows us to bound
    \begin{align*}
        \norm{\sin^2\left( \epsilon \sqrt{G_{\nu_{j-}}} \right) - \sin^2\left( \epsilon \sqrt{G_{\nu_{j+}}} \right)}_{op} &\leq \sin^2\left( \epsilon \sqrt{\lambda_{max}(G_{\nu_{j+}})}\right) + \sin^2\left( \epsilon \sqrt{\lambda_{max}(G_{\nu_{j-}})}\right) 
        \\
        &\leq 2\sin^2 \left( \epsilon \sqrt{r(d-r)} \right) \\
        &\leq 2 \epsilon^2 r (d-r).
    \end{align*}
    The same argument shows that
    \begin{equation*}
        \norm{\sin^2\left( \epsilon \sqrt{H_{\nu_{j+}}} \right) - \sin^2\left( \epsilon \sqrt{H_{\nu_{j-}}} \right)}_{op} \leq 2 \epsilon^2 r (d-r)
    \end{equation*}
    which shows that every entry in~\eqref{eqn::matrix_sum_1} is bounded by $2\epsilon^2 (d-r)$. We can therefore write the matrix in~\eqref{eqn::matrix_sum_1} in the following form
    \begin{align}
        M^{(1)} = 2\epsilon^2 (d-r) \begin{bmatrix}
            M_{11} & 0
            \\
            0 & M_{22}
        \end{bmatrix},
    \end{align}
    where $M_{11}$ and $M_{22}$ are self-adjoint matrices with every entry bounded by $1$. Let us now turn to~\eqref{eqn::matrix_sum_2}. Here, we note that we can write 
    \begin{equation*}
        2 \epsilon \left( \sinc\left( 2 \epsilon \sqrt{G_{\nu_{j-}}} \right) V_{\nu_{j-}} - V_{\nu_{j-}}\right) = 2 \epsilon \left( \sinc\left( 2 \epsilon \sqrt{G_{\nu_{j-}}} \right) - \mathbbm{1}_r\right) V_{\nu_{j-}}
    \end{equation*}
    and with the same argument as in~\eqref{eqn::bound_on_entries_of_AV} we can show that the each entry of the matrix in~\eqref{eqn::matrix_sum_2} is bounded in absolute value from above by $2 \epsilon^2 (d-r) \sqrt{r}$. The same argument applies to the entries of~\eqref{eqn::matrix_sum_3} which shows that we can write the sum of~\eqref{eqn::matrix_sum_2} and~\eqref{eqn::matrix_sum_3} as
    \begin{equation}
        M^{(2)} = 4\epsilon^2 \sqrt{r} (d-r) \begin{bmatrix}
            0 & M_{12}
            \\
            M_{12}^t & 0
        \end{bmatrix},
    \end{equation}
    where $M_{12} \in \mathbb{R}^{r \times (d-r)}$ has entries bounded by 1 in absolute value. Let us now consider the term
    \begin{align*}
        \Tr \left[ A \Delta_{\nu_j} \right]^2
    \end{align*}
    for a self-adjoint positive semi-definite $A$ with $\norm{A}_{op} \leq 1$, which we may bound using $(a + b + c)^2 \leq 3 (a^2 + b^2 + c^2)$ by
    \begin{align}
    \label{eqn::matrix_sum_split}
        \Tr \left[ A \Delta_{\nu_j} \right]^2 \leq 3 \Tr\left[ A M^{(1)} \right]^2 + 3 \Tr\left[ A M^{(2)} \right]^2 + 3 \Tr\left[ A M^{(3)} \right]^2,
    \end{align}
    where $M^{(3)}$ is the matrix in~\eqref{eqn::matrix_sum_4}. For the first summand we use the Hölder-inequality for the Schatten-$p$-norms to get
    \begin{align}
        \Tr\left[ A M^{(1)} \right]^2 &\leq 4 \epsilon^4 (d-r)^2 \norm{A}_{op} \norm{ \begin{bmatrix}
            M_{11} & 0
            \\
            0 & M_{22}
        \end{bmatrix}}_1 \notag
        \\
        &\leq 4 \epsilon^4 (d-r)^2 \norm{A}_{op} d \notag
        \\
        \label{eqn::bound_matrix_sum_1}
        &\leq 4 \epsilon^4 d^3 \norm{A}_{op}.
    \end{align}
    For the second summand, we again use Hölder's inequaliy to bound
    \begin{align}
        \Tr\left[ A M^{(2)} \right]^2 &\leq 16 \epsilon^4 r (d-r)^2 \norm{A}_{op} \norm{ \begin{bmatrix}
            0 & M_{12}
            \\
            M_{12}^t & 0
        \end{bmatrix}}_1 \notag
        \\
        &\leq 16 \epsilon^4 r (d-r)^2 \norm{A}_{op} r \sqrt{d-r} \notag
        \\
        \label{eqn::bound_matrix_sum_2}
        &\leq 16 \epsilon^4 r^2 d^{5/2} \norm{A}_{op}. 
    \end{align}
    Finally, let us consider the last summand. By definition of $V_{\nu_{j}}$, we have that $M^{(3)} = \frac{2\epsilon}{r} T_{k,l}$ for some $(k,l) \in \{r+1,...,d\} \times \{1,...,r\}$ corresponding to the index $j \in \{1,...,D\}$. As such, we have
    \begin{align*}
        &\sum_{j = 1}^D \frac{1}{2^{D-1}} \sum_{\nu \in \{-1,1\}^{D-1}} \Tr \left[ A (\rho_{\nu_{j+}} - \rho_{\nu_{j-}})\right]^2 
        \\
        =& \sum_{l = 1}^r \sum_{k = r+1}^d \frac{1}{2^{D-1}} \sum_{\nu \in \{-1,1\}^{D-1}} \Tr \left[ A (\rho_{\nu_{(k,l)+}} - \rho_{\nu_{(k,l)-}})\right]^2
        \\
        =& \sum_{l = 1}^r \sum_{k = r+1}^d \frac{1}{2^{D-1}} \sum_{\nu \in \{-1,1\}^{D-1}} \frac{4\epsilon^2}{r^2} 4 \Re\left( \bra{k}A\ket{l} \right)^2.
        \intertext{We see that this term is independent of the exact $\nu$, such that we can write this as}
        =& \sum_{l = 1}^r \sum_{k = r+1}^d  \frac{4\epsilon^2}{r^2} 4 \Re\left( \bra{k}A\ket{l} \right)^2
        \\
        \leq& \frac{16\epsilon^2}{r^2} \sum_{l = 1}^r \sum_{k = r+1}^d  \left| \bra{k}A\ket{l} \right|^2.
    \end{align*}
    Now we note that for a self-adjoint positive matrix
    \begin{equation*}
        A = \begin{bmatrix}
            A_{11} & A_{12}
            \\
            A_{12}^* & A_{22}
        \end{bmatrix}, \hspace{20pt} \text{with } \norm{A}_{op} \leq 1 
    \end{equation*}
    we have $\sum_{l = 1}^r \sum_{k = r+1}^d  \left| \bra{k}A\ket{l} \right|^2 = \norm{A_{12}}_F^2$. Let us now bound the Frobenius norm of $A_{12}$. To do this, note that, since $A$ is contractive, $A - A^2$ is also a positive matrix, where 
    \begin{equation*}
        A^2 = \begin{bmatrix}
            A_{11}^2 + A_{12}A_{12}^* & *
            \\
            * & *
        \end{bmatrix}.
    \end{equation*}
    Here we denote by $*$ blocks in the matrix $A^2$ whose exact value is not of interest to us for the calculation. The top left block of $A - A^2$ is therefore given by $A_{11} - A_{11}^2 - A_{12}A_{12}^*$. Since this sub-matrix is also positive, for its trace we have
    \begin{equation*}
        \Tr[A_{11}] - \Tr[A_{11}^2] - \Tr\left[ A_{12}A_{12}^* \right] \geq 0.
    \end{equation*}
    Rearranging this formula gives
    \begin{align*}
        \norm{A_{12}}_F^2 = \Tr\left[ A_{12}A_{12}^* \right] &\leq \Tr[A_{11}] - \Tr[A_{11}^2] = \sum_{m = 1}^r \lambda_m - \lambda_m^2 \leq \frac{r}{4}
    \end{align*}
    where $(\lambda_m)_{m = 1}^r \subseteq [0,1]$ are the eigenvalues of $A_{11}$. This gives the bound
    \begin{equation}
    \label{eqn::bound_matrix_sum_3}
        \sum_{j = 1}^D \frac{1}{2^{D-1}} \sum_{\nu \in \{-1,1\}^{D-1}} \Tr \left[ A (\rho_{\nu_{j+}} - \rho_{\nu_{j-}})\right]^2 \leq \frac{4\epsilon^2}{r}
    \end{equation}
    for any self-adjoint positive $A$ such that $\norm{A}_{op} \leq 1$. In order to proof the final bound we combine the results of~\eqref{eqn::matrix_sum_split} together with~\eqref{eqn::bound_matrix_sum_1},~\eqref{eqn::bound_matrix_sum_2} and~\eqref{eqn::bound_matrix_sum_3} to obtain
    \begin{align*}
        &\sup_{\norm{A}_{op} \leq 1} \sum_{j = 1}^D \frac{1}{2^{D-1}} \sum_{\nu \in \{-1,1\}^{D-1}} \Tr \left[ A (\rho_{\nu_{j+}} - \rho_{\nu_{j-}})\right]^2 
        \\
        \leq& \sup_{\norm{A}_{op} \leq 1} 3  \sum_{j = 1}^D \left( 4 \epsilon^4 d^3 \norm{A}_{op} + 16 \epsilon^4 r^2 d^{5/2} \norm{A}_{op}\right) + 12 \frac{\epsilon^2}{r}
        \\
        \leq& 12 \epsilon^4 d^4 r + 48 \epsilon^4 r^3 d^{7/2} + 12 \frac{\epsilon^2}{r}
        \\
        \leq& \frac{72 \epsilon^2}{r}
    \end{align*}
    for $\epsilon \leq \frac{1}{d^2r^2}$.
\end{proof}

\subsection{Pure states}

\begin{Restatementlemma*}{\scshape~\ref{lem::Hamming_distance_pure}.\hspace{7pt}}
    Let $D = d/2$ and $\mathcal{V} = \{-1,1\}^D$. The set of local binary hypotheses $\mathcal{F} = \{\rho_{\nu}\}_{\nu \in \mathcal{V}}$ defined by~\eqref{eqn::rho_nu_pure} induce a $2\eta$-Hamming separation for $\norm{\cdot}_F^2$ on $\mathcal{S}_{pure}(\mathbb{C}^d)$ where $\eta = \frac{\epsilon^2}{2d}$.
\end{Restatementlemma*}

\begin{proof}
    We can write any pure qudit as $\rho = \ket{\psi}\bra{\psi}$ with
    \begin{equation*}
        \ket{\psi} = \sum_{k = 1}^d \gamma_k \ket{k} \hspace{20pt} \text{and} \hspace{20pt} \ket{\psi'} = \sum_{k = 1}^d |\gamma_k| \ket{k}
    \end{equation*}
    we have
    \begin{align*}
        \norm{\rho - \rho_{\nu}}_{Fr}^2 &= 2 \left( 1 - \left| \bra{\psi}\ket{\psi_{\nu}} \right|^2 \right).
    \end{align*}
    For the inner product it holds
    \begin{align*}
        \left| \bra{\psi}\ket{\psi_{\nu}} \right|^2 &= \left| \sum_{k = 1}^D \gamma_k \frac{1 + \epsilon \nu_k}{\sqrt{d(1+\epsilon^2)}} + \gamma_{k + \frac{d}{2}} \frac{1 - \epsilon \nu_k}{\sqrt{d(1+\epsilon^2)}} \right|^2
        \\
        &\leq \left( \sum_{k = 1}^D \left| \gamma_k \right| \frac{1 + \epsilon \nu_k}{\sqrt{d(1+\epsilon^2)}} + \left| \gamma_{k + \frac{d}{2}} \right| \frac{1 - \epsilon \nu_k}{\sqrt{d(1+\epsilon^2)}} \right)^2 \leq \left| \bra{\psi'}\ket{\psi_{\nu}} \right|^2.
    \end{align*}
    Since the inner product between two pure states is smaller than one, we can bound the Frobenius-norm between the two states by the $\ell_2$-norm of their parameter vectors, i.e.
    \begin{equation*}
        \norm{\rho - \rho_{\nu}}_{Fr}^2 \geq 2 \left(1 - \left| \bra{\psi'}\ket{\psi_{\nu}} \right|^2  \right) \geq 2 \left(1 - \left| \bra{\psi'}\ket{\psi_{\nu}} \right|  \right) = \norm{\ket{\psi'} - \ket{\psi_{\nu}}}_{\ell_2}^2
    \end{equation*}
    and it holds
    \begin{align*}
        \norm{\ket{\psi'} - \ket{\psi_{\nu}}}_{\ell_2}^2 &= \sum_{k = 1}^D \left(  \left| \gamma_k \right| - \frac{1 + \epsilon \nu_k}{\sqrt{d(1+\epsilon^2)}}\right)^2 + \left(  \left| \gamma_{k + \frac{d}{2}} \right| - \frac{1 - \epsilon \nu_k}{\sqrt{d(1+\epsilon^2)}}\right)^2
        \intertext{By defining $\mathbf{v}(\rho)_k = \sgn\left( \left| \gamma_k\right| - \frac{1}{\sqrt{d(1+\epsilon^2)}}  \right)$ we can lower bound this term by}
        \norm{\ket{\psi'} - \ket{\psi_{\nu}}}_{\ell_2}^2 &= \frac{\epsilon^2}{d(1+\epsilon^2)} \sum_{k = 1}^D \mathbbm{1}_{\{ \mathbf{v}(\rho)_k \neq \nu_k\}} \geq \frac{\epsilon^2}{d} \sum_{k = 1}^D \mathbbm{1}_{\{ \mathbf{v}(\rho)_k \neq \nu_k\}},
    \end{align*}
    which proves the lemma.
\end{proof}

\label{sec::Appendix_pure_states}
\begin{Restatementlemma*}{\scshape~\ref{lem::upper_bound_distances_pure_states}.\hspace{7pt}}
    Let $\epsilon \leq 1$, $D = \frac{d}{2}$ and $(\rho_{\nu})_{\nu \in \mathcal{V}}$ as in~\eqref{eqn::rho_nu_pure}. Then, for the states defined in Definition~\ref{defn::mixed_distribution states} it holds
    \begin{equation*}
        \sup_{\norm{A}_{op} \leq 1} \sum_{j = 1}^D \frac{1}{2^{D-1}} \sum_{\nu \in \{-1,1\}^{D-1}} \Tr \left[ A (\rho_{\nu_{j+},i} - \rho_{\nu_{j-}, i})\right]^2 \leq \frac{392 \epsilon^2}{d}.
    \end{equation*}
\end{Restatementlemma*}

\begin{proof}
Let $A$ be positive semi-definite such that $\lambda_{max}(A) \leq 1$. We then consider the term
\begin{align*}
    &\sum_{j = 1}^D \frac{1}{2^{D-1}} \sum_{\nu \in \{-1,1\}^{D-1}} \Tr\left[ A (\rho_{\nu_{j+}} - \rho_{\nu_{j-}}) \right]^2 
    \\
    = &\sum_{j = 1}^D \frac{1}{2^{D-1}} \sum_{\nu \in \{-1,1\}^{D-1}} \left( \bra{\psi_{\nu_{j+}}}A\ket{\psi_{\nu_{j+}}} -  \bra{\psi_{\nu_{j-}}}A\ket{\psi_{\nu_{j-}}}\right)^2.
\end{align*}
For any $\nu \in \{-1,1\}^D$ we rewrite $\ket{\psi_{\nu_{j+}}} = \ket{\psi_{\nu_{j+}, 1}} + \ket{\psi_{\nu_{j+}, 2}}$ and $\ket{\psi_{\nu_{j-}}} = \ket{\psi_{\nu_{j-}, 1}} + \ket{\psi_{\nu_{j-}, 2}}$, where 
\begin{align*}
    \ket{\psi_{\nu_{j+}, 1}} &= \sum_{k \neq j} \frac{1 + \epsilon \nu_k}{\sqrt{d (1+\epsilon^2)}} \ket{k} + \frac{1 - \epsilon \nu_k}{\sqrt{d (1+\epsilon^2)}} \ket{k + \frac{d}{2}}
    \\
    \ket{\psi_{\nu_{j+}, 2}} &= \frac{1 + \epsilon}{\sqrt{d(1 + \epsilon^2)}} \ket{j} + \frac{1 - \epsilon}{\sqrt{d(1 + \epsilon^2)}} \ket{j + \frac{d}{2}}
    \intertext{and}
    \ket{\psi_{\nu_{j-}, 1}} &= \sum_{k \neq j} \frac{1 + \epsilon \nu_k}{\sqrt{d (1+\epsilon^2)}} \ket{k} + \frac{1 - \epsilon \nu_k}{\sqrt{d (1+\epsilon^2)}} \ket{k + \frac{d}{2}} = \ket{\psi_{\nu_{j+}, 1}}
    \\
    \ket{\psi_{\nu_{j-}, 2}} &= \frac{1 - \epsilon}{\sqrt{d(1 + \epsilon^2)}} \ket{j} + \frac{1 + \epsilon}{\sqrt{d(1 + \epsilon^2)}} \ket{j + \frac{d}{2}}.
\end{align*}
This allows us to rewrite 
\begin{align}
    \Tr\left[ A (\rho_{\nu_{j+}} - \rho_{\nu_{j-}}) \right]^2 =& \Big( \bra{\psi_{\nu_{j+}, 1}}A\ket{\psi_{\nu_{j+}, 1}} + \bra{\psi_{\nu_{j+}, 2}}A\ket{\psi_{\nu_{j+}, 1}} \notag
    \\
    & + \bra{\psi_{\nu_{j+}, 2}}A\ket{\psi_{\nu_{j+}, 2}} + \bra{\psi_{\nu_{j+}, 1}}A\ket{\psi_{\nu_{j+}, 2}} \notag
    \\
    & - \bra{\psi_{\nu_{j-}, 1}}A\ket{\psi_{\nu_{j-}, 1}} - \bra{\psi_{\nu_{j-}, 2}}A\ket{\psi_{\nu_{j-}, 1}} \notag
    \\
    & - \bra{\psi_{\nu_{j-}, 2}}A\ket{\psi_{\nu_{j-}, 2}} - \bra{\psi_{\nu_{j-}, 1}}A\ket{\psi_{\nu_{j-}, 2}} \Big)^2 \notag
    \\
    =& \Big( \bra{\psi_{\nu_{j+}, 2}}A\ket{\psi_{\nu_{j+}, 2}} - \bra{\psi_{\nu_{j-}, 2}}A\ket{\psi_{\nu_{j-}, 2}} \notag
    \\
    &  + \bra{\psi_{\nu_{j+}, 2}}A\ket{\psi_{\nu_{j+}, 1}} - \bra{\psi_{\nu_{j-}, 2}}A\ket{\psi_{\nu_{j-}, 1}} \notag
    \\
    &  + \bra{\psi_{\nu_{j+}, 1}}A\ket{\psi_{\nu_{j+}, 2}} - \bra{\psi_{\nu_{j-}, 1}}A\ket{\psi_{\nu_{j-}, 2}} \Big)^2 \notag
    \\
    &= \Big( \bra{\psi_{\nu_{j+}, 2}}A\ket{\psi_{\nu_{j+}, 2}} - \bra{\psi_{\nu_{j-}, 2}}A\ket{\psi_{\nu_{j-}, 2}} \notag
    \\
    & + 2 \Re\big( \bra{\psi_{\nu_{j+}, 1}}A\ket{\psi_{\nu_{j+}, 2}} - \bra{\psi_{\nu_{j-}, 1}}A\ket{\psi_{\nu_{j-}, 2}} \big) \Big)^2 \notag
    \\
    \leq& \Big( \bra{\psi_{\nu_{j+}, 2}}A\ket{\psi_{\nu_{j+}, 2}} - \bra{\psi_{\nu_{j-}, 2}}A\ket{\psi_{\nu_{j-}, 2}} \Big)^2 \notag
    \\
    \label{eqn::sum_over_differences_in_scalar_products}
    & + 4 \Re\big( \bra{\psi_{\nu_{j+}, 1}}A\ket{\psi_{\nu_{j+}, 2}} - \bra{\psi_{\nu_{j-}, 1}}A\ket{\psi_{\nu_{j-}, 2}} \Big)^2.
\end{align}
We now consider both summands separately. Firstly, we have
\begin{align*}
    \bra{\psi_{\nu_{j+}, 2}}A\ket{\psi_{\nu_{j+}, 2}}
    =& \frac{1}{d (1 + \epsilon^2)} \Bigg( (1+\epsilon)^2 \bra{j}A\ket{j} + (1-\epsilon)^2 \bra{j + \frac{d}{2}}A\ket{j + \frac{d}{2}} 
    \\
    & \qquad \qquad \quad+ 2 (1- \epsilon^2) \Re \bigg(\bra{j}A\ket{j + \frac{d}{2}} \bigg) \Bigg)
\end{align*}
Analogously, we have
\begin{align*}
    \bra{\psi_{\nu_{j-}, 2}}A\ket{\psi_{\nu_{j-}, 2}}
    =& \frac{1}{d (1 + \epsilon^2)} \Bigg( (1-\epsilon)^2 \bra{j}A\ket{j} + (1+\epsilon)^2 \bra{j + \frac{d}{2}}A\ket{j + \frac{d}{2}} 
    \\
    & \qquad \qquad \quad + 2 (1- \epsilon^2) \Re \bigg(\bra{j}A\ket{j + \frac{d}{2}} \bigg) \Bigg).
\end{align*}
Combining the two gives
\begin{align*}
    &\bra{\psi_{\nu_{j+}, 2}}A\ket{\psi_{\nu_{j+}, 2}} - \bra{\psi_{\nu_{j-}, 2}}A\ket{\psi_{\nu_{j-}, 2}}
    \\
    =& \frac{1}{d(1+\epsilon^2)} \left( \bra{j}A\ket{j} \left( (1+ \epsilon)^2 - (1-\epsilon)^2 \right) + \bra{j + \frac{d}{2}}A\ket{j + \frac{d}{2}} \left( (1-\epsilon)^2 - (1+\epsilon)^2 \right) \right)
    \\
    =& \frac{1}{d(1+\epsilon^2)} \left( 4 \epsilon \bra{j}A\ket{j} - 4 \epsilon \bra{j + \frac{d}{2}}A\ket{j + \frac{d}{2}} \right)
\end{align*}
Since this term is independent of the actual $\nu$, we have
\begin{align}
\label{eqn::upper_bound_first_summand}
    2 \sum_{j = 1}^D \frac{1}{2^{D-1}} \sum_{\nu \in \{-1,1\}^{D-1}} \Big( \bra{\psi_{\nu_{j+}, 2}}A\ket{\psi_{\nu_{j+}, 2}} - \bra{\psi_{\nu_{j-}, 2}}A\ket{\psi_{\nu_{j-}, 2}} \Big)^2 &\leq \sum_{j = 1}^D \frac{16 \epsilon^2}{d^2(1+\epsilon^2)^2} \notag
    \\
    &= \frac{8 \epsilon^2}{d(1+\epsilon^2)^2}
\end{align}
using the fact that $A$ is positive semi-definite with largest eigenvalue being smaller than 1. Let us now consider the second term in~\eqref{eqn::sum_over_differences_in_scalar_products}. This is the quantum part, that only appears due to the non-commutative nature of matrices. Here, we use the fact that $\ket{\psi_{\nu_{j-}, 1}} = \ket{\psi_{\nu_{j+}, 1}}$ to rewrite
\begin{equation*}
    \Big( \Re\big( \bra{\psi_{\nu_{j+}, 1}}A\ket{\psi_{\nu_{j+}, 2}} - \bra{\psi_{\nu_{j-}, 1}}A\ket{\psi_{\nu_{j-}, 2}} \big) \Big)^2 \leq \left| \bra{\psi_{\nu_{j+}, 1}} A \left( \ket{\psi_{\nu_{j+}, 2}} - \ket{\psi_{\nu_{j-}, 2}} \right) \right|^2.
\end{equation*}
Now it holds
\begin{align*}
    \ket{\psi_{\nu_{j+}, 2}} - \ket{\psi_{\nu_{j-}, 2}} =& \frac{1}{\sqrt{d (1+\epsilon^2)}} \bigg( (1+\epsilon)\ket{j} + (1-\epsilon) \ket{j + \frac{d}{2}} 
    \\
    &\qquad \qquad \quad- (1-\epsilon) \ket{j} - (1+\epsilon)\ket{j + \frac{d}{2}} \bigg)
    \\
    =& \frac{1}{\sqrt{d (1+\epsilon^2)}} \left( 2 \epsilon \ket{j} - 2\epsilon \ket{j + \frac{d}{2}} \right)
    \\
    =& \frac{2\epsilon}{\sqrt{d(1 + \epsilon^2)}} \left( \ket{j} - \ket{j + \frac{d}{2}} \right),
\end{align*}
which gives
\begin{alignat*}{2}
    & \bra{\psi_{\nu_{j+}, 1}} A && \left( \ket{\psi_{\nu_{j+}, 2}} - \ket{\psi_{\nu_{j-}, 2}} \right)
    \\
    =& \frac{2\epsilon}{d (1+\epsilon^2)} && \sum_{k \neq j} \left( (1+ \epsilon \nu_k) \bra{k} + (1-\epsilon \nu_k) \bra{k + \frac{d}{2}} \right) A \left( \ket{j} - \ket{j + \frac{d}{2}} \right)
    \\
    =& \frac{2\epsilon}{d (1+\epsilon^2)} \bigg( && \sum_{k \neq j} \left( \bra{k} + \bra{k + \frac{d}{2}} \right) A \left( \ket{j} - \ket{j + \frac{d}{2}} \right) 
    \\
    & && + \epsilon \sum_{k \neq j} \nu_k \left( \bra{k} - \bra{k + \frac{d}{2}} \right) A \left( \ket{j} - \ket{j + \frac{d}{2}} \right) \bigg)
    \\
    =& \frac{4\epsilon}{d (1+\epsilon^2)} \bigg( && \bra{\Psi_1} A \ket{\phi_j} - \left(\bra{j} + \bra{j + \frac{d}{2}}\right) A \ket{\phi_j} + \epsilon \bra{\Psi_{2, \nu}} A \ket{\phi_j} \bigg)
\end{alignat*}
where we define
\begin{equation*}
    \ket{\Psi_1} = \sum_{k = 1}^{D} \ket{k} + \ket{k + \frac{d}{2}} \hspace{20pt} \text{and} \hspace{20pt} \ket{\Psi_{2, \nu}} = \sum_{k \neq j} \nu_k \left( \ket{k} - \ket{k + \frac{d}{2}} \right).
\end{equation*}
and the orthonormal vectors.
\begin{equation*}
    \ket{\phi_j} = \frac{\ket{j} - \ket{j + \frac{d}{2}}}{\sqrt{2}}.
\end{equation*}
We can therefore write
\begin{align}
    &\sum_{j = 1}^D \frac{1}{2^{D-1}} \sum_{\nu \in \{-1,1\}^{D-1}} \left| \bra{\psi_{\nu_{j+}, 1}} A \left( \ket{\psi_{\nu_{j+}, 2}} - \ket{\psi_{\nu_{j-}, 2}} \right) \right|^2 \notag
    \\
    =& \frac{8\epsilon^2}{d^2(1+\epsilon^2)^2} \sum_{j = 1}^D  \frac{1}{2^{D-1}} \sum_{\nu \in \{-1,1\}^{D-1}} \bigg| \bra{\Psi_1} A \ket{\phi_j} + \left(\bra{j} + \bra{j + \frac{d}{2}}\right) A \ket{\phi_j} \notag
    \\
    & \qquad \qquad \qquad \qquad \qquad \qquad \qquad + \epsilon \bra{\Psi_{2, \nu}} A \ket{\phi_j} \bigg|^2 \notag
    \\
    \leq& \frac{32 \epsilon^2}{d^2(1+\epsilon^2)^2} \sum_{j = 1}^D  \bigg( \frac{1}{2^{D-1}} \sum_{\nu \in \{-1,1\}^{D-1}} \left| \bra{\Psi_1} A \ket{\phi_j} \right|^2 \notag
    \\
    & \qquad \qquad \qquad \quad + \frac{1}{2^{D-1}} \sum_{\nu \in \{-1,1\}^{D-1}} \left|\Big(\bra{j} +  \bra{j + \frac{d}{2}}\Big) A \ket{\phi_j} \right|^2 \notag
    \\
    \label{eqn::cross_terms_average}
    & \qquad \qquad \qquad \quad+ \epsilon^2 \frac{1}{2^{D-1}} \sum_{\nu \in \{-1,1\}^{D-1}} \left |\bra{\Psi_{2, \nu}} A \ket{\phi_j} \right|^2 \bigg).
\end{align}
Since $\ket{\Psi_1}$ is independent of $\nu$, we have
\begin{equation*}
    \frac{1}{2^{D-1}} \sum_{\nu \in \{-1,1\}^{D-1}} \left| \bra{\Psi_1} A \ket{\phi_j} \right|^2  =  \left| \bra{\Psi_1} A \ket{\phi_j} \right|^2.
\end{equation*}
The second summand is also independent of $\nu$ such that
\begin{equation*}
    \frac{1}{2^{D-1}} \sum_{\nu \in \{-1,1\}^{D-1}} \left|\Big(\bra{j} +  \bra{j + \frac{d}{2}}\Big) A \ket{\phi_j} \right|^2 = \left|\Big(\bra{j} +  \bra{j + \frac{d}{2}}\Big) A \ket{\phi_j} \right|^2 \leq 2 .
\end{equation*}
For the third summand, we have
\begin{align*}
    \frac{1}{2^{D-1}} \sum_{\nu \in \{-1,1\}^{D-1}} \left|\bra{\Psi_{2, \nu}} A \ket{\phi_j} \right|^2 =& \frac{1}{2^{D-1}} \sum_{\nu \in \{-1,1\}^{D-1}} \bra{\phi_j} A \ket{\Psi_{2, \nu}}\bra{\Psi_{2, \nu}} A \ket{\phi_j}
    \\
    =& \bra{\phi_j}A \left( \frac{1}{2^{D-1}} \sum_{\nu \in \{-1,1\}^{D-1}} \ket{\Psi_{2, \nu}}\bra{\Psi_{2, \nu}} \right) A \ket{\phi_j}.
\end{align*}
The inner sum may then be bounded as follows by making use of the independence structure of the Rademacher random variable $\nu_k$
\begin{align*}
    &\frac{1}{2^{D-1}} \sum_{\nu \in \{-1,1\}^{D-1}} \ket{\Psi_{2, \nu}}\bra{\Psi_{2, \nu}} 
    \\
    =& \frac{1}{2^{D-1}} \sum_{\nu \in \{-1,1\}^{D-1}} \sum_{k \neq j} \sum_{l \neq j} \nu_k \nu_l \left( \ket{k} - \ket{k + \frac{d}{2}} \right) \left( \bra{l} - \bra{l + \frac{d}{2}} \right)
    \\
    =& \sum_{k  \neq j} \left( \ket{k} - \ket{k + \frac{d}{2}} \right) \left( \bra{k} - \bra{k + \frac{d}{2}} \right) \frac{1}{2^{D-1}} \sum_{\nu \in \{-1,1\}^{D-1}} \nu_k^2 
    \\
    &+ \sum_{\genfrac{}{}{0pt}{}{k \neq l}{k,l \neq j}} \left( \ket{k} - \ket{k + \frac{d}{2}} \right) \left( \bra{l} - \bra{l + \frac{d}{2}} \right) \frac{1}{2^{D-1}} \sum_{\nu \in \{-1,1\}^{D-1}} \nu_k \nu_l
\end{align*}
Since $\nu_k$ and $\nu_l$ are independent Rademacher random variables, we have
\begin{equation*}
    \frac{1}{2^{D-1}} \sum_{\nu \in \{-1,1\}^{D-1}} \nu_k^2  = 1 \hspace{20pt} \text{and} \hspace{20pt} \frac{1}{2^{D-1}} \sum_{\nu \in \{-1,1\}^{D-1}} \nu_k \nu_l = 0,
\end{equation*} 
allowing us to rewrite
\begin{align*}
    \frac{1}{2^{D-1}} \sum_{\nu \in \{-1,1\}^{D-1}} \ket{\Psi_{2, \nu}}\bra{\Psi_{2, \nu}} =& \sum_{k  \neq j} \left( \ket{k} - \ket{k + \frac{d}{2}} \right) \left( \bra{k} - \bra{k + \frac{d}{2}} \right)
    \\
    =& 2 \sum_{k  \neq j} \frac{\ket{k} - \ket{k + \frac{d}{2}}}{\sqrt{2}} \frac{\bra{k} - \bra{k + \frac{d}{2}}}{\sqrt{2}} =: A_2
\end{align*}
Note that the Matrix $A_2$ is diagonal with maximal eigenvalue $2$. As such, its matrix square root is given by
\begin{equation*}
    \sqrt{A_2} = \sqrt{2} \sum_{k  \neq j} \frac{\ket{k} - \ket{k + \frac{d}{2}}}{\sqrt{2}} \frac{\bra{k} - \bra{k + \frac{d}{2}}}{\sqrt{2}}
\end{equation*}
We can therefore rewrite equation~\eqref{eqn::cross_terms_average} as
\begin{align}
    &4 \Re\big( \bra{\psi_{\nu_{j+}, 1}}A\ket{\psi_{\nu_{j+}, 2}} - \bra{\psi_{\nu_{j-}, 1}}A\ket{\psi_{\nu_{j-}, 2}} \Big)^2
    \\
    \leq& 4 \sum_{j = 1}^D \frac{1}{2^{D-1}} \sum_{\nu \in \{-1,1\}^{D-1}} \left| \bra{\psi_{\nu_{j+}, 1}} A \left( \ket{\psi_{\nu_{j+}, 2}} - \ket{\psi_{\nu_{j-}, 2}} \right) \right|^2 \notag
    \\
    \leq& \frac{128 \epsilon^2}{d^2(1+\epsilon^2)^2} \sum_{j = 1}^D \left( \left| \bra{\Psi_1} A \ket{\phi_j} \right|^2 + 2 +  \bra{\phi_j} A \sqrt{A_2} \sqrt{A_2} A \ket{\phi_j} \right) \notag
    \\
    \leq& \frac{128 \epsilon^2}{d^2(1+\epsilon^2)^2}  \left( \sum_{j = 1}^D \left| \bra{\Psi_1} A \ket{\phi_j} \right|^2 + 2D + \sum_{j = 1}^D \norm{\sqrt{A_2} A \ket{\phi_j}}^2 \right) \notag
    \\
    \leq& \frac{128 \epsilon^2}{d^2(1+\epsilon^2)^2} \left( \norm{A \ket{\Psi_1}}^2 + 2D + \sum_{j = 1}^D \norm{\sqrt{A_2} A \ket{\phi_j}}^2 \right) \notag
    \intertext{using Bessel's inequality as the $\ket{\phi_j}$ are an orthonormal system. Since the maximal eigenvalue of $A$ and $\sqrt{A_2}$ are $1$ and $\sqrt{2}$ respectively and $\norm{\Psi_1}^2 = d$ for $D = \frac{d}{2}$, we can further bound this by}
    \leq& \frac{128 \epsilon^2}{d^2(1+\epsilon^2)^2} \left( d + d + d \right) \notag
    \\
    \label{eqn::cross_terms_average_final}
    \leq& \frac{384 \epsilon^2}{d(1+\epsilon^2)^2}.
\end{align}
Combining the results of~\eqref{eqn::upper_bound_first_summand} and~\eqref{eqn::cross_terms_average_final} together with~\eqref{eqn::sum_over_differences_in_scalar_products} gives 
\begin{equation*}
    \sup_{\norm{A}_{op} \leq 1} \sum_{j = 1}^D \frac{1}{2^{D-1}} \sum_{\nu \in \{-1,1\}^{D-1}} \Tr\left[ A (\rho_{\nu_{j+}} - \rho_{\nu_{j-}}) \right]^2 \leq \frac{8 \epsilon^2}{d(1+\epsilon^2)^2} + \frac{384 \epsilon^2}{d (1+\epsilon^2)^2} \leq \frac{392 \epsilon^2}{d}.
\end{equation*}
\end{proof}

\subsection{Probability vector of a pure state}
\begin{lemma}
\label{lem::Hamming_distance_probability_vector}
    Let $D = d/2$ and $\mathcal{V} = \{-1,1\}^D$ and $p(\cdot)$ be the functioin that maps a pure state on its corresponding probability vector. The set of local binary hypotheses $\mathcal{F} = \{\rho_{\nu}\}_{\nu \in \mathcal{V}}$ defined by~\eqref{eqn::definition_pure_states_lower_bound_prop_amplitude} induce a $2\eta$-Hamming separation for $\norm{p(\cdot)}_2^2$ on $\mathcal{S}_{pure}(\mathbb{C}^d)$ where $\eta = \frac{\epsilon^2}{2d^2}$.
\end{lemma}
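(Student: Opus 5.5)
The plan is to compute the probability vectors explicitly, define the sign map coordinatewise, and then get the Hamming bound from a simple one-dimensional inequality on each coordinate.

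For $\rho_\nu$ defined in~\eqref{eqn::definition_pure_states_lower_bound_prop_amplitude}, the probability vector is
\[
p(\rho_\nu)_k=\frac{1+\epsilon\nu_k}{d},\qquad p(\rho_\nu)_{k+d/2}=\frac{1-\epsilon\nu_k}{d},\qquad k=1,\dots,D .
\]
These entries are valid probabilities because $\epsilon\le 1$, and they sum to one. For an arbitrary pure state $\rho$ with probability vector $p=p(\rho)$, I will define
\[
\mathbf{v}(\rho)_k=\sgn\bigl(p_k-p_{k+d/2}\bigr),
\]
with the convention $\sgn(0)=+1$. This map depends only on $\rho$ and not on $\nu$, as Definition~\ref{defn::Hamming_separation} requires.

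Next I split the squared $\ell_2$ distance into the $D$ coordinate pairs $(k,k+d/2)$ and drop nothing:
\[
\norm{p(\rho)-p(\rho_\nu)}_2^2=\sum_{k=1}^D\Bigl[\bigl(p_k-\tfrac{1+\epsilon\nu_k}{d}\bigr)^2+\bigl(p_{k+d/2}-\tfrac{1-\epsilon\nu_k}{d}\bigr)^2\Bigr].
\]
For each pair I apply $a^2+b^2\ge \tfrac12(a-b)^2$. This bounds the $k$-th summand from below by
\[
\tfrac12\Bigl(p_k-p_{k+d/2}-\tfrac{2\epsilon\nu_k}{d}\Bigr)^2 .
\]

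The remaining step is the sign argument. Suppose $\mathbf{v}(\rho)_k\neq\nu_k$, and take $\nu_k=+1$ for definiteness. Then $x:=p_k-p_{k+d/2}<0$, so
\[
\Bigl|x-\tfrac{2\epsilon}{d}\Bigr|\ge\tfrac{2\epsilon}{d}.
\]
If instead $\nu_k=-1$, then $x\ge 0$ and $\bigl|x+\tfrac{2\epsilon}{d}\bigr|\ge\tfrac{2\epsilon}{d}$. In either case the $k$-th summand is at least $\tfrac12\cdot\tfrac{4\epsilon^2}{d^2}=\tfrac{2\epsilon^2}{d^2}$.

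Summing over $k$ gives
\[
\norm{p(\rho)-p(\rho_\nu)}_2^2\ge\frac{2\epsilon^2}{d^2}\sum_{k=1}^D\mathbbm{1}_{\{\mathbf{v}(\rho)_k\neq\nu_k\}}.
\]
This is stronger than the claimed bound, since $2\eta=\epsilon^2/d^2$ corresponds to the stated $\eta=\epsilon^2/(2d^2)$, so the lemma follows.

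There is no real obstacle here, because the separation is purely classical. The only points needing care are the tie convention for $\sgn$ at zero, which the argument above handles, and confirming that the Hamming-separation definition is used with $p(\cdot)$ in place of $\rho$.
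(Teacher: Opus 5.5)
Your proof is correct and follows essentially the same route as the paper: you read off the explicit probability vectors of $\rho_\nu$, decode $\nu$ coordinatewise by a sign, and lower-bound the contribution of each mismatched index. The differences are minor. The paper decodes with $\sgn(p_k - 1/d)$ and effectively uses only the $k$-th coordinate of each pair, which gives exactly $\epsilon^2/d^2$ per mismatch. Your decoder $\sgn(p_k-p_{k+d/2})$, combined with $a^2+b^2\ge\tfrac12(a-b)^2$, uses both coordinates of each pair and gains a factor $2$, and your convention $\sgn(0)=+1$ handles the tie case that the paper leaves implicit.
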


\begin{proof}
    Let $\rho = \ket{\psi}\bra{\psi}$ be a pure qudit with
    \begin{equation*}
        \ket{\psi} = \sum_{k = 1}^d  \gamma_{k} \ket{k}.
    \end{equation*}
    For the probability amplitude vector $p = [|\gamma_1|,....,|\gamma_d|^2]^t$ we have the following result: Let $\mathbf{v}(\rho): \mathcal{S}_{pure}(\mathbb{C}^d) \to \{-1,1\}, \mathbf{v}(\rho)_k = \sgn\left( |\gamma_k(\rho)|^2 - \frac{1}{d} \right)$, then it holds
    \begin{align*}
        \norm{p - |\gamma_{\nu}|^2}_2^2 &= \sum_{k = 1}^{D} \left(|\gamma_k|^2 - \left(\frac{1 + \epsilon\nu_k}{d}\right)\right)^2 + \left(|\gamma_k|^2 - \left(\frac{1 - \epsilon\nu_k}{d}\right)\right)^2
        \\
        &= \sum_{k = 1}^D \left( \mathbf{v}(\rho)_k + \frac{\epsilon}{d} \nu_k \right)^2 + \left( \mathbf{v}(\rho)_k - \frac{\epsilon}{d} \nu_k \right)^2 \geq \frac{\epsilon^2}{d^2} \sum_{k = 1}^D \mathbbm{1}_{\{ \mathbf{v}(\rho)_k \neq \nu_k \}},
    \end{align*}
    which shows the claim.
\end{proof}

\begin{lemma}
\label{lem::upper_bound_distances_pure_states_for_amplitudes}
    Let $\epsilon \leq 1$, $D = \frac{d}{2}$ and $(\rho_{\nu})_{\nu \in \mathcal{V}}$ as in~\eqref{eqn::rho_nu_pure}. Then, for the states defined in Definition~\ref{defn::mixed_distribution states} it holds
    \begin{equation*}
        \sup_{\norm{A}_{op} \leq 1} \sum_{j = 1}^D \frac{1}{2^{D-1}} \sum_{\nu \in \{-1,1\}^{D-1}} \Tr \left[ A (\rho_{\nu_{j+}} - \rho_{\nu_{j-}})\right]^2 \leq \frac{\epsilon^2}{d}
    \end{equation*}
\end{lemma}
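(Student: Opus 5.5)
We follow the proof of Lemma~\ref{lem::upper_bound_distances_pure_states} almost verbatim, applied to the states~\eqref{eqn::definition_pure_states_lower_bound_prop_amplitude} used in the proof of Theorem~\ref{thm::lower_bound_probability_vector}. These states are the ones actually needed; the reference to~\eqref{eqn::rho_nu_pure} in the statement should be read this way. Set $a=\sqrt{(1+\epsilon)/d}$ and $b=\sqrt{(1-\epsilon)/d}$, and write $j'=j+\frac d2$. Since $\sqrt{(1\pm\epsilon\nu_k)/d}\in\{a,b\}$, every amplitude can be written linearly in $\nu_k$:
\begin{equation*}
\sqrt{(1+\epsilon\nu_k)/d}=s+t\nu_k, \qquad \sqrt{(1-\epsilon\nu_k)/d}=s-t\nu_k, \qquad s=\tfrac{a+b}{2},\; t=\tfrac{a-b}{2}.
\end{equation*}
This linearisation is what makes the earlier Rademacher-averaging argument go through. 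We also use the elementary bounds $s^2\le 1/d$ and $t\le \epsilon/(\sqrt{2d})$, which follow from $a^2+b^2=2/d$ and $a-b=2\epsilon/(\sqrt d(a+b)\sqrt d)\cdot\frac1{\sqrt d}^{-1}$, i.e. $a-b=\frac{2\epsilon}{d(a+b)}$ with $a+b\ge\sqrt{2/d}$ for $\epsilon\le1$.

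Fix a positive semidefinite $A$ with $\norm{A}_{op}\le1$ and an index $j$. Split $\ket{\psi_{\nu_{j\pm}}}=\ket{\psi_{1}}+\ket{\psi_{\nu_{j\pm},2}}$, where $\ket{\psi_1}$ collects the coordinates $k,k'$ with $k\neq j$ and is common to both signs. The second parts are $\ket{\psi_{\nu_{j+},2}}=a\ket{j}+b\ket{j'}$ and $\ket{\psi_{\nu_{j-},2}}=b\ket{j}+a\ket{j'}$. As in~\eqref{eqn::sum_over_differences_in_scalar_products}, the quantity $\Tr[A(\rho_{\nu_{j+}}-\rho_{\nu_{j-}})]^2$ is at most twice a diagonal term plus twice a cross term.

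\emph{Diagonal term.} The mixed products $ab$ cancel, leaving $(a^2-b^2)(\bra jA\ket j-\bra{j'}A\ket{j'})=\frac{2\epsilon}{d}(\bra jA\ket j-\bra{j'}A\ket{j'})$. Its square is at most $4\epsilon^2/d^2$. Summing over the $D=d/2$ values of $j$ gives a contribution of order $\epsilon^2/d$, independently of $\nu$.

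\emph{Cross term.} Write $\ket{\psi_{\nu_{j+},2}}-\ket{\psi_{\nu_{j-},2}}=2t\sqrt2\,\ket{\phi_j}$ with $\phi_j=(\ket j-\ket{j'})/\sqrt2$, and $\ket{\psi_1}=s(\ket{\Psi_1}-\ket j-\ket{j'})+t\ket{\Psi_{2,\nu}}$, with $\Psi_1,\Psi_{2,\nu}$ as in the pure-state proof. The cross term is then bounded by $8t^2$ times three pieces:
\begin{itemize}
\item $s^2\sum_j|\bra{\Psi_1}A\ket{\phi_j}|^2\le s^2\norm{A\Psi_1}^2\le s^2 d$, by Bessel's inequality over the orthonormal system $\{\phi_j\}$;
\item $s^2\sum_j|(\bra j+\bra{j'})A\ket{\phi_j}|^2\le 2s^2D$;
\item the Rademacher average of $t^2|\bra{\Psi_{2,\nu}}A\ket{\phi_j}|^2$, which equals $t^2\bra{\phi_j}AA_2A\ket{\phi_j}$ and sums over $j$ to $O(t^2 d)$.
\end{itemize}
Since $t^2\lesssim\epsilon^2/d$ and $s^2\le1/d$, each piece contributes $O(\epsilon^2/d)$, the last one even $O(\epsilon^4/d)$.

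The main obstacle is the constant. The paper claims exactly $\epsilon^2/d$, whereas the decomposition with $(x+y+z)^2\le3(x^2+y^2+z^2)$ naturally yields a moderate absolute constant times $\epsilon^2/d$. If the crude splitting does not reach the stated constant, I would sharpen it in two ways. First, I would use the exact identity $\rho_{\nu_{j+}}-\rho_{\nu_{j-}}=(a^2-b^2)(\ket j\bra j-\ket{j'}\bra{j'})+2\sqrt2\,t\,(\ket{\psi_1}\bra{\phi_j}+\ket{\phi_j}\bra{\psi_1})$. Second, I would bound the sum over $j$ directly by a single Bessel or operator-norm argument in the orthonormal system $\{\phi_j\}\cup\{(\ket j+\ket{j'})/\sqrt2\}$ rather than splitting term by term. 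Failing that, the final rate in Theorem~\ref{thm::lower_bound_probability_vector} only needs the bound up to an absolute constant, which the plan above delivers.
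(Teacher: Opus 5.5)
Your argument is correct and gives the bound $C\epsilon^2/d$ for an absolute constant $C$. It follows a genuinely different route from the paper's, and the difference matters.

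The paper bounds each summand separately. Via $\sup_{0\le A\le\mathbbm{1}}\Tr[AX]=\Tr[X_+]$ it replaces $\Tr[A(\rho_{\nu_{j+}}-\rho_{\nu_{j-}})]^2$ by the squared trace distance $1-|\bra{\psi_{\nu_{j+}}}\ket{\psi_{\nu_{j-}}}|^2$, and it evaluates the overlap $1-2(1-\sqrt{1-\epsilon^2})/d$ explicitly. This is shorter, but it moves the supremum over $A$ inside the sum over $j$. Each summand is then of size about $2\epsilon^2/d$, so summing over the $D=d/2$ indices yields only $O(\epsilon^2)$. The written proof reaches $\epsilon^2/d$ only through two slips: a spurious factor $\tfrac14$ (for traceless $X$ one has $\sup_{0\le A\le\mathbbm{1}}\Tr[AX]=\norm{X}_{Tr}$), and never carrying out the sum over $j$.

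Your decomposition uses a single $A$ for all $j$. It exploits this through Bessel's inequality over the orthonormal family $\{\ket{\phi_j}\}$ and through the Rademacher averaging of $\ket{\Psi_{2,\nu}}$. That is exactly what produces the factor $1/d$. This factor, not the constant, is what Theorem~\ref{thm::lower_bound_probability_vector} needs for the rate $d/(n\alpha^2)$. You are also right to read the statement as concerning the states~\eqref{eqn::definition_pure_states_lower_bound_prop_amplitude}; the paper's own computation uses those amplitudes.

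Drop the plan to sharpen the argument to the constant $1$: the inequality as stated is false. Take the projection $A=\sum_{k=1}^{D}\ket{k}\bra{k}$. Then $\Tr[A\rho_\nu]=\sum_{k=1}^{D}(1+\epsilon\nu_k)/d$, so every summand equals $4\epsilon^2/d^2$ and the supremum is at least $2\epsilon^2/d$. For the states~\eqref{eqn::rho_nu_pure} the same $A$gives $8\epsilon^2/(d(1+\epsilon^2)^2)\ge 2\epsilon^2/d$. Your fallback is therefore the right conclusion: state the lemma as $C\epsilon^2/d$ and propagate $C$. This only changes numerical constants in Theorem~\ref{thm::lower_bound_probability_vector}.

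If you want a clean $C$, your exact-identity idea works even better than you expect.
\begin{itemize}
\item Set $\ket{m}=\tfrac12(\ket{\psi_{\nu_{j+}}}+\ket{\psi_{\nu_{j-}}})=s\ket{\Psi_1}+t\ket{\Psi_{2,\nu}}$. Then $\rho_{\nu_{j+}}-\rho_{\nu_{j-}}=2\sqrt{2}\,t\,(\ket{m}\bra{\phi_j}+\ket{\phi_j}\bra{m})$ exactly, because the diagonal part is absorbed by $4st=a^2-b^2$.
\item The mixed term between $s\ket{\Psi_1}$ and $t\ket{\Psi_{2,\nu}}$ averages to zero over $\nu$.
\item Bessel's inequality then gives $32t^2d(s^2+t^2)=32t^2=16(1-\sqrt{1-\epsilon^2})/d\le 16\epsilon^2/d$, for every hermitian $A$ with $\norm{A}_{op}\le 1$.
\end{itemize}
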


\begin{proof}
    We can express the trace of the positive part of any operator $\rho$ as (see \cite{audenaert_asymptotic_2008})
    \begin{equation}
    \label{eqn::Trace_variational_formulation}
        \Tr[X_+] = \max_A \left\{ \Tr[A X] \; \middle| \; 0 \leq A \leq \mathbbm{1} \right\}
    \end{equation}
    As such, using $\norm{X}_{Tr} = \frac{1}{2} \Tr[X_+] + \frac{1}{2} \Tr[X_-]$, we have
    \begin{equation*}
        \Tr \left[ A (\rho_{\nu_{j+}} - \rho_{\nu_{j-}})\right]^2 \leq \frac{1}{4} \norm{\rho_{\nu_{j+}} - \rho_{\nu_{j-}}}_{Tr}^2 = \frac{1}{4} \left( 1 - \left| \bra{\psi_{\nu+}\ket{\psi_{\nu-}}} \right|^2 \right).
    \end{equation*}
    For the inner product it holds
    \begin{align*}
        \left| \bra{\psi_{\nu+}\ket{\psi_{\nu-}}} \right|^2 &= \left|\sum_{k \neq j}^D \left( \frac{1 + \epsilon\nu_k}{d} + \frac{1 - \epsilon\nu_k}{d} + 2 \sqrt{\frac{1 + \epsilon}{d}}\sqrt{\frac{1-\epsilon}{d}} \right)\right|^2
        \\
        &= \left| 1 - \frac{2}{d} + 2 \sqrt{\frac{1}{d^2} - \frac{\epsilon^2}{d^2}} \right|^2
        \\
        &= \left| 1 - \frac{2 (1-\sqrt{1-\epsilon^2})}{d} \right|^2
    \end{align*}
    which shows
    \begin{align*}
        \Tr \left[ A (\rho_{\nu_{j+}} - \rho_{\nu_{j-}})\right]^2 &\leq \frac{1}{4}\left( 1 - \left( 1 - \frac{4}{d}\left( 1 - \sqrt{1-\epsilon^2} \right) + \frac{4}{d^2} \left( 1 - \sqrt{1 - \epsilon^2} \right)^2 \right) \right)
        \\
        &= \frac{1}{d^2}\left( 1 - \sqrt{1 - \epsilon^2} \right)\left( d - \left( 1 - \sqrt{1 - \epsilon^2} \right) \right)
        \\
        &\leq \frac{1}{d}\left( 1 - \sqrt{1 - \epsilon^2} \right)
        \\
        &\leq \frac{\epsilon^2}{d}.
    \end{align*}
\end{proof}

\end{appendix}

\end{document}